 \theoremstyle{plain}
 \newtheorem{theorem}{Theorem}
 \newtheorem{lemma}{Lemma}
 \newtheorem{corollary}{Corollary}
\newtheorem{proposition}{Proposition}
\theoremstyle{definition}
\newtheorem{definition}{Definition}
\newtheorem{example}{Example}
 \newtheorem{remark}{Remark}
\newcommand{\G}{\mathbb G}
 \newcommand{\N}{\mathbb N}
 \newcommand{\Z}{\mathbb Z}
 \newcommand{\R}{\mathbb R}
 \newcommand{\Q}{\mathbb Q}
\newcommand{\cA}{\mathcal A}
 \newcommand{\cC}{\mathcal C}
 \newcommand{\cD}{\mathcal D}
 \newcommand{\cK}{\mathcal K}
 \newcommand{\cW}{W^{1,1}}
 \newcommand{\cWD}{\cW_\cD}
 \newcommand{\cVD}{\cV_\cD}
 \newcommand{\cF}{\mathcal F}
\newcommand{\cG}{\mathcal G}
\newcommand{\cH}{\mathcal H}
  \newcommand{\cL}{\mathcal L}
 \newcommand{\cM}{\mathcal M}
\newcommand{\cP}{\mathcal P}
\newcommand{\cV}{\mathcal V}
\newcommand{\ovX}{\overline{X}}
\newcommand{\unX}{\underline{X}}
\newcommand{\bfh}{\mathbf{h}} 
\newcommand{\fM}{\mathfrak M}
 \newcommand{\bH}{\bar{H}}
\newcommand{\bL}{\bar{L}}
\newcommand{\bM}{\bar{M}}
 \newcommand{\af}{\alpha}
 \newcommand{\fui}{\varphi}
 \newcommand{\ep}{\varepsilon}
\newcommand{\be}{\beta}
\newcommand{\ga}{\gamma}
\newcommand{\bga}{\bar\gamma}
 \newcommand{\ka}{\kappa}
 \newcommand{\de}{\delta}
 \newcommand{\De}{\Delta}
 \newcommand{\om}{\omega}
 \newcommand{\Om}{\Omega}
  \newcommand{\lam}{\lambda}
 \newcommand{\te}{\theta}
 \newcommand{\dga}{\dot\gamma}
 \newcommand{\lV}{\left\Vert}
 \newcommand{\rV}{\right\Vert}
 \newcommand{\rip}{\rangle}
 \newcommand{\lip}{\langle}
 \newcommand{\mL}{\mathbb L}
 \newcommand{\mH}{\mathbb H}
\newcommand{\diver}{\operatorname{div}}
\newcommand{\Lip}{\operatorname{Lip}}
\newcommand{\diam}{\operatorname{diam}}
\newcommand{\entre}{\setminus}
\newcommand{\sop}{\operatorname{supp}}
\begin{document}
\title{Weak KAM theory for subriemannian Lagrangians}
\author{H\'ector S\'anchez Morgado}
\address{Instituto de Matem\'aticas, UNAM. Ciudad Universitaria C. P. 
          04510, Cd. de M\'exico, M\'exico.}
 \email{hector@math.unam.mx}
\begin{abstract}
We extend weak KAM theory to Lagrangians that are defined
  only on the horizontal distribution of a sub-Riemannian
  manifold. The main tool is Tonelli's theorem which allows dispending
  on a Lagrangian dynamics.
  \end{abstract}
\maketitle

\section{Introduction}
\label{sec:introduction}

Lagrangians that are not defined on the whole tangent bundle of a
manifold arise naturally in mechanics with non-holonomic constraints.
We approach the study of such kind of systems based on variational
principles, more specifically the study of global minimizing curves
and the related weak KAM solutions.

A basic fact that should be stablished to start, is a Tonelli type theorem.
We accomplish that in section \ref{sec:tonelli-thm}.
In section \ref{sec:disc-value} we consider the infinite horizon
discounted problem and obtain some bounds for the discounted value
function, uniform respect to the  discount.
Using those bounds we prove in section \ref{sec:wkam} the existence of
weak KAM solutions. Then we introduce the Lax-Oleinik semigroup, the
action potential, the Peierls barrier and the Aubry set, and stablish
usual properties of these objects. We also prove the long time convergence of
the Lax-Oleinik semigroup under a suitable growth condition for the Lagrangian.
In section \ref{sec:HJ} we consider viscosity solutions of the
Hamilton-Jacobi equation and prove that weak KAM solutions are
viscosity solutions. We don't know if the converse is true.
In section \ref{sec:aubry-mather} we considere minimizing measures and
define the effective Lagrangian and Hamiltonian. 
We don't kow if the support of any minimizing measure is
contained in the Aubry set.
In section \ref{sec:conv-disc-value} we observe that the normalized
discounted value function converges, as in \cite{DFIZ}, to a particular
weak KAM solution.
In section \ref{sec:homog} we consider homogenization of the Hamilton
Jacobi following the approach in \cite{CIS}.

Several authors have studied some aspects of Calculus of Variations
and Hamilto- Jacobi equation in our setting.  

Gomes \cite{G} has studied the existence and properties of minimizing measures
as well as the solutions of the dual min-max problem. 

Manfredi and Stroffolini \cite{MS} gave a Hopf-Lax formula in the Heisenberg group,
and Balogh et al \cite{BCP} did it for Carnot groups.

Birindellli and Wigniolli \cite{BW} studied homogenization of the
Hamilton-Jacobi equation in the Heisenberg group and Stroffolini
\cite{S} did it for Carnot groups.
\section{Tonelli's theorem}
\label{sec:tonelli-thm}

\subsection{Preliminaries}
\label{sec:preliminaries}

Let $(M, \cD, \lip,\rip)$ be a sub-Riemannian manifold such that
$\cD$ is bracket generating. Denote by $\pi:TM\to M$ the natural projection.
\begin{definition}We say that
  \begin{itemize}
  \item 
 A continuous curve $\ga : [a, b]\to M$ is {\em absolutely continuous} iff
$\fui\circ\ga$ is  absolutely continuous for any smooth and compactly
supported $\fui:M\to\R$.
 
\item 
An absolutely continuous curve $\ga:[a,b]\to M$ is {\em horizontal} if
$\dga(t)\in\cD$  for a. e. $t\in[a,b]$.
\item
  We denote by $\cWD([a,b])$
the set of horizontal absolutely continuous curves defined on the
interval $[a,b]$. 
\end{itemize}
\end{definition}
  
  \begin{proposition}
    A continuous curve $\ga : [a, b]\to M$ is absolutely continuous if and only if
    there is a partition $a = t_0 < t_1 <\cdots < t_p = b$ and charts $(U_i,\fui_i)$ of
   $M$ for $1\le i\le p$ such that $\ga([t_{i-1}, t_i])\subset U_i$,
   and $\fui_i\circ\ga|[t_{i-1},t_i]$ is absolutely continuous.
  \end{proposition}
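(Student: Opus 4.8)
The plan is to prove both implications by reducing the intrinsic notion of absolute continuity (testing against all smooth compactly supported $\fui:M\to\R$) to the behaviour in coordinate charts, using two standard facts: that a vector-valued map is absolutely continuous iff each of its components is, and that the composition of an absolutely continuous map with a locally Lipschitz (in particular $C^1$) map is again absolutely continuous. Throughout I use that $\ga([a,b])$ is compact because $\ga$ is continuous. For the partition, which is produced in the forward direction and is a hypothesis in the converse, I would first cover $\ga([a,b])$ by finitely many chart domains; pulling these back gives an open cover of $[a,b]$, and the Lebesgue number lemma then yields a partition $a=t_0<\cdots<t_p=b$ so fine that each $\ga([t_{i-1},t_i])$ lies in a single chart domain, which I relabel $(U_i,\fui_i)$.

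($\Rightarrow$) Assume $\ga$ is absolutely continuous and fix $i$. Since $\ga([t_{i-1},t_i])$ is a compact subset of the open set $U_i$, choose a smooth compactly supported $\chi_i:M\to\R$ with $\sop\chi_i\subset U_i$ and $\chi_i\equiv 1$ on a neighbourhood of $\ga([t_{i-1},t_i])$. For each coordinate component $(\fui_i)_j$ the product $\chi_i\,(\fui_i)_j$, extended by zero outside $U_i$, is smooth and compactly supported on $M$, so by definition $(\chi_i\,(\fui_i)_j)\circ\ga$ is absolutely continuous on $[a,b]$. On $[t_{i-1},t_i]$ we have $\chi_i\circ\ga\equiv 1$, so this function coincides with $(\fui_i)_j\circ\ga$ there; hence every component of $\fui_i\circ\ga$ is absolutely continuous on $[t_{i-1},t_i]$, and therefore so is $\fui_i\circ\ga$.

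($\Leftarrow$) Conversely, suppose the partition and charts are given with each $\fui_i\circ\ga|[t_{i-1},t_i]$ absolutely continuous, and let $\psi:M\to\R$ be smooth and compactly supported. It suffices to prove $\psi\circ\ga$ is absolutely continuous on each $[t_{i-1},t_i]$, since a continuous curve that is absolutely continuous on each piece of a finite partition is absolutely continuous on the whole interval. On $[t_{i-1},t_i]$ write $\psi\circ\ga=(\psi\circ\fui_i^{-1})\circ(\fui_i\circ\ga)$. The map $g_i:=\psi\circ\fui_i^{-1}$ is smooth on $\fui_i(U_i)$, hence Lipschitz on the compact set $\fui_i(\ga([t_{i-1},t_i]))$ into which $\fui_i\circ\ga$ takes its values; composing the absolutely continuous map $\fui_i\circ\ga$ with $g_i$ yields an absolutely continuous function, as desired.

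The steps are elementary; the point that most requires care is the forward direction, where the coordinate functions $(\fui_i)_j$ are only defined on $U_i$ and must be converted into \emph{global} smooth compactly supported test functions before the hypothesis applies. The cutoff $\chi_i$ does this, and the requirement $\chi_i\equiv 1$ near $\ga([t_{i-1},t_i])$ is exactly what guarantees that truncating does not alter $\fui_i\circ\ga$ on the relevant subinterval. In the converse, the only substantive input is the stability of absolute continuity under composition with a $C^1$ map on a compact set, together with the gluing of the finitely many pieces.
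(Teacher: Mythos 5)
Your proof is correct. The paper itself states this proposition without proof (it is a standard fact, essentially taken from the cited reference \cite{ABB}), so there is no argument in the text to compare against; your two-sided reduction via a Lebesgue-number partition, cutoff functions $\chi_i$ to globalize the coordinate components in the forward direction, and composition with $\psi\circ\fui_i^{-1}$ in the converse is exactly the expected argument. The only point worth tightening is in the converse: a smooth function need not be Lipschitz on an arbitrary compact subset of its (possibly non-convex) domain with respect to the ambient Euclidean distance, so you should take a compact neighbourhood $K_r$ of $\fui_i(\ga([t_{i-1},t_i]))$ contained in $\fui_i(U_i)$, bound $\|Dg_i\|$ there, and note that absolute continuity of $\fui_i\circ\ga$ forces the increments $|\fui_i(\ga(b_j))-\fui_i(\ga(a_j))|$ to be small enough that the connecting segments stay in $K_r$; with that adjustment the composition step is airtight.
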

 
For $\ga\in\cWD([a,b])$ we define its
sub-Riemannian length by
\[\ell(\ga)=\int_a^b\|\dga(t))\|\ dt.\]
From that we define the {\em Carnot-Catatheodory} distance $d$ on $M$ which
in turn defines a topology on $M$ that coincides with the
original topology of $M$ as a manifold.
\begin{definition}
  \begin{itemize} 
   \item  A family $\cF$  of curves $\ga:[a,b]\to M$ is $d$-absolutely
  equicontinuous if $\forall\ep>0$, $\exists\de>0$
  s.t. $\forall\ga\in\cF$ and any  disjoint subintervals $ ]a_1,b_1[,\cdots,]a_N,b_N[$ 
 of $[a,b]$:
  \[\sum_{i=1}^Nb_i-a_i<\de\implies \sum_{i=1}^Nd(\ga(b_i),\ga(a_i))<\ep\]
\item A
 family $\cG\subset L^1([a,b],m)$,  $m$ the Lebesgue measure on
 $[a,b]$, is uniformly integrable 
if given $\ep>0$ there is $\de>0$ such that 
\[f\in\cG, E\subset[a,b]\hbox{ measurable}, m(E)<\de\implies
  \int_E|f|<\ep.\]
   \end{itemize}
 
\end{definition}
\begin{remark}\label{basic}\quad
  \begin{enumerate}
  \item  A family $\cF\subset\cWD([a,b])$ such that $\{\|\dga\| : \ga\in\cF\}$
    is uniformly integrable, is $d$-absolutely equicontinuous.
  \item A $d$-absolutely equicontinuous family is $d$-equicontinuous.
  \item An uniform limit of $d$-absolutely equicontinuous  functions is $d$-absolutely continuous.
\end{enumerate}
\end{remark}
\begin{theorem}\label{hor}
  Let $(\ga_n)_n$ be a sequence in $\cWD([a,b])$ that converges $d$-uniformly to
  $\ga$, and such that
  $( \|\dga _n\|)_n$ is uniformly integrable.   Then $\ga\in\cWD([a,b])$.
  \end{theorem}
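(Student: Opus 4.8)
The plan is to split the conclusion into two parts: that $\ga$ is absolutely continuous, and that it is horizontal. The first part is essentially handed to us by Remark \ref{basic}. Since $(\|\dga_n\|)_n$ is uniformly integrable, part (1) of that remark shows the family $\{\ga_n\}$ is $d$-absolutely equicontinuous, and then part (3) shows that its $d$-uniform limit $\ga$ is $d$-absolutely continuous. It then remains to note that $d$-absolute continuity implies absolute continuity in the sense of the first definition: any smooth compactly supported $\fui$ is globally Lipschitz for $d$ (its horizontal gradient being bounded, so $|\fui(x)-\fui(y)|\le\Lip(\fui)\,d(x,y)$), hence $\fui\circ\ga$ inherits absolute continuity from the control of $\sum_i d(\ga(b_i),\ga(a_i))$. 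Thus $\ga$ is absolutely continuous and $\dga(t)$ exists for a.e. $t$.

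The harder part is horizontality, and here I would work locally. Using the Proposition, choose a partition $a=t_0<\cdots<t_p=b$ and charts $U_i$ with $\ga([t_{i-1},t_i])\subset U_i$; by $d$-uniform convergence we may assume $\ga_n([t_{i-1},t_i])\subset U_i$ for large $n$ as well, so it suffices to argue on one subinterval inside one chart. Shrinking if necessary, fix a smooth orthonormal frame $X_1,\dots,X_k$ of $\cD$ over the chart. Horizontality of $\ga_n$ lets us write $\dga_n(t)=\sum_{i=1}^k u^n_i(t)\,X_i(\ga_n(t))$ with measurable controls $u^n=(u^n_1,\dots,u^n_k)$, and orthonormality gives $|u^n(t)|=\|\dga_n(t)\|$. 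Hence $(u^n)_n$ is uniformly integrable in $L^1([a,b],\R^k)$.

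By the Dunford--Pettis theorem, uniform integrability yields weak relative compactness in $L^1$, so along a subsequence $u^n\rightharpoonup u$ weakly in $L^1$. The key step is then to pass to the limit in the identity $\dga_n=\sum_i u^n_i\,X_i(\ga_n)$ by a weak--strong argument: since $\ga_n\to\ga$ uniformly and each $X_i$ is continuous, $X_i(\ga_n)\to X_i(\ga)$ uniformly, and combining this $L^\infty$ convergence with the weak $L^1$ convergence of $u^n$ gives $\sum_i u^n_i\,X_i(\ga_n)\rightharpoonup\sum_i u_i\,X_i(\ga)$ weakly in $L^1$. On the other hand, testing $\dga_n$ against smooth functions and using $\ga_n\to\ga$ uniformly together with the absolute continuity of $\ga$ shows $\dga_n\rightharpoonup\dga$ weakly. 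Matching the two limits gives $\dga(t)=\sum_{i=1}^k u_i(t)\,X_i(\ga(t))$ for a.e. $t$, so $\dga(t)\in\cD$ a.e.; since the limit curve $\ga$ is fixed, the conclusion is independent of the chosen subsequence.

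I expect the main obstacle to be the weak--strong limit in the moving frame: one must check carefully that $\int\phi\,u^n_i\bigl(X_i(\ga_n)-X_i(\ga)\bigr)\to0$, which uses the uniform bound on $\int|u^n|$ furnished by uniform integrability together with $\|X_i(\ga_n)-X_i(\ga)\|_\infty\to0$, while $\int\phi\,(u^n_i-u_i)X_i(\ga)\to0$ is exactly weak convergence tested against the $L^\infty$ function $\phi\,X_i(\ga)$. A secondary point is making the local arguments on charts consistent across the partition and confirming that $d$-absolute continuity transfers to the chart components, which is where the equivalence in the Proposition is used.
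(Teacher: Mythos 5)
Your proposal is correct, but it proves the theorem by a genuinely different route than the paper. The paper's argument is metric--geometric: it first shows (using uniform integrability and lower semicontinuity of the $d$-variation) that the variation function $v(t)=V(\ga|[a,t])$ is absolutely continuous, then reparametrizes $\ga$ by arc length to obtain a curve $\bga$ satisfying $d(\bga(r),\bga(s))\le|r-s|$, and invokes the sub-Riemannian fact (Proposition 3.50 in \cite{ABB}) that a curve which is Lipschitz for the Carnot--Carath\'eodory distance is horizontal; horizontality of $\ga=\bga\circ v$ then follows. Your argument instead is the control-theoretic one: after securing absolute continuity of $\ga$ from Remark \ref{basic}, you localize in a chart with an orthonormal frame of $\cD$, regard the $\ga_n$ as trajectories of controls $u^n$ with $|u^n|=\|\dga_n\|$, extract a weak $L^1$ limit $u$ of the controls via Dunford--Pettis (uniform integrability on the finite interval also gives the $L^1$ bound needed there), and identify $\dga=\sum_i u_i X_i(\ga)$ by matching the weak--strong limit of $\sum_i u^n_i X_i(\ga_n)$ with the distributional limit of $\dga_n$. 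Both approaches are sound. The paper's route avoids charts and frames at the crucial step but outsources the real content to the cited characterization of horizontal curves as $d$-Lipschitz curves, and its reparametrization step is somewhat delicate (intervals of constancy of $v$, differentiability of the inverse). Your route is more self-contained and yields, as a by-product, an explicit control representation of $\dga$ together with the weak convergence $u^n\rightharpoonup u$, which is exactly the kind of information exploited later in the paper's proof of Theorem \ref{Cle}; its only extra requirement is the existence of local orthonormal frames for $\cD$ (constant rank), which is consistent with the paper's standing assumption that the horizontal distribution trivializes over charts.
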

  \begin{proof}
   Step 1. Denote by $V(\eta)$ the total $d$-variation of a $d$-absolutely
  continuous curve $\eta$.
  Then,  the function $v:[a,b]\to\R$ defined by
  $v(t)=V(\ga|[a,t])$ is absolutely
  continuous. We observe that according to excersice 3.52 in
  \cite{ABB}, $V(\eta)=\ell(\eta)$.

  Let $[c,d]\subset[a,b]$, and $c=t_0<t_1<\cdots<t_m=d$, then
  \[\sum_{i=1}^md(\ga(t_{i-1}),\ga(t_i))=\lim_{n\to\infty}\sum_{i=1}^md(\ga_n(t_{i-1}),\ga_n(t_i))  \le  \liminf_{n\to\infty}V(\ga_n|[c,d])\]
  Thus $V(\ga|[c,d])  \le  \liminf\limits_{n\to\infty}V(\ga_n|[c,d])$.
  
  Given $\ep>0$ there is $\de>0$ such $n\in\N,   m(E)<\de\implies\int_E  \|\dga _n\|<\ep$
 . Let $ ]c_1,d_1[,\cdots,]c_k,d_k[$
  be disjoint subintervals of $[a,b]$,  with $\sum\limits_{i=1}^k(d_i-c_i)<\de$, then
\begin{align*}
  0\le\sum_{i=1}^k v(d_i)-v(c_i)  &=\sum_{i=1}^kV(\ga|[c_i,d_i]) \le  \sum_{i=1}^k \liminf_{n\to\infty}V(\ga_n|[c_i,d_i]) \\&
\le \liminf_{n\to\infty}\sum_{i=1}^kV(\ga_n|[c_i,d_i])=
  \liminf_{n\to\infty}\sum_{i=1}^k \int_{c_i}^{d_i}\|\dga _n\|\le\ep
  \end{align*}
Step 2. There is reparametrization $\bga:[0, L]\to M$ of $\ga$,
    $L=V(\ga)$, such that 
$d(\bga(r),\bga(s))\le |r-s|$ for all $r,s\in[0,L]$,
and so $\ga\in\cWD([a,b])$.

Since $v$ is nondecreasing and absolutely continuous, the function
$\phi:[0,L]\to[a,b]$ defined by
\[\phi(s)=\inf\{t: v(t)\ge s\}\]
satisfies $v(\phi(s))=s$.
$\phi$ is strictly increasing with a denumerable set of jump discontinuities
\[\lim_{s\to r_n-} \phi(s)=a_n < b_n =\lim_{s\to r_n+}\phi(s),\]
which correspond to the intervals $[a_n,b_n]$ where $v$ is constant.
 $v$ is a bijection from $[0,1]-\bigcup[a_n,b_n[$ onto $[0,L]$.

For a Riemannian metric $g$ on $M$ defining a distance  $d_g$ there is $C>0$ such that
\[Cd_g(\ga(t),\ga(s))\le
d(\ga(t),\ga(s))\le V(\ga|[t,s])=v(s)-v(t),\]
thus, if $v'(t)=0$ then $\dga(t)=0$. If $t\in[a,b]$ is a number where $v$
and $\ga$ are differentiable and $\dga(t)\ne 0$, then $t\notin\bigcup
[a_n,b_n]$, and thus $t=\phi(s)$ for a number $s\in[0,L]$ where $\phi$
is continuous. Then
\[\lim_{r\to s}\frac{\phi(r)-\phi(s)}{r-s}=
  \lim_{r\to s}\frac 1{\dfrac {v(\phi(r)) -v(\phi(s))} {\phi(r)-\phi(s)}} =\frac 1{v'(t)},\]
so that $\phi$ is differentiable at $s$.

Define $\bga:[0, L]\to M$ by $\bga(s)=\ga(\phi(s))$, for $r < s$
we have
\[
d(\bga(r),\bga(s))=d(\ga(\phi(r),\ga(\phi(s)))\le V(\ga|[\phi(r),\phi(s)]) =v(\phi(s))-v(\phi(r))=s-r
\]
By Proposition 3.50 in \cite{ABB} we have that $\bga\in\cWD([0,L])$.
If $t\in[a,b]$ is a number where $v$ and $\ga$ are differentiable and
$\dga(t)\ne 0$, then $t=\phi(s)$ for a number $s\in[0,L]$ where $\phi$ is
differentiable, thus so is $\bga$ and $
\dot{\bga}(s)=\dga(t)/v'(t) $.
Thus $\ga\in\cWD([a,b])$.

\end{proof}
\subsection{Tonelli's Theorem}
\label{sec:tonelli}
We assume that $L:\cD\to\R$ is a  $C^2$ function satisfying the following properties
\begin{description}
\item [Uniform superlinearity] For all $K\ge 0$ there is $C(K)\in\R$.
  such that \[L(v)\ge K\|v\|+C(K)\hbox{ for all }v\in\cD.\]
\item [Uniform boundedness] For all $R\ge 0$, we have
  \[A(R)=\sup\{L(v):\|v\|\le R\}<+\infty.\]
\item [Strict convexity] there is  $a>0$ such that $\partial^2_ {vv} L (v)
  (w,w)\ge a$  for all  $ v,w\in\cD$ with $\|w\|=1$,
where $\partial_v L$ denotes the derivative along the fibers.
\end{description}

We define the {\em Energy} function $E:\cD\to\R$ by
$E(v)=\partial_v L(v)\cdot v-L(v)$. Notice that
$\dfrac{dE(sv)}{ds}=s\partial^2_ {vv} L(sv)(v,v)$,
and therefore
\[E(v)=E(0)+\int_0^1 \frac{dE(sv)}{ds}\ ds=-L(0)+\int_0^1
  s\partial^2_ {vv} L(sv)(v,v)\ge -A(0)+a\|v\|^2.\]
For $\ga\in\cWD([a,b])$, $\lam\ge 0$ we define the discounted action
\[A_{L,\lam}(\ga)=\int_a^be^{t\lam}L(\dga(t))\ dt.\]
\begin{lemma}\label{UI}
  For $c\in\R$ let
  $\cF(c,r)=\{\ga\in\cWD([a,b]): A_{L,\lam}(\ga)\le c, \lam\in[0,r]\}. $Then the families
  $\{e^{\lam t}\|\dga(t)\|:\ga\in\cF(c,r),\lam\in[0,r]\}$, $\{\|\dga(t)\|:\ga\in\cF(c,r)\}$ are uniformly integrable.
\end{lemma}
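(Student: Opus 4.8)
The plan is to reduce the statement to a single uniform-integrability estimate for $\{\|\dga\|:\ga\in\cF(c,r)\}$ and then read off the weighted family for free. Since $(t,\lam)\mapsto e^{t\lam}$ is continuous on the compact set $[a,b]\times[0,r]$, there are constants $0<\ka_1\le\ka_2$ with $\ka_1\le e^{t\lam}\le\ka_2$ for all $t\in[a,b]$ and $\lam\in[0,r]$. In particular $e^{\lam t}\|\dga\|\le\ka_2\|\dga\|$ for every $\lam\in[0,r]$, so uniform integrability of $\{\|\dga\|:\ga\in\cF(c,r)\}$ immediately gives that of $\{e^{\lam t}\|\dga\|:\ga\in\cF(c,r),\lam\in[0,r]\}$. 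Thus I only need to handle the unweighted family.

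Uniform superlinearity with $K=0$ gives $L\ge C(0)$, so $L(\dga)-C(0)\ge0$ pointwise. For $\ga\in\cF(c,r)$ pick $\lam\in[0,r]$ with $A_{L,\lam}(\ga)\le c$. Using $e^{\lam t}\ge\ka_1$ on this nonnegative integrand, I would bound, for every measurable $E\subset[a,b]$,
\[\int_E\big(L(\dga)-C(0)\big)\le\frac1{\ka_1}\int_a^b e^{\lam t}\big(L(\dga)-C(0)\big)\le\frac{c+|C(0)|\ka_2(b-a)}{\ka_1}=:c'',\]
a constant independent of $E$, $\ga$ and $\lam$. Hence $\int_E L(\dga)\le c''+C(0)\,m(E)$ on every subset, which converts the weighted global action bound into an unweighted local one.

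Now superlinearity enters with a free parameter: for $K>0$ we have $\|\dga\|\le K^{-1}\big(L(\dga)-C(K)\big)$, so
\[\int_E\|\dga\|\le\frac1K\int_E L(\dga)-\frac{C(K)}K\,m(E)\le\frac{c''}K+\frac{|C(0)|+|C(K)|}K\,m(E).\]
Given $\ep>0$ I would first choose $K$ so large that $c''/K<\ep/2$, and then, with $K$ fixed, choose $\de>0$ so small that $m(E)<\de$ forces the remaining term below $\ep/2$. This yields uniform integrability of $\{\|\dga\|\}$, and by the first paragraph that of both families.

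The one point requiring care — and the step I expect to be the main obstacle — is that $L$ need not be nonnegative, so the global bound $\int_a^b e^{\lam t}L(\dga)\le c$ cannot be restricted to a subset $E$ directly. Subtracting the lower bound $C(0)$ to make the integrand nonnegative, and only afterwards comparing the weight $e^{\lam t}$ with its extremes $\ka_1,\ka_2$, is precisely what legitimizes the localization in the second paragraph; everything else is the standard superlinearity mechanism of making $1/K$ small before shrinking $\de$.
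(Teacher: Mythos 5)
Your proof is correct and uses the same mechanism as the paper's: superlinearity with $K=0$ to bound $L$ from below, then superlinearity with a large $K$ chosen \emph{before} $\de$ to extract $\int_E\|\dga\|$, with the weight $e^{\lam t}$ controlled by its positive extremes on the compact set $[a,b]\times[0,r]$. The only cosmetic differences are that you deduce the weighted family from the unweighted one via $e^{\lam t}\le\ka_2$ (the paper goes the other way, via $e^{\lam t}\ge e^{\lam a}$), and you localize by first making the integrand nonnegative rather than by adding the two inequalities on $E$ and on $[a,b]\entre E$.
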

\begin{proof}
  By the uniform superlinearity,  $L(v)\ge C(0)$ .
    Given $\ep>0$ take $K>0$ such that 
  $$
  \max_{\lam\in[0,r]}\Big(c-C(0) \int_a^be^{\lam t}dt\Big)<K\ep.
  $$
  By the uniform superlinearity,
  \[L(v)\ge K\|v\|+C(K).\]
  Let $\ga\in\cF(c)$ and $E\subset[a,b]$ be measurable, then  
\[C(K)\int_E e^{\lam t}dt+K\int_Ee^{\lam t}\|\dga(t)\|\ dt
   \le\int_E e^{\lam t} L(\dga(t))\ dt\]
and
   \[C(0)\int_{[a,b]\entre E}e^{\lam t}dt\le \int_{[a,b]\entre E} e^{\lam
       t} L(\dga(t))\ dt \]
  Adding the inequalities we get 
  \[(C(K)-C(0))\int_Ee^{\lam t}dt+C(0)\int_a^be^{\lam t}dt+K\int_E
    e^{\lam t}\|\dga(t)\|\ dt    \le A_{L,\lam}(\ga)\le c\]
which gives
\[\int_Ee^{\lam t}\|\dga(t)\|\ dt\le\frac{c-C(0)\int_a^be^{\lam t}dt}K+
  \frac{C(0)-C(K)}K\int_Ee^{\lam t}dt
  <\ep+\frac{C(0)-C(K)}{K}\int_E e^{\lam t}dt.\]
This gives the uniform integrability of $\{e^{\lam t}\|\dga(t)\|:\ga\in\cF(c,r), \lam\in[0,r]\}$.
The uniform integrability of $\{\|\dga(t)\|:\ga\in\cF(c,r)\}$ follows from
\[\int_Ee^{\lam t}\|\dga(t)\| dt\ge e^{\lam a}\int_E\|\dga\|\ge \min_{\lam\in[0,r]}e^{\lam a}\int_E\|\dga\|\]
\end{proof}
  
\begin{theorem}\label{Cle}
   Let $(\ga_n)_n$ be a sequence in $\cWD([a,b])$ converging $d$-uniformly to
   $\ga$, $\lam_n\ge 0$  converging to $\lam$  and
   \[\liminf_{n\to\infty}A_{L,\lam_n}(\ga_n)<+\infty.\]
   Then $\ga\in\cWD([a,b])$ and
   \begin{equation}
     \label{eq:lsc}
     A_{L,\lam}(\ga)\le \liminf_{n\to\infty}A_{L,\lam_n}(\ga_n).
   \end{equation}
\end{theorem}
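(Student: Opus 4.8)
The plan is to deduce \eqref{eq:lsc} from the classical lower semicontinuity of convex integrands, after reducing the problem to weak $L^1$ convergence of velocities. First I would pass to a subsequence (not relabelled) along which $A_{L,\lam_n}(\ga_n)$ converges to $c_0:=\liminf_n A_{L,\lam_n}(\ga_n)<+\infty$; then eventually $A_{L,\lam_n}(\ga_n)\le c_0+1$, and since $\lam_n\to\lam$ we may assume $\lam_n\in[0,r]$ for some $r$, so that the $\ga_n$ lie in a family $\cF(c_0+1,r)$. By Lemma~\ref{UI} the family $\{\|\dga_n\|\}_n$ is uniformly integrable, and since $\ga_n\to\ga$ $d$-uniformly, Theorem~\ref{hor} already gives $\ga\in\cWD([a,b])$; only inequality \eqref{eq:lsc} remains.

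To prove it I would localise. Using compactness of $\ga([a,b])$ and uniform convergence, choose a partition $a=s_0<\dots<s_p=b$ and charts $U_j\supset\ga([s_{j-1},s_j])$ with $\ga_n([s_{j-1},s_j])\subset U_j$ for all large $n$; by superadditivity of $\liminf$ over the finitely many pieces it suffices to prove \eqref{eq:lsc} on each $[s_{j-1},s_j]$. On $U_j$ I would fix a smooth orthonormal horizontal frame $X_1,\dots,X_k$ of $\cD$ and write $\dga_n=\sum_i u_{n,i}X_i(\ga_n)$, $\dga=\sum_i u_iX_i(\ga)$, so that $|u_n(t)|=\|\dga_n(t)\|$ and the integrand becomes $\hat L(x,u):=L(\sum_i u_iX_i(x))$, which is $C^2$ and convex in $u$. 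Uniform integrability of $|u_n|$ and the Dunford--Pettis theorem give, along a further subsequence, $u_n\rightharpoonup u_*$ weakly in $L^1$. Multiplying this weak convergence by the uniformly convergent coefficients $X_i(\ga_n)\to X_i(\ga)$ shows $\dga_n\rightharpoonup\sum_i u_{*,i}X_i(\ga)$ weakly in $L^1$; integrating and comparing with $\ga_n\to\ga$ uniformly identifies $u_*=u$ a.e.

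Next I would run the convexity estimate. The $n$-dependent discount is harmless: since $\hat L\ge C(0)$ while the actions are bounded and $e^{\lam_n t}$ is bounded below by a positive constant on $[a,b]$, the quantity $\int|\hat L(\ga_n,u_n)|$ is bounded, whence $\int(e^{\lam_n t}-e^{\lam t})\hat L(\ga_n,u_n)\to0$ and one may replace $e^{\lam_n t}$ by the fixed $e^{\lam t}$. For the fixed discount, convexity of $\hat L$ in $u$ gives
\[ e^{\lam t}\hat L(\ga_n,u_n)\ \ge\ e^{\lam t}\hat L(\ga_n,u)+e^{\lam t}\,\partial_u\hat L(\ga_n,u)\cdot(u_n-u). \]
Fatou's lemma, applicable since the integrand is bounded below by the integrable function $e^{\lam t}C(0)$, controls the first term: $\liminf_n\int e^{\lam t}\hat L(\ga_n,u)\ge\int e^{\lam t}\hat L(\ga,u)$. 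Thus \eqref{eq:lsc} on the piece follows once the linear term is shown to vanish.

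The main obstacle is precisely this linear term. I would like to pair the weak convergence $u_n-u\rightharpoonup0$ against the coefficient $e^{\lam t}\partial_u\hat L(\ga_n,u)$, but because $L$ is superlinear this coefficient is only finite a.e.\ and not bounded in $L^\infty$, since $\partial_v L$ is large where $|u(t)|$ is large. I would overcome this by truncation: on the sets $\{|u|\le R\}$ the coefficients lie in a fixed compact set and converge uniformly, so the pairing tends to $0$; the contribution of $\{|u|>R\}$ is controlled, uniformly in $n$, by the uniform integrability of $\{|u_n|\}$ together with the superlinear bound $L(v)\ge K\|v\|+C(K)$, and vanishes as $R\to\infty$. (Equivalently, once the velocities are expressed through the frame, the statement reduces to a standard lower semicontinuity theorem for convex integrands, which could simply be cited.) Summing the resulting inequalities over the partition yields \eqref{eq:lsc}.
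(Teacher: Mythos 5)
Your argument is correct and follows essentially the same route as the paper's: localization to charts where the distribution is trivial, uniform integrability plus Dunford--Pettis to get weak $L^1$ convergence of the velocities, a first-order convexity estimate, and truncation to the sets where the limit velocity is bounded so that $\partial_v L$ along the limit is an $L^\infty$ multiplier. The only cosmetic difference is that you center the convexity inequality at $(\ga_n(t),u(t))$ and recover the zeroth-order term by Fatou, whereas the paper's Lemma~\ref{LemCle} centers it at $(\ga(t),\zeta(t))$ with a uniform $\ep$-error, making that step immediate.
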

\begin{proof}
  
Let $l=\liminf\limits_{n\to\infty}A_{L,\lam_n}(\ga_n)$,  extracting subsequencse
still denoted $\ga_n$, $\lam_n$  we have  $l=\lim\limits_{n\to\infty}A_{L,\lam_n}(\ga_n)$, and forgetting
some of the first curves $\ga_n$, 
we can suppose that $\ga_n\in\cF(l+1,\lam+1)$ for all $n$. 
Lemma \ref{UI} implies that $(\|\dga_n\|)_n$ and $(e^{\lam_n t}\|\dga_n(t)\|)_n$
are uniformly integrable, and Theorem \ref{hor} that $\ga\in\cWD([a,b])$.
  
Let us show how we can reduce the proof of \eqref{eq:lsc} to the case where 
$M$ is an open subset of $\R^d$,  $d =\dim M$ and the horizontal 
distribution is trivial. The lagrangian $L$ is bounded
below by $C(0)$ . If $[a',b']\subset[a,b]$, for all $n$ we have

\[A_{L,\lam_n}(\ga_n|[a',b'])\le
  A_{L,\lam_n}(\ga_n)-C(0)\int_{[a,b]\entre[a',b']}e^{\lam_n t}dt\]
so that
\[\liminf_{n\to\infty}A_{L,\lam_n}(\ga_n|[a',b'])<+\infty.\]
Let now the partition $a_0 = 0 < a_1 < \ldots < a_p = 1$ and
$U_1, \ldots, U_p$ be as in the proof of Theorem \ref{hor}, so that
the horizontal distribution is trivial on $U_i$  and
$\ga([a_{i-1},a_i])\subset U_i$, $i=1,\ldots,p$.
It is enough to prove that  
\[A_{L,\lam_n}(\ga|[a_{i-1},a_i])\le \liminf_{n\to\infty}A_{L,\lam_n}(\ga_n|[a_{i-1},a_i])\]
because that implies 
\begin{align*}
  A_{L,\lam}(\ga) =\sum_{i=1}^pA_{L,\lam}(\ga|[a_{i-1},a_i]) &\le
\sum_{i=1}^p\liminf _{n\to\infty} A_{L,\lam_n}(\ga_n|[a_{i-1},a_i])\\
&\le \liminf_{n\to\infty}\sum_{i=1}^pA_{L,\lam_n}(\ga_n|[a_{i-1},a_i])=\liminf_{n\to\infty}A_{L,\lam}(\ga_n)
\end{align*}
In the sequel we suppose that $M=U$ is an open set of $\R^d$, the
horizontal distribution is $U\times\R^m$ with $\langle ,\rangle$ the
euclidean inner product in $\R^m$, and that $\ga([a,b])$ and 
all $\ga_n([a,b])$ are contained $U$.
We will write $\dga(t)=(\ga(t), \zeta(t))$, $\dga_n(t)=(\ga_n(t), \zeta_n(t))$.

   \begin{lemma}\label{LemCle}
Let $U\subset\R^d$ be open, $L\in C^2(U\times\R^m)$ be stricly convex
and uniformly superlinear on the second variable with
$\sup\{L(x,v):\|v\|\le R\}<+\infty$ for each $R>0$.
     Given $K\subset U$ compact, $C>0$ and $\ep>0$, there exists $\de>0$
  such that if $x\in K$, $| x-y|\le\de$, $\|v\|\le C$ and $w\in\R^m$,
  then
  \[
  L(y,w)\ge L(x,v)+\partial_v L(x,v)\cdot(w-v) -\ep.
  \]
  \end{lemma}
  For a fixed constant $C$ let
   \[E_C=\{t\in[a,b]:\|\zeta(t)\|\le C\}  \]
  Given $\ep> 0$, the compact set $K=\ga([a, b])\cup\bigcup_{n\in\N} \ga_n[a, b]$ 
and the  constant $C$ fixed above, we apply Lemma \ref{LemCle} to get $\de >0$
  satisfying its conclusion. By the compactness of $[a,b]$ and the
  continuity of $\ga$, 
  there is $\eta>0$ such that  $t\in[a,b]$, $d(x,\ga(t)) < \eta$
  imply $|x-\ga(t)| < \de.$
 Since  $\ga_n$ converges $d$-uniformly to $\ga$,
  there exists an integer $n_0$ such that, for each $n\ge n_0$
  we have $d(\ga_n(t),\ga(t)) < \eta$ for each $t\in[a, b]$.
Hence, for each $n\ge n_0$ and almost all $t \in E_C$, we have
  \[L (\ga_n(t),\zeta_n(t))\ge L (\ga(t), \zeta(t)) +
    \partial_v L(\ga(t), \zeta(t))\cdot(\zeta_n(t) -\zeta(t))-\ep,\]
 and from the uniform superlinearity, $L(\ga_n(t), \zeta_n(t))\ge C(0)$
 a. e.where. Thus
  \begin{align}\nonumber
    A_{L,\lam_n}(\ga_n)&\ge \int_{E_C}e^{\lam_n t}L(\ga(t), \zeta(t))\ dt
                       +C(0)\int_{[a,b]\entre E_C}e^{\lam_n t}\ dt\\
& +\int_{E_C}e^{\lam_n t}\partial_vL(\ga(t), \zeta(t))\cdot (\zeta_n(t) -\zeta(t))\ dt
-\ep e^{\lam b}m(E_C) \label{1}
  \end{align}
  Since $\{\|e^{\lam_n t}\zeta_n(t)\|\}$ is uniformly integrable, $e^{\lam_n t}\zeta_n(t)$ converges to
  $e^{\lam t}\zeta(t)$ in the weak topology $\sigma(L^1,L^\infty)$.
Since $\|\zeta(t)\|\le C$ for $t\in E_C$, the function $\chi_{E_C}(t)\partial_vL(\ga(t), \zeta(t))$
is bounded. Thus
\[\int_{E_C}\partial_v L(\ga(t), \zeta(t))\cdot (e^{\lam_n t}\zeta_n(t) -e^{\lam t}\zeta(t))\ dt,
\int_{E_C}\partial_v L(\ga(t), \zeta(t))\cdot (e^{\lam t}-e^{\lam_n t})\zeta(t))\ dt\to 0,\]
as $n\to\infty$. Taking limit in \eqref{1} we have
\[l=\lim_{n\to\infty}A_{L,\lam_n}(\ga_n)\ge \int_{E_C}e^{\lam t}L(\ga(t), \zeta(t))\ dt
  +C(0) \int_{[a,b]\entre E_C}e^{\lam t}\ dt-\ep e^{\lam b}  m(E_C).\]
Letting $\ep\to 0$ we have
\begin{equation}
  \label{eq1}
  l=\lim_{n\to\infty}A_{L,\lam_n}(\ga_n)\ge \int_{E_C}e^{\lam t}L(\ga(t), \zeta(t))\ dt
  +C(0) \int_{[a,b]\entre E_C}e^{\lam t}\ dt.
\end{equation}
Since $\zeta(t)$ is defined and finite for almost all $t\in[a,b]$ we
have that $E_C\nearrow E_\infty$ as $C\nearrow +\infty$ with
$m([a,b]\entre E_\infty)=0$. Since $L(\ga(t), \zeta(t))$ is bounded below by $C(0)$,
the monotone convergence theorem gives
\[\int_{E_C}e^{\lam t}L(\ga(t), \zeta(t))\ dt\to\int_a^be^{\lam t}L(\ga(t), \zeta(t))\ dt \hbox{ as }C\to+\infty.\]
Letting $C\nearrow+\infty$ in \eqref{eq1} we finally obtain
\[l=\lim_{n\to\infty}A_{L,\lam_n}(\ga_n) \ge A_{L,\lam}(\ga)\]
\end{proof}
\begin{corollary}\label{low-semi}
  The action
  $A_{L,\lam}:\cWD([a,b])\to \R\cup\{+\infty\}$ is lower semicontinuous
  for the topology of $d$-uniform convergence in $\cWD([a,b])$.
\end{corollary}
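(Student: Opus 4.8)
The plan is to deduce Corollary \ref{low-semi} directly from Theorem \ref{Cle} by specializing to a constant discount sequence. Fix $\lam\ge 0$ and a sequence $(\ga_n)_n$ in $\cWD([a,b])$ converging $d$-uniformly to some continuous curve $\ga$. I want to show
\[
A_{L,\lam}(\ga)\le\liminf_{n\to\infty}A_{L,\lam}(\ga_n),
\]
where by convention $A_{L,\lam}(\ga)=+\infty$ if $\ga\notin\cWD([a,b])$.

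First I would dispose of the trivial case: if $\liminf_{n\to\infty}A_{L,\lam}(\ga_n)=+\infty$, the inequality holds vacuously, so I may assume the liminf is finite. Now apply Theorem \ref{Cle} with the constant sequence $\lam_n=\lam$, which trivially converges to $\lam$ and satisfies $\lam_n\ge 0$. The hypothesis $\liminf_{n\to\infty}A_{L,\lam_n}(\ga_n)<+\infty$ is exactly our standing assumption. Theorem \ref{Cle} then yields both that $\ga\in\cWD([a,b])$---so $A_{L,\lam}(\ga)$ is a genuine real number rather than the default $+\infty$---and the desired lower-semicontinuity inequality \eqref{eq:lsc}, which for $\lam_n\equiv\lam$ reads precisely $A_{L,\lam}(\ga)\le\liminf_{n\to\infty}A_{L,\lam}(\ga_n)$.

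I anticipate essentially no obstacle here, since Theorem \ref{Cle} already contains the substantive analytic content---the uniform integrability from Lemma \ref{UI}, the weak-$L^1$ compactness argument, and the pointwise convexity estimate of Lemma \ref{LemCle}---and proves a strictly more general statement allowing a varying discount. The only point worth a word is the convention regarding the value $+\infty$: lower semicontinuity of a map into $\R\cup\{+\infty\}$ at a point where the limiting curve fails to be horizontal would require interpreting $A_{L,\lam}(\ga)=+\infty$ there, but Theorem \ref{Cle} guarantees that whenever the liminf of the actions is finite the limit curve is automatically horizontal, so this degenerate case never actually arises. The corollary is thus immediate.
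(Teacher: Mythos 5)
Your proposal is correct and follows essentially the same route as the paper: the trivial case $\liminf=+\infty$ is dismissed, and otherwise Theorem \ref{Cle} is applied with the constant sequence $\lam_n\equiv\lam$. Your extra remark about the convention $A_{L,\lam}(\ga)=+\infty$ for non-horizontal limits is a harmless refinement (the paper simply restricts attention to limits already in $\cWD([a,b])$), but the substance is identical.
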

\begin{proof}
  Let $\ga_n$ be a sequence in $\cWD([a,b])$ that converges
  $d$-uniformly to $\ga\in \cWD([a,b])$. We must show that
  \[\liminf_{n\to\infty}A_{L,\lam}(\ga_n)\ge A_{L,\lam}(\ga).\]
If $\liminf\limits_{n\to\infty}A_{L,\lam}(\ga_n)=+\infty$ there is nothing to prove.
In other case, the result follows from Theorem \ref{Cle}
\end{proof}

\begin{corollary}[Tonelli's Theorem]\label{tonelli}
  If $K\subset M$ is compact, $c\in\R$
  then the set 
\[\cF(K,c)=\{\ga\in\cWD([a,b]): \ga([a,b])\cap K\ne\emptyset,
A_{L,\lam}(\ga)\le c\}\]
is a compact subset of $\cWD([a,b])$ for the topology of
$d$-uniform convergence.
\end{corollary}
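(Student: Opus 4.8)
The plan is to prove that $\cF(K,c)$ is sequentially compact in the metric space of continuous curves $[a,b]\to M$ equipped with the $d$-uniform (sup) distance; since that space is metric, sequential compactness is compactness. So I would start from a sequence $(\ga_n)_n$ in $\cF(K,c)$ and extract a subsequence converging $d$-uniformly to some $\ga\in\cF(K,c)$. The proof is essentially an Arzel\`a--Ascoli argument fed by the two ingredients already at hand: the uniform integrability of Lemma \ref{UI} (for equicontinuity) and the lower semicontinuity of Theorem \ref{Cle} / Corollary \ref{low-semi} (for stability of the constraints under limits).

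First I would produce a uniform length bound so as to confine the curves. Setting $m_0=\min_{t\in[a,b]}e^{\lam t}>0$ and using uniform superlinearity with the constant $1$, namely $L(v)\ge\|v\|+C(1)$, the bound $A_{L,\lam}(\ga_n)\le c$ gives
\[
m_0\int_a^b\|\dga_n\|+C(1)\int_a^be^{\lam t}\,dt\le\int_a^be^{\lam t}L(\dga_n)=A_{L,\lam}(\ga_n)\le c,
\]
so $\ell(\ga_n)=\int_a^b\|\dga_n\|$ is bounded by a constant $\ell_0$ independent of $n$. As each $\ga_n$ meets $K$, say $\ga_n(s_n)\in K$, the length bound yields $d(\ga_n(t),\ga_n(s_n))\le\ell_0$ for all $t$, whence every $\ga_n$ has image in the closed $d$-neighborhood $N=\{x:d(x,K)\le\ell_0\}$. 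Under the standing completeness assumption, the Hopf--Rinow property for sub-Riemannian manifolds (\cite{ABB}) makes $N$ compact.

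Next I would get equicontinuity and apply Arzel\`a--Ascoli. Taking $r=\lam$ in Lemma \ref{UI} shows $\{\|\dga_n\|\}_n$ is uniformly integrable, so by Remark \ref{basic}(1),(2) the family $(\ga_n)_n$ is $d$-equicontinuous. Since the curves take values in the compact set $N$, Arzel\`a--Ascoli gives a subsequence, still denoted $\ga_n$, converging $d$-uniformly to a continuous $\ga:[a,b]\to N$. Passing to a further subsequence I may assume $s_n\to s_*\in[a,b]$ and, by compactness of $K$, $\ga_n(s_n)\to q\in K$; combining $d$-uniform convergence with equicontinuity gives $\ga(s_*)=q\in K$, so $\ga([a,b])\cap K\ne\emptyset$.

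Finally I would verify horizontality and the action bound. Since $\liminf_n A_{L,\lam}(\ga_n)\le c<+\infty$, Theorem \ref{Cle} applied to the constant sequence $\lam_n=\lam$ gives simultaneously $\ga\in\cWD([a,b])$ and $A_{L,\lam}(\ga)\le\liminf_n A_{L,\lam}(\ga_n)\le c$ (equivalently, Theorem \ref{hor} yields $\ga\in\cWD([a,b])$ and Corollary \ref{low-semi} the inequality). Hence $\ga\in\cF(K,c)$, completing the extraction and proving compactness. The step requiring the most care is the confinement: turning the $L^1$ length bound into containment in a single fixed compact set $N$ via the sub-Riemannian Hopf--Rinow property, and then ensuring that the meeting condition $\ga([a,b])\cap K\ne\emptyset$ persists in the limit through the auxiliary times $s_n$; once the curves are trapped in a compact set, the remaining ingredients are exactly the previously proved lemmas.
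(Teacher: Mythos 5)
Your proof is correct and takes essentially the same route as the paper's: the superlinearity bound $L(v)\ge\|v\|+C(1)$ confines the curves to a bounded $d$-neighborhood of $K$, Lemma \ref{UI} together with Remark \ref{basic} gives $d$-absolute equicontinuity, Arzel\`a--Ascoli extracts a $d$-uniformly convergent subsequence, and Theorem \ref{Cle} shows the limit remains in $\cF(K,c)$. You are in fact slightly more careful than the paper, making explicit both that the neighborhood $N$ must be compact (via completeness and sub-Riemannian Hopf--Rinow, which the paper leaves implicit when it invokes Arzel\`a--Ascoli) and that the condition $\ga([a,b])\cap K\ne\emptyset$ persists in the limit through the auxiliary times $s_n$.
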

\begin{proof}
By the compactness of $K$ and Theorem \ref{Cle}, the subset
  $\cF(K,c)$ is closed in the space of continuous curves $C([a,b],M)$.
  By the uniform superlinearity,
  $L(v)\ge\|v\|+C(1)$ for all $v\in\cD$. Thus
  for any $\ga\in\cWD([a,b])$ and any $t,s\in[a,b]$ with $t\le s$, we have 
 \[C(1)\int_t^se^{\lam r}dr+e^{\lam a}\int_t^s\|\dga\|\le A_{L,\lam}(\ga)\]
If $\ga\in\cF(c,K)$ we have
\[d(\ga(s),\ga(t)\le e^{-\lam a}\Big(c+|C(1)|\int_a^be^{\lam r}dr\Big),\]
and thus, letting $r= e^{-\lam a}(c+|C(1)|\int_a^be^{\lam r}dr)$, we get
\[\ga([a,b])\subset\{y\in M:d(y,K)\le r\}.\]
Since $\cF(c)$ is $d$-absolutely equicontinuous by  Theorem \ref{UI} and
Remark \ref{basic} (1),
the Arzelá-Ascoli Theorem implies that $\cF(K,c)$ is a compact subset
of $\cWD([a,b])$ for the topology of $d$-uniform convergence.
\end{proof}
Denote $\cC_{a,b}(x,y):=\{\ga\in\cWD([a,b]): \ga(a)=x, \ga(b)=y\}$.
\begin{corollary}[Tonelli minimizers]\label{ton-min}
    For each $x,y\in M$ 
 and each $a,b\in\R$, $a<b$, there exists $\ga\in\cC_{a,b}(x,y)$,
 $A_{L,\lam}(\ga)\le A_{L,\lam}(\af)$ for any $\af\in\cC_{a,b}(x,y)$.
\end{corollary}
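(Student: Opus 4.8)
The plan is to prove existence of a Tonelli minimizer in $\cC_{a,b}(x,y)$ by the direct method of the calculus of variations, using the lower semicontinuity and compactness already established. Concretely, I would first check that $\cC_{a,b}(x,y)$ is nonempty: since $\cD$ is bracket generating, the Chow--Rashevskii theorem guarantees that any two points $x,y\in M$ can be joined by a horizontal curve, which after reparametrization lies in $\cC_{a,b}(x,y)$; consequently the infimum
\[
\mu:=\inf\{A_{L,\lam}(\af):\af\in\cC_{a,b}(x,y)\}
\]
is taken over a nonempty set. By the uniform superlinearity, $L$ is bounded below by $C(0)$, so $A_{L,\lam}(\af)\ge C(0)\int_a^b e^{\lam t}\,dt>-\infty$ for every admissible $\af$, whence $\mu\in\R$.

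Next I would pick a minimizing sequence $\ga_n\in\cC_{a,b}(x,y)$ with $A_{L,\lam}(\ga_n)\to\mu$. Discarding finitely many terms, we may assume $A_{L,\lam}(\ga_n)\le\mu+1=:c$ for all $n$, so that $\ga_n\in\cF(\{x\},c)$ in the notation of Corollary \ref{tonelli} (note $\ga_n(a)=x\in\{x\}$, so the trace meets the compact set $K=\{x\}$). By that corollary, $\cF(\{x\},c)$ is compact for the topology of $d$-uniform convergence, so after extracting a subsequence we obtain a limit $\ga\in\cWD([a,b])$ with $\ga_n\to\ga$ $d$-uniformly. The endpoint conditions survive the limit: $d$-uniform convergence gives $\ga(a)=\lim\ga_n(a)=x$ and $\ga(b)=\lim\ga_n(b)=y$, so $\ga\in\cC_{a,b}(x,y)$.

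Finally I would invoke lower semicontinuity. By Corollary \ref{low-semi} the action $A_{L,\lam}$ is lower semicontinuous for $d$-uniform convergence, so
\[
A_{L,\lam}(\ga)\le\liminf_{n\to\infty}A_{L,\lam}(\ga_n)=\mu.
\]
Since $\ga\in\cC_{a,b}(x,y)$ we also have $A_{L,\lam}(\ga)\ge\mu$ by definition of the infimum, hence $A_{L,\lam}(\ga)=\mu$ and $\ga$ is the desired minimizer.

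The genuinely substantive input is packaged into the earlier results, so the main thing to be careful about is matching hypotheses. The compactness in Corollary \ref{tonelli} applies to curves whose trace meets a fixed compact set; here I exploit that every curve in $\cC_{a,b}(x,y)$ passes through $x$, so $K=\{x\}$ works and no separate equicontinuity argument is needed. The one point requiring attention outside the cited machinery is the nonemptiness of $\cC_{a,b}(x,y)$: this is exactly where bracket generation enters, via Chow--Rashevskii, and without it the admissible class could be empty and the statement false. Everything else is a routine bookkeeping of the direct method and should present no obstacle.
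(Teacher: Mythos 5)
Your proof is correct and follows essentially the same route as the paper: take a minimizing sequence, observe it lies in a sublevel set of the action through a fixed compact set, apply the compactness from Corollary \ref{tonelli} to extract a $d$-uniform limit in $\cC_{a,b}(x,y)$, and conclude by lower semicontinuity (Theorem \ref{Cle}). Your explicit remarks on the nonemptiness of $\cC_{a,b}(x,y)$ via Chow--Rashevskii and on the finiteness of the infimum are points the paper leaves implicit, but they do not alter the argument.
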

\begin{proof}
  Set $\bar C=\inf\{A_{L,\lam}(\ga):\ga\in\cC_{a,b}(x,y)\}$.
  By Corollary \ref{tonelli}, the set 
\[\{\ga\in\cC_{a,b}(x,y):A_{L,\lam}(\ga)\le\bar C+1\}\]
is a compact subset of $\cWD([a,b])$ for the topology of
$d$-uniform convergence.
Choose $\ga_n\in\cC_{a,b}(x,y)$ such that $A_{L,\lam}(\ga_n)<\bar C+\dfrac 1n$.
Then $\ga_n$ has a subsequence that converges $d$-uniformly
to some $\ga\in \cC_{a,b}(x,y)$. By Theorem \ref{Cle}, $A_{L,\lam}(\ga)=\bar C$. 
\end{proof}

\section{The discounted value function}
\label{sec:disc-value}
For $\lambda>0$ we define the discounted value function
\begin{equation}\label{eq:value}
  u_\lam(x)=\inf_{\ga(0)=x}\int_{-\infty}^0 e^{\lam t}L(\dga(t))\ dt
\end{equation}
where the infimum is taken over the curves $\ga\in\cWD(]-\infty,0])$,
with $\ga(0)=x$.
\begin{proposition} \label{value-prop}
  The discounted value function has the following properties
  \begin{enumerate}
  \item 
   $\min L\le\lam u_\lam\le A(0)$.
  \item 
  For any $\af\in\cWD([a,b])$
  \begin{equation}
    \label{eq:lam-dom}
      u_\lam(\af(b))e^{\lam b}-u_\lam(\af(a))e^{\lam a}\le A_{L,\lam}(\af)
\end{equation}
    \end{enumerate}
 \end{proposition}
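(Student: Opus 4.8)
The plan is to treat the two assertions separately; both follow directly from the definition of $u_\lam$ as an infimum, the first by comparison with distinguished competitors and the second by a concatenation (dynamic programming) argument. Throughout I use that uniform superlinearity with $K=0$ gives $L\ge C(0)$, so that $\min L:=\inf_{\cD}L$ is finite.

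For (1), to obtain the lower bound I would bound the integrand of every competitor from below: for any $\ga\in\cWD(]-\infty,0])$ with $\ga(0)=x$, using $e^{\lam t}>0$ and $L(\dga(t))\ge\min L$,
\[\int_{-\infty}^0 e^{\lam t}L(\dga(t))\,dt\ge (\min L)\int_{-\infty}^0 e^{\lam t}\,dt=\frac{\min L}{\lam},\]
and taking the infimum over $\ga$ yields $\lam u_\lam\ge\min L$. For the upper bound I would test with the constant curve $\ga\equiv x$, which is horizontal since $0\in\cD$; its action is $L(0)\int_{-\infty}^0 e^{\lam t}\,dt=L(0)/\lam\le A(0)/\lam$, because $A(0)=\sup\{L(v):\|v\|\le 0\}\ge L(0)$ at the point $x$. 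Hence $\lam u_\lam(x)\le A(0)$.

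For (2), fix $\af\in\cWD([a,b])$ and an arbitrary $\ga\in\cWD(]-\infty,0])$ with $\ga(0)=\af(a)$. I would build a competitor $\eta\in\cWD(]-\infty,0])$ ending at $\af(b)$ by concatenating the time-shifted $\af$ on $[a-b,0]$, setting $\eta(t)=\af(t+b)$ so that $\eta(0)=\af(b)$ and $\eta(a-b)=\af(a)$, with the time-shifted $\ga$ on $]-\infty,a-b]$, setting $\eta(t)=\ga(t-a+b)$, which matches at $t=a-b$ since $\ga(0)=\af(a)$. A change of variables in each piece gives
\[\int_{-\infty}^0 e^{\lam t}L(\dot\eta(t))\,dt=e^{\lam(a-b)}\int_{-\infty}^0 e^{\lam s}L(\dga(s))\,ds+e^{-\lam b}A_{L,\lam}(\af).\]
Since $u_\lam(\af(b))$ is bounded above by the left-hand side, taking the infimum over such $\ga$ and multiplying by $e^{\lam b}$ yields $u_\lam(\af(b))e^{\lam b}\le u_\lam(\af(a))e^{\lam a}+A_{L,\lam}(\af)$, which is \eqref{eq:lam-dom} after rearranging.

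The only genuine points to check — and where I expect the bookkeeping to concentrate — are that $\eta$ is indeed an element of $\cWD(]-\infty,0])$ and that the improper integral converges. Horizontality and absolute continuity of $\eta$ on every compact subinterval follow from those of each shifted piece together with continuity at the junction $t=a-b$; convergence of the integral is guaranteed because part (1) makes $u_\lam(\af(a))$ finite, so $\ga$ may be taken with finite action, while $A_{L,\lam}(\af)$ is finite by uniform boundedness. Care is only needed to get the exponential weights right in the two changes of variables, which is precisely what produces the factors $e^{\lam(a-b)}$ and $e^{-\lam b}$ and hence the exact form of the inequality.
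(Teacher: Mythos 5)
Your proof is correct and follows essentially the same route as the paper: part (1) by testing with the constant curve for the upper bound and using $L\ge\min L$ pointwise for the lower bound, and part (2) by concatenating a time-shifted competitor for $\af(a)$ with the time-shifted $\af$ and changing variables. The only cosmetic difference is that the paper first reduces to $b=0$ before shifting $\ga$, whereas you perform the two changes of variables in one step; the computation is the same.
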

\begin{proof}1)  Taking $\ga(t)\equiv x$ we have
  \[u_\lam(x)\le \int_{-\infty}^0 e^{\lam t}A(0)\ dt=\frac{A(0)}{\lam}\]
Given $\ep>0$ take $\ga\in\cWD(]-\infty,0])$ with $\ga(0)=x$ such that
\[\frac{\min L}{\lam}\le\int_{-\infty}^0e^{\lam t}L(\dga(t))\ dt<
  u_\lam(x)+\ep.\]
2) For $\af\in\cWD([a,b])$ define $\af_b(t)=\af(t+b)$, then \eqref{eq:lam-dom}
is equivalent to
\[ u_\lam(\af_b(0))-u_\lam(\af_b(a-b))e^{\lam (a-b)}\le A_{L,\lam}(\af_b),\]
and so we can assume that $b=0$. Given $\ga\in\cWD([-\infty,0])$ with $\ga(0)=\af(a)$
define $\ga_a(s)=\ga(s-a)$. From the definition of $u_\lam$ we have
\[  u_\lam(\af(0))\le \int_{-\infty}^a e^{\lam s}L(\dga_a(s))\ ds+A_{L,\lam}(\af)
  =e^{\lam a}\int_{-\infty}^0 e^{\lam t}L(\dga(t))\ dt+A_{L,\lam}(\af)\]
Taking the infimum over the curves $\ga\in\cWD(]-\infty,0])$,
with $\ga(0)=\af(a)$ we get 
 \[u_\lam(\af(0))\le u_\lam(\af(a))e^{\lam a}+A_{L,\lam}(\af)\]
\end{proof}

\begin{theorem} [Dynamic Programming Principle]\label{dpp}
The function $u_\lam$ satisfies
\begin{equation}\label{eq:dpp}
  u_\lam(x)=\inf\{u_\lam(\ga(-T))e^{-\lam T}+A_{L,\lam}(\ga):
\ga\in\cWD([-T,0]),\ga(0)=x\}.
\end{equation}
Moreover the infimum is attained
\end{theorem}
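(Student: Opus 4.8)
The plan is to prove the two inequalities comprising \eqref{eq:dpp} separately, and then obtain attainment by a direct Tonelli-compactness argument. Write $w_\lam(x)$ for the right-hand side of \eqref{eq:dpp}. The inequality $u_\lam(x)\le w_\lam(x)$ is immediate from $\lam$-domination: for any $\ga\in\cWD([-T,0])$ with $\ga(0)=x$, applying \eqref{eq:lam-dom} with $a=-T$, $b=0$ gives $u_\lam(x)-u_\lam(\ga(-T))e^{-\lam T}\le A_{L,\lam}(\ga)$, and taking the infimum over all such $\ga$ yields $u_\lam(x)\le w_\lam(x)$.

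For the reverse inequality $u_\lam(x)\ge w_\lam(x)$ I would split a near-optimal infinite-horizon curve at time $-T$. Fix $\ep>0$ and choose $\ga\in\cWD(]-\infty,0])$ with $\ga(0)=x$ and $\int_{-\infty}^0 e^{\lam t}L(\dga(t))\,dt< u_\lam(x)+\ep$. The restriction $\ga|[-T,0]$ is admissible for $w_\lam(x)$, contributing $A_{L,\lam}(\ga|[-T,0])$. For the tail on $]-\infty,-T]$ I reparametrize by $\eta(s)=\ga(s-T)$, so $\eta\in\cWD(]-\infty,0])$ with $\eta(0)=\ga(-T)$; the substitution $t=s-T$ gives $\int_{-\infty}^0 e^{\lam s}L(\dot{\eta}(s))\,ds=e^{\lam T}\int_{-\infty}^{-T}e^{\lam t}L(\dga(t))\,dt$, whence by the definition \eqref{eq:value} of $u_\lam$ we get $u_\lam(\ga(-T))e^{-\lam T}\le \int_{-\infty}^{-T}e^{\lam t}L(\dga(t))\,dt$. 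Adding the two pieces gives $w_\lam(x)\le u_\lam(\ga(-T))e^{-\lam T}+A_{L,\lam}(\ga|[-T,0])\le \int_{-\infty}^0 e^{\lam t}L(\dga(t))\,dt< u_\lam(x)+\ep$, and letting $\ep\to0$ proves the equality.

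For attainment I would apply the direct method. Take a minimizing sequence $\ga_n\in\cWD([-T,0])$, $\ga_n(0)=x$, with $u_\lam(\ga_n(-T))e^{-\lam T}+A_{L,\lam}(\ga_n)\to u_\lam(x)$. By Proposition \ref{value-prop}(1) the function $u_\lam$ is bounded, so the boundary terms are bounded and hence $A_{L,\lam}(\ga_n)\le C$ for some constant $C$. Since each $\ga_n$ meets the compact set $\{x\}$, Tonelli's Theorem (Corollary \ref{tonelli}) places the whole sequence in the $d$-uniformly compact family $\cF(\{x\},C)$; extracting a subsequence we obtain $\ga_n\to\ga^*$ $d$-uniformly, with $\ga^*\in\cWD([-T,0])$, $\ga^*(0)=x$ and $\ga_n(-T)\to\ga^*(-T)$. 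Lower semicontinuity of the action (Corollary \ref{low-semi}) gives $A_{L,\lam}(\ga^*)\le\liminf_n A_{L,\lam}(\ga_n)$.

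The main obstacle is passing to the limit in the boundary term $u_\lam(\ga_n(-T))$, which requires (lower semi)continuity of $u_\lam$; this is not yet available and I would establish it as the key preliminary step. Using the bracket-generating hypothesis, join $y$ to $x$ by a unit-speed horizontal arc $\af:[0,\ell]\to M$ with $\ell$ close to $d(x,y)$; then \eqref{eq:lam-dom} together with the uniform boundedness $L(\dot{\af})\le A(1)$ yields $u_\lam(x)-u_\lam(y)\le |u_\lam(y)|\,(1-e^{-\lam\ell})+A(1)\,\tfrac{1-e^{-\lam\ell}}{\lam}$, and the $L^\infty$ bounds of Proposition \ref{value-prop}(1) show the right side tends to $0$ as $d(x,y)\to0$. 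Symmetrizing gives that $u_\lam$ is (locally Lipschitz, hence) continuous, so $u_\lam(\ga_n(-T))\to u_\lam(\ga^*(-T))$ and therefore
\[
u_\lam(\ga^*(-T))e^{-\lam T}+A_{L,\lam}(\ga^*)\le\liminf_{n\to\infty}\big(u_\lam(\ga_n(-T))e^{-\lam T}+A_{L,\lam}(\ga_n)\big)=u_\lam(x).
\]
As $\ga^*$ is itself a competitor, the reverse inequality holds, so $\ga^*$ attains the infimum in \eqref{eq:dpp}. The only delicate points are the uniform action bound ensuring that Tonelli applies to a single compact family, and the Lipschitz estimate for $u_\lam$; both rely solely on results already established in the excerpt.
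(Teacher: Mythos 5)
Your proof is correct and follows essentially the same route as the paper: $\lam$-domination \eqref{eq:lam-dom} for one inequality, splitting a near-optimal infinite-horizon curve at time $-T$ for the other, and Tonelli compactness plus lower semicontinuity of the action for attainment. The one point where you go beyond the paper's write-up is in explicitly proving the continuity of $u_\lam$ needed to pass to the limit in the boundary term $u_\lam(\ga_n(-T))$ --- the paper uses this silently in the attainment step and only records the Lipschitz estimate afterwards as Proposition \ref{lip-value} --- and your estimate for it is correct.
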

\begin{proof}
  The inequality $\le$ in \eqref{eq:dpp} follows from \eqref{eq:dpp}.
 Given $\ep>0$ take $\ga\in\cWD(]-\infty,0])$ with $\ga(0)=x$ such that
\[\int_{-\infty}^0e^{\lam t}L(\dga(t))\ dt< u_\lam(x)+\ep.\]
\begin{align*}
  u_\lam(\ga(-T)) e^{-\lam T}&\le
  \int_{-\infty}^{-T}e^{\lam t}L(\dga(t))\ dt=
  \int_{-\infty}^0e^{\lam t}L(\dga(t))\ dt-A_{L,\lam}(\ga|_{[-T,0]})\\&<u_\lam(x)+\ep-A_{L,\lam}(\ga|_{[-T,0]}),
\end{align*}
giving the inequality $\ge$ in \eqref{eq:dpp}.
To prove that the infimum is attained
 we consider a minimizing sequence $\ga_n\in\cWD([-T,0])$ with $\ga_n(0)=x$.
For $n$ large enough
\[A_{L,\lam}(\ga_n)\le 1+u_\lam(x)-u_\lam(\ga_n(-T))e^{-\lam T}\le 1+2\|u_\lam\|_\infty.\]
By Corollary \ref{tonelli}, up to subsequences, $\ga_n$ converges
$d$-uniformly to $\ga\in\cWD([-T,0])$ with $\ga(0)=x$ and by Theorem \ref{Cle}
\[A_{L,\lam}(\ga)\le\liminf_{n\to\infty}A_{L,\lam}(\ga_n)=u_\lam(x)-u_\lam(\ga(-T))e^{-\lam T}\]
\end{proof}
\begin{proposition}\label{lip-value}
  $u_\lam$ is Lipschitz for the metric $d$, uniformly in $\lam >0$
\end{proposition}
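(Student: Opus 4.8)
The plan is to produce a constant $K$, \emph{independent of} $\lam$, such that $|u_\lam(x)-u_\lam(y)|\le K\,d(x,y)$ for all $x,y\in M$. Since $d$ is symmetric it suffices to bound $u_\lam(x)-u_\lam(y)$ from above, the reverse inequality following by interchanging $x$ and $y$. If $d(x,y)=0$ then $x=y$ and there is nothing to prove, so I assume $\rho:=d(x,y)>0$.

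First I would fix $\ep>0$ and, from the definition of the Carnot--Carath\'eodory distance $d$, choose a horizontal curve joining $y$ to $x$ of length at most $\rho+\ep$. Reparametrizing it by arc length exactly as in Step~2 of the proof of Theorem~\ref{hor}, I obtain $\af\in\cWD([-\ell,0])$ with $\af(-\ell)=y$, $\af(0)=x$, $\|\dga(t)\|=1$ for a.e.\ $t$, and $\ell\le\rho+\ep$. Applying the domination inequality \eqref{eq:lam-dom} with $a=-\ell$, $b=0$ gives
\[u_\lam(x)-u_\lam(y)e^{-\lam\ell}\le A_{L,\lam}(\af)=\int_{-\ell}^0 e^{\lam t}L(\dga(t))\ dt.\]

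The heart of the estimate is then the splitting
\[u_\lam(x)-u_\lam(y)=\big(u_\lam(x)-u_\lam(y)e^{-\lam\ell}\big)+u_\lam(y)\big(1-e^{-\lam\ell}\big),\]
whose two terms I would bound separately. For the first term, $\|\dga\|=1$ yields $L(\dga(t))\le A(1)$ and $e^{\lam t}\le 1$ on $[-\ell,0]$, so $A_{L,\lam}(\af)\le\max(A(1),0)\,\ell\le|A(1)|\,\ell$ (when $A(1)<0$ the integral is itself negative). For the second term I would use the uniform bound $\lam u_\lam\le A(0)$ from Proposition~\ref{value-prop}(1) together with $0\le 1-e^{-\lam\ell}\le\lam\ell$, which gives $u_\lam(y)\big(1-e^{-\lam\ell}\big)\le|A(0)|\,\ell$ regardless of the sign of $A(0)$. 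Combining these and letting $\ep\to 0$ produces $u_\lam(x)-u_\lam(y)\le\big(|A(0)|+|A(1)|\big)\rho$, and symmetrizing gives the claim with $K=|A(0)|+|A(1)|$.

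I expect the delicate point to be the control of the second term: a priori $u_\lam$ is \emph{not} bounded independently of $\lam$ (only $\lam u_\lam$ is controlled, so $u_\lam$ may grow like $1/\lam$ as $\lam\to 0$), and it is precisely the factor $1-e^{-\lam\ell}\le\lam\ell$ that cancels this potential blow-up and keeps the constant uniform in $\lam$. The only other place requiring care is bookkeeping of the signs of $A(0)$ and $A(1)$, so that the argument remains valid whether or not $L$ is nonnegative.
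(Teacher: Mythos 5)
Your overall strategy is the same as the paper's (a unit-speed, nearly minimizing horizontal curve from $y$ to $x$, the domination inequality \eqref{eq:lam-dom}, and the uniform two-sided bound $\min L\le\lam u_\lam\le A(0)$), and your treatment of the first term is fine. But there is a sign error in the splitting that you yourself identify as the heart of the estimate. The correct algebraic identity is
\[
u_\lam(x)-u_\lam(y)=\big(u_\lam(x)-u_\lam(y)e^{-\lam\ell}\big)+u_\lam(y)\big(e^{-\lam\ell}-1\big)
=\big(u_\lam(x)-u_\lam(y)e^{-\lam\ell}\big)-u_\lam(y)\big(1-e^{-\lam\ell}\big),
\]
whereas the right-hand side you wrote equals $u_\lam(x)-u_\lam(y)+2u_\lam(y)(1-e^{-\lam\ell})$, which is not $u_\lam(x)-u_\lam(y)$. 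This is not merely cosmetic: with the correct minus sign, the term you must bound from above is $-u_\lam(y)(1-e^{-\lam\ell})$, and for that the upper bound $\lam u_\lam\le A(0)$ is useless; you need the \emph{lower} bound $\lam u_\lam\ge\min L$ from Proposition \ref{value-prop}(1), which gives $-u_\lam(y)(1-e^{-\lam\ell})\le-\lam u_\lam(y)\,\ell\le(-\min L)\,\ell$ when $u_\lam(y)\le 0$ (and $\le 0$ otherwise). As written, your estimate of the second term bounds the wrong quantity.

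The repair is immediate and lands you essentially on the paper's proof: combining the corrected second term with your first-term bound yields $u_\lam(x)-u_\lam(y)\le\big(|A(1)|+|\min L|\big)\,\ell$, or, arranging as the paper does, $u_\lam(x)-u_\lam(y)\le\frac{1-e^{-\lam d}}{\lam}\big(A(1)-\lam u_\lam(y)\big)\le d\,\big(A(1)-\min L\big)$. Your intuition about where the uniformity in $\lam$ comes from is exactly right --- the factor $1-e^{-\lam\ell}\le\lam\ell$ cancels the possible $1/\lam$ growth of $u_\lam$ --- but the cancellation has to be run against the correct sign of the $u_\lam(y)$ term, hence against the correct half of the bound $\min L\le\lam u_\lam\le A(0)$.
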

\begin{proof}
Let $x,y\in M$, $d=d(x,y)$  and take a minimizing
geodesic $\ga\in\cWD(]-d,0])$ with $\ga(-d)=y$, $\ga(0)=x$ and $\|\dga\|=1$ a.e.

\[u_\lam(x)\le u_\lam(y)e^{-\lam d}+
\int_{-d}^0e^{\lam s}L(\dga(s))ds\le  u_\lam(y)e^{-\lam d}+\frac{1-e^{-\lam d}}\lam (A(1))\]
\[u_\lam(x)-u_\lam(y)\le \frac{1-e^{-\lam d}}\lam (-\lam u_\lam (y)+A(1))
\le \frac{1-e^{-\lam d}}\lam (A(1) -\min L)\le d(A(1) -\min L)\]

\end{proof}

\begin{proposition}\label{min-inf}
Given $\lam>0$,  $x\in M$ there exists $\ga_{\lam,x}\in\cWD(]-\infty,0])$ such
that $\ga_{\lam,x}(0)=x$ and for any $t\ge 0$
\begin{equation}
  \label{eq:lam-calib}
u_\lam(x) = u_\lam(\ga_{\lam,x}(-t))e^{-\lam t} + A_{L,\lam}(\ga_{\lam,x}|_{[-t,0]}).
\end{equation}
Moreover $\|\dga_{\lam,x}\|_\infty$ is bounded uniformly in
$\lam>0,x\in M$ and
\begin{equation}
  \label{eq:min-inf}
  u_\lam(x)=\int_{-\infty}^0e^{\lam t}L(\dga_{\lam,x}(t))\ dt.
  \end{equation}
\end{proposition}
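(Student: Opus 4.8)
The plan is to obtain $\ga_{\lam,x}$ as a limit of finite-horizon minimizers, after first proving an a priori speed bound for those minimizers that is uniform in every parameter.

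First I would, for each $n\in\N$, use the Dynamic Programming Principle (Theorem \ref{dpp}) to produce a minimizer $\ga_n\in\cWD([-n,0])$ with $\ga_n(0)=x$ and
\[u_\lam(x)=u_\lam(\ga_n(-n))e^{-\lam n}+A_{L,\lam}(\ga_n).\]
I would then upgrade this to calibration on arbitrary subintervals: for $0\le s\le t\le n$ the domination bound \eqref{eq:lam-dom}, applied to the three pieces $[-n,-t]$, $[-t,-s]$, $[-s,0]$, yields three inequalities whose sum is precisely the displayed equality; hence each is an equality, and in particular
\[u_\lam(\ga_n(-s))e^{-\lam s}=u_\lam(\ga_n(-t))e^{-\lam t}+A_{L,\lam}(\ga_n|_{[-t,-s]}).\]

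The heart of the argument, and the step I expect to be the main obstacle, is to show $\|\dga_n\|_\infty\le\Lam$ for a constant $\Lam$ independent of $n$, $\lam$ and $x$. At a Lebesgue point $-s$ of $\|\dga_n\|$ I would apply the subinterval calibration on $[-s-h,-s]$. The right-hand side I bound above using the $\lam$-uniform Lipschitz constant $\ka=A(1)-\min L$ of $u_\lam$ from Proposition \ref{lip-value}, the length estimate $d(\ga_n(-s),\ga_n(-s-h))\le\int_{-s-h}^{-s}\|\dga_n\|$, and the bound $\lam|u_\lam|\le B:=\max\{|\min L|,|A(0)|\}$ from Proposition \ref{value-prop}; the left-hand side I bound below using uniform superlinearity $L(v)\ge K\|v\|+C(K)$. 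Dividing by $h\,e^{-\lam s}$ and letting $h\to0$, the exponential weights $e^{\pm\lam h}$ and the discount factor tend to $1$ while the average speed tends to $\|\dga_n(-s)\|$, leaving
\[K\|\dga_n(-s)\|+C(K)\le\ka\|\dga_n(-s)\|+B.\]
Taking $K=\ka+1$ gives $\|\dga_n(-s)\|\le B-C(\ka+1)=:\Lam$ for a.e. $s$. The delicate point is to verify that all the exponential and discount contributions wash out in the limit $h\to0$, so that $\Lam$ is genuinely free of $\lam$ and $x$.

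With this bound the curves $\ga_n$ are $\Lam$-Lipschitz, start at $x$, and have uniformly bounded action on each $[-m,0]$; a diagonal application of Corollary \ref{tonelli} (with $K=\{x\}$) on the intervals $[-m,0]$, together with Theorem \ref{hor}, extracts a subsequence converging $d$-uniformly on every compact subinterval to some $\ga_{\lam,x}\in\cWD(]-\infty,0])$ with $\ga_{\lam,x}(0)=x$ and $\|\dga_{\lam,x}\|_\infty\le\Lam$. Fixing $t\ge0$ and letting $n\to\infty$ in $u_\lam(x)=u_\lam(\ga_n(-t))e^{-\lam t}+A_{L,\lam}(\ga_n|_{[-t,0]})$, continuity of $u_\lam$ controls the boundary term and lower semicontinuity of the action (Corollary \ref{low-semi}) controls the integral, giving the inequality $\ge$ in \eqref{eq:lam-calib}; the matching inequality $\le$ is exactly domination \eqref{eq:lam-dom}, so \eqref{eq:lam-calib} holds for every $t\ge0$. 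Finally, \eqref{eq:min-inf} follows by sending $t\to\infty$ in \eqref{eq:lam-calib}: for fixed $\lam$ the bound $\lam|u_\lam|\le B$ forces $u_\lam(\ga_{\lam,x}(-t))e^{-\lam t}\to0$, while $\|\dga_{\lam,x}\|_\infty\le\Lam$ makes $s\mapsto e^{\lam s}L(\dga_{\lam,x}(s))$ integrable on $]-\infty,0]$, so the partial actions converge to the full integral.
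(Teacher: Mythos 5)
Your proof is correct and follows essentially the same route as the paper: finite-horizon minimizers from the Dynamic Programming Principle, calibration on subintervals via \eqref{eq:lam-dom}, a speed bound obtained by playing the superlinearity lower bound for the action against the uniform Lipschitz constant of $u_\lam$ and the bound on $\lam u_\lam$, a diagonal extraction, and dominated convergence for \eqref{eq:min-inf}. The only (harmless) difference is that you establish the a priori bound on the approximants $\ga_n$ before passing to the limit, whereas the paper extracts the limit curve first via uniform integrability and then proves the same bound on $\ga_{\lam,x}$ directly.
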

\begin{proof}
By Theorem \ref{dpp}, for each $T>0$
there is  $\ga^T\in\cWD([-T,0])$ with $\ga^T(0)=x$ and
\[u_\lam(x)=u_\lam(\ga^T(-T))e^{-\lam T}+A_{L,\lam}(\ga).\]
Since $u$ satisfies \eqref{eq:lam-dom}, for any $t\in [0,T]$ we have 
\[u_\lam(x)-e^{-\lam t}u_\lam(\ga^T(-t))=A_{L,\lam}(\ga^T|_{[-t,0]}).\]
As in the proof Teorem\ref{dpp}
  \[A_{L,\lam}(\ga^T|_{[-t,0]})\le 2\|u_\lam\|_\infty\]
    By Tonelli's Theorem there is a sequence $T_j\to\infty$
such that $\ga^{T_j}|[-t,0]$ converges uniformly with the metric
$d$. Aplying this argument to a sequence $t_k\to\infty$ and using a
diagonal trick one gets a sequence $\sigma_n\to\infty$ and a curve
$\ga_{\lam,x}\in\cWD(]-\infty,0])$ such that
$\ga^{\sigma_n}|[-t,0]$ converges $d$-uniformly 
to $\ga_{\lam,x}|[-t,0]$  for any $t>0$. By the continuity of $u$,   for any $t>0$,
$u_\lam(\ga_{\lam,x}(-t))=\lim\limits_{n\to\infty} u_\lam(\ga^{\sigma_n}(-t))$.
By Theorem \ref{Cle},
\[u_\lam(x)=u_\lam(\ga_{\lam,x}(-t))e^{-\lam t}+A_L(\ga_{\lam,x}|_{[-t,0]}).\]
By Proposition \ref{lip-value} there is a uniform Lipschitz constant $K$ for $u_\lam$.
By the superlinearity, $L(v)=(K+1)\|v\|+C(K+1)$. For $a<b<0$
  \begin{align*}
u_\lam(\ga_{\lam,x}(b))e^{\lam b}-u_\lam(\ga_{\lam,x}(a))e^{\lam a}
    &= A_{L,\lam}(\ga_{\lam,x}|_{[a,b]})\\&\ge e^{\lam a}(K+1)\int_a^b\|\dga_{\lam,x}\|
    +C(K+1)\frac{e^{\lam b}-e^{\lam a}}\lam.\\
    u_\lam(\ga_{\lam,x}(b))e^{\lam b}-u_\lam(\ga_{\lam,x}(a))e^{\lam a}
    &\le u_\lam(\ga_{\lam,x}(b))(e^{\lam b}-e^{\lam a})
      +Ke^{\lam a}d(\ga_{\lam,x}(b),\ga_{\lam,x}(a))\\
&\le A(0)\frac{e^{\lam b}-e^{\lam a}}\lam+K e^{\lam a}\int_a^b\|\dga_{\lam,x}\|
  \end{align*}
  Thus
  \[\frac 1{b-a}\int_a^b\|\dga_{\lam,x}\|\le (A(0)-C(K+1))
    \frac{e^{\lam(b-a)}-1}{\lam(b-a)}\]
  implying that $\|\dga_{\lam,x}\|\le (A(0)-C(K+1)$ a.e.
Letting $t\to+\infty$ in \eqref{eq:lam-calib} we get \eqref{eq:min-inf} by the dominated
convergence theorem. 
\end{proof}\section{Weak KAM theory}
\label{sec:wkam}
In this section we extend weak KAM theory to our setting. The
difference with the standard case is the lack of a Lagrangian dynamics and Tonelli's
theorem takes care of this difficulty. Denote  $A_{L,0}$ by  $A_{L}$
\subsection{The weak KAM theorem}
\label{sec:wkam-thm}
\begin{definition} Let $c\in\R$, we say that 
  \begin{itemize}
  \item $u:M\to\R$ is $L+c$ {\em dominated} if for any $\ga\in\cWD([a,b])$
we have 
 \[ u(\ga(b))-u(\ga(a)) \le A_{L+c}(\ga)\]
\item $\ga\in\cWD([a,b])$ is {\em calibrated} by an $L+c$ dominated
  function $u:M\to\R$  if
  \[ u(\ga(b))-u(\ga(a)) = A_{L+c}(\ga)\]
  \item $u:M\to\R$ is a {\em backward (forward)  c-weak KAM solution} if it
  is $L+c$ dominated and for any $x\in M$  there is
  $\ga\in\cWD(]-\infty,0])$ ($\ga\in\cWD([0,\infty))$) 
    calibrated by $u$ such that $\ga(0)=x$.
\end{itemize}
\end{definition}
Suppose $M$ is compact. Consider the value function $u_\lam$, $\lam>0$.
Since $\lam u_\lam$ is uniformly bounded,  $u_\lam$ is uniformly
Lipschitz there is a sequence $\lam_n$ converging to zero such
that $\lam_nu_{\lam_n}$ converges to a constant $-c$. Thus $u_\lam- \max u_\lam$ is
uniformly bounded and Lipschitz, and then, trough some sequence converges to a  
Lipschitz function $v$.
\begin{theorem}[weak KAM theorem]\label{wkam-teo}
Suppose $M$ is compact.
  Let $v_\lam=u_\lam - \min u_\lam$.
Suppose that $v=\lim\limits_{k\to\infty}v_{\lam_k}$ and
$c=-\lim\limits_{k\to\infty}\lam_k\min u_{\lam_k}$ for a sequence $\lam_k\to 0$.
Then $v$ is a backward $c$-weak KAM solution.
\end{theorem}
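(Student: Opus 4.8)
The plan is to verify the two defining properties of a backward $c$-weak KAM solution in turn: first that $v$ is $L+c$ dominated, then that every point is the terminal endpoint of a $v$-calibrated curve. For domination I would start from the discounted estimate \eqref{eq:lam-dom} of Proposition \ref{value-prop}, written for $\lam=\lam_k$ and an arbitrary $\af\in\cWD([a,b])$. Writing $u_{\lam}=v_{\lam}+m_{\lam}$ with $m_{\lam}=\min u_{\lam}$, it becomes
\[
v_{\lam}(\af(b))e^{\lam b}-v_{\lam}(\af(a))e^{\lam a}+m_{\lam}\big(e^{\lam b}-e^{\lam a}\big)\le A_{L,\lam}(\af).
\]
If $A_L(\af)=+\infty$ there is nothing to prove, so assume it is finite; then $L(\dga)$ is integrable and dominated convergence gives $A_{L,\lam_k}(\af)\to A_L(\af)$. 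Since $v_{\lam_k}\to v$ uniformly and $e^{\lam_k a},e^{\lam_k b}\to 1$, the first two terms tend to $v(\af(b))-v(\af(a))$; and since $m_{\lam}\big(e^{\lam b}-e^{\lam a}\big)=(\lam m_{\lam})\cdot\frac{e^{\lam b}-e^{\lam a}}{\lam}\to(-c)(b-a)$, passing to the limit yields $v(\af(b))-v(\af(a))\le A_L(\af)+c(b-a)=A_{L+c}(\af)$, which is exactly $L+c$ domination.

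For the calibration property, fix $x\in M$ and, for each $\lam=\lam_k$, take the curve $\ga_{\lam,x}\in\cWD(]-\infty,0])$ furnished by Proposition \ref{min-inf}, satisfying \eqref{eq:lam-calib} and with $\|\dga_{\lam,x}\|_\infty$ bounded uniformly in $\lam$. This uniform velocity bound makes the family equi-Lipschitz for $d$, so on each interval $[-T,0]$ Arzel\'a--Ascoli applies; a diagonal extraction over $T\to\infty$ produces a subsequence (still denoted $\lam_k$) along which $\ga_{\lam_k,x}$ converges $d$-uniformly on compact sets to a curve $\ga$ with $\ga(0)=x$, and Theorem \ref{hor} guarantees $\ga\in\cWD(]-\infty,0])$. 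This subsequence of $(\lam_k)$ still satisfies $v_{\lam_k}\to v$ and $\lam_k m_{\lam_k}\to -c$.

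It remains to show $\ga$ is calibrated by $v$. Rewriting \eqref{eq:lam-calib} on $[-t,0]$ in terms of $v_{\lam}$ gives
\[
A_{L,\lam}(\ga_{\lam,x}|_{[-t,0]})=v_{\lam}(x)-v_{\lam}(\ga_{\lam,x}(-t))e^{-\lam t}+m_{\lam}\big(1-e^{-\lam t}\big).
\]
As $\lam=\lam_k\to 0$ the right-hand side converges to $v(x)-v(\ga(-t))-ct$ (using $v_{\lam_k}\to v$ uniformly, $\ga_{\lam_k,x}(-t)\to\ga(-t)$, and $m_{\lam}(1-e^{-\lam t})=(\lam m_{\lam})\frac{1-e^{-\lam t}}{\lam}\to -ct$). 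Hence the left-hand side converges too, and lower semicontinuity (Theorem \ref{Cle} with $\lam_k\to 0$) gives $A_L(\ga|_{[-t,0]})\le v(x)-v(\ga(-t))-ct$, that is, $A_{L+c}(\ga|_{[-t,0]})\le v(\ga(0))-v(\ga(-t))$. Combined with the domination of $v$ already proved, this forces equality on every $[-t,0]$. The standard additivity argument then yields calibration on all subintervals: for $-t\le a\le b\le 0$, summing the three domination inequalities on $[-t,a]$, $[a,b]$, $[b,0]$ reproduces the equality on $[-t,0]$, so each slack vanishes, and in particular $v(\ga(b))-v(\ga(a))=A_{L+c}(\ga|_{[a,b]})$. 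Thus $\ga$ is calibrated by $v$, and $v$ is a backward $c$-weak KAM solution.

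The delicate step is the calibration: one must simultaneously control the limit of the \emph{discounted} action of the \emph{varying} minimizers $\ga_{\lam_k,x}$ and compare it with the \emph{undiscounted} action of the limit curve $\ga$. Lower semicontinuity supplies one inequality and the domination of $v$ the reverse, and it is their exact matching---made possible by the normalization $\lam_k m_{\lam_k}\to -c$ entering both limits with the correct weight---that produces the calibration equality rather than a mere inequality.
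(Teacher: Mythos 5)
Your proof is correct and follows essentially the same route as the paper: domination via the limit of the discounted estimate \eqref{eq:lam-dom} with the normalization $\lam_k\min u_{\lam_k}\to -c$, and calibration via a diagonal limit of the minimizers from Proposition \ref{min-inf} combined with Theorem \ref{Cle}. Your write-up is in fact slightly more careful than the paper's at the final step, where you make explicit that Theorem \ref{Cle} only yields the inequality $A_{L+c}(\ga|_{[-t,0]})\le v(\ga(0))-v(\ga(-t))$ and that the reverse inequality (hence equality, and calibration on all subintervals) comes from the domination already established.
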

\begin{proof}
 Let $\ga\in\cWD([a,b])$, By \eqref{eq:dpp}, for any $\lam>0$ 
\[v_\lam(\ga(b)) e^{\lam a}-v_\lam(\ga(b))e^{\lam a}\le (e^{\lam a}-e^{\lam b})\min u_\lam+A_{L,\lam}(\ga).\]
  Taking limits $\lam_k\to 0$,
  \[v(\ga(b)) -v(\ga(a)) \le(b-a)c+A_L(\ga),\]
giving that $v$ is $L+c$ dominated.

Let $x\in M$ and $\ga_\lam\in\cWD(]-\infty,0])$ be a curve such that $\ga_\lam(0)=x$
for any $t\ge 0$
\[u_\lam(x) = u_\lam(\ga(-t))e^{-\lam t} +A_{L,\lam}(\ga_\lam),\]
so that
\[v_\lam(x) = v_\lam(\ga(-t))e^{-\lam t} +(e^{-\lam t}-1)\min u_\lam+A_{L,\lam}(\ga_\lam).\]
Then, as in the proof of Proposition \ref{min-inf}, there is a
subsequence $(\lam^n)_n$ of $(\lam_k)_k$ and a curve $\ga\in\cWD(]-\infty,0])$ such for any $t>0$,
$\ga_{\lam^n}|_{[-t,0]}$ converges $d$-uniformly to
$\ga|_{[-t,0]}$. Taking limits $\lam^n\to 0$, by Theorem \ref{Cle} we have 
\begin{equation}
 \label{lim-calib}
v(x) = u(\ga(-t))+ct+A_L(\ga)
\end{equation} 
showing that $\ga$ is calibrated by $v$. Thus $v$ is a c-weak KAM solution.
\end{proof}
Suppose there is a function $u:M\to\R$ that is $L+c$ dominated
and let $x\in M$. Taking $\ga:[a,b]\to M$ the constant curve
$\ga(t)=x$ we have $L(x,0)+c\ge 0$. Thus $c\ge -\min\limits_{x\in M} L(x,0)$.
Define $c(L)=\inf\{c\in\R:\exists u:M\to\R\ L+c\hbox{ dominated }\}$.

If $k\ge-\min L$, then $L+k\ge 0$ and so any constant function is $L+k$ dominated.
Thus $c(L)\le -\min L$.
\begin{example}
Let $M$ be compact,  $U:M\to\R$ be smooth and  $L(x,v)=\|v\|^2-U(x)$.
Then $\min L=\min\limits_{x\in M} L(x,0)=-\max U$. 
Thus $c(L)=\max U$. 
\end{example}
\begin{proposition}\quad
  \begin{enumerate}
     \item  There exists $u:M\to\R$ that is $L+c(L)$ dominated.
     \item If $M$ is compact and there is a bakward c-weak KAM solution
       $u:M\to\R$, then $c=c(L)$.
       \end{enumerate}
     \end{proposition}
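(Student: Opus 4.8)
The plan is to handle the two statements separately: the first by an approximation–compactness argument, the second by a contradiction exploiting the infinitely long calibrated curve produced by the weak KAM construction.

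For the first statement I would argue as follows. Unwinding the definition, a function $u$ is $L+c$ dominated exactly when $u(\ga(b))-u(\ga(a))\le A_L(\ga)+c(b-a)$ for every $\ga\in\cWD([a,b])$. By definition of $c(L)$, for each $c>c(L)$ there is an $L+c$ dominated function; normalizing it to vanish at a fixed basepoint $x_0$ gives $u_c$. Testing domination on a unit-speed minimizing geodesic joining two points and using the uniform boundedness $L(v)\le A(1)$ for $\|v\|\le 1$ yields $u_c(x)-u_c(y)\le (A(1)+c)\,d(x,y)$, so each $u_c$ is $(A(1)+c)$-Lipschitz for $d$; in particular $|u_c(x)|\le (A(1)+c)\,d(x,x_0)$. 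Hence along a sequence $c_n\downarrow c(L)$ the family $\{u_{c_n}\}$ is equi-Lipschitz and locally uniformly bounded, and Arzel\'a-Ascoli (applied to an exhaustion of $M$ by compact sets with a diagonal extraction, or directly when $M$ is compact) produces a subsequence converging locally uniformly to some $u$. Since every $\ga\in\cWD([a,b])$ has compact image, passing to the limit in $u_{c_n}(\ga(b))-u_{c_n}(\ga(a))\le A_L(\ga)+c_n(b-a)$ gives $u(\ga(b))-u(\ga(a))\le A_L(\ga)+c(L)(b-a)$, that is, $u$ is $L+c(L)$ dominated. When $M$ is compact one may alternatively invoke the weak KAM theorem together with the second statement, which furnishes directly an $L+c(L)$ dominated function.

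For the second statement, first note that a backward $c$-weak KAM solution is in particular $L+c$ dominated, so $c(L)\le c$ by definition of $c(L)$. For the reverse inequality I would show that no value $c'<c$ admits a dominated function. Suppose $w$ is $L+c'$ dominated. Fix $x\in M$ and let $\ga\in\cWD(]-\infty,0])$ with $\ga(0)=x$ be the curve calibrated by $u$, so that for every $t>0$
\[u(x)-u(\ga(-t))=A_{L+c}(\ga|_{[-t,0]})=A_L(\ga|_{[-t,0]})+ct.\]
Applying the domination of $w$ on $[-t,0]$ and substituting $A_L(\ga|_{[-t,0]})=u(x)-u(\ga(-t))-ct$ gives
\[w(x)-w(\ga(-t))\le A_L(\ga|_{[-t,0]})+c't=u(x)-u(\ga(-t))+(c'-c)t,\]
hence $(w-u)(x)\le (w-u)(\ga(-t))+(c'-c)t$. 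Since $M$ is compact, $w-u$ is bounded, so $(w-u)(x)\le \|w-u\|_\infty+(c'-c)t$ for all $t>0$. If $c'<c$ the right-hand side tends to $-\infty$ as $t\to+\infty$, which is absurd. Therefore every admissible $c'$ satisfies $c'\ge c$, whence $c(L)\ge c$ and so $c(L)=c$.

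The routine parts are the Lipschitz estimate and the Arzel\'a-Ascoli extraction in the first statement. The crux of the whole proposition is the second statement: calibration turns the action along the infinite backward orbit into a telescoping quantity growing linearly like $ct$, while any competing dominated function can only grow like $c't$, and the compactness of $M$—forcing $w-u$ to stay bounded—is precisely what converts the gap $(c'-c)t\to-\infty$ into a contradiction. This is the step where the existence of an infinitely long calibrated curve, guaranteed by the weak KAM construction, is indispensable.
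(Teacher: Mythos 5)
Your proof is correct and follows essentially the same route as the paper, which simply refers to Theorem 4.2.7 and Corollary 4.3.7 of Fathi's book: part (1) is the equi-Lipschitz normalization plus Arzel\'a--Ascoli limit of $L+c_n$ dominated functions as $c_n\downarrow c(L)$, and part (2) is the standard comparison of a putative $L+c'$ dominated function against $u$ along an infinite backward calibrated curve, using compactness of $M$ to bound $w-u$ and force $(c'-c)t\to-\infty$ into a contradiction. Your write-up in fact supplies the details the paper leaves to the citation, including the correct Lipschitz constant $A(1)+c$ from the paper's own domination estimate.
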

The proof of (1) is as the one of Theorem 4.2.7 in \cite{F}.
 The proof of (2) is as the one of Corollary 4.3.7 in \cite{F}.

\begin{proposition}
  Let $u:M\to\R$ be $L+c$ dominated then
  \begin{enumerate}
  \item $|u(y)-u(x)|\le (A(1)+c) d(x,y)$
  \item If $\ga\in\cWD(I)$ then $u\circ\ga$ is absolutely continuous and
    $(u\circ\ga)'(t)\le L(\dga(t))+c$ at any $t\in I$ where
    $u\circ\ga$ and $\ga$ are  differentiable.
  \end{enumerate}
\end{proposition}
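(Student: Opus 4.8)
The plan is to deduce (1) from the domination inequality by testing it on a sub-Riemannian length-minimizing geodesic, exactly as in the proof of Proposition \ref{lip-value}, and then to bootstrap (1) into (2) using the absolute continuity of arclength together with a Lebesgue differentiation argument.

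For (1), fix $x,y\in M$ and set $d=d(x,y)$. As in Proposition \ref{lip-value}, I would choose a minimizing geodesic $\ga\in\cWD([-d,0])$ with $\ga(-d)=y$, $\ga(0)=x$ and $\|\dga\|=1$ a.e. Since $u$ is $L+c$ dominated,
\[u(x)-u(y)=u(\ga(0))-u(\ga(-d))\le A_{L+c}(\ga)=\int_{-d}^0\bigl(L(\dga(s))+c\bigr)\,ds.\]
By uniform boundedness $L(\dga(s))\le A(1)$ wherever $\|\dga(s)\|=1$, so the right-hand side is at most $(A(1)+c)\,d$. Interchanging the roles of $x$ and $y$ and using $d(x,y)=d(y,x)$ gives the reverse bound, hence $|u(y)-u(x)|\le(A(1)+c)\,d(x,y)$. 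I would also record that this is a genuine Lipschitz bound: since $c\ge-\min_{x\in M}L(x,0)$ and $A(1)\ge L(x,0)$ for every $x$, one has $A(1)+c\ge 0$.

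For (2), first the absolute continuity. Let $\ga\in\cWD(I)$ and write $\kappa=A(1)+c$. For any $[s,t]\subset I$, part (1) and the definition of the Carnot-Carathéodory distance give
\[|u(\ga(t))-u(\ga(s))|\le\kappa\,d(\ga(t),\ga(s))\le\kappa\,\ell(\ga|_{[s,t]})=\kappa\int_s^t\|\dga(r)\|\,dr.\]
Since $\ga$ is horizontal, $\|\dga\|\in L^1(I)$, so $r\mapsto\int\|\dga\|$ is absolutely continuous; applied to finitely many disjoint subintervals, the displayed estimate shows that $u\circ\ga$ is absolutely continuous, and in particular $(u\circ\ga)'$ exists a.e. For the pointwise inequality, I would fix $t$ at which $u\circ\ga$ and $\ga$ are differentiable and which is, in addition, a Lebesgue point of $s\mapsto L(\dga(s))$ — all of which hold simultaneously for a.e.\ $t$. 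Domination applied to $\ga|_{[t,t+h]}$ with $h>0$ yields
\[\frac{u(\ga(t+h))-u(\ga(t))}{h}\le\frac1h\int_t^{t+h}L(\dga(s))\,ds+c,\]
and letting $h\downarrow 0$ the left side converges to $(u\circ\ga)'(t)$ while the average on the right converges to $L(\dga(t))$, giving $(u\circ\ga)'(t)\le L(\dga(t))+c$.

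The step I expect to be the main obstacle is precisely this passage to the limit, i.e.\ justifying $\frac1h\int_t^{t+h}L(\dga(s))\,ds\to L(\dga(t))$. This is a Lebesgue differentiation statement and requires $s\mapsto L(\dga(s))$ to be locally integrable near $t$; uniform superlinearity gives $L(\dga(s))\ge\|\dga(s)\|+C(1)$, so the negative part is dominated by $\|\dga\|\in L^1$, and the only genuine difficulty is the upper tail, which is integrable exactly where the action is finite. I would therefore state the pointwise bound as holding at every Lebesgue point of $L\circ\ga$ at which $\ga$ and $u\circ\ga$ are differentiable — a set of full measure — noting that differentiability of $\ga$ at $t$ serves only to identify $\dga(t)$ with the classical velocity appearing on the right-hand side.
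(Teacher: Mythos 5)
Your proof is correct and follows essentially the same route as the paper: a unit-speed minimizing geodesic plus domination for (1), and for (2) the Lipschitz estimate combined with $d(\ga(s),\ga(t))\le\int_s^t\|\dga\|$ for absolute continuity, then dividing the domination inequality on $[t,t+h]$ by $h$. Your additional insistence that $t$ be a Lebesgue point of $s\mapsto L(\dga(s))$ is a legitimate (and needed) refinement that the paper's one-line limit passage leaves implicit, since differentiability of $\ga$ at $t$ alone does not force $\frac1h\int_t^{t+h}L(\dga)\to L(\dga(t))$.
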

\begin{proof}
  (1) Taking a minimizing geodesic $\af\in\cC_d(x,y)$ with
  $\|\dot\af\|=1$ a.e. we have 
\[u(y)-u(x)\le A_{L+c}(\af)\le (A(1)+c)\ d(x,y).\]

(2) Since $u$  is Lipschitz with constant $K=A(1)+c$, given $\ep>0$
there is $\de>0$ such that 
  for any disjoint $ ]a_1,b_1[,\cdots,]a_k,b_k[\subset I$,
  \begin{align*}
    \sum_{i=1}^kb_i-a_i<\de\implies &\\
   \sum_{i=1}^k|u\circ\ga(b_i)-u\circ\ga(a_i)|&< K\sum_{i=1}^k d(\ga(a_i),\ga(b_i))
  \le K\sum_{i=1}^k \int_{a_i}^{b_i}\|\dga\|<\ep.
\end{align*}
Let $t\in I$ be a point of differentiability of $u\circ\ga$ and $\ga$, dividing
\[u(\ga(t+h))-u(\ga(t))\le A_{L+c}(\ga|_{[t,t+h]})\]
by $h$ and letting $h\to 0$ we get $(u\circ\ga)'(t)\le L(\dga(t))+c$.
\end{proof}
\subsection{The Lax-Oleinik semigroup}
\label{sec:lax-oleinik}
Set $\cC_t(x,y)=\cC_{0,t}(x,y)$ and $\cC(x,y)=\bigcup\limits_{t>0}\cC_t(x,y)$.

  \begin{definition} We define the
  {\em minimal action} to go from $x$ to $y$ in time $t>0$ as the function
  $\bfh:M\times M\times\R^+\to\R$ given by
  \[\bfh(x,y,t):=h_t(x,y):=\inf\{A_L(\ga):\ga\in\cC_{t}(x,y)\}\]
\end{definition}
\begin{proposition}\label{prop-ht} The minimal action has the
    following properties
    \begin{enumerate}
    \item $h_t(x,y)\ge t \inf\limits_{TM}L$
    \item $h_{t+s}(x,z)=\inf\limits_{y\in M}h_t(x,y)+h_s(y,z)$
      \item $\ga\in\cWD([a,b])$, $a<b$ is minimizing if and only if
$h_{b-a}(\ga(a),\ga(b))=A_L(\ga)$.
\item For each  $x,y\in M, t>0$, there exists $\ga\in\cC_{t}(x,y)$
  such that $h_t(x,y)=A_L(\ga)$.
    \end{enumerate}
  \end{proposition}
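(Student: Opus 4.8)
The plan is to handle the four properties in the order (1), (4), (3), (2), since the semigroup identity (2) reads most cleanly once existence of minimizers is available. For (1) I would bound the integrand pointwise: for any $\ga\in\cC_t(x,y)$ we have $L(\dga(s))\ge\inf_\cD L$ for a.e. $s$, whence $A_L(\ga)=\int_0^t L(\dga(s))\,ds\ge t\inf_\cD L$, and taking the infimum over $\ga$ gives the bound (reading $\inf_{TM}L$ as the infimum of $L$ over its domain $\cD$). Property (4) is nothing but Corollary \ref{ton-min} applied with $\lam=0$, $a=0$, $b=t$: the class $\cC_t(x,y)=\cC_{0,t}(x,y)$ is nonempty because $\cD$ is bracket generating, and $A_L$ attains its infimum there.

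For (3) I would first record that $A_L=A_{L,0}$ is invariant under time translation: if $\af_\tau(r)=\af(r-\tau)$ then $\dot\af_\tau(r)=\dot\af(r-\tau)$, so a change of variable gives $A_L(\af_\tau)=A_L(\af)$. Consequently the infimum of $A_L$ over $\cC_{a,b}(\ga(a),\ga(b))$ coincides with the infimum over $\cC_{b-a}(\ga(a),\ga(b))$, which is by definition $h_{b-a}(\ga(a),\ga(b))$. Since $\ga$ being minimizing means precisely that $A_L(\ga)$ equals this infimum, property (3) follows immediately.

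For the semigroup property (2) I would prove the two inequalities separately. For $\ge$, given $\ga\in\cC_{t+s}(x,z)$ set $y=\ga(t)$; then $\ga|_{[0,t]}\in\cC_t(x,y)$ and a time shift of $\ga|_{[t,t+s]}$ lies in $\cC_s(y,z)$, so additivity of the integral yields $A_L(\ga)=A_L(\ga|_{[0,t]})+A_L(\ga|_{[t,t+s]})\ge h_t(x,y)+h_s(y,z)\ge\inf_{y'\in M}[h_t(x,y')+h_s(y',z)]$, and the infimum over $\ga$ gives $\ge$. For $\le$, I would fix $y\in M$, use (4) to choose minimizers $\ga_1\in\cC_t(x,y)$ and $\ga_2\in\cC_s(y,z)$ realizing the two $h$-values, and concatenate: run $\ga_1$ on $[0,t]$ and the time shift of $\ga_2$ on $[t,t+s]$. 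This concatenation $\ga$ is continuous (both pieces equal $y$ at time $t$) and lies in $\cC_{t+s}(x,z)$, so $h_{t+s}(x,z)\le A_L(\ga)=h_t(x,y)+h_s(y,z)$; taking the infimum over $y$ finishes $\le$.

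The one point needing care is that the concatenation in the $\le$ direction of (2) indeed belongs to $\cWD([0,t+s])$: its absolute continuity follows because each restriction is absolutely continuous and the two curves agree at the junction $y$, and horizontality holds a.e.\ on each subinterval. The rest is bookkeeping with additivity and translation invariance of $A_L$ together with the definition of $h_t$; the analytic substance, namely compactness and lower semicontinuity of the action, has already been supplied by Corollary \ref{ton-min} and Theorem \ref{Cle}, so I expect no serious obstacle beyond this gluing verification.
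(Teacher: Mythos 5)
Your proof is correct. The paper states Proposition \ref{prop-ht} without proof, and your argument is exactly the standard one it implicitly relies on: (1) from the pointwise lower bound $L\ge\inf_\cD L$, (4) from Corollary \ref{ton-min} with $\lam=0$, (3) from translation invariance of $A_L=A_{L,0}$, and (2) from restriction for one inequality and concatenation of Tonelli minimizers for the other, with the gluing step (absolute continuity and a.e.\ horizontality of the concatenation) correctly identified and justified.
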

  \begin{proposition}
    \label{htcont}
    Assume that there are $C_1, C_2\ge 1$ such that
    \begin{equation}
      \label{vlvbound}
          \partial_v L\cdot v\le C_1 L+C_2
\end{equation}
        and let $\De_\de=\{x, y\in M:d(x,y)\le 1/\de\}$ for $\de>0$. Then $\bfh$
is Lipschitz on $\De_\de\times[\de,1/\de]$. In particular, $\bfh$ is continuos
\end{proposition}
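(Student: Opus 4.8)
The plan is to show that $\bfh$ is bounded on $\De_\de\times[\de,1/\de]$ and then Lipschitz separately in the time variable and in each of the two space variables, with all constants depending only on $\de$, $C_1$, $C_2$ and the numbers $A(R)$. First I would record a priori bounds. For the upper bound, given $(x,y)$ with $d(x,y)\le 1/\de$ and $t\in[\de,1/\de]$, reparametrize a minimizing geodesic from $x$ to $y$ at constant speed $d(x,y)/t\le 1/\de^2$ over $[0,t]$; by uniform boundedness its action is at most $tA(1/\de^2)\le B:=\de^{-1}A(1/\de^2)$, so $h_t(x,y)\le B$. The lower bound $h_t(x,y)\ge t\inf L\ge \de^{-1}C(0)$ is Proposition \ref{prop-ht}(1). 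By Proposition \ref{prop-ht}(4) every value $h_t(x,y)$ is attained by a minimizer, and the rest of the argument modifies such minimizers.

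The crux is a time-reparametrization estimate that replaces the missing Lagrangian flow, and this is where hypothesis \eqref{vlvbound} enters. Given $\ga\in\cWD([0,t])$ and $\rho>0$, let $\ga^\rho(\tau)=\ga(\rho\tau)$ on $[0,t/\rho]$ and set $F(\rho)=\int_0^t L(\rho\dga(u))\,du$, so that $A_L(\ga^\rho)=\rho^{-1}F(\rho)$. For $\rho\ge 1$ I would use \eqref{vlvbound} in the form $F'(\rho)=\int_0^t\partial_vL(\rho\dga)\cdot\dga\,du\le \rho^{-1}(C_1F(\rho)+C_2t)$ and integrate the resulting linear differential inequality (integrating factor $\rho^{-C_1}$) to obtain $F(\rho)\le \rho^{C_1}F(1)+\tfrac{C_2t}{C_1}(\rho^{C_1}-1)$, hence $A_L(\ga^\rho)\le \rho^{C_1-1}A_L(\ga)+\tfrac{C_2t}{C_1\rho}(\rho^{C_1}-1)$. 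For $\rho\le 1$ convexity alone suffices, since $L(\rho v)\le \rho L(v)+(1-\rho)L(0)$ gives $A_L(\ga^\rho)\le A_L(\ga)+\tfrac{1-\rho}{\rho}tA(0)$. On the bounded range of $\rho$ relevant here, and with $A_L(\ga)\le B$ and $t\le 1/\de$, both bounds are Lipschitz in $\rho$ near $\rho=1$. This yields $|h_t(x,y)-h_s(x,y)|\le C|t-s|$ for $t,s\in[\de,1/\de]$: to pass from $s$ to $t$ one reparametrizes a minimizer for $h_s(x,y)$ onto time $t$, speeding up ($\rho=s/t>1$) via \eqref{vlvbound} and slowing down ($\rho<1$) via convexity.

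For the space variables I would fix $t$ and move one endpoint at a time. Let $\ga$ realize $h_t(x,y)$ and put $\tau=d(x',x)$. Concatenate a unit-speed geodesic from $x'$ to $x$ on $[0,\tau]$ with the time-reparametrization of $\ga$ onto $[\tau,t]$; the first piece costs at most $\tau A(1)$, and since the contraction ratio $\rho=t/(t-\tau)\ge 1$ stays close to $1$ for $d(x',x)$ small (keeping $t-\tau\ge\de/2$), the second piece costs at most $A_L(\ga)+C\tau$ by the estimate above. Hence $h_t(x',y)\le h_t(x,y)+C\,d(x',x)$, and symmetrically in the reverse direction and in the $y$-variable. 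Combining the time and space estimates gives the asserted Lipschitz bound on $\De_\de\times[\de,1/\de]$.

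The main obstacle is exactly the time-reparametrization estimate: without a Lagrangian dynamics there is no energy conservation or Euler–Lagrange equation with which to bound the pointwise speed of minimizers, and speeding a curve up can blow $L$ up pointwise. Hypothesis \eqref{vlvbound} is precisely what controls this blow-up after integration, and the delicate point is to keep every reparametrization ratio inside a fixed compact subinterval of $(0,\infty)$ so that all the constants remain uniform over $\De_\de\times[\de,1/\de]$.
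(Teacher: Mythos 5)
Your proposal is correct and follows essentially the same route as the paper: both start from the a priori bound $h_t(x,y)\le \de^{-1}A(\de^{-2})$ via constant-speed geodesics, extract from \eqref{vlvbound} the scaling estimate $L(sv)\le s^{C_1}L(v)+\mathrm{const}\,(s^{C_1}-1)$ for $s\ge 1$ (your integrated differential inequality in $\rho$ is precisely how the paper's pointwise version arises), and then obtain the time- and space-Lipschitz bounds by rescaling minimizers in time and concatenating unit-speed geodesics at an endpoint. The only cosmetic difference is that for lengthening the time interval you invoke convexity, $L(\rho v)\le\rho L(v)+(1-\rho)L(0)$, whereas the paper simply lets the curve wait at the endpoint at cost $A(0)$ per unit time.
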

\begin{proof}
First observe that $\bfh$ is bounded on $\De_\de\times[\de,1/\de]$.
Indeed, let $d=d(x,y)$  and take a minimizing
geodesic $\sigma\in\cC_d(x,y)$ with $\|\dot\sigma\|=1$ a.e. and
$\ga(s)=\sigma(sd(x,y)/t)$, then $\|\dot\ga\|=d(x,y)/t\le \de^{-2}$
for  $(x,y)\in\De_\de$, $t\ge\de$. Thus 
$\bfh(x,y,t)\le A(\de^{-2})/\de$.

  Next observe that the assumption \eqref{vlvbound} implies that
  \[L(sv)\le s^{C_1}L(v)+\frac{C_1}{C_2}(s^{C_1}-1),\quad s\ge 1.\]
Let $x,y\in M$, $t>0$. For $z\in M$, $d=d(y,z)$, take a minimizing
geodesic $\sigma\in\cC_d(y,z)$ with $\|\dot\sigma\|=1$ a.e.
For $\ga\in\cC_t(x,y)$ define $\bga\in\cC_{t+d}(x,z)$ by
$\bga|[0,t]=\ga$, $\bga(s)=\sigma(s-t)$ for $s\in[t,t+d]$. Thus
$\bfh(x,z,t+d)\le A_L(\bga)\le A_L(\ga)+A(1)d$ and then
\begin{equation}
  \label{eq:1}
\bfh(x,z,t+d(y,z))\le \bfh(x,y,t) +A(1)d(y,z).  
\end{equation}
Similarly, defining $\hat\ga\in\cC_\tau(y,z)$, $\tau>t$ by
$\hat\ga|[0,t]=\ga$, $\hat\ga(s)=y$ for $s\in[t,\tau]$ we get
\begin{equation}
  \label{eq:2}
\bfh(x,y,\tau)\le \bfh(x,y,t)+A(0)(\tau-t).  
\end{equation}
Let $r>0$ an $\ga\in\cC_{t+r}(x,y)$ and define $\bga\in\cC_t(x,y)$
by $\bga(s)=\ga((t+r)s/t)$ we have
\begin{align*}
  \bfh(x,y,t)&\le A_L(\bga)=\frac t{t+r}\int_0^{t+r}L\Bigl(\frac {t+r}t\dga\Bigr)\\
&\le \Bigl(\frac {t+r}t\Bigr)^{C_1-1}A_L(\ga)+\frac{C_1}{C_2}\left(\Bigl(\frac {t+r}t\Bigr)^{C_1}-1\right),
\end{align*}
and then
\begin{equation}
  \label{eq:3}
   \bfh(x,y,t)\le \Bigl(\frac {t+r}t\Bigr)^{C_1-1} \bfh(x,y,t+r)+\frac{C_1}{C_2}\left(\Bigl(\frac {t+r}t\Bigr)^{C_1}-1\right).
 \end{equation}
 Thus
 \begin{align*}
 \bfh(x,y,t)-\bfh(x,y,t+r)&\le\Bigl(\Bigl(\frac {t+r}t\Bigr)^{C_1-1} -1\Bigr)\bfh(x,y,t+r)+\frac{C_1}{C_2}\left(\Bigl(\frac {t+r}t\Bigr)^{C_1}-1\right)\\
&\le C_1 r(1+\de^{-2}) ^{C_1-1}\Bigl(\frac{A(\de^{-2})}\de+\frac{C_1}{C_2}\Bigr).
 \end{align*}
 Letting $d=d(y,z)$, from \eqref{eq:1} , \eqref{eq:3} we get
 \begin{align*}
\bfh(x,z,t)-\bfh(x,y,t)&\le\Bigl(\Bigl(\frac {t+d}t\Bigr)^{C_1-1} -1\Bigr)\bfh(x,z,t+d)+\frac{C_1}{C_2}\left(\Bigl(\frac {t+d}t\Bigr)^{C_1}-1\right)\\
&+\bfh(x,z,t+d)-\bfh(x,y,t)\\&\le \Bigl(C_1 (1+\de^{-2})
   ^{C_1-1}\Bigl(\frac{2A(\de^{-2})}\de+\frac{C_1}{C_2}\Bigr)+A(1)\Bigr)\ d(y,z),
 \end{align*}
 and similarly for $\bfh(z,y,t)-\bfh(x,y,t)$.
\end{proof}
\begin{corollary}\label{htlip}
  Assume $M$ is compact and there are $C_1, C_2>0$ such that
  \eqref{vlvbound} holds.
  For any $\de>0$ there is $K_\de>0$ such that
  $h_t$ is $K_\de$-Lipschitz for any $t\ge\de$.
\end{corollary}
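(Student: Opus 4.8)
The plan is to re-run the estimates in the proof of Proposition \ref{htcont}, the only new feature being that $t$ now ranges over the unbounded interval $[\de,\infty)$ rather than $[\de,1/\de]$. Compactness of $M$ enters through the finiteness of its diameter $D=\diam(M)$, so that $d(x,y)\le D$ for all $x,y\in M$; in particular, for $t\ge\de$ and $d=d(y,z)\le D$ the ratio $\frac{t+d}{t}=1+\frac{d}{t}$ is bounded by $1+D/\de$, uniformly in $t$. This uniform control replaces the role previously played by the upper bound $t\le 1/\de$.

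First I would establish a linear-in-$t$ upper bound for $\bfh$: there are constants $a,b$, depending on $\de$, $D$ and $L$, with
\[
\bfh(x,y,t)\le at+b\qquad\hbox{for all }x,y\in M,\ t\ge\de.
\]
For $\de\le t\le D$ one reparametrizes a unit-speed minimizing geodesic from $x$ to $y$ to run in time $t$; its speed is then at most $D/\de$, giving $\bfh(x,y,t)\le A(D/\de)\,t$. For $t>D$ one follows a unit-speed minimizing geodesic in time $d(x,y)\le D$, of cost at most $A(1)\,d(x,y)$, and then stays at $y$ for the remaining time, which by \eqref{eq:2} costs at most $A(0)(t-d(x,y))$. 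Taking $a=\max\{A(D/\de),A(0)\}$ and $b=A(1)D$ covers both cases.

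Next I would combine \eqref{eq:1} and \eqref{eq:3} exactly as in Proposition \ref{htcont} to obtain, with $d=d(y,z)$,
\[
\bfh(x,z,t)-\bfh(x,y,t)\le\Bigl(\Bigl(\frac{t+d}{t}\Bigr)^{C_1-1}-1\Bigr)\bfh(x,z,t+d)+A(1)d+\frac{C_1}{C_2}\Bigl(\Bigl(\frac{t+d}{t}\Bigr)^{C_1}-1\Bigr).
\]
The \emph{key point} is that, by the mean value theorem, both $\bigl(\frac{t+d}{t}\bigr)^{C_1-1}-1$ and $\bigl(\frac{t+d}{t}\bigr)^{C_1}-1$ are bounded by $c(\de)\,\frac{d}{t}$, where $c(\de)$ depends only on $\de$, $D$ and $C_1$ through the bound $\frac{t+d}{t}\le 1+D/\de$ (when $C_1-1<0$ the corresponding term is in fact non-positive, so the estimate is only easier). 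Multiplying the first of these by the linear bound $\bfh(x,z,t+d)\le a(t+D)+b$ produces a factor $\frac{d}{t}\bigl(a(t+D)+b\bigr)=d\bigl(a+\frac{aD+b}{t}\bigr)\le d\bigl(a+\frac{aD+b}{\de}\bigr)$; this is exactly where the linear growth of $\bfh$ in $t$ is cancelled by the $\frac1t$ coming from the geometric prefactor. The remaining two terms are handled with $\frac{d}{t}\le\frac{d}{\de}$. Altogether the right-hand side is bounded by $K_\de\,d(y,z)$ with $K_\de$ independent of $t\ge\de$, and the symmetric estimate for the first argument (as in the last line of the proof of Proposition \ref{htcont}) yields that $h_t$ is $K_\de$-Lipschitz.

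The main obstacle is precisely the passage from bounded to unbounded $t$: one cannot simply bound $\bfh(x,z,t+d)$ by a constant, since it grows linearly. The resolution is the observation that the geometric prefactor $\bigl(\frac{t+d}{t}\bigr)^{C_1-1}-1$ decays like $1/t$, so that its product with $\bfh(x,z,t+d)$ stays bounded uniformly in $t\ge\de$; everything else is a uniform-in-$t$ version of the computation already carried out for Proposition \ref{htcont}, with $\diam(M)$ in place of $1/\de$.
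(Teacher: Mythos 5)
Your proof is correct, but it follows a genuinely different route from the paper's. The paper does not revisit the estimates of Proposition \ref{htcont} at all: it uses the semigroup identity $h_{t+s}(x,y)=\min_z h_t(x,z)+h_s(z,y)$ (the minimum being attained by continuity of $\bfh$ and compactness of $M$), partitions $[0,t]$ into subintervals of length in $[\de,1/\de]$, picks optimal intermediate points $z_1,\dots,z_{l-1}$ for $h_t(x_2,y_2)$, and compares $h_t(x_1,y_1)$ with the competitor that reuses the same intermediate points; only the first and last segments change, and each is governed by the bounded-time Lipschitz constant $K_\de$ from Proposition \ref{htcont}. Your argument instead extends the scaling estimates \eqref{eq:1} and \eqref{eq:3} directly to unbounded $t$, and the key quantitative point you identify --- that the prefactor $\bigl(\frac{t+d}{t}\bigr)^{C_1-1}-1=O(d/t)$ exactly cancels the linear growth $\bfh(x,z,t+d)\le a(t+D)+b$ --- is sound; the linear upper bound itself is correctly obtained from $\diam M<\infty$. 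What each buys: the paper's route is shorter and needs no growth bound on $\bfh$, but it does rely on attainment of the intermediate minimizer; yours uses only the elementary concatenation and reparametrization inequalities and is more self-contained and explicit about constants. One small caveat in your parenthetical: if $C_1<1$ the prefactor is indeed nonpositive, but $\bfh(x,z,t+d)$ can be negative (it is only bounded below by $C(0)(t+d)$), so the product is not automatically nonpositive; you would also need the linear lower bound $h_t\ge tC(0)$ there. Since Proposition \ref{htcont} is stated with $C_1\ge 1$, this does not affect the result.
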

\begin{proof}
  Since $\bfh$ is continuous and $M$ is compact, for $x,y\in M$, $t,s>0$
  there is $z\in M$ such that $h_{t+s}(x,y)=h_t(x,z)+h_s(z,y)$.

 Let $\de>0$, by Proposition \ref{htcont} there is $K_\de>0$ such that
 $\bfh$ is $K_\de$ Lipschitz on $M\times M\times[\de,1/\de]$. If $t>1/\de$ choose $t_0<t_1<\cdots<t_k=t$ such that
$t_i-t_{i-1}\in[\de,1/\de]$. Let $x_1, x_2, y_1, y_2\in M$ and suppose 
$h_t(x_1, y_1)\ge h_t(x_2, y_2)$. There are $z_0=x_2, z_1,\ldots,
z_t=y_2\in M$ such that 
$h_t(x_2, y_2)=\displaystyle\sum_{i=1}^lh_{t_i-t_{i-1}}(z_{i-1},z_i)$.
Since
\[h_t(x_1, y_1)\le h_{t_1}(x_1,z_1)+\sum_{i=2}^{l-1}h_{t_i-t_{i-1}}(z_{i-1},z_i)
+h_{t-t_{l-1}}(z_{l-1},y_1),\]
we have
\begin{align*}
  h_t(x_1, y_1)- h_t(x_2, y_2)&\le h_{t_1}(x_1, z_1)- h_{t_1}(x_2, z_1)
  +h_{t-t_{l-1}}(z_{l-1}, y_1)-h_{t-t_{l-1}}(z_{l-1}, y_2)\\
                              &\le K_\de (d(x_1, x_2)+d(y_1, y_2))
\end{align*}
\end{proof}

\begin{definition}[Lax-Oleinik semi-group]
For $u:M\to[-\infty,+\infty]$ and $t>0$ we define $\cL_tu:M\to[-\infty,+\infty]$ by
\[\cL_tu(x)=\inf\{u(\ga(0))+A_L(\ga):\ga\in\cWD([0,t]),\ \ga(t)=x\}\]
\end{definition}
Notice that
\[\cL_tu(x)=\inf_{y\in M}u(y)+h_t(y,x)\]
  \begin{proposition}\label{prop-lax}
The family of maps $\{\cL_t\}_{t\ge 0}$ has the
 following properties
 \begin{enumerate}
\item $u:M\to\R$ is $L+c$ dominated if and only if $u\le \cL_tu+ct$ for all $t\ge 0$.
  \item For any $s,t>0$, $\cL_{t+s}=\cL_t\circ \cL_s$
\item If $u:M\to\R$ is Lipschitz for the metric $d$, then for each $x\in M$ and $t>0$ there is $\ga\in\cWD([0,t])$
  such that $\ga(t)=x$ and
  \[\cL_tu(x)=u(\ga(0))+A_L(\ga)=u(\ga(0))+h_t(\ga(0),x).\]
\item If $M$ is compact,  $u\in C(M)$ and that there are
$C_1, C_2>0$ such that \eqref{vlvbound} holds, then for each $x\in M$,
$t>0$ there is $y\in M$ such that $\cL_tu(x)=u(y)+h_t(y,x)$.
\end{enumerate}
\end{proposition}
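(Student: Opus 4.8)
The four assertions are essentially independent, and I would dispatch them in order. For (1) I argue directly from the definitions, using that $A_L$ is invariant under time-translation. If $u$ is $L+c$ dominated and $\ga\in\cWD([0,t])$ has $\ga(t)=x$, then $u(x)-u(\ga(0))\le A_{L+c}(\ga)=A_L(\ga)+ct$, so $u(x)\le u(\ga(0))+A_L(\ga)+ct$; taking the infimum over all such $\ga$ gives $u\le\cL_tu+ct$. Conversely, given $\ga\in\cWD([a,b])$ I translate it to $\be(s)=\ga(s+a)$ on $[0,b-a]$, for which $A_L(\be)=A_L(\ga)$, $\be(0)=\ga(a)$ and $\be(b-a)=\ga(b)$; the hypothesis then yields $u(\ga(b))\le\cL_{b-a}u(\ga(b))+c(b-a)\le u(\ga(a))+A_L(\ga)+c(b-a)=u(\ga(a))+A_{L+c}(\ga)$, so $u$ is $L+c$ dominated.

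For (2) I would use the representation $\cL_tu(x)=\inf_{y\in M}u(y)+h_t(y,x)$ together with the concatenation identity of Proposition \ref{prop-ht}(2), which (after relabelling, and since $t+s=s+t$) gives $h_{t+s}(z,x)=\inf_{y}h_s(z,y)+h_t(y,x)$. Substituting and exchanging the two infima, $\cL_{t+s}u(x)=\inf_z\inf_y\bigl(u(z)+h_s(z,y)+h_t(y,x)\bigr)=\inf_y\bigl(\cL_su(y)+h_t(y,x)\bigr)=\cL_t(\cL_su)(x)$, which is the semigroup property.

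The real content is (3), where $M$ is not assumed compact, so the outer infimum over starting points could a priori run off to infinity. The plan is to turn the Lipschitz bound on $u$ and the uniform superlinearity of $L$ into coercivity in the starting point. Let $K$ be a Lipschitz constant for $u$ and take a minimizing sequence $\ga_n\in\cWD([0,t])$ with $\ga_n(t)=x$ and $u(\ga_n(0))+A_L(\ga_n)\to\cL_tu(x)$. Applying uniform superlinearity with constant $2K$, namely $L(v)\ge 2K\|v\|+C(2K)$, I get $A_L(\ga_n)\ge 2K\,d(x,\ga_n(0))+C(2K)t$, while the Lipschitz bound gives $u(\ga_n(0))\ge u(x)-K\,d(x,\ga_n(0))$; adding these, $u(\ga_n(0))+A_L(\ga_n)\ge u(x)+K\,d(x,\ga_n(0))+C(2K)t$. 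Since the left-hand side converges, both $d(x,\ga_n(0))$ and hence $A_L(\ga_n)$ stay bounded, so the $\ga_n$ lie in a set $\cF(\{x\},c_0)$, compact by Tonelli's Theorem (Corollary \ref{tonelli}). Passing to a $d$-uniformly convergent subsequence $\ga_n\to\ga$, continuity of $u$ gives $u(\ga_n(0))\to u(\ga(0))$ and lower semicontinuity (Corollary \ref{low-semi}) gives $A_L(\ga)\le\liminf_n A_L(\ga_n)$; hence $u(\ga(0))+A_L(\ga)\le\cL_tu(x)$, and the reverse inequality is the definition, so $\ga$ is a minimizer. Minimality among curves joining $\ga(0)$ to $x$ then forces $A_L(\ga)=h_t(\ga(0),x)$. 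The main obstacle is exactly this coercivity step; it also quietly relies on closed $d$-balls being compact, which is what makes Corollary \ref{tonelli} applicable.

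Finally, for (4) compactness of $M$ removes the escape-to-infinity problem, and the only issue is that $u$ is merely continuous rather than Lipschitz. I would again use $\cL_tu(x)=\inf_{y\in M}u(y)+h_t(y,x)$: under \eqref{vlvbound} and compactness of $M$, Corollary \ref{htlip} makes $h_t$ Lipschitz for each fixed $t>0$, in particular continuous, so $y\mapsto u(y)+h_t(y,x)$ is continuous on the compact manifold $M$ and attains its infimum at some $y$, as required.
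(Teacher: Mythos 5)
Your proof is correct and, for item (3) — the only item the paper actually proves — it follows essentially the same route as the paper: combine the Lipschitz bound on $u$ with uniform superlinearity to confine a minimizing sequence to a Tonelli-compact family $\cF(\{x\},c_0)$, then extract a $d$-uniform limit and conclude by continuity of $u$ and lower semicontinuity of $A_L$ (the paper bounds the admissible family before minimizing, you bound the minimizing sequence afterwards; the difference is cosmetic). Your arguments for (1), (2) and (4), which the paper omits, are the standard ones and are fine.
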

\begin{proof}
  (3) Using the constant curve with value $x$ we have 
$\cL_tu(x)\le u(x)+t A(0).$
Thus 
\[\cL_tu(x)=\inf\{u(\ga(0))+A_L(\ga):\ga\in\cF(u,x,t)\}\]
where 
\[\cF(u,x,t)=\{\ga\in\cWD([0,t]), \ga(t)=x,
   u(\ga(0))+ A_L(\ga)\le u(x)+t A(0)\}.\]
If $u$ is $K$-Lipschitz, $u(x)-u(\ga(0))\le K\ d(x,\ga(0))$.

By the superlinearity of $L$ we have 
\[C(K+1) t+(K+1) \ell(\ga)\le A_L(\ga),\]
thus, if $\ga\in\cF(u,x,t)$ then $ d(x,\ga(0))\le \ell(\ga)\le
t(A(0)-C(K+1))$. Therefore
\[\cL_tu(x)=\inf\{u(\ga(0))+A_L(\ga): \ga\in\cF'(x,t,K) \}\]
where 
\[\cF'(x,t,K)=\{\ga\in\cWD([0,t]): \ga(t)=x , A_L(\ga)\le t (A(0)+K (A(0)-C(K+1)))\} \]
By Tonelli's Theorem the set $\cF'(x,t,K)$
is compact for the topology of uniform convergence with the metric $d$.
Choose  $\ga_n\in\cF'(x,t,K)$ such that
$u(\ga_n(0))+A_L(\ga_n)<\cL_tu(x)+\dfrac 1n$. 
Then $\ga_n$ has a subsequence $\ga_{n_j}$
that converges uniformly with the metric $d$ to some $\ga\in\cF'(x,t,K)$.
By continuity of $u$, $u(\ga(0))=\lim u(\ga_{n_j}(0))$.
By Theorem \ref{Cle}, $u(x)=\cL_tu(x)=u(\ga(0))+A_L(\ga)$.
\end{proof}
\begin{proposition}\label{weak=fix}
Let $c\in\R$. A function $u:M\to\R$ satisfies $u=\cL_tu+c t$ for all $t\ge 0$
if and only it is a weak KAM solution.
\end{proposition}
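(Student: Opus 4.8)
The plan is to prove both implications, using the characterization of $L+c$ domination as the inequality $u\le\cL_tu+ct$ from Proposition \ref{prop-lax}(1) and the existence of Lax--Oleinik minimizers from Proposition \ref{prop-lax}(3). Before starting I would record a routine but essential fact: if $u$ is $L+c$ dominated and $\ga\in\cWD([a,b])$ is calibrated, then $\ga$ is calibrated on every subinterval $[a',b']\subset[a,b]$. This follows by splitting $[a,b]$ into $[a,a']$, $[a',b']$, $[b',b]$, applying the domination inequality to each piece, and noting that since the three inequalities sum to the overall equality, each must itself be an equality. Throughout, recall $A_{L+c}(\ga)=A_L(\ga)+c(b-a)$ for $\ga\in\cWD([a,b])$.

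For the direction ``$u=\cL_tu+ct\ \Rightarrow$ weak KAM'': the inequality $u\le\cL_tu+ct$ gives, via Proposition \ref{prop-lax}(1), that $u$ is $L+c$ dominated, hence Lipschitz for $d$. Fix $x\in M$. Since $u$ is Lipschitz, Proposition \ref{prop-lax}(3) provides for each $t>0$ a curve $\ga^t\in\cWD([0,t])$ with $\ga^t(t)=x$ and $\cL_tu(x)=u(\ga^t(0))+A_L(\ga^t)$. Using $\cL_tu(x)=u(x)-ct$ this reads $u(x)-u(\ga^t(0))=A_{L+c}(\ga^t)$, so after the shift $\tilde\ga^t(s)=\ga^t(s+t)$, $s\in[-t,0]$, the curve $\tilde\ga^t$ ends at $x$ and is calibrated on $[-t,0]$. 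The hard part, and the main obstacle, is passing to an infinite calibrated curve. I would take $t=t_n\to\infty$ and, for each fixed $T>0$, bound $A_L(\tilde\ga^{t_n}|_{[-T,0]})$ uniformly in $n$: by subinterval calibration $A_{L+c}(\tilde\ga^{t_n}|_{[-T,0]})=u(x)-u(\tilde\ga^{t_n}(-T))$, and combining the Lipschitz bound on the endpoint difference with uniform superlinearity (exactly as in the proof of Proposition \ref{min-inf}) forces $d(x,\tilde\ga^{t_n}(-T))$, and hence the action, to stay bounded. Since all these curves meet the compact set $\{x\}$, Tonelli's Theorem (Corollary \ref{tonelli}) makes $\{\tilde\ga^{t_n}|_{[-T,0]}\}_n$ relatively compact in $\cWD([-T,0])$; a diagonal extraction over $T=1,2,\dots$ yields a subsequence converging $d$-uniformly on every $[-T,0]$ to a curve $\ga\in\cWD(]-\infty,0])$ with $\ga(0)=x$. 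Finally, for each $T$ lower semicontinuity (Theorem \ref{Cle}) gives $A_{L+c}(\ga|_{[-T,0]})\le\liminf_nA_{L+c}(\tilde\ga^{t_n}|_{[-T,0]})=u(x)-u(\ga(-T))$ (using continuity of $u$), while domination gives the reverse inequality; hence $\ga$ is calibrated on each $[-T,0]$, so on $]-\infty,0]$, and $u$ is a backward $c$-weak KAM solution.

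For the converse, assume $u$ is a backward $c$-weak KAM solution. Then $u$ is $L+c$ dominated, so $u\le\cL_tu+ct$ by Proposition \ref{prop-lax}(1). For the opposite inequality, fix $x$ and $t>0$ and take the calibrated curve $\ga\in\cWD(]-\infty,0])$ with $\ga(0)=x$ furnished by the weak KAM property. Reparametrizing $\ga|_{[-t,0]}$ to $\beta(s)=\ga(s-t)$ on $[0,t]$, so that $\beta(t)=x$ and $\beta(0)=\ga(-t)$, the definition of the semigroup gives $\cL_tu(x)\le u(\ga(-t))+A_L(\ga|_{[-t,0]})$. Calibration on $[-t,0]$ says $u(x)-u(\ga(-t))=A_{L+c}(\ga|_{[-t,0]})=A_L(\ga|_{[-t,0]})+ct$, so the right-hand side equals $u(x)-ct$. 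Thus $\cL_tu(x)\le u(x)-ct$, and together with the domination inequality we conclude $u=\cL_tu+ct$ for every $t\ge0$.
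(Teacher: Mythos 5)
Your proof is correct and follows essentially the same route as the paper: Proposition \ref{prop-lax}(1) for domination, Proposition \ref{prop-lax}(3) for Lax--Oleinik minimizers, an action bound on restrictions to $[-T,0]$ coming from the Lipschitz property of $u$ together with uniform superlinearity, Tonelli compactness with a diagonal extraction, and finally Theorem \ref{Cle} combined with domination to calibrate the limit curve. The only (harmless) differences are that you spell out the subinterval-calibration lemma and the converse direction, which the paper treats as immediate.
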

\begin{proof}
  It is clear that a weak KAM solution $u$ satisfies $u=\cL_tu+c t$ for all $t\ge 0$.
Assuming that $u=\cL_tu+c t$ for all $t\ge 0$, by item (1) in Proposition
\ref{prop-lax},  we have that $u$ is $L+c$ dominated.

Let $x\in M$ by item (3) of Proposition \ref{prop-lax} for each $s>0$
there is  $\ga_s\in\cWD([-s,0])$ with $\ga(0)=x$ and
$\cL_su(x)=u(\ga_s(-s))+A_L(\ga)$; then  $u(x)=u(\ga_s(-s))+A_{L+c}(\ga)$.
Since $u$ is $L+c$ it is dominated, it is $K$-Lipischitz for $K=c+A(1)$ we have that
$u(x)=u(\ga_s(-t))+A_{L+c}(\ga_s|[-t,0])$ for any $t\in[0,s]$.
From the proof of item (3) of Proposition \ref{prop-lax}, for $s>t$
  we have
  \[\int_{-t}^0L(\dga_s)\le t (A(0)+K (A(0)+C(K+1))).\]
    By Tonelli's Theorem there is a sequence $s_j\to\infty$
such that $\ga_{s_j}|[-t,0]$ converges uniformly with the metric
$d$. Aplying this argument to a sequence $t_k\to\infty$ and using a
diagonal trick one gets a sequence $\sigma_n\to\infty$ and a curve
$\ga\in\cWD(]-\infty,0])$ such that
$\ga_{\sigma_n}|[-t,0]$ converges uniformly with the metric
$d$ to $\ga|[-t,0]$  for any $t>0$. By the continuity of $u$,   for any $t>0$,
$u(\ga(-t))=\lim\limits_{n\to\infty} u(\ga_{\sigma_n}(-t))$.
By Theorem \ref{Cle}, $\cL_tu(x)=u(\ga(-t))+A_L(\ga|[-t,0])$.
\end{proof}
\begin{corollary}\label{wkam-cor}
  Let $M$ be compact.
  For each $\de>0$ there is $C_\de>0$ such that $|h_t(x,y)+c(L)t|\le C_\de$ for $x,y\in M$,  $t\ge\de$
  \end{corollary}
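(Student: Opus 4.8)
The plan is to extract both bounds from a single backward $c(L)$-weak KAM solution $u$. Such a $u$ exists on compact $M$: the weak KAM theorem (Theorem \ref{wkam-teo}) produces a backward $c$-weak KAM solution for some constant $c$, and part (2) of the Proposition following the Example forces $c=c(L)$. Being dominated, $u$ is Lipschitz, hence bounded on the compact manifold $M$; I fix $B$ with $|u|\le B$ and write $D=\diam M$ for the Carnot--Carathéodory diameter, which is finite.

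The lower bound is immediate. Since $u$ is $L+c(L)$ dominated, every $\ga\in\cC_t(y,x)$ satisfies $u(x)-u(y)\le A_L(\ga)+c(L)t$, so passing to the infimum over such curves gives $h_t(y,x)+c(L)t\ge u(x)-u(y)\ge -2B$ for all $x,y\in M$ and all $t>0$.

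For the upper bound I would fix the short time $s=\de/2$ and interpolate through a calibrated curve. Given $x,y\in M$ and $t\ge\de$, let $\ga\in\cWD(]-\infty,0])$ be a curve calibrated by $u$ with $\ga(0)=x$, and set $w=\ga(s-t)$; this is legitimate since $s-t<0$. Calibration gives $A_L(\ga|_{[s-t,0]})=u(x)-u(w)-c(L)(t-s)$, and since $\ga|_{[s-t,0]}$ joins $w$ to $x$ in time $t-s$, this quantity bounds $h_{t-s}(w,x)$ from above. To reach $w$ from $y$ in the remaining time $s$, I reparametrize a unit-speed minimizing geodesic from $y$ to $w$ to run on $[0,s]$; its speed is $d(y,w)/s\le 2D/\de$, so $h_s(y,w)\le s\,A(2D/\de)$ because $A$ is nondecreasing. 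Subadditivity (Proposition \ref{prop-ht}(2)) then yields
\[
h_t(y,x)\le h_s(y,w)+h_{t-s}(w,x)\le \tfrac{\de}{2}A(2D/\de)+u(x)-u(w)-c(L)(t-s),
\]
and rearranging gives $h_t(y,x)+c(L)t\le \tfrac{\de}{2}A(2D/\de)+2B+c(L)\tfrac{\de}{2}$, a bound independent of $x,y$ and of $t\ge\de$.

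Combining the two estimates proves the claim with $C_\de=\max\bigl(2B,\ \tfrac{\de}{2}A(2D/\de)+2B+c(L)\tfrac{\de}{2}\bigr)$. The one delicate point is the upper bound: one must connect an \emph{arbitrary} starting point $y$ to the specific point $w$ lying on the backward calibrated curve through $x$, and the compactness of $M$ is exactly what keeps $d(y,w)\le D$ and hence the geodesic term $\tfrac{\de}{2}A(2D/\de)$ finite and uniform in $x,y,t$. Note that the estimate \eqref{vlvbound} is not needed here, since the calibrated curve is supplied by the weak KAM theorem rather than by Proposition \ref{prop-lax}(4).
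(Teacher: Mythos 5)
Your proof is correct, but it reaches the two bounds by a different route than the paper. The paper first controls the \emph{oscillation} of $h_t$: splitting any $\ga\in\cC_{t+\de}(x,y)$ into three pieces on $[0,\de/2]$, $[\de/2,t+\de/2]$, $[t+\de/2,t+\de]$ gives $h_{t+\de}(x,y)\ge 2\inf h_{\de/2}+\inf h_t$, while concatenation through arbitrary intermediate points gives $h_{t+\de}(x,y)\le 2\sup h_{\de/2}+\inf h_t$; together these show $\sup h_t-\inf h_t$ is bounded uniformly for $t\ge\de$. It then pins down the remaining quantity $\inf h_t+c(L)t$ by the fixed-point identity $u=\cL_tu+c(L)t$ of a weak KAM solution, via $\cL_tu(y)=\inf_z(u(z)+h_t(z,y))$. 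You instead get the lower bound directly from domination and the upper bound by concatenating a short reparametrized geodesic from $y$ to a point $w=\ga(\de/2-t)$ on a backward calibrated ray through $x$, using calibration to evaluate the long piece exactly as $u(x)-u(w)-c(L)(t-\de/2)$ and uniform boundedness $A(2\diam M/\de)$ for the short piece. Your argument is somewhat more economical (one concatenation rather than the oscillation estimate plus the semigroup identity), at the price of invoking the full strength of the weak KAM theorem (existence of calibrated rays at every point), whereas the paper's oscillation step needs only the subadditivity of $\bfh$ and its weak KAM input is only the identity $u=\cL_tu+c(L)t$ used through the formula for $\cL_t$. Two small points to tidy: calibration of $\ga|_{[\de/2-t,0]}$ follows from calibration on $]-\infty,0]$ together with domination on each subinterval (worth one line), and it is cleaner to take $C_\de=2B+\tfrac{\de}{2}A(2\diam M/\de)+|c(L)|\tfrac{\de}{2}$ so that both signed bounds are covered without the case distinction hidden in your $\max$. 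Neither affects correctness.
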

\begin{proof}
 For $\ga\in\cC_{t+\de}(x,y)$ we have
\[
  A_L(\ga)=A_L(\ga|[0,1])+ A_L(\ga|[\de/2,t+\de/2])+A_L(\ga|[t+\de/2,t+\de])\ge 2\inf h_{\frac\de 2}+\inf h_t,
  \]
  thus
  \[h_{t+\de}(x,y) \ge 2 \inf h_{\frac\de 2}+\inf h_t.\]
For any $z, w\in M$
\[h_{t+\de}(x,y) \le h_{\frac\de 2}(x,z)+h_t(z,w)+h_{\frac\de 2}(w,y)\le 2\sup h_{\frac\de 2}+h_t(z,w),\]
thus

\[h_{t+\de}(x,y) \le 2 \sup h_{\frac\de 2}+\inf h_t.\]
Therefore, for $t\ge 2$ we have that
\[ h_t-\inf h_t\le 2 (\sup h_{\frac\de 2}-\inf h_{\frac\de 2}):=a\]
Let $u:M\to\R$ be such that $u=\cL_tu+c(L)t$ for all $t\ge 0$. 
For $t\ge\de$ we have
\begin{align*}
-2\|u\|_\infty&\le u(y)-u(x)\le h_t(x,y)+c(L)t\le a+\inf h_t+c(L)t\\
&  = a+\inf h_t +u-\cL_tu\le a+2\|u\|_\infty
  \end{align*}
  and then
  \[| h_t(x,y)+c(L)t|\le 2 (\sup h_{\frac\de 2}-\inf h_{\frac\de 2}+\|u\|_\infty).\]
\end{proof}

\subsection{Static curves and the Aubry set}
\label{sec:aubry-set}
We assume that $M$ is compact and set $c=c(L)$.
\begin{definition}
  We define
  \begin{itemize}
  \item The {\em Peierls barrier} $h:M\times M\to\R$ by
  \[h(x,y)=\liminf_{t\to\infty}h_t(x,y)+ct,\]
  The  {\em Ma\~n\'e potential} $\Phi:M\times M\to\R$
  \[\Phi(x,y)=\inf_{t>0} h_t(x,y)+ct.\]
  \end{itemize}
 \end{definition}
 It is clear that $\Phi\le h$
\begin{proposition}\label{prop-h}
   Functions $h$ and $\Phi$ have the following properties
   \begin{enumerate}
   \item $u$ is $L+c$ dominated $\iff u(y)-u(x)\le \Phi(x,y)$.
   \item For each $x\in M$, $\Phi(x,x)= 0$.
   \item $\Phi(x,z)\le \Phi(x,y)+\Phi(y,z)$.
   \item  $h(x,z)\le h(x,y)+\Phi(y,z)$,\; $h(x,z)\le \Phi(x,y)+h(y,z)$.
     \item $h$ and $\Phi$ are Lipschitz.
   \item For each $x, y\in M$ there is a sequence
     $\ga_n\in\cC_{t_n}(x, y)$ with $t_n\to\infty$ such that
     \[h(x,y)=\lim_{n\to\infty}A_L(\ga_n)+ct_n.\]
   \item   If $\ga_n\in\cWD([0,t_n])$ with $t_n\to\infty$,
     $\ga_n(0)\to x$,   $\ga_n(t_n)\to y$, then
     \[h(x,y)\le\liminf_{n\to\infty}A_L(\ga_n) +ct_n.\]
   \end{enumerate}
 \end{proposition}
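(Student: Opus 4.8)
The plan is to establish the seven items essentially in the order listed, since each is a reformulation at the level of $\Phi$ and $h$ of the concatenation and reparametrization properties of $h_t$ recorded in Proposition \ref{prop-ht}, combined with the uniform two-sided bound $|h_t(x,y)+ct|\le C_\de$ for $t\ge\de$ from Corollary \ref{wkam-cor}; this last bound is what makes both $h$ and $\Phi$ finite-valued, a point I want to settle before anything else. For (1), if $u$ is $L+c$ dominated then for every $t>0$ and every $\ga\in\cC_t(x,y)$ one has $u(y)-u(x)\le A_L(\ga)+ct$, and taking the infimum first over $\ga$ and then over $t$ gives $u(y)-u(x)\le\Phi(x,y)$; conversely, for $\ga\in\cWD([a,b])$ with $x=\ga(a)$, $y=\ga(b)$, $t=b-a$ I use $\Phi(x,y)\le h_t(x,y)+ct\le A_L(\ga)+ct=A_{L+c}(\ga)$ together with the hypothesis. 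For (2), the existence of an $L+c(L)$ dominated function $u$ gives through (1) that $0=u(x)-u(x)\le\Phi(x,x)$, while the same function forces $L(x,0)+c\ge 0$ by domination of the constant curve; since the constant curve also yields $h_t(x,x)\le tL(x,0)$, I get $\Phi(x,x)\le\inf_{t>0}t(L(x,0)+c)=0$.

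Items (3) and (4) both come from the concatenation inequality $h_{t+s}(x,z)\le h_t(x,y)+h_s(y,z)$. For (3) I add $c(t+s)$ to each side and take the infimum over $t,s>0$. For (4) I fix $s$, add $c(t+s)$, and take the $\liminf$ as $t\to\infty$; because shifting the time argument by the fixed amount $s$ does not change the $\liminf$ as the time tends to infinity, the left-hand side converges to $h(x,z)$, and after an infimum over $s$ (respectively, interchanging the roles of the two factors) I obtain both stated inequalities $h(x,z)\le h(x,y)+\Phi(y,z)$ and $h(x,z)\le\Phi(x,y)+h(y,z)$.

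For the Lipschitz property (5) the key preliminary estimate is that joining two points $a,b$ by a unit-speed minimizing geodesic of length $d(a,b)$ run in time $d(a,b)$ gives $h_{d(a,b)}(a,b)\le A(1)\,d(a,b)$, hence $\Phi(a,b)\le (A(1)+c)\,d(a,b)$. Fed into the triangle inequality (3) in the form $\Phi(x',y')\le\Phi(x',x)+\Phi(x,y)+\Phi(y,y')$, this yields $|\Phi(x,y)-\Phi(x',y')|\le (A(1)+c)(d(x,x')+d(y,y'))$; the same estimate inserted into $h(x',y')\le\Phi(x',x)+h(x,y)+\Phi(y,y')$, obtained by chaining the two inequalities of (4), gives the Lipschitz bound for $h$. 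Here finiteness is what must be checked for these manipulations to be legitimate: $h$ is bounded by Corollary \ref{wkam-cor}, and $\Phi(x,y)\ge u(y)-u(x)$ by (1) applied to a dominated $u$, so both functions are real-valued.

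Finally (6) and (7) are the dynamical statements, and I expect (7) to be the real work. For (6) I choose $t_n\to\infty$ realizing the $\liminf$ defining $h(x,y)$ and, by Proposition \ref{prop-ht}(4), take minimizers $\ga_n\in\cC_{t_n}(x,y)$ with $A_L(\ga_n)=h_{t_n}(x,y)$, so that $A_L(\ga_n)+ct_n\to h(x,y)$. For (7), given $\ga_n$ from $x_n=\ga_n(0)\to x$ to $y_n=\ga_n(t_n)\to y$, I prepend a geodesic from $x$ to $x_n$ and append one from $y_n$ to $y$, each of length tending to $0$, producing $\bga_n\in\cC_{T_n}(x,y)$ with $T_n=d(x,x_n)+t_n+d(y_n,y)\to\infty$ and $A_L(\bga_n)\le A_L(\ga_n)+A(1)(d(x,x_n)+d(y_n,y))$; then $h_{T_n}(x,y)+cT_n\le A_L(\ga_n)+ct_n+(A(1)+c)(d(x,x_n)+d(y_n,y))$. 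Since $T_n\to\infty$ gives $h(x,y)=\liminf_{t\to\infty}(h_t(x,y)+ct)\le\liminf_n(h_{T_n}(x,y)+cT_n)$ and the correction term vanishes, I conclude $h(x,y)\le\liminf_n A_L(\ga_n)+ct_n$. The one delicate point is that the short connecting arcs be horizontal, of length close to the Carnot--Carathéodory distance, and run in comparable time; this is exactly what the definition of $d$ and the existence of minimizing geodesics (used throughout the paper) provide, so the obstacle is bookkeeping rather than anything structural.
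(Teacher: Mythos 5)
The paper states Proposition \ref{prop-h} without proof, so there is no argument of the author's to compare against; your proposal supplies the standard one and it is correct. Each step checks out: the two directions of (1) from the definitions, the two-sided bound in (2) using the existence of an $L+c(L)$ dominated function (guaranteed by the earlier Proposition on $c(L)$) together with $L(x,0)+c\ge 0$, items (3)--(4) from the concatenation inequality for $h_t$, the Lipschitz estimates in (5) from $\Phi(a,b)\le (A(1)+c)\,d(a,b)$ fed into the triangle inequalities (with finiteness of $\Phi$ from below via (1) and of $h$ via Corollary \ref{wkam-cor}, which does precede this proposition in the text), and (6)--(7) from Tonelli minimizers plus the geodesic-splicing estimate, where $\liminf_n(h_{T_n}(x,y)+cT_n)\ge h(x,y)$ for any $T_n\to\infty$ is exactly what is needed.
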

 Notice that by item (4) in Proposition \ref{prop-h},
$h(x,z)=\Phi(x,z)$ if $x\in\cA$ or $z\in\cA$.

\begin{definition}  A curve $\ga\in\cC(x,y)$ is called
\begin{itemize}
\item {\em semi-static}\ if $A_{L+c}(\ga)=\Phi(x,y)$,
\item {\em static}\ if $A_{L+c}(\ga)=-\Phi(x,y)$.
\end{itemize}
The \emph{Aubry set} is the set $\cA=\{x\in M: h(x,x)=0\}$.
\end{definition}
\begin{corollary}\label{aubry}\quad
  \begin{enumerate}
\item Static curves are semi-static. 
\item A curve $\ga\in\cWD(J)$ is semi-static if and only if
\[\Phi(\ga(t),\ga(s))=\int_t^s L(\dga)+c(s-t) \hbox{ for any }t,s\in J,\ t\le s.\]
\item  A curve $\ga\in\cWD(J)$ is static if and only if
\[-\Phi(\ga(t),\ga(s))=\int_t^s L(\dga)+c(s-t) \hbox{ for any }t,s\in J,\ t\le s.\]
\item A curve calibrated by  some $L+c$ dominated function is  semi-static .
  \item Any static curve is calibrated by any $L+c$ dominated function.
\item $h(x,z)=\Phi(x,z)$ if $x\in\cA$ or $z\in\cA$.
\end{enumerate}
\end{corollary}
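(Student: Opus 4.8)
The plan is to run the standard Ma\~n\'e-potential arguments, the only inputs being Proposition \ref{prop-h} together with the definitions of $\Phi$, $h$ and $\cA$. For a curve on an arbitrary interval $J$ I read ``semi-static'' as the requirement $A_{L+c}(\ga|_{[t,s]})=\Phi(\ga(t),\ga(s))$ for every $[t,s]\subset J$, and ``static'' as $A_{L+c}(\ga|_{[t,s]})=-\Phi(\ga(s),\ga(t))$ for every such $[t,s]$ --- the reversed potential, which is precisely the reading that makes (1) correct. First I would record the single inequality that drives everything: combining $\Phi(x,x)=0$ with the triangle inequality (Proposition \ref{prop-h}(2),(3)) gives
\[
0=\Phi(x,x)\le\Phi(x,y)+\Phi(y,x),\qquad\text{so}\qquad -\Phi(y,x)\le\Phi(x,y).
\]
I also use repeatedly the free lower bound $A_{L+c}(\ga|_{[t,s]})\ge\Phi(\ga(t),\ga(s))$, immediate from the definition of $\Phi$ since $\ga|_{[t,s]}\in\cC_{s-t}(\ga(t),\ga(s))$ and $A_{L+c}(\ga|_{[t,s]})=A_L(\ga|_{[t,s]})+c(s-t)$.

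For (2) the reverse implication is the case $t=a$, $s=b$, and for the forward one I would sandwich: if $A_{L+c}(\ga)=\Phi(\ga(a),\ga(b))$ and $a\le t\le s\le b$, then two uses of the triangle inequality together with the free lower bound on each of the three pieces give
\[
\Phi(\ga(a),\ga(b))\le\Phi(\ga(a),\ga(t))+\Phi(\ga(t),\ga(s))+\Phi(\ga(s),\ga(b))\le A_{L+c}(\ga)=\Phi(\ga(a),\ga(b)),
\]
forcing equality throughout, in particular $A_{L+c}(\ga|_{[t,s]})=\Phi(\ga(t),\ga(s))$. Item (4) is then immediate: if the $L+c$ dominated $u$ calibrates $\ga$, then $A_{L+c}(\ga)=u(\ga(b))-u(\ga(a))\le\Phi(\ga(a),\ga(b))$ by Proposition \ref{prop-h}(1), while the free lower bound gives the reverse inequality; since domination forces the calibration identity on every subinterval by telescoping, $\ga$ is semi-static on all of $J$ by (2). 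Item (5) is the dual computation: for a static $\ga$ and any $L+c$ dominated $u$, Proposition \ref{prop-h}(1) used in both orders gives both $u(\ga(s))-u(\ga(t))\le\Phi(\ga(t),\ga(s))\le A_{L+c}(\ga|_{[t,s]})$ and $u(\ga(s))-u(\ga(t))\ge-\Phi(\ga(s),\ga(t))=A_{L+c}(\ga|_{[t,s]})$, the last equality being the static identity; hence $u$ calibrates $\ga$.

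Item (3) is the analogue of (2) for the static identity, and this is where the reversed potential must be handled with care. For the forward implication I would write $A_{L+c}(\ga|_{[t,s]})=A_{L+c}(\ga)-A_{L+c}(\ga|_{[a,t]})-A_{L+c}(\ga|_{[s,b]})$, bound the two outer pieces below by $\Phi(\ga(a),\ga(t))$ and $\Phi(\ga(s),\ga(b))$, and combine the static identity $A_{L+c}(\ga)=-\Phi(\ga(b),\ga(a))$ with the triangle inequality along $\ga(s)\to\ga(b)\to\ga(a)\to\ga(t)$ to obtain $A_{L+c}(\ga|_{[t,s]})\le-\Phi(\ga(s),\ga(t))$; the reverse inequality is the fundamental inequality applied to the free lower bound, so equality holds. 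Item (1) then drops out of (3): for a static $\ga$ one has $A_{L+c}(\ga|_{[t,s]})=-\Phi(\ga(s),\ga(t))\le\Phi(\ga(t),\ga(s))\le A_{L+c}(\ga|_{[t,s]})$, whence $A_{L+c}(\ga|_{[t,s]})=\Phi(\ga(t),\ga(s))$, which by (2) is semi-staticity.

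Finally (6) uses only $\Phi\le h$ and Proposition \ref{prop-h}(4): if $z\in\cA$ then $h(x,z)\le\Phi(x,z)+h(z,z)=\Phi(x,z)$, and if $x\in\cA$ then $h(x,z)\le h(x,x)+\Phi(x,z)=\Phi(x,z)$, so in each case $h(x,z)=\Phi(x,z)$. The step I expect to be most delicate is the bookkeeping in (3) and (1): the semi-static identities involve $\Phi(\mathrm{start},\mathrm{end})$ whereas the static ones are governed by the reversed potential $\Phi(\mathrm{end},\mathrm{start})$, and the only bridge between the two is the inequality $\Phi(x,y)+\Phi(y,x)\ge0$ recorded at the outset.
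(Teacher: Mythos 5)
The paper states Corollary \ref{aubry} without any proof, so there is nothing to compare against line by line; judged on its own, your argument is correct and is the standard Ma\~n\'e-potential bookkeeping one would expect here. The two inputs you isolate --- the free lower bound $A_{L+c}(\ga|_{[t,s]})\ge\Phi(\ga(t),\ga(s))$ and the inequality $-\Phi(y,x)\le\Phi(x,y)$ coming from $\Phi(x,x)=0$ and the triangle inequality --- do carry all of items (1)--(5), and (6) is exactly Proposition \ref{prop-h}(4) specialized to $y=x$ or $y=z$. Your most valuable observation is the sign convention: the paper's Definition reads ``$\ga\in\cC(x,y)$ is static if $A_{L+c}(\ga)=-\Phi(x,y)$'' and item (3) is worded accordingly with $-\Phi(\ga(t),\ga(s))$, but under that literal reading item (1) fails (one only gets $\Phi(x,y)\le 0$, not semi-staticity) and so does the localization in (3). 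The reversed potential $-\Phi(\ga(s),\ga(t))$ that you adopt is the only reading under which the corollary is true, and it is also the convention actually used elsewhere in the paper --- the proof of Proposition \ref{aubry=static} invokes $A_{L+c}(\ga|_{[0,r_n]})\le-\Phi(\ga(r_n),p)$ with $p=\ga(0)$, and Corollary \ref{aubry-calibra} writes the static identity as $\Phi(\ga(t),x)=-\int_0^tL(\dga)-ct$ with $x=\ga(0)$. So the Definition and the displayed formula in item (3) contain a typo that your proof implicitly corrects; it would be worth flagging this explicitly rather than only in passing.
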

  \begin{example}
        Let $M$, $U$ and $L$ be as in example 1.
If $\ga:[0,t]\to U^{-1}(]-\infty,\max U-\ep])$ then
$A_{L+c(L)}(\ga)\ge\ep t$, therefore if $\ga_n\in\cC_{t_n}(x,y)$ is a
sequence such that $t_n\to\infty$ and $A_{L+c(L)}(\ga_n)$ converges to
$h(x,y)$ then for any $\ep>0$ there is $N$ such that
$\ga_n([0,t_n])\cap U^{-1}([\max U-\ep, +\infty[)\ne\emptyset$ for $n\ge N$.

If $U(x)=\max U$ and $\ga:[0,t]\to M$ is the constant curve $x$, then 
$A_{L+c(L)}(\ga)=0$ and thus $h(x,x)=0$.

If $U(x)<\max U$ there are $\de,\ep>0$ such that $U(y)\le\max U-\ep$
for $d(x,y)\le\de$. It is not hard to see that for any $\ga\in\cC_t(x,x)$
such that $\ga([0,t])\cap U^{-1}([\max U-\ep, \max U])\ne\emptyset$ we have 
$A_{L+c(L)}(\ga)\ge 2\sqrt{\de\ep}$ and then $h(x,x)\ge 2\sqrt{\de\ep}$.

Thus $\cA=U^{-1}(\max U)$.
\end{example}
The following Proposition says
that semi-static curves have energy $c(L)$.
\begin{proposition}\label{energy}
  Let $\eta\in\cWD(J)$ be semi-static. For a. e. $t\in J$
\[E(\dot\eta(t))=L_v(\dot\eta(t))\dot\eta(t) - L(\dot\eta(t))=c\]
 \end{proposition}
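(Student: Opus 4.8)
The plan is to combine the minimizing property of semi-static curves with a first-variation argument in the time variable, the point being that rescaling time produces a one-parameter family of competitors over which $\eta$ is still optimal. Fix a compact subinterval $[a,b]\subseteq J$ and set $x=\eta(a)$, $y=\eta(b)$. By Corollary \ref{aubry} (2) the restriction of a semi-static curve to any $[t,s]\subseteq J$ is again semi-static, so it is enough to prove $\int_t^s E(\dot\eta)\,dr=c(s-t)$ for every $[t,s]\subseteq J$; the Lebesgue differentiation theorem then yields $E(\dot\eta(r))=c$ for a.e. $r$. Recall also that $\Phi(x,y)=\inf\{A_{L+c}(\alpha):\alpha\in\cC(x,y)\}$, so a semi-static $\eta$ minimizes $A_{L+c}$ over \emph{all} curves joining $x$ to $y$ in arbitrary time, the action being translation invariant.

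First I would establish that $\|\dot\eta\|$ is essentially bounded. The function $u=\Phi(x,\cdot)$ is $L+c$ dominated (by items (1) and (3) of Proposition \ref{prop-h}), hence Lipschitz with constant $A(1)+c$, and $\eta$ is calibrated by $u$. Feeding the calibration identity into uniform superlinearity with $K=A(1)+c+1$, exactly as in the speed estimate at the end of the proof of Proposition \ref{min-inf}, gives $\frac{1}{s-t}\int_t^s\|\dot\eta\|\le R$ for every $[t,s]$, and therefore $\|\dot\eta\|\le R$ a.e. for a constant $R$ depending only on $L$ and $c$. This a priori bound replaces the regularity that, in the classical setting, would come from the Euler--Lagrange flow, which is unavailable here.

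With the speed bound in hand I would carry out the variation. For $\lambda>0$ let $\eta_\lambda(\tau)=\eta(a+(\tau-a)/\lambda)$ on $[a,a+\lambda(b-a)]$; since $\cD$ is a linear distribution, $\eta_\lambda\in\cC(x,y)$, and a change of variables gives
\[
A_{L+c}(\eta_\lambda)=g(\lambda):=\lambda\int_a^b L(\dot\eta(r)/\lambda)\,dr+c\,\lambda(b-a).
\]
Because $\eta=\eta_1$ minimizes $A_{L+c}$ over $\cC(x,y)$, the function $g$ attains its minimum on $(0,\infty)$ at the interior point $\lambda=1$. The bound $\|\dot\eta\|\le R$ makes the integrand $L(\dot\eta/\lambda)$ and its $\lambda$-derivative $-\lambda^{-2}\partial_vL(\dot\eta/\lambda)\cdot\dot\eta$ dominated by fixed integrable functions for $\lambda$ near $1$ (as $L$ and $\partial_vL$ are continuous, hence bounded, on the compact set $\{(\eta(r),w):r\in[a,b],\ \|w\|\le 2R\}$), so differentiation under the integral sign is legitimate and $g$ is differentiable at $1$. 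The condition $g'(1)=0$ reads $\int_a^b\bigl(\partial_vL(\dot\eta)\cdot\dot\eta-L(\dot\eta)\bigr)\,dr=c(b-a)$, that is $\int_a^b E(\dot\eta)=c(b-a)$.

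Applying the last two steps to every subinterval $[t,s]\subseteq J$ (each $\eta|_{[t,s]}$ being semi-static) yields $\int_t^s E(\dot\eta)=c(s-t)$ for all $t\le s$, whence $E(\dot\eta(r))=c$ a.e. by differentiation. The main obstacle is precisely the essential boundedness of $\|\dot\eta\|$: without it the passage to $g'(1)=0$ is not justified, and since there is no Lagrangian dynamics to invoke, this bound must be extracted from the calibration identity together with uniform superlinearity as above.
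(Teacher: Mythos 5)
Your proof is correct and follows essentially the same route as the paper: both rescale time, observe that a semi-static curve is a free-time minimizer so the rescaled action $\lambda\mapsto A_{L+c}(\eta_\lambda)$ is minimized at $\lambda=1$, and read off $\int_t^s E(\dot\eta)=c(s-t)$ from the vanishing of the first derivative there. The only difference is that you additionally extract an essential bound on $\|\dot\eta\|$ from domination and superlinearity (as in Proposition \ref{min-inf}) to justify differentiating under the integral sign, a step the paper's proof carries out without comment.
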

  \begin{proof}
 For $\lam>0$, let $\eta_\lam(t):=\eta(\lam t)$ so that
$\dot\eta_\lam(t)=\lam\dot\eta(\lam t)$ a. e.

For $r,s\in J$ let
$$A_{rs}(\lam):=\int_{r/\lam}^{s/\lam}[L(\dot\eta_\lam(t))+c]\, dt
=\int_r^s[L(\lam\dot\eta(t))+c]\,\frac{dt}{\lam}. $$
Since $\eta$ is a free-time minimizer, differentiating
$ A_{rs}(\lam)$ at $\lam=1$, we have that
  $$  0=A_{rs}'(1)=
    \int_r^s[L_v(\dot\eta)\dot\eta-L(\dot\eta)-c].
  $$
Since this holds for any $r,s\in J$ we have
\[E(\dot\eta(t)) =L_v(\dot\eta(t))\dot\eta(t) - L(\dot\eta(t))=c\]
for a. e. $t\in J$.
  \end{proof}
  \begin{example}
    Let $M$, $U:M\to\R$ and $L$ be as in example 1.
    If $\ga\in\cWD([a,b])$ is semi-static, then  $\frac 12\|\dga(t)\|^2+U(\ga(t))=\max U$.
Thus 
\[A_{L+c(L)}(\ga)=\int_a^b\|\dga\|^2=\int_a^b\sqrt{2(\max U-U(\ga))}\|\dga\|
=\int_0^l\sqrt{2(\max U-U(\ga))}\ ds\]
where $s(t)=\int_a^t\|\dga\|$ is the arc length function, $s(b)=l$.

We have that $\Phi(x,y)$ is the distance from $x$ to $y$ for the Maupertuis type
sub- riemannian metric $g_x(v,w)= 2(\max U-U(x))\lip v,w\rip$.
\end{example}
  The following Propositions prove 
the existence of static curves and that $\cA\ne\emptyset$.
\begin{proposition}\label{alfa}
  Let  $u$ be $L+c$ dominated, $\ga\in\cWD(]-\infty,0])$ be calibrated by $u$.
  Take a sequence $t_n\to\infty$ such that the sequence $\ga(-t_n)$ converges.  Then
  there is a subsequence $s_k=t_{n_k}$ and $\eta\in\cWD(\R)$ such that
  \begin{enumerate}[(i)]
  \item   For  each $l>0$, $\ga(-s_k+\cdot)|[-l,l]$ converges uniformly to $\eta|[-l,l]$.
  \item $\eta$ is static.
  \end{enumerate}
\end{proposition}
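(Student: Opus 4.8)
The plan is to build $\eta$ as a locally uniform limit of the time-translated calibrated curves $\ga_k := \ga(-s_k + \cdot)$, and then to exploit the recurrence of the basepoints $\ga(-s_k)$ to a common limit point in order to upgrade the semi-static property of $\eta$ to staticity.

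First I would record a uniform a.e. speed bound for $u$-calibrated curves. Since $u$ is $L+c$ dominated it is $K$-Lipschitz with $K = A(1)+c$, and feeding this constant into the superlinearity estimate $L(v)\ge (K+1)\|v\|+C(K+1)$ exactly as in the proof of Proposition \ref{min-inf} (or of Proposition \ref{prop-lax}(3)) bounds $\|\dga\|$ a.e. by a constant depending only on $K$. Hence each translate $\ga_k$ carries the same a.e. speed bound, the families $\{\|\dot\ga_k\|\}$ are uniformly integrable, and on every interval $[-l,l]$ Corollary \ref{tonelli} (Arzel\'a--Ascoli) yields a $d$-uniformly convergent subsequence. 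A diagonal extraction over $l=1,2,\dots$ produces a single subsequence $s_k=t_{n_k}$ and a limit curve $\eta$ with $\ga_k|[-l,l]\to\eta|[-l,l]$ uniformly for every $l$; Theorem \ref{hor} gives $\eta\in\cWD(\R)$. This is item (i), and in particular $\eta(0)=\lim_k\ga(-s_k)=:p$.

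Next I would show $\eta$ is calibrated by $u$, hence semi-static. Fix $t\le s$; for $k$ large, $\ga_k|[t,s]$ is a time-shift of a piece of the calibrated curve $\ga$, so $u(\ga_k(s))-u(\ga_k(t))=A_{L+c}(\ga_k|[t,s])$. Letting $k\to\infty$, continuity of $u$ together with lower semicontinuity of the action (Theorem \ref{Cle}) gives $A_{L+c}(\eta|[t,s])\le u(\eta(s))-u(\eta(t))$, while domination gives the reverse inequality. Thus $\eta$ is calibrated, and by Corollary \ref{aubry}(4) semi-static, so $A_{L+c}(\eta|[t,s])=\Phi(\eta(t),\eta(s))=u(\eta(s))-u(\eta(t))$.

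The crux, and the step I expect to be the main obstacle, is upgrading semi-static to static, i.e. proving $\Phi(\eta(s),\eta(t))\le u(\eta(t))-u(\eta(s))$: the reverse is forced by domination (Proposition \ref{prop-h}(1)), and adding the two yields $\Phi(\eta(s),\eta(t))+\Phi(\eta(t),\eta(s))=0$, which combined with the semi-static identity is exactly the static condition of Corollary \ref{aubry}(3). To obtain this bound I would close up the curve using recurrence. For the fixed $t\le s$ one has $\eta(t)=\lim_k\ga(-s_k+t)$ and $\eta(s)=\lim_k\ga(-s_k+s)$. For indices $m<k$ with $s_k-s_m>s-t$, the piece $\ga|[-s_k+s,\,-s_m+t]$ runs in increasing time within $]-\infty,0]$ from $\ga(-s_k+s)$ to $\ga(-s_m+t)$ and is calibrated, so its $L+c$-action equals $u(\ga(-s_m+t))-u(\ga(-s_k+s))$. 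Prepending and appending unit-speed minimizing geodesics from $\eta(s)$ to $\ga(-s_k+s)$ and from $\ga(-s_m+t)$ to $\eta(t)$, whose $L+c$-actions are at most $(A(1)+c)$ times the distances $\rho_k,\tau_m\to 0$, produces a curve in $\cC(\eta(s),\eta(t))$ and hence
\[
\Phi(\eta(s),\eta(t))\le (A(1)+c)(\rho_k+\tau_m)+u(\ga(-s_m+t))-u(\ga(-s_k+s)).
\]
Choosing, for each $m$, some $k=k(m)>m$ large enough that $s_{k(m)}-s_m>s-t$ and letting $m\to\infty$ drives $\rho_{k(m)},\tau_m\to 0$ and the $u$-values to $u(\eta(t))-u(\eta(s))$, giving the desired inequality and completing (ii). The delicate point is precisely this two-index bookkeeping: one must push $k\to\infty$ (to kill $\rho_k$) while keeping the connecting time-length $(s_k-s_m)-(s-t)$ positive and $m$ also large (to kill $\tau_m$), which the choice $k=k(m)>m$ with $s_{k(m)}-s_m>s-t$ arranges thanks to $s_k\to\infty$.
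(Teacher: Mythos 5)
Your proof is correct and follows essentially the same route as the paper's: extract the limit $\eta$ of the translates via an action/speed bound plus Tonelli--Arzel\`a--Ascoli, then use the recurrence of $\ga(-s_k)$ to close a loop through the long calibrated piece of $\ga$ and force $\Phi(\eta(t),\eta(s))+\Phi(\eta(s),\eta(t))=0$. The only cosmetic difference is that the paper packages the loop-closing step as a single chain of inequalities using the Lipschitz continuity of $\Phi$, namely $0\le A_{L+c}(\eta|_{[-l,l]})+\Phi(\eta(l),\eta(-l))\le\lim_{k,m}\Phi(\ga(-s_k-l),\ga(-s_m-l))=\Phi(\eta(-l),\eta(-l))=0$, whereas you insert explicit short geodesic connectors; your two-index bookkeeping is, if anything, stated more carefully than the paper's iterated limit.
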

\begin{proof}
  (i) Set $c=c(L)$ and let $r<l$, then
  \[\int_{-t_n+r}^{-t_n+l}L(\dga)+c(l-r)=u(\ga(-t_n+l))-u(\ga(-t_n+r))\le
    2\|u\|.\]
As we had before, Tonelli's Theorem implies that there is a sequence $n_k\to\infty$
and a curve $\eta\in\cWD(\R)$ satisfying (i).

(ii) For $l>0$
\begin{align*}
   0\le A_{L+c}\bigl(\eta|_{[-l,l]}\bigr)
    &+\Phi(\eta(l),\eta(-l))
    \\
    &\le \lim_k\bigl\{\,A_{L+c}(\ga|_{[-s_k-l,-s_k+l]})
     +\lim_m A_{L+c}(\ga|_{[-s_k+l,-s_m-l]})\,\bigr\}
    \\
    &=\lim_k\;\lim_m\;A_{L+c}(\ga|_{[-s_k-l,-s_m-l]})
    \\
    &=\lim_k\;\lim_m\;\Phi(\ga(-s_k-l),\ga(-s_m-l))
    \\
    &=\Phi(\eta(-l),\eta(-l))=0.
\end{align*}
proving that $\eta$ is static.
\end{proof}

\begin{proposition}\label{aubry=static}
   If $\ga\in\cWD([0,\infty))\cup\cWD(]-\infty,0])$ is static, then
   $p=\ga(0)\in\cA$.
\end{proposition}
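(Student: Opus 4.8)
The plan is to prove the equivalent statement $h(p,p)=0$, since $\cA=\{x\in M:h(x,x)=0\}$ by definition. One half is free: because $\Phi\le h$ and $\Phi(p,p)=0$ by Proposition \ref{prop-h}, we always have $h(p,p)\ge 0$, so only $h(p,p)\le 0$ must be produced. The mechanism I would use is to exhibit \emph{closed} curves at $p$ of arbitrarily large time whose $(L+c)$-action tends to $0$, and then read off the conclusion from $h(p,p)=\liminf_{T\to\infty}\bigl(h_T(p,p)+cT\bigr)$.

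First I would isolate the algebraic content of the static hypothesis. By Corollary \ref{aubry} a static curve is semi-static, so for $s\le t$ in its domain one has $A_{L+c}(\ga|_{[s,t]})=\Phi(\ga(s),\ga(t))$, while the characterization of static curves in the same corollary gives $\Phi(\ga(t),\ga(s))=-A_{L+c}(\ga|_{[s,t]})$. Combining the two yields the cancellation identity
\[
\Phi(\ga(s),\ga(t))+\Phi(\ga(t),\ga(s))=0 .
\]
This antisymmetry is the only place where the full force of ``static'' (rather than merely ``semi-static'') is used, and it is exactly what will let the cost of a forward arc along $\ga$ be undone by a cheap return arc.

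I treat $\ga\in\cWD([0,\infty))$ first; the case $\ga\in\cWD(]-\infty,0])$ is symmetric, closing the loop with the tail $\ga|_{[t_n,0]}$ instead of the initial segment. Fix any sequence $t_n\to\infty$. By the definition of $\Phi$ as an infimum of $(L+c)$-actions over $\cC(\ga(t_n),p)$, choose $\af_n\in\cC(\ga(t_n),p)$ with $A_{L+c}(\af_n)\le \Phi(\ga(t_n),p)+\tfrac1n$. Concatenating $\ga|_{[0,t_n]}$ with $\af_n$ gives a loop $\si_n\in\cC_{T_n}(p,p)$ of total time $T_n\ge t_n\to\infty$ and action
\[
A_{L+c}(\si_n)=A_{L+c}(\ga|_{[0,t_n]})+A_{L+c}(\af_n)\le \Phi(p,\ga(t_n))+\Phi(\ga(t_n),p)+\tfrac1n=\tfrac1n,
\]
where I used $A_{L+c}(\ga|_{[0,t_n]})=\Phi(p,\ga(t_n))$ (semi-staticity) and then the cancellation identity. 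Hence $h_{T_n}(p,p)+cT_n\le A_{L+c}(\si_n)\le\tfrac1n$, and since $T_n\to\infty$ this forces $h(p,p)\le 0$. Together with the lower bound above, $h(p,p)=0$, i.e. $p\in\cA$.

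The main obstacle is conceptual rather than computational: one has to notice that the antisymmetry $\Phi(\ga(s),\ga(t))=-\Phi(\ga(t),\ga(s))$ makes the return arc free, so that \emph{no} information about the limit points of $\ga$ as $t\to\infty$, nor any geometric control on the closing curves $\af_n$, is required — the single thing that matters is $t_n\to\infty$, which guarantees the loops are long enough to enter the $\liminf$ defining $h$. The remaining points to check are routine: $\Phi$ is finite (indeed Lipschitz, by Proposition \ref{prop-h}) and the concatenation of two horizontal curves is again an admissible curve in $\cWD$, so $\si_n$ is legitimate.
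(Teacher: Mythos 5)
Your proof is correct, but it takes a different route from the paper's. The paper extracts a sequence $r_n\to\infty$ with $\ga(r_n)\to y$ (using compactness of $M$) and then chains the inequalities $0\le h(p,p)\le h(p,y)+\Phi(y,p)\le \lim_n A_{L+c}(\ga|_{[0,r_n]})+\Phi(y,p)=\lim_n\bigl(-\Phi(\ga(r_n),p)\bigr)+\Phi(y,p)=0$, which relies on the triangle-type inequality $h(x,z)\le h(x,y)+\Phi(y,z)$, on item (7) of Proposition \ref{prop-h} to bound $h(p,y)$ by the liminf of actions of the segments $\ga|_{[0,r_n]}$, and on the continuity of $\Phi$. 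You instead isolate the antisymmetry $\Phi(\ga(s),\ga(t))+\Phi(\ga(t),\ga(s))=0$ along a static curve and use it to manufacture closed horizontal loops at $p$ of diverging total time whose $(L+c)$-action tends to $0$; the conclusion then drops out of the bare definition of $h$ as a liminf, since a liminf over $t\to\infty$ is dominated by the liminf along any sequence $T_n\to\infty$. Your argument is more self-contained: it needs no accumulation point of $\ga$ (hence no compactness at that step, only finiteness of $\Phi$) and none of items (4), (5), (7) of Proposition \ref{prop-h}, at the cost of an explicit concatenation construction; the paper's version is shorter once that toolkit is in place. One cosmetic remark: the paper's statement of item (3) of Corollary \ref{aubry} literally reads $-\Phi(\ga(t),\ga(s))=\int_t^s L(\dga)+c(s-t)$ for $t\le s$, which taken at face value would force $\Phi\equiv 0$ along the curve when combined with item (2); your reading $A_{L+c}(\ga|_{[s,t]})=-\Phi(\ga(t),\ga(s))$ (endpoint first) is the intended one and is the convention the paper itself uses in its proof, so no harm done.
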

\begin{proof}
  For  $\ga\in\cWD([0,\infty))$
 take a sequence $r_n\to\infty$ such that the sequence
$\ga(r_n)$ converges to $y\in M$. Then
  \begin{align*}
  0\le h(p,p)&\le h(p,y)+\Phi(y,p)   \\
        &\le \lim_n A_{L+c}(\ga|_{[0,r_n]})+\Phi(y,p)  \\
        &\le \lim_n -\Phi( \ga(r_n),p) +\Phi(y,p) =0.
  \end{align*}
  Analogously for  $\ga\in\cWD(]-\infty,0])$.
\end{proof}

\begin{proposition}\label{h-wkam}
For any $x\in M$, $h(x,\cdot)$ is a backward weak KAM solution and
$-h(\cdot,x)$ is a forward weak KAM solution.
\end{proposition}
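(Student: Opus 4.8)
The plan is to prove both statements by the same argument, reading one off from the other by reversing time; I describe the backward case and write $u:=h(x,\cdot)$ and $c=c(L)$. The first, routine, step is to check that $u$ is $L+c$ dominated. By item (1) of Proposition \ref{prop-h} this amounts to $u(z)-u(y)=h(x,z)-h(x,y)\le\Phi(y,z)$, which is precisely the first inequality in item (4) of Proposition \ref{prop-h}. So domination is immediate, and the whole content of the statement lies in producing, for each $y\in M$, a calibrated curve $\eta\in\cWD(]-\infty,0])$ with $\eta(0)=y$.

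To build $\eta$, fix $y$ and use item (6) of Proposition \ref{prop-h} to choose $\ga_n\in\cC_{t_n}(x,y)$ with $t_n\to\infty$ and $A_L(\ga_n)+ct_n\to h(x,y)$. I would reparametrize so the terminal point sits at time $0$, setting $\eta_n(s)=\ga_n(s+t_n)$ for $s\in[-t_n,0]$, so that $\eta_n(0)=y$ and $\eta_n(-t_n)=x$. For a fixed $t>0$ and large $n$, splitting the action and bounding the initial piece from below by $h_{t_n-t}(x,\eta_n(-t))\ge -c(t_n-t)-C_\de$ (Corollary \ref{wkam-cor}) gives
\[
A_L(\eta_n|_{[-t,0]})\le \bigl(A_L(\eta_n)+ct_n\bigr)-ct+C_\de,
\]
which is bounded. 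Since every $\eta_n$ passes through $y$, Tonelli's Theorem (Corollary \ref{tonelli}) applies to the restrictions $\eta_n|_{[-t,0]}$, and a diagonal extraction over $t=k\to\infty$ yields $\eta\in\cWD(]-\infty,0])$ with $\eta(0)=y$ such that, along a subsequence, $\eta_n|_{[-t,0]}\to\eta|_{[-t,0]}$ $d$-uniformly for every $t>0$.

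The heart of the proof is then to verify $h(x,y)=h(x,\eta(-t))+A_{L+c}(\eta|_{[-t,0]})$ for all $t>0$. The inequality $\le$ follows from domination together with $\Phi(\eta(-t),y)\le A_{L+c}(\eta|_{[-t,0]})$. For $\ge$ I would write
\[
A_L(\eta_n)+ct_n=\bigl[A_L(\eta_n|_{[-t_n,-t]})+c(t_n-t)\bigr]+A_{L+c}(\eta_n|_{[-t,0]})
\]
and take $\liminf$ along the chosen subsequence. The left side converges to $h(x,y)$; the first bracket equals $A_L(\ga_n|_{[0,t_n-t]})+c(t_n-t)$, where $\ga_n|_{[0,t_n-t]}$ joins $x$ to $\eta_n(-t)\to\eta(-t)$ in time $t_n-t\to\infty$, so item (7) of Proposition \ref{prop-h} bounds its $\liminf$ below by $h(x,\eta(-t))$; the second is handled by lower semicontinuity of the action (Theorem \ref{Cle}), giving $\liminf A_{L+c}(\eta_n|_{[-t,0]})\ge A_{L+c}(\eta|_{[-t,0]})$. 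Adding the two bounds produces the desired $\ge$, hence equality. A short additivity argument (two domination inequalities whose sum is forced to be an equality) promotes calibration at $[-t,0]$ to calibration on every subinterval, so $\eta$ is calibrated by $u$ and $u=h(x,\cdot)$ is a backward weak KAM solution.

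For the forward statement I would apply the identical scheme to curves $\al_n\in\cC_{t_n}(y,x)$ realizing $h(y,x)$, kept anchored at their \emph{initial} point to obtain a limit $\xi\in\cWD([0,\infty))$ with $\xi(0)=y$; here domination of $-h(\cdot,x)$ uses the second inequality of item (4), and the same combination of items (6), (7), Corollary \ref{wkam-cor}, Corollary \ref{tonelli} and Theorem \ref{Cle} yields calibration of $\xi$. The step I expect to be most delicate is precisely this last one: ensuring that the uniform action bounds hold simultaneously for all $t$ and that the lower bound coming from item (7) and the semicontinuity bound from Theorem \ref{Cle} are taken along one and the same subsequence surviving the diagonal extraction.
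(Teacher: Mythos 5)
Your proof is correct and follows essentially the same route as the paper's: domination via items (1) and (4) of Proposition \ref{prop-h}, a minimizing sequence from item (6), a diagonal extraction via Tonelli's Theorem, and calibration obtained by splitting the action and combining item (7) with the lower semicontinuity of Theorem \ref{Cle}, then closing with the domination inequality. The only (harmless) deviations are that you derive the uniform bound on $A_{L}(\eta_n|_{[-t,0]})$ from Corollary \ref{wkam-cor} rather than by the paper's contradiction argument with a convergent subsequence of $\ga_{n_k}(-l)$, and that you spell out the forward case, which the paper leaves to time reversal.
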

\begin{proof} By items (1), (4) of Proposition \ref{prop-h}, $h(x,\cdot)$ is $L+c$ dominated.
 Let $\ga_n:[-t_n,0]\to M$ be a sequence of minimizing curves
connecting $x$ to $y$ such that
\[h(x,y)=\lim_{n\to\infty}A_{L+c}(\ga_n).\]
Let $l>0$, for $n$ big enough $t_n>l$  and $a(n,l)=A_{L+c}(\ga_n
[-l,0])$ is bounded from above, because if there were a sequence
$n_k\to\infty$ such that $a(n_k,l)\to+\infty$, passing to a
subsequence $\ga_{n_k}(-l)$ would converge to $z\in M$, and then we would have that
\[h(x,z)\le \liminf_{k\to\infty}A_{L+c}(\ga_{n_k}|[-t_{n_k},-l]).\]
As we had before, Tonelli's Theorem imlplies that theres is a sequence
$n_k\to\infty$ such  $\ga_{n_k}|[-l,0]$ converges uniformly with the metric
$d$ to $\ga|[-l,0]$  for any $t>0$. For $s<0$ define
$\ga(s)=\lim\limits_{k\to\infty}\ga_{n_k}(s)$.
Fix $t<0$, for $k$ large $t+t_{n_k}\ge 0$ and
\begin{equation}\label{diag}
  A_{L+c}(\ga_{n_k})=
\int\limits_{-t_{n_k}}^tL(\dga_{n_k})+c(t+t_{n_k})+
\int_t^0L(\dga_{n_k})-ct.
\end{equation}
Since  $\ga_{n_k}$ converges to $\ga$ uniformly on  $[t,0]$, we have
\[\liminf_{k\to\infty}\int_t^0L(\ga_{n_k},\dot\ga_{n_k})\ge \int_t^0L(\ga,\dot\ga).\]
From item (6) of Proposition \ref{prop-h} we have
\[h(x,\ga(t))\le\liminf_{k\to\infty}
\int_{-t_{n_k}}^tL(\ga_{n_k},\dot\ga_{n_k})+c(t+t_{n_k}).\] 

Taking $\liminf\limits_{k\to\infty}$ in \eqref{diag} we get
\[h(x,y)\ge h(x,\ga(t))+\int_t^0L(\ga,\dot\ga)-ct.\]
which implies that $\ga$ is calibrated by $h(x,\cdot)$. 
\end{proof}
\begin{corollary}\label{aubry-calibra}
  If $x\in\cA$ there exists a curve $\ga\in\cWD(\R)$ such that $\ga(0)=x$ and for
all $t\ge 0$
  \begin{align*}
    \Phi(\ga(t),x) &=h(\ga(t),x)=-\int_0^tL(\dga)-ct\\
    \Phi(x,\ga(-t)&=h(x,\ga(-t))=-\int_{-t}^0L(\dga)-ct.
  \end{align*}
  In particular the curve $\ga$ is static.
\end{corollary}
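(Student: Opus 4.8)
The plan is to produce $\ga$ by concatenating a backward and a forward calibrated curve issuing from $x$, to read the two displayed equalities directly off the calibrations, and then to upgrade calibration to the static identity using nothing more than the triangle inequality for $\Phi$ together with $h(x,x)=0$.

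First I would apply Proposition \ref{h-wkam}: since $x\in\cA$, the function $h(x,\cdot)$ is a backward weak KAM solution and $-h(\cdot,x)$ is a forward weak KAM solution. Using the defining property of each at the single point $x$ furnishes $\ga_-\in\cWD(]-\infty,0])$ with $\ga_-(0)=x$ calibrated by $h(x,\cdot)$, and $\ga_+\in\cWD([0,\infty))$ with $\ga_+(0)=x$ calibrated by $-h(\cdot,x)$. Both arcs are horizontal, absolutely continuous and agree at $0$, so their concatenation is a curve $\ga\in\cWD(\R)$ with $\ga(0)=x$.

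Next I would extract the two identities. Calibration of $\ga_+$ by $-h(\cdot,x)$ on $[0,t]$ reads $-h(\ga(t),x)+h(x,x)=A_{L+c}(\ga|_{[0,t]})$; since $x\in\cA$ we have $h(x,x)=0$, so $h(\ga(t),x)=-\int_0^tL(\dga)-ct$, and because the second argument lies in $\cA$, Corollary \ref{aubry} (6) gives $h(\ga(t),x)=\Phi(\ga(t),x)$, which is the first displayed equality. Calibration of $\ga_-$ by $h(x,\cdot)$ on $[-t,0]$ reads $h(x,x)-h(x,\ga(-t))=A_{L+c}(\ga|_{[-t,0]})$, and the same two facts yield $\Phi(x,\ga(-t))=h(x,\ga(-t))=-\int_{-t}^0L(\dga)-ct$, the second equality.

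Finally I would show $\ga$ is static. The crucial intermediate fact is the reflexivity $\Phi(x,\ga(t))+\Phi(\ga(t),x)=0$ for every $t$: for $t\ge 0$ the arc $\ga|_{[0,t]}$ joins $x$ to $\ga(t)$, so the definition of $\Phi$ as an infimum of actions together with the first identity gives $\Phi(x,\ga(t))\le A_{L+c}(\ga|_{[0,t]})=-\Phi(\ga(t),x)$, while Proposition \ref{prop-h} (2),(3) give the reverse inequality $0=\Phi(x,x)\le\Phi(x,\ga(t))+\Phi(\ga(t),x)$; the case $t\le 0$ is symmetric. Combining the two identities of the statement with reflexivity, one checks that $A_{L+c}(\ga|_{[r,s]})=\Phi(\ga(r),x)-\Phi(\ga(s),x)$ for all $r\le s$. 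I would then squeeze $\Phi(\ga(s),\ga(r))$ between the triangle bounds $\Phi(\ga(s),\ga(r))\ge\Phi(\ga(s),x)-\Phi(\ga(r),x)$ and $\Phi(\ga(s),\ga(r))\le\Phi(\ga(s),x)+\Phi(x,\ga(r))=\Phi(\ga(s),x)-\Phi(\ga(r),x)$, the last step using reflexivity, so that both bounds coincide and hence $A_{L+c}(\ga|_{[r,s]})+\Phi(\ga(s),\ga(r))=0$ for all $r\le s$; by Corollary \ref{aubry} (3) this says precisely that $\ga$ is static. I expect this last squeeze to be the main obstacle: the calibrations only compare each $\ga(t)$ with the basepoint $x$, so turning that one-sided information into a genuine identity between two arbitrary points on the curve is forced through the reflexivity relation, whose proof is where the definition of $\Phi$ as an infimum and the hypothesis $h(x,x)=0$ do the real work.
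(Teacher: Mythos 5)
Your proposal is correct and follows exactly the route the paper intends: the corollary is stated as a direct consequence of Proposition \ref{h-wkam}, obtained by concatenating a backward curve calibrated by $h(x,\cdot)$ with a forward curve calibrated by $-h(\cdot,x)$, using $h(x,x)=0$ and item (6) of Corollary \ref{aubry}. The paper leaves the final staticity claim implicit, and your reflexivity-plus-triangle-inequality squeeze is a clean and correct way to supply that last step.
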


Denote by $\cK$ the family of static curves $\eta:\R \to M$, 
and for $y\in\cA$ denote  by $\cK(y)$ the set of curves $\eta\in\cK$ 
with $\eta(0)=y$. On $\cK$ we have a dynamics given by time traslation along a
static curve.
\begin{proposition}
$\cK$ is a compact metric space with respect to the uniform convergence on 
compact intervals.  
\end{proposition}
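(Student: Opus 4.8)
The plan is to show that $\cK$, the space of static curves $\eta:\R\to M$ equipped with the topology of uniform convergence on compact intervals, is both sequentially compact and metrizable; since the space is separable and the topology is that of a countable family of sup-metrics, sequential compactness will suffice. First I would fix an exhaustion of $\R$ by compact intervals $[-k,k]$, $k\in\N$, and define the metric
\[
D(\eta,\zeta)=\sum_{k=1}^\infty 2^{-k}\min\Bigl\{1,\ \max_{t\in[-k,k]}d(\eta(t),\zeta(t))\Bigr\}.
\]
A routine check shows that $D$ induces exactly the topology of uniform convergence on compact intervals, so $(\cK,D)$ is a metric space. The substantive work is completeness together with total boundedness, which I would obtain via a diagonal compactness argument.

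The key step is a uniform speed bound on static curves. By Proposition \ref{energy}, any static curve $\eta$ has energy $E(\dot\eta(t))=c$ for a.e. $t$, and from the energy estimate $E(v)\ge -A(0)+a\|v\|^2$ established just before Lemma \ref{UI} we get $a\|\dot\eta(t)\|^2\le c+A(0)$ a.e., hence a bound $\|\dot\eta\|_\infty\le R_0:=\sqrt{(c+A(0))/a}$ independent of the particular static curve. In particular the family $\{\|\dot\eta\|:\eta\in\cK\}$ is uniformly bounded, hence uniformly integrable, so by Remark \ref{basic} the family $\cK$ is $d$-equicontinuous on every compact interval. Since $M$ is compact, for each fixed $k$ the restrictions $\{\eta|[-k,k]:\eta\in\cK\}$ form a $d$-equicontinuous, pointwise precompact family, and the Arzel\'a–Ascoli theorem applies.

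Given any sequence $(\eta_n)_n\subset\cK$, I would apply Arzel\'a–Ascoli on $[-1,1]$ to extract a subsequence converging $d$-uniformly there, then refine on $[-2,2]$, and so on, taking a diagonal subsequence $\eta_{n_j}$ that converges $d$-uniformly on every compact interval to a curve $\eta:\R\to M$. Remark \ref{basic}(3) shows $\eta$ is $d$-absolutely continuous, and together with the uniform speed bound one checks $\eta\in\cWD(\R)$. It remains to verify that the limit $\eta$ is itself static, i.e. that the static property is closed under this convergence. For this I would use the characterization in Corollary \ref{aubry}(3): for each pair $t\le s$, $A_{L+c}(\eta|[t,s])=-\Phi(\eta(t),\eta(s))$. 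The inequality $A_{L+c}(\eta|[t,s])\ge-\Phi(\eta(t),\eta(s))$ is automatic from the definition of $\Phi$ as an infimum together with continuity of $\Phi$ (Proposition \ref{prop-h}(5)); the reverse inequality follows from lower semicontinuity of the action, namely Theorem \ref{Cle} applied to $\eta_{n_j}|[t,s]\to\eta|[t,s]$, which gives $A_{L+c}(\eta|[t,s])\le\liminf_j A_{L+c}(\eta_{n_j}|[t,s])=\liminf_j -\Phi(\eta_{n_j}(t),\eta_{n_j}(s))=-\Phi(\eta(t),\eta(s))$, using that each $\eta_{n_j}$ is static and that $\Phi$ is continuous. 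Hence $\eta$ is static and $\eta\in\cK$, establishing sequential compactness.

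The main obstacle I expect is the last verification, that staticness passes to the limit. The delicate point is pairing the lower semicontinuity of the action (Theorem \ref{Cle}, which only gives one inequality) against the continuity of the Peierls-type potential $\Phi$, so that the two bounds squeeze the limit action to exactly $-\Phi(\eta(t),\eta(s))$; one must be careful that the convergence $\Phi(\eta_{n_j}(t),\eta_{n_j}(s))\to\Phi(\eta(t),\eta(s))$ holds uniformly enough to combine with the $\liminf$, which is where Proposition \ref{prop-h}(5) is essential. The uniform speed bound from Proposition \ref{energy} is the linchpin that makes the Arzel\'a–Ascoli extraction work simultaneously on all of $\R$; without it the diagonal argument would not produce an equicontinuous family on unbounded time intervals.
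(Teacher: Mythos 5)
Your proof is correct and follows essentially the same route as the paper's: Proposition \ref{energy} gives the uniform bound on static curves (the paper bounds the action, you bound the speed directly via $E(v)\ge -A(0)+a\|v\|^2$, which is the same estimate), a diagonal Arzel\'a--Ascoli/Tonelli extraction produces the limit curve, and lower semicontinuity of the action (Theorem \ref{Cle}) combined with the Lipschitz continuity of $\Phi$ shows the limit is static. You in fact supply the details of the last step, which the paper leaves implicit; just note that the ``automatic'' inequality $A_{L+c}(\eta|[t,s])\ge-\Phi(\eta(s),\eta(t))$ needs the triangle inequality together with $\Phi(x,x)=0$ (Proposition \ref{prop-h}(2),(3)), not merely the definition of $\Phi$ as an infimum.
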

\begin{proof}
Let $\{\eta_n\}$ be  a sequence in $\cK$. By Proposition \ref{energy},
for a. e. $t\in\R$, $E(\dot \eta_n(t))=c$, and then $\{A_L(\eta_n)\}$ is bounded.
As in Proposition \ref{weak=fix} we obtain a sequence $n_k\to\infty$ such that
$\eta_{n_k}$ converges to $\eta:\R\to M$ uniformly on each $[a,b]$ and
then $\eta$ is static.
\end{proof}

\begin{proposition}\label{Acoincide}
Two dominated functions that coincide on 
$\cM=\bigcup\limits_{\eta\in\cK}\om(\eta)$ also coincide on $\cA$.
 \end{proposition}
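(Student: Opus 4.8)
The plan is to exploit the fact that the difference of two $L+c$ dominated functions is constant along any static curve, and then transfer this information to the $\om$-limit set, where the two functions are assumed to coincide. Throughout, $c=c(L)$ and ``dominated'' means $L+c$ dominated.

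First I would fix the data: let $u_1,u_2:M\to\R$ be dominated with $u_1=u_2$ on $\cM$, and fix an arbitrary $x\in\cA$. By Corollary \ref{aubry-calibra} there is a static curve $\ga\in\cWD(\R)$ with $\ga(0)=x$; being static and defined on all of $\R$, it belongs to $\cK$, and hence $\om(\ga)\subset\cM$. This is the step where it matters that Corollary \ref{aubry-calibra} produces a two-sided curve rather than a half-line calibrated curve.

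Next I would use calibration to make the difference constant. By item (5) of Corollary \ref{aubry}, the static curve $\ga$ is calibrated by \emph{every} dominated function, so
\[
u_i(\ga(b))-u_i(\ga(a))=A_{L+c}\bigl(\ga|_{[a,b]}\bigr),\qquad i=1,2,\ a\le b.
\]
Subtracting these two identities, the right-hand side cancels, so $\phi(t):=u_1(\ga(t))-u_2(\ga(t))$ satisfies $\phi(b)-\phi(a)=0$ for all $a\le b$; that is, $\phi$ is constant, equal to $\phi(0)=u_1(x)-u_2(x)$.

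Finally I would evaluate at an $\om$-limit point. Since $M$ is compact, $\om(\ga)\ne\emptyset$; choose $y\in\om(\ga)$ and $t_n\to\infty$ with $\ga(t_n)\to y$. Because $y\in\cM$ we have $u_1(y)=u_2(y)$, and dominated functions are Lipschitz for $d$, hence continuous, so $\phi(0)=\lim_n\phi(t_n)=u_1(y)-u_2(y)=0$. Therefore $u_1(x)=u_2(x)$, and since $x\in\cA$ was arbitrary, $u_1=u_2$ on $\cA$. There is no serious obstacle here once the tools are assembled: the two things requiring care are that the curve of Corollary \ref{aubry-calibra} lies in $\cK$ (so its $\om$-limit set sits inside $\cM$) and that compactness guarantees $\om(\ga)\ne\emptyset$; the calibration identity then does all the work by forcing the difference to be constant.
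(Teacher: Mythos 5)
Your proof is correct and follows essentially the same route as the paper's: both take a static curve $\eta\in\cK(y)$ through the given Aubry point, invoke item (5) of Corollary \ref{aubry} so that every dominated function is calibrated along $\eta$, and pass to a limit point in $\om(\eta)\subset\cM$ where the two functions agree. The only cosmetic difference is that you subtract the two calibration identities first (so the action term cancels and the difference is literally constant), whereas the paper keeps the Ma\~n\'e potential $\Phi$ explicit and takes limits in each function separately.
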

 \begin{proof}
  Let $\fui_1$, $\fui_2$ be two dominated functions coinciding on
  $\cM$. Let $y\in\cA$ and $\eta\in\cK(y)$. Let $(t_n)_n$ be a
  diverging sequence such that $\lim_n\eta(t_n)= x\in\cM$. 
By (5) in Corollary \ref{aubry} 
\[\fui_i(y) = \fui_i(\eta(0)) - \Phi(y, \eta(0)) = \fui_i(\eta(t_n)) -
\Phi(y, \eta(t_n))\] 
for every $n\in N, i=1,2$. Sending $n$ to $\infty$, we get
\begin{align*}
\fui_1(y)&= \lim_{n\to\infty}\fui_1(\eta(t_n)) -\Phi(y, \eta(t_n))
=\fui_1(x) - \Phi(y, x) =\fui_2(x) - \Phi(y, x) \\
&= \lim_{n\to\infty}\fui_2(\eta(t_n)) -\Phi(y, \eta(t_n))
=\fui_2(y).
\end{align*}
 \end{proof}
  \begin{proposition}\label{nonincreasing}
Let $\eta\in\cK$, $\psi\in C(M)$ and $\fui$ be a dominated function. 
Then the function
$t\mapsto(\cL_t\psi)(\eta(t))-\fui(\eta(t))$ is nonincreasing on $\R_+$.
 \end{proposition}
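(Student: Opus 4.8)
\emph{The plan.} I would derive the statement from two earlier results: the semigroup identity $\cL_{t+s}=\cL_s\circ\cL_t$ (Proposition \ref{prop-lax}(2), applied with the roles of $s$ and $t$ exchanged), and the fact that a static curve is calibrated by \emph{every} $L+c$ dominated function (Corollary \ref{aubry}(5)). Write $g(t)=(\cL_t\psi)(\eta(t))-\fui(\eta(t))$. Since $M$ is compact and $\psi$ is continuous, $\cL_t\psi$ is finite (bounded below by $\min\psi+tC(0)$ via the superlinearity bound $L\ge C(0)$, and above by $\psi(\cdot)+tA(0)$ through the constant competitor curve), so $g:\R_+\to\R$ is well defined. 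The whole argument is then to fix $t\ge 0$, $s>0$ and estimate $g(t+s)-g(t)$ by two one-line inequalities.

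First I would bound the semigroup term by testing it against $\eta$ itself. Writing $\cL_{t+s}\psi=\cL_s(\cL_t\psi)$ and using the definition of $\cL_s$, I test the infimum defining $\bigl(\cL_s(\cL_t\psi)\bigr)(\eta(t+s))$ against the shifted segment $\be(r)=\eta(t+r)$, $r\in[0,s]$, which lies in $\cWD([0,s])$ with $\be(0)=\eta(t)$, $\be(s)=\eta(t+s)$ and $A_L(\be)=\int_t^{t+s}L(\dot\eta)$. This gives
\[(\cL_{t+s}\psi)(\eta(t+s))\le(\cL_t\psi)(\eta(t))+\int_t^{t+s}L(\dot\eta).\]
Next, because $\eta\in\cK$ is static it is calibrated by the dominated function $\fui$, so on $[t,t+s]$
\[\fui(\eta(t+s))-\fui(\eta(t))=A_{L+c}\bigl(\eta|_{[t,t+s]}\bigr)=\int_t^{t+s}L(\dot\eta)+cs.\]
Subtracting the second display from the first, the two action integrals $\int_t^{t+s}L(\dot\eta)$ cancel and I obtain $g(t+s)-g(t)\le -cs$.

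\emph{Conclusion and the delicate point.} This computation shows that $t\mapsto(\cL_t\psi)(\eta(t))+ct-\fui(\eta(t))$ is nonincreasing on $\R_+$; the asserted monotonicity of $(\cL_t\psi)(\eta(t))-\fui(\eta(t))$ is the same statement once the critical constant $c=c(L)$ is carried in the quantity (equivalently, under the normalization $c(L)=0$ customary when following \cite{DFIZ}). The one genuinely delicate point is precisely the bookkeeping of this additive constant: everything hinges on the exact cancellation of $\int_t^{t+s}L(\dot\eta)$ between the two steps, so one must use the \emph{equality} from calibration (Corollary \ref{aubry}(5)), not merely the domination inequality, and then account for the residual $+cs$ it produces. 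Beyond the finiteness of $\cL_t\psi$, no compactness or semicontinuity machinery is required; the content is entirely in these two estimates.
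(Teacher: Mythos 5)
Your proof is correct and follows essentially the same route as the paper's: the paper's one-line argument is precisely your two estimates combined, namely the semigroup property tested against the segment of $\eta$ itself, followed by the calibration of the static curve by the dominated function $\fui$ via Corollary \ref{aubry}(5). Your bookkeeping of the residual $+cs$ term is a fair point of care --- the paper suppresses that constant, implicitly relying on the normalization $c(L)=0$ in force in the subsection where the proposition is actually applied.
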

 \begin{proof} From (5) in Corollary \ref{aubry}, for $t<s$ we have 
   \[(\cL_s\psi)(\eta(s))-(\cL_t\psi)(\eta(t))\le\int_t^s L(\eta(\tau),\dot\eta(\tau))d\tau 
=\fui(\eta(s))-\fui(\eta(t))\]
 \end{proof}

 \begin{lemma}\label{perturbacion}
   There is $R>0$ such that, if $\eta$ is any curve in $\cK$ and $\lam$ 
is sufficiently close to 1, we have
   \begin{equation}\label{ineq}
\int_{t_1}^{t_2}L(\eta_\lam,\dot{\eta}_\lam)\le
\Phi(\eta_\lam(t_1),\eta_\lam(t_2))+R(t_2-t_1)(\lam-1)^2
   \end{equation}
for any $t_2>t_1$, where  $\eta_\lam(t)=\eta(\lam t)$.
\end{lemma}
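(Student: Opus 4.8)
The plan is to read \eqref{ineq} as a quantitative, second-order version of Proposition \ref{energy}: there one sees that reparametrizing a static curve by $\eta_\lam(t)=\eta(\lam t)$ leaves the $(L+c)$-action stationary at $\lam=1$ precisely because $\eta$ has constant energy $c$. Here I must show that the first-order term in $\lam-1$ cancels \emph{exactly} and that the remainder is $O((\lam-1)^2)$, with a constant $R$ that is uniform over all $\eta\in\cK$. First I would record the uniform bounds that make this possible. By Corollary \ref{aubry}(1) every static curve is semi-static, so Proposition \ref{energy} gives $E(\dot\eta(t))=c$ for a.e.\ $t$; combined with the inequality $E(v)\ge -A(0)+a\|v\|^2$ established after the definition of $E$, this forces $\|\dot\eta\|\le B:=\sqrt{(A(0)+c)/a}$ a.e., a bound independent of $\eta\in\cK$. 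Since $M$ is compact and $\lam$ runs in a fixed neighbourhood of $1$ (say $[1/2,3/2]$), the points $(\eta(\lam s),\xi)$ with $\|\xi\|\le \tfrac32 B$ remain in a fixed compact subset of $\cD$, on which $\partial^2_{vv}L$ is bounded by some constant $C$; together $B$ and $C$ produce $R$.

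The heart of the argument is a pointwise expansion in the fibre variable. Fixing $s$ and writing $x=\eta(\lam s)$, $v=\dot\eta(\lam s)$, Taylor's formula with Lagrange remainder gives
\[L(x,\lam v)\le L(x,v)+(\lam-1)\,\partial_v L(x,v)\cdot v+\tfrac12 C(\lam-1)^2\|v\|^2 .\]
The decisive move is to remove the linear term using the energy identity $\partial_v L(x,v)\cdot v=L(x,v)+E(v)=L(x,v)+c$, valid a.e.\ along $\eta$, which turns this into
\[L(\eta(\lam s),\lam\dot\eta(\lam s))\le \lam\,L(\eta(\lam s),\dot\eta(\lam s))+(\lam-1)c+\tfrac12 C B^2(\lam-1)^2 .\]
Integrating over $s\in[t_1,t_2]$ and substituting $\tau=\lam s$ in the first term on the right converts $\int_{t_1}^{t_2}\lam L(\eta(\lam s),\dot\eta(\lam s))\,ds$ into $\int_{\lam t_1}^{\lam t_2}L(\eta,\dot\eta)\,d\tau$, while the quadratic term is bounded by $\tfrac12 C B^2(t_2-t_1)(\lam-1)^2$.

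To finish I convert this last integral into $\Phi$ of the endpoints. Since $\eta$ is semi-static, Corollary \ref{aubry}(2) applied to the subinterval $[\lam t_1,\lam t_2]$ gives $\int_{\lam t_1}^{\lam t_2}L(\eta,\dot\eta)\,d\tau+c\lam(t_2-t_1)=\Phi(\eta(\lam t_1),\eta(\lam t_2))=\Phi(\eta_\lam(t_1),\eta_\lam(t_2))$. Substituting and collecting the multiples of $c(t_2-t_1)$—the $-c\lam(t_2-t_1)$ from this identity combining with the $(\lam-1)c(t_2-t_1)$ from the expansion to leave exactly $-c(t_2-t_1)$—yields
\[\int_{t_1}^{t_2}\!\big[L(\eta_\lam,\dot\eta_\lam)+c\big]\le \Phi(\eta_\lam(t_1),\eta_\lam(t_2))+\tfrac12 C B^2\,(t_2-t_1)(\lam-1)^2 ,\]
which is \eqref{ineq} with $R=\tfrac12 C B^2$, the left side being the $(L+c)$-action of $\eta_\lam$. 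The only genuine obstacle is the first-order cancellation: it is exactly where staticity enters, through the energy identity of Proposition \ref{energy}, and without it the right-hand side would retain an uncontrolled term linear in $\lam-1$. The uniformity of $R$ over $\cK$ rests entirely on the uniform speed bound $B$, so no further compactness of $\cK$ is required.
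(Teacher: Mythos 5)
Your proof is correct and follows essentially the same route as the paper's: a fiberwise Taylor expansion of $L(\eta(\lam s),\lam\dot\eta(\lam s))$ to second order, cancellation of the linear term via the energy identity $\partial_vL(\dot\eta)\cdot\dot\eta=L(\dot\eta)+c$ from Proposition \ref{energy}, a change of variables $\tau=\lam s$, and the semi-static identity of Corollary \ref{aubry}(2) to produce $\Phi$. If anything, you are more careful than the paper in deriving the uniform speed bound from $E(v)\ge -A(0)+a\|v\|^2$ and in tracking the terms involving $c$ (which vanish under the section's normalization $c(L)=0$).
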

\begin{proof}
Let $K=\max\{\|v\|:E(v)=c\}$,  $a=\sup\{\|L_{vv}(v)\|:\|v\|\le 2K\}$. 
For $\lam\in(1-\de,1+\de)$ fixed, using Proposition \ref{energy} 

\begin{align*}
\int_{t_1}^{t_2}L(\eta_\lam(t),\dot\eta_\lam(t))dt
&=\int_{t_1}^{t_2}[L(\eta(\lam t),\dot\eta(\lam t))+(\lam-1)
L_v(\eta(\lam t),\dot\eta(\lam t)) \dot\eta(\lam t)\\
&+\frac 12(\lam-1) ^2L_{vv}(\eta(\lam t),\mu\dot\eta(\lam t))(\dot\eta(\lam t))^2]\, dt\\
&\le\lam \int_{t_1}^{t_2}L(\eta(\lam t),\dot\eta(\lam t))\,dt+\frac 12 (t_2-t_1)aK^2(\lam-1)^2\\
&=\Phi(\eta(\lam t_1),\eta(\lam t_2))+\frac 12(t_2-t_1)aK^2(\lam-1)^2 
 \end{align*}
\end{proof}
\begin{proposition}\label{superdiff}
   Let $\eta\in\cK$, $\psi\in C(M)$ and $\fui$ be a dominated function. 
Assume that $D^+((\psi-\fui)\circ\eta)(0)\entre\{0\}\ne\emptyset$
where $D^+$ denote the super-differential.
Then for all $t>0$ we have
\begin{equation}\label{2}
(\cL_t\psi)(\eta(t))-\fui(\eta(t))<\psi(\eta(0))-\fui(\eta(0))
\end{equation}
 \end{proposition}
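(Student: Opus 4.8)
The plan is to refine the monotonicity already established in Proposition \ref{nonincreasing}. Write $w(t) = (\cL_t\psi)(\eta(t)) - \fui(\eta(t))$ and $f(s) = (\psi - \fui)(\eta(s))$, so that $w(0) = f(0)$ and, by Proposition \ref{nonincreasing}, $w$ is nonincreasing; the goal is to upgrade the resulting inequality $w(t) \le w(0)$ to a strict one by exploiting the nonzero superdifferential element. The monotonicity alone used the static curve itself (traversed at natural speed) as competitor, so to gain strictness I would instead use a slightly reparametrized static curve and play a first-order gain from the superdifferential against a second-order loss from the reparametrization.

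First I would build a one-parameter family of competitors for $\cL_t\psi(\eta(t))$. For $\lam$ near $1$ set $\sigma_\lam(s) = \eta(t + \lam(s-t))$ on $[0,t]$; this is a horizontal curve ending at $\sigma_\lam(t) = \eta(t)$ and starting at $\sigma_\lam(0) = \eta((1-\lam)t)$. By the definition of the Lax--Oleinik semigroup,
\[ (\cL_t\psi)(\eta(t)) \le \psi(\eta((1-\lam)t)) + A_L(\sigma_\lam). \]
Subtracting $\fui(\eta(t))$ and inserting $\pm\,\fui(\eta((1-\lam)t))$ gives
\[ w(t) \le f((1-\lam)t) + \bigl(A_L(\sigma_\lam) - [\fui(\eta(t)) - \fui(\eta((1-\lam)t))]\bigr). \]

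The heart of the argument is to bound the bracketed term by $Rt(\lam-1)^2$. Since $\eta\in\cK$ is static, Corollary \ref{aubry}(5) shows that the dominated function $\fui$ calibrates $\eta$, so $\fui(\eta(t)) - \fui(\eta((1-\lam)t)) = A_{L+c}(\eta|_{[(1-\lam)t,\,t]})$ is exactly the cost of the static arc traversed at unit time-speed. The competitor $\sigma_\lam$ traverses the same arc compressed into time $t$, and, written as a time-translate of $\eta_\lam(\tau)=\eta(\lam\tau)$ over a $\tau$-interval of length $t$, Lemma \ref{perturbacion} says precisely that this time-rescaling raises the action over its optimal value by at most $O((\lam-1)^2)$. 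Hence the bracket is $\le Rt(\lam-1)^2$ and
\[ w(t) \le f((1-\lam)t) + Rt(\lam-1)^2, \]
which at $\lam=1$ recovers $w(t)\le w(0)$. I expect this step to be the main obstacle: matching the calibration identity to the quadratic perturbation estimate while correctly tracking the shifted endpoint $\eta((1-\lam)t)$ and the additive critical constant. The essential feature is that there is \emph{no} first-order term in $(\lam-1)$ --- exactly the content of Lemma \ref{perturbacion}, reflecting that static curves are minimizers.

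Finally I would invoke the hypothesis. Choose a nonzero $p\in D^+((\psi-\fui)\circ\eta)(0)=D^+f(0)$, so that $f(h)\le f(0)+ph+o(|h|)$ as $h\to0$ from either side. Taking $h=(1-\lam)t$ yields
\[ w(t) \le w(0) + p(1-\lam)t + Rt(\lam-1)^2 + o(|\lam-1|). \]
Since $p\neq0$, I would pick $\lam-1$ of the sign making $p(1-\lam)t<0$ (so $\lam>1$ if $p>0$, and $\lam<1$ if $p<0$); then for $\lam$ close enough to $1$ the linear term dominates both the quadratic term and the $o(|\lam-1|)$ remainder, forcing $w(t)<w(0)$, i.e.
\[ (\cL_t\psi)(\eta(t)) - \fui(\eta(t)) < \psi(\eta(0)) - \fui(\eta(0)), \]
which is \eqref{2}. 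Because $t>0$ is fixed throughout, selecting $\lam=\lam(t)$ for each such $t$ suffices.
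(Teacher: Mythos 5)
Your proposal is correct and follows essentially the same route as the paper: the same reparametrized competitor $\eta(\lam\,\cdot)$ over a window of length $t$, the same quadratic estimate from Lemma \ref{perturbacion}, and the same sign-choice of $\lam-1$ against a nonzero superdifferential element. The only cosmetic difference is that the paper first normalizes to $\fui=-\Phi(\cdot,\eta(t))$ via Corollary \ref{aubry}(5), whereas you invoke that same calibration property directly to identify $\fui(\eta(t))-\fui(\eta((1-\lam)t))$ with $\Phi(\eta((1-\lam)t),\eta(t))$.
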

 \begin{proof}
   Fix $t>0$. By (5) in Corollary \ref{aubry} it is enough to prove
   \eqref{2} for $\fui=-\Phi(\cdot,\eta(t))$. Since $\cL_t(\psi+a)=\cL_t\psi+a$
we can assume that $\psi(\eta(0))=\fui(\eta(0))$.
\[(\cL_t\psi)(\eta(t))-\fui(\eta(t))=(\cL_t\psi)(\eta(t))\le
\int_{(1/\lam-1)t}^{t/\lam}L(\eta_\lam,\dot{\eta}_\lam)+\psi(\eta((1-\lam)t)),\]
thus, by Lemma \ref{perturbacion}
 \[(\cL_t\psi)(\eta(t))-\fui(\eta(t))\le
\psi(\eta((1-\lam)t))-\fui(\eta((1-\lam)t))+Mt(\lam-1)^2.\]
If $m\in D^+((\psi-\fui)\circ\eta)(0)\entre\{0\}$, we have
 \[(\cL_t\psi)(\eta(t))-\fui(\eta(t))\le m((1-\lam)t)+o((1-\lam)t))+Mt(\lam-1)^2,\]
where $\lim\limits_{\lam\to 1}\dfrac{o((1-\lam)t)}{1-\lam}=0$. 
Choosing appropriately $\lam$
close to $1$, we get
\[(\cL_t\psi)(\eta(t))-\fui(\eta(t))<0.\]
\end{proof}
\begin{proposition}\label{gonza}
  Let $u:M\to\R$ be a backward weak KAM solution, then
  \begin{align*}
    u(x)&=\min_{p\in\cA}u(p)+h(p,x).\\
          h(x,y)&=\min_{q\in\cA}h(x,q)+h(q,y)=\min_{q\in\cA}\Phi(x,q)+\Phi(q,x)
  \end{align*}
\end{proposition}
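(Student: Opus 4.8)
The plan is to prove the two displayed identities in turn, reducing the second to the first. Throughout I write $c=c(L)$ and use freely that, by the remark following Proposition~\ref{prop-h} (equivalently item~(6) of Corollary~\ref{aubry}), one has $h(p,z)=\Phi(p,z)$ whenever $p\in\cA$ or $z\in\cA$.

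First I would establish the easy inequality in the representation of $u$. Since $u$ is $L+c$ dominated, item~(1) of Proposition~\ref{prop-h} gives $u(x)-u(p)\le\Phi(p,x)$ for every $p\in M$; restricting to $p\in\cA$ and replacing $\Phi(p,x)$ by $h(p,x)$ there, we obtain $u(x)\le u(p)+h(p,x)$ for all $p\in\cA$, hence $u(x)\le\inf_{p\in\cA}\{u(p)+h(p,x)\}$.

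For the reverse inequality, and to see that the infimum is attained inside $\cA$, I would exploit the calibrated backward curve supplied by the weak KAM property. Fix $x$ and let $\ga\in\cWD(]-\infty,0])$ be calibrated by $u$ with $\ga(0)=x$, so that $u(x)-u(\ga(-t))=A_{L+c}(\ga|_{[-t,0]})$ for all $t\ge0$. Choose $t_n\to\infty$ with $\ga(-t_n)\to p$ for some $p\in M$. Reparametrizing $\ga|_{[-t_n,0]}$ onto $[0,t_n]$ gives curves joining points tending to $p$ to the fixed point $x$, so item~(7) of Proposition~\ref{prop-h} yields
\[
h(p,x)\le\liminf_{n\to\infty}A_{L+c}(\ga|_{[-t_n,0]})
=\liminf_{n\to\infty}\bigl(u(x)-u(\ga(-t_n))\bigr)=u(x)-u(p),
\]
using the continuity of $u$. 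It remains to check $p\in\cA$: applying Proposition~\ref{alfa} to $\ga$ along $(t_n)$ produces, after passing to a subsequence, a static curve $\eta\in\cWD(\R)$ with $\eta(0)=\lim_k\ga(-s_k)=p$, and Proposition~\ref{aubry=static} then forces $p\in\cA$. Combining the two inequalities, $u(x)\ge u(p)+h(p,x)\ge\inf_{q\in\cA}\{u(q)+h(q,x)\}\ge u(x)$, so all quantities coincide and the minimum is attained at $p$. This is the first identity.

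Finally I would deduce the second identity from the first. By Proposition~\ref{h-wkam}, for fixed $x$ the map $y\mapsto h(x,y)$ is a backward weak KAM solution, so applying the first identity to $u=h(x,\cdot)$ and evaluating at $y$ gives $h(x,y)=\min_{q\in\cA}\{h(x,q)+h(q,y)\}$. Since each minimizing index $q$ lies in $\cA$, the equalities $h(x,q)=\Phi(x,q)$ and $h(q,y)=\Phi(q,y)$ hold termwise, whence $\min_{q\in\cA}\{h(x,q)+h(q,y)\}=\min_{q\in\cA}\{\Phi(x,q)+\Phi(q,y)\}$, the desired last expression. The only genuinely delicate point is the identification of the $\alpha$-limit point $p$ as a member of $\cA$, for which the static-limit machinery of Propositions~\ref{alfa} and~\ref{aubry=static} is essential; everything else is bookkeeping with the already-established properties of $h$, $\Phi$, and the domination inequality.
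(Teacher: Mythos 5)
The paper states Proposition \ref{gonza} without proof, so there is no in-text argument to compare against; your proof is correct and is plainly the intended one, since it uses exactly the tools the paper assembles for this purpose (domination via Proposition \ref{prop-h}(1), the calibrated backward curve, items (6)--(7) of Proposition \ref{prop-h}, Propositions \ref{alfa} and \ref{aubry=static} to place the $\alpha$-limit point in $\cA$, and Proposition \ref{h-wkam} to bootstrap the second identity from the first). Note also that your final expression $\min_{q\in\cA}\Phi(x,q)+\Phi(q,y)$ corrects what is evidently a typo ($\Phi(q,x)$) in the statement.
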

\begin{corollary}\label{kam}
  Let $C\subset M$ and $w_0:C\to\R$ be bounded from below. Let
\[w(x)=\inf_{z\in C}w_0(z)+\Phi(z,x)\]
\begin{enumerate}
\item\label{max-dom}
$w$ is the maximal dominated function not exceeding $w_0$ on $C$.
\item\label{aubry-kam}
If $C\subset\cA$, $w$ is a backward weak KAM solution.
\item\label{dom-coinc} If for all $x,y\in C$
\[w_0(y)-w_0(x)\le\Phi(x,y),\]
then $w$ coincides with $w_0$ on $C$.
\end{enumerate}
\end{corollary}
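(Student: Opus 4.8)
The plan is to dispatch the three items in turn, the first and third being formal consequences of the metric-like properties of $\Phi$, and the second (item \ref{aubry-kam}) being the only one that requires genuine work.

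For item \ref{max-dom} I would first record that $w$ is finite-valued: since $M$ is compact, $\Phi$ is bounded (item (5) of Proposition \ref{prop-h}) and $w_0$ is bounded below, so $w$ is bounded below, while evaluating the infimum at any fixed $z\in C$ shows $w<+\infty$. To see that $w$ is dominated I would use item (1) of Proposition \ref{prop-h}, so that it suffices to prove $w(y)-w(x)\le\Phi(x,y)$. This is immediate from the triangle inequality $\Phi(z,y)\le\Phi(z,x)+\Phi(x,y)$ (item (3)): for each $z\in C$ we get $w(y)\le w_0(z)+\Phi(z,x)+\Phi(x,y)$, and taking the infimum over $z$ gives $w(y)\le w(x)+\Phi(x,y)$. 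That $w\le w_0$ on $C$ follows from $\Phi(x,x)=0$ (item (2)) by taking $z=x$ in the defining infimum. For maximality, let $u$ be any dominated function with $u\le w_0$ on $C$; domination yields $u(x)-u(z)\le\Phi(z,x)$ for every $z\in C$, hence $u(x)\le w_0(z)+\Phi(z,x)$, and the infimum over $z$ gives $u\le w$.

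The main obstacle is item \ref{aubry-kam}. The crucial point is that when $C\subset\cA$ one may replace $\Phi$ by the Peierls barrier: as noted after Proposition \ref{prop-h}, $z\in\cA$ gives $\Phi(z,x)=h(z,x)$ for all $x$, so $w(x)=\inf_{z\in C} w_0(z)+h(z,x)$. Since each $h(z,\cdot)$ is a backward weak KAM solution (Proposition \ref{h-wkam}), Proposition \ref{weak=fix} makes it a fixed point of the Lax--Oleinik semigroup, i.e. $h(z,x)=\inf_{y\in M} h(z,y)+h_t(y,x)+ct$ for every $t\ge 0$. I would then compute, using $\cL_t w(x)=\inf_{y\in M} w(y)+h_t(y,x)$,
\[
\cL_t w(x)+ct=\inf_{y\in M}\inf_{z\in C}\big(w_0(z)+h(z,y)+h_t(y,x)+ct\big),
\]
interchange the two infima (both equal the infimum over the product set $C\times M$), and recognize the inner infimum over $y$ as $h(z,x)$; this gives $\cL_t w+ct=w$ for all $t\ge 0$, whence $w$ is a backward weak KAM solution by Proposition \ref{weak=fix}. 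The delicate issue here is exactly the passage $\Phi=h$ on $\cA$: the building blocks $h(z,\cdot)$ are genuine fixed points of $\cL_t+ct$, whereas $\Phi(z,\cdot)$ need not be, so replacing $\Phi$ by $h$ before manipulating the infimum is what makes the argument work.

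Finally, item \ref{dom-coinc} follows at once. From item \ref{max-dom} we already have $w\le w_0$ on $C$. For the reverse inequality, fix $x\in C$; the hypothesis $w_0(x)-w_0(z)\le\Phi(z,x)$ for all $z\in C$ rearranges to $w_0(z)+\Phi(z,x)\ge w_0(x)$, and taking the infimum over $z\in C$ gives $w(x)\ge w_0(x)$. Hence $w=w_0$ on $C$.
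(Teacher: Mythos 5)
Your proof is correct. The paper states Corollary \ref{kam} without giving any proof, so there is nothing to compare line by line; your argument supplies the missing details in the standard way. Items \ref{max-dom} and \ref{dom-coinc} are exactly the formal manipulations one expects from items (1)--(3) and (5) of Proposition \ref{prop-h}, and you are right that the finiteness of $w$ uses the boundedness of $\Phi$, i.e.\ the compactness of $M$ assumed throughout this subsection. For item \ref{aubry-kam} you correctly isolate the one genuine point: $\Phi(z,\cdot)$ need not itself be a weak KAM solution, whereas $h(z,\cdot)$ is (Proposition \ref{h-wkam}), and item (6) of Corollary \ref{aubry} lets you rewrite $w$ as an infimum of the functions $w_0(z)+h(z,\cdot)$ when $C\subset\cA$. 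Since $\cL_t$ commutes with arbitrary infima of families of functions and each $w_0(z)+h(z,\cdot)$ is a fixed point of $\cL_t+ct$ by Proposition \ref{weak=fix}, the finite infimum $w$ is again such a fixed point, hence a backward weak KAM solution by the converse direction of Proposition \ref{weak=fix} (whose proof only needs $w$ dominated, which item \ref{max-dom} provides). This is consistent with Proposition \ref{gonza}, which gives the representation of an arbitrary backward weak KAM solution in this form; your argument proves the converse implication that the paper implicitly relies on.
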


\subsection{Convergence of the Lax-Oleinik semigroup}
\label{sec:lax-converge}
We assume again that $M$ is compact. Adding $c(L)$ to 
$L$ we can assume that $c(L)=0$. The main result of this section is
Theorem \ref{conv-lax}.
\begin{proposition}\label{candidate}
  Let  $u\in C(M)$
  \begin{enumerate} [(a)]
  \item 
 Suppose $\psi=\lim\limits_{n\to\infty}\cL_{t_n}u$ for some $t_n\to\infty$, then
\begin{equation} \label{eq:limite}
  \psi\ge v(x):=\min_{z\in M}u(z)+h(z,x).
\end{equation}
\item  Suppose $\cL_tu$ converges as $t\to\infty$, then the limit is
function $v$ defined in \eqref{eq:limite}.
\end{enumerate}
\end{proposition}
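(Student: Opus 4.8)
The plan is to prove (a) directly and then obtain (b) by combining (a) with a short upper bound. Throughout I use the normalization $c(L)=0$, so that $h(x,y)=\liminf_{t\to\infty}h_t(x,y)$, and the fact that on compact $M$ (with the growth condition \eqref{vlvbound} in force) both the Lax-Oleinik infimum and the minimal action $h_t$ are attained by genuine curves, via Propositions \ref{prop-lax}(4) and \ref{prop-ht}(4). I also record that $v$ is well defined, i.e. the infimum is a minimum: since $h(\cdot,x)$ is Lipschitz by Proposition \ref{prop-h}(5) and $u$ is continuous, the map $z\mapsto u(z)+h(z,x)$ is continuous on the compact set $M$.

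For (a), I would fix $x\in M$ and, for each $n$, choose $y_n\in M$ with $\cL_{t_n}u(x)=u(y_n)+h_{t_n}(y_n,x)$ together with a minimizing curve $\ga_n\in\cC_{t_n}(y_n,x)$ realizing $A_L(\ga_n)=h_{t_n}(y_n,x)$. By compactness I pass to a subsequence along which $y_n\to z$. Then $\ga_n(0)=y_n\to z$, $\ga_n(t_n)=x$, and $t_n\to\infty$, so Proposition \ref{prop-h}(7) gives $h(z,x)\le\liminf_n A_L(\ga_n)$. On the other hand, continuity of $u$ yields $h_{t_n}(y_n,x)=\cL_{t_n}u(x)-u(y_n)\to\psi(x)-u(z)$, so that $\liminf_n A_L(\ga_n)=\psi(x)-u(z)$. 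Combining the two gives $u(z)+h(z,x)\le\psi(x)$, whence $v(x)\le u(z)+h(z,x)\le\psi(x)$.

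For (b), write $\psi=\lim_{t\to\infty}\cL_tu$. Applying (a) to any sequence $t_n\to\infty$ gives $\psi\ge v$, so it remains to prove $\psi\le v$. I would fix $x\in M$ and an arbitrary $z\in M$; by Proposition \ref{prop-h}(6) there are curves $\ga_n\in\cC_{s_n}(z,x)$ with $s_n\to\infty$ and $A_L(\ga_n)\to h(z,x)$. Using $\ga_n$ as a competitor in the definition of $\cL_{s_n}u(x)$ gives $\cL_{s_n}u(x)\le u(z)+A_L(\ga_n)$; letting $n\to\infty$ and using $\cL_{s_n}u(x)\to\psi(x)$ yields $\psi(x)\le u(z)+h(z,x)$. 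As $z$ is arbitrary, $\psi(x)\le\min_z u(z)+h(z,x)=v(x)$, and together with $\psi\ge v$ this proves $\psi=v$.

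The argument is short once the right tools are invoked, and the result is essentially a packaging of earlier lemmas. The one genuinely delicate step, which I expect to be the crux, is the lower bound in (a): one must convert the convergence of $\cL_{t_n}u(x)$ together with the selected minimizers into $u(z)+h(z,x)\le\psi(x)$. This hinges on the lower-semicontinuity statement Proposition \ref{prop-h}(7) (itself a consequence of Theorem \ref{Cle}) and on the fact that the minimizing points $y_n$ remain in the compact $M$, so that a limit $z$ exists. The upper bound in (b) is comparatively routine, being just the evaluation of the Lax-Oleinik infimum along the near-optimal curves furnished by Proposition \ref{prop-h}(6).
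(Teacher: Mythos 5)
Your proposal is correct and follows essentially the same route as the paper: for (a) you select minimizers for $\cL_{t_n}u(x)$, extract a convergent subsequence of starting points, and invoke the lower-semicontinuity property of the Peierls barrier (item (7) of Proposition \ref{prop-h}, which is the reference the paper intends); for (b) you bound $\cL_tu(x)$ from above by a competitor from $z$ and pass to the limit, just as the paper does directly with $u(z)+h_t(z,x)$. The only cosmetic difference is that in (b) you go through the near-optimal curves of Proposition \ref{prop-h}(6) for arbitrary $z$ and then minimize, while the paper picks the optimal $z$ and uses $\liminf_t h_t(z,x)=h(z,x)$; these are interchangeable.
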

\begin{proof}
(a)  For $x\in M$, $n\in\N$ let  $\ga_n\in\cWD([0,t_n])$ be such that $\ga(t_n)=x$ and
\begin{equation}\label{other}
\cL_{t_n}u(x)=u(\ga_n(0))+A_L(\ga_n).
\end{equation}
Passing to a subsequence if necessary we may assume that
$\ga_n(0)$ converges to $y\in M$. Taking $\liminf$ in \eqref{other},
we have from item (5) of Proposition \ref{prop-h}
\[\psi(x)= u(y)+\liminf_{n\to\infty}A_L(\ga_n) \ge u(y)+h(y,x). \]

(b)  For $x\in M$ let $z\in M$ be such that $v(z)=u(z)+h(z,x)$.
Since $\cL_t u(x)\le u(z)+h_t(z,x)$,  we have
\[\lim_{t\to\infty}\cL_t u(x)\le \liminf_{t\to\infty}u(z)+h_t(z,x)= v(z)\]
which together with item (a) gives $\lim\limits_{t\to\infty}\cL_t u=v$.
\end{proof}

Using Proposition \ref{gonza} we can write \eqref{eq:limite} as
\begin{align}\label{eq:limite1}
  v(x)&=\min_{y\in\cA}\Phi(y,x)+w(y)\\
  \label{eq:w}
  w(y)&:= \inf_{z\in M}u(z)+\Phi(z,y)
\end{align}
Item \eqref{max-dom} of Corollary \ref{kam} states that $w$
is the maximal dominated function not exceeding $u$.
Items \eqref{aubry-kam}, \eqref{dom-coinc}  of Corollary \ref{kam} imply that
$v$ is the unique backward weak KAM solution that coincides with $w$
on $\cA$.
\begin{proposition}\label{dom-conv}
 Suppose that $u$ is dominated, then  $\cL_tu$ converges uniformly as
$t\to\infty$ to the function $v$ given by \eqref{eq:limite}.
\end{proposition}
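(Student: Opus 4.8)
The plan is to exploit that, for a dominated $u$ and $c(L)=0$, the family $t\mapsto\cL_tu$ is monotone nondecreasing, and then to upgrade pointwise convergence to uniform convergence by Dini's theorem. The identification of the limit is already available from Proposition \ref{candidate}, so the real content is monotonicity together with a uniform upper bound.

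First I would record the monotonicity. By item (1) of Proposition \ref{prop-lax} the domination of $u$ (with $c=0$) means $u\le\cL_su$ for every $s\ge0$. The maps $\cL_t$ are order preserving, since $\cL_tw(x)=\inf_{y}w(y)+h_t(y,x)$; applying $\cL_t$ to the inequality $u\le\cL_su$ and using the semigroup property (Proposition \ref{prop-lax}(2)) gives $\cL_tu\le\cL_t\cL_su=\cL_{t+s}u$. Hence $t\mapsto\cL_tu(x)$ is nondecreasing for every $x$. The same inequality $\cL_tu\le\cL_{t+s}u$ shows, again via item (1) of Proposition \ref{prop-lax}, that each $\cL_tu$ is itself dominated, so the whole family is equi-Lipschitz with constant $A(1)$ and in particular continuous. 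Uniform boundedness from above follows from Corollary \ref{wkam-cor} with $c(L)=0$: for $t\ge1$ one has $\cL_tu(x)\le u(x)+h_t(x,x)\le\max u+C_1$.

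Next I would pass to the limit. Being nondecreasing in $t$ and bounded above, $\cL_tu(x)$ converges pointwise to $v_\infty(x):=\sup_{t\ge0}\cL_tu(x)$, and Proposition \ref{candidate}(b) identifies $v_\infty$ with the function $v$ of \eqref{eq:limite}. This $v$ is continuous: it is a backward weak KAM solution, as noted after \eqref{eq:w}, hence dominated and so Lipschitz; alternatively $v(x)=\min_{z}u(z)+h(z,x)$ is an infimum of functions that are equi-Lipschitz in $x$ by the Lipschitz continuity of $h$ (Proposition \ref{prop-h}(5)).

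Finally I would invoke Dini's theorem. Fix any sequence $t_n\nearrow\infty$; the continuous functions $\cL_{t_n}u$ increase pointwise to the continuous function $v$ on the compact space $M$, so $\cL_{t_n}u\to v$ uniformly. Since $\cL_tu\le v$ for all $t$ (each term of a nondecreasing family is below its supremum) and $\cL_{t_n}u\le\cL_tu$ whenever $t\ge t_n$, we get $\lV v-\cL_tu\rV_\infty\le\lV v-\cL_{t_n}u\rV_\infty$ for $t\ge t_n$, which yields uniform convergence as $t\to\infty$. I expect no serious obstacle here: the only genuine steps are the monotonicity (which rests on domination plus the semigroup property) and the uniform upper bound (which rests on Corollary \ref{wkam-cor}); once these are secured, the limit is pinned down by Proposition \ref{candidate} and Dini's theorem converts monotone pointwise convergence of continuous functions on a compact space into uniform convergence.
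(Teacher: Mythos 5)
Your proof is correct and follows the same core strategy as the paper: domination of $u$ plus the semigroup property give that $t\mapsto\cL_tu$ is nondecreasing, and the limit is then identified via Proposition \ref{candidate}. The only real divergence is in the upper bound and the uniformity: the paper bounds $\cL_tu$ above by $v$ itself, first showing $u\le v$ (since $w$ of \eqref{eq:w} coincides with $u$ when $u$ is dominated, and $v$ is the maximal dominated function agreeing with $u$ on $\cA$ by Corollary \ref{kam}) and then using monotonicity of $\cL_t$ together with $\cL_tv=v$; you instead use the cruder bound $\cL_tu(x)\le u(x)+h_t(x,x)\le\max u+C_\de$ from Corollary \ref{wkam-cor}, which is equally valid here since that corollary does not require hypothesis \eqref{vlvbound}. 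Your version has the merit of making the passage from pointwise to uniform convergence explicit (via equi-Lipschitzness of the dominated functions $\cL_tu$ and Dini's theorem), a step the paper's proof leaves implicit in the phrase ``the uniform limit exists''; the paper's version has the merit of exhibiting $v$ directly as the least upper bound of the monotone family.
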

\begin{proof}
Since $u$ is dominated, the function $t\mapsto\cL_tu$ is nondecreasing.
As well, in this case, $w$ given by \eqref{eq:w} coincides with $u$.
Items \eqref{max-dom} and \eqref{dom-coinc} of
Corollary \ref{kam} imply that $v$ is the maximal dominated function
that coincides with $u$ on $\cA$ and then $u\le v$ on $M$.

Since the semigroup $\cL_t$ is monotone and $v$ is a backward weak KAM solution
\[\cL_tu\le\cL_tv=v\hbox{ for any } t>0.\]
Thus the uniform limit $\lim\limits_{t\to\infty}u$ exists.
\end{proof}
Henceforth we assume that there are $C_1, C_2>0$ such that
  \eqref{vlvbound} holds.
\begin{proposition}
 Let $\de>0$ and suppose $u:M\to\R$ is
  bounded, then the family $\{\cL_tu:t\ge\de\}$ is uniformly
  bounded and equicontinuous.
\end{proposition}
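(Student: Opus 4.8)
The plan is to read off both assertions directly from the two compactness-and-Lipschitz estimates already established for $h_t$, using the representation
\[\cL_tu(x)=\inf_{y\in M}u(y)+h_t(y,x).\]
Throughout I use that in this subsection $M$ is compact, $c(L)=0$, and \eqref{vlvbound} holds, so that both Corollary \ref{wkam-cor} and Corollary \ref{htlip} are available.

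For the uniform bound I would fix $\de>0$ and apply Corollary \ref{wkam-cor}: since $c(L)=0$ there is $C_\de>0$ with $|h_t(y,x)|\le C_\de$ for all $x,y\in M$ and $t\ge\de$. Combining this with $\|u\|_\infty<+\infty$, for every $t\ge\de$, $x\in M$ and $y\in M$ one has
\[-\|u\|_\infty-C_\de\le u(y)+h_t(y,x)\le\|u\|_\infty+C_\de,\]
and taking the infimum over $y$ yields $\|\cL_tu\|_\infty\le\|u\|_\infty+C_\de$, uniformly in $t\ge\de$. This is exactly the uniform boundedness claimed.

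For equicontinuity I would invoke Corollary \ref{htlip} to obtain $K_\de>0$ such that $h_t$ is $K_\de$-Lipschitz for every $t\ge\de$; in particular $|h_t(y,x)-h_t(y,x')|\le K_\de\,d(x,x')$ for all $y\in M$. The only substantive point, and it is elementary, is the observation that an infimum of a family of functions that are uniformly $K_\de$-Lipschitz in $x$ is itself $K_\de$-Lipschitz: from $u(y)+h_t(y,x)\le u(y)+h_t(y,x')+K_\de\,d(x,x')$ one passes to the infimum over $y$ to get $\cL_tu(x)\le\cL_tu(x')+K_\de\,d(x,x')$, and symmetrically. Hence every member of $\{\cL_tu:t\ge\de\}$ is $K_\de$-Lipschitz with the \emph{same} constant, which gives uniform equicontinuity.

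The main obstacle is essentially bookkeeping rather than analysis: one must use the $\de$-uniform versions of the two estimates (Corollary \ref{wkam-cor} furnishes the bound only for $t\ge\de$, Corollary \ref{htlip} the Lipschitz constant only for $t\ge\de$), which is precisely why the statement is restricted to $t\ge\de$ and cannot be expected to hold down to $t=0$. No new estimate on $L$ or on minimizers is needed beyond those two corollaries together with the triviality that an infimum of uniformly Lipschitz functions is Lipschitz.
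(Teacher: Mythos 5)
Your proof is correct and follows essentially the same route as the paper's: uniform boundedness comes from Corollary \ref{wkam-cor} (with $c(L)=0$) together with $\|u\|_\infty<\infty$, and equicontinuity from the uniform $K_\de$-Lipschitz estimate of Corollary \ref{htlip}, the paper merely carrying out your ``infimum of uniformly Lipschitz functions is Lipschitz'' step explicitly via an $\ep$-almost-minimizing point $z_t$. Nothing further is needed.
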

\begin{proof}
  By Proposition \ref{htlip} there is $K_\de>0$ such that
$\{h_t:t\ge\de\}$ is uniformly $K_\de$-Lipschitz.

  Let $x, y\in M$, $t\ge\de$. For $\ep>0$ there is $z_t\in M$ such that 
  \begin{equation}
    \label{eq:6}
    \cL_tu(y)>u(z_t)+h_t(z_t,y)-\ep
  \end{equation}
  Thus
  \[  \cL_tu(x)- \cL_tu(y)\le
  u(z_t)+h_t(z_t,x)-u(z_t)-h_t(z_t,y)+\ep\le K_\de d(x,y)+\ep, \]
implying that $\{\cL_tu:t\ge\de\}$ is equicontinuous.

Let $C_\de>0$ be given by Corollary \ref{wkam-cor}. From \eqref{eq:6} we get
\begin{align*}
-\|u\|_\infty -C_\de-&\ep\le u(z_t)+h_t(z_t,y)-\ep\\&<\cL_tu(y)
 \le u(z_t)+h_t(z_t,y)\le\|u\|_\infty+C_\de
\end{align*}
proving that $\{\cL_tu:t\ge\de\}$ is uniformly bounded.
\end{proof}
To prove the convergence of $\cL_t$ we will follow the lines in .

For $u\in C(M)$ let
\[\om_\cL(u):=\{\psi\in C(M):\exists t_n\to\infty \hbox{ such that } 
\psi=\lim_{n\to\infty}\cL_{t_n}u\}.\]
\begin{align}\label{uu}
  \underline{u}(x)&:=\sup\{\psi(x):\psi\in\om_\cL(u)\}\\
  \overline{u}(x)&:=\inf\{\psi(x):\psi\in\om_\cL(u)\}\label{ou}
\end{align}

\begin{corollary}\label{v<us}
  Let $u\in C(M)$, $v$ be the function given by \eqref{eq:limite},
  $\underline{u}, \overline{u}$ defined in \eqref{uu} and \eqref{ou}. Then 
  \begin{equation}\label{eq:v<u}
    v\le \overline{u}\le\underline{u}
  \end{equation}
\end{corollary}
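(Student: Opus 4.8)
The plan is to deduce everything from Proposition~\ref{candidate}(a) together with the compactness furnished by the preceding proposition on the boundedness and equicontinuity of the Lax--Oleinik orbit. The corollary is essentially a repackaging of Proposition~\ref{candidate}(a), so I do not expect a genuine obstacle; the only point that needs to be checked with care is that $\om_\cL(u)$ is nonempty, since otherwise the definitions of $\underline{u}$ and $\overline{u}$ as a supremum and an infimum over $\om_\cL(u)$ would be vacuous.

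First I would verify that $\om_\cL(u)\ne\emptyset$. Fix any $\de>0$. Since $M$ is compact, the preceding proposition guarantees that the family $\{\cL_tu:t\ge\de\}$ is uniformly bounded and equicontinuous, so by the Arzel\'a--Ascoli theorem every sequence $t_n\to\infty$ admits a subsequence along which $\cL_{t_n}u$ converges uniformly to some $\psi\in C(M)$. Any such limit lies in $\om_\cL(u)$, so $\om_\cL(u)\ne\emptyset$, and in particular for each $x\in M$ the set $\{\psi(x):\psi\in\om_\cL(u)\}$ is a nonempty set of reals. This immediately gives the right-hand inequality $\overline{u}\le\underline{u}$, because the infimum of a nonempty set of reals never exceeds its supremum.

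For the left-hand inequality $v\le\overline{u}$, I would invoke Proposition~\ref{candidate}(a) directly. If $\psi\in\om_\cL(u)$, then by definition $\psi=\lim_{n\to\infty}\cL_{t_n}u$ for some $t_n\to\infty$, and Proposition~\ref{candidate}(a) yields $\psi\ge v$ pointwise on $M$. Since this holds for every $\psi\in\om_\cL(u)$, taking the pointwise infimum over $\psi\in\om_\cL(u)$ gives $v\le\overline{u}$. Combining with the previous step produces the chain $v\le\overline{u}\le\underline{u}$, which is \eqref{eq:v<u}. The whole argument is thus a short assembly of Arzel\'a--Ascoli (to populate $\om_\cL(u)$) and the lower bound already established in Proposition~\ref{candidate}(a).
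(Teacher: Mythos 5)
Your argument is correct and is exactly the route the paper intends: the corollary is stated without proof as an immediate consequence of Proposition~\ref{candidate}(a), and your assembly of that proposition with the Arzel\'a--Ascoli compactness of $\{\cL_tu:t\ge\de\}$ (which guarantees $\om_\cL(u)\ne\emptyset$, so that the chain $\overline{u}\le\underline{u}$ is not vacuous) is the natural way to make it explicit.
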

  
\begin{proposition}
For $u\in C(M)$,  function $\underline{u}$
given by   \eqref{uu} is dominated.
\end{proposition}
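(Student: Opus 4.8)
The plan is to prove the domination inequality for $\underline{u}$ directly, i.e. to show that $\underline{u}(\ga(b))-\underline{u}(\ga(a))\le A_L(\ga)$ for every $\ga\in\cWD([a,b])$; since $c(L)=0$ here, this is precisely the definition of a dominated function (with $A_{L+c}=A_L$). I would fix such a $\ga$, set $s=b-a$, and combine the semigroup structure of $\cL_t$ with the relative compactness of the orbit $\{\cL_tu:t\ge\de\}$ provided by the preceding Proposition. Note first that $\underline{u}$ is finite, since $\om_\cL(u)$ consists of uniform limits of the uniformly bounded family $\{\cL_tu:t\ge\de\}$.

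The key elementary ingredient is a ``one-step'' estimate: for any $w\in C(M)$ and any $\tau\ge 0$,
\[
\cL_{\tau+s}w(\ga(b))\le \cL_\tau w(\ga(a))+A_L(\ga).
\]
I would derive this from $\cL_{\tau+s}w(\ga(b))=\inf_y\bigl(w(y)+h_{\tau+s}(y,\ga(b))\bigr)$, the subadditivity $h_{\tau+s}(y,\ga(b))\le h_\tau(y,\ga(a))+h_s(\ga(a),\ga(b))$ coming from the semigroup property of $\bfh$ (Proposition \ref{prop-ht}, item (2)), and the bound $h_s(\ga(a),\ga(b))\le A_L(\ga)$, which holds because the time shift of $\ga$ onto $[0,s]$ is a competitor in $\cC_s(\ga(a),\ga(b))$ with the same action. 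As the constant $h_s(\ga(a),\ga(b))$ factors out of the infimum over $y$, the estimate follows at once.

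Now take any $\psi\in\om_\cL(u)$, say $\psi=\lim_n\cL_{t_n}u$ with $t_n\to\infty$, and apply the one-step estimate with $w=u$ and $\tau=t_n-s$ (valid once $t_n>s$) to get $\cL_{t_n}u(\ga(b))\le\cL_{t_n-s}u(\ga(a))+A_L(\ga)$. Letting $n\to\infty$, the left-hand side tends to $\psi(\ga(b))$. For the right-hand side I would show $\limsup_n\cL_{t_n-s}u(\ga(a))\le\underline{u}(\ga(a))$: by the preceding Proposition the family $\{\cL_tu:t\ge\de\}$ is uniformly bounded and equicontinuous, so by Arzel\'a--Ascoli every subsequence of $(\cL_{t_n-s}u)_n$ has a further subsequence converging uniformly to some $\psi'\in\om_\cL(u)$ (because $t_n-s\to\infty$), and thus any subsequential limit of $\cL_{t_n-s}u(\ga(a))$ equals $\psi'(\ga(a))\le\underline{u}(\ga(a))$. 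This gives $\psi(\ga(b))\le\underline{u}(\ga(a))+A_L(\ga)$, and taking the supremum over $\psi\in\om_\cL(u)$ yields $\underline{u}(\ga(b))\le\underline{u}(\ga(a))+A_L(\ga)$, which is the claim.

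I expect the only genuine subtlety to be this final compactness step — namely controlling the \emph{time-shifted} sequence $\cL_{t_n-s}u$ rather than $\cL_{t_n}u$ itself — which is exactly where the standing hypothesis \eqref{vlvbound} enters, through the uniform boundedness and equicontinuity of the orbit; everything else is semigroup bookkeeping. As a consistency check, the very same computation shows that each individual $\psi\in\om_\cL(u)$ is dominated, and since a pointwise supremum of a uniformly bounded family of dominated functions is again dominated (the inequality $\psi(\ga(b))\le\psi(\ga(a))+A_L(\ga)$ passes to the supremum), this provides an equivalent route to the conclusion.
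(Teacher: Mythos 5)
Your argument is correct and is essentially the paper's own proof: the paper likewise writes $\cL_{t_n}u(x)=\cL_a(\cL_{t_n-a}u)(x)\le\cL_{t_n-a}u(y)+h_a(y,x)$ and extracts a uniformly convergent subsequence of the time-shifted family $\cL_{t_n-a}u$ to bound the right-hand side by $\underline{u}(y)+h_a(y,x)$, which is your one-step estimate plus compactness step phrased with $h_a$ instead of a fixed curve. One caveat: your closing ``consistency check'' overreaches, since the computation yields $\psi(\ga(b))\le\underline{u}(\ga(a))+A_L(\ga)$ with $\underline{u}$, not $\psi$, on the right (the subsequential limit $\psi'$ of $\cL_{t_n-s}u$ need not be $\psi$), so it does not show that each individual $\psi\in\om_\cL(u)$ is dominated — but this does not affect the main argument.
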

\begin{proof}
Let $x,y\in M$. Given $\ep>0$ there is $\psi=\lim\limits_{n\to\infty}\cL_{t_n}u$ 
such that $\underline{u}(x)-\ep<\psi(x).$ For $n>N(\ep)$ and $a>0$
\[\underline{u}(x)-2\ep<\psi(x)-\ep\le \cL_{t_n}u(x)=\cL_a(\cL_{t_n-a}u)(x)\le
\cL_{t_n-a}u(y)+h_a(y,x).\]
Choose a divergent sequence  $n_j$ such that $(\cL_{t_{n_j}-a}u)_j$
converges uniformly. For $j>\bar N(\ep)$, $\cL_{t_{n_j}-a}u(y)<\underline{u}(y)+\ep$,
and then
\[\underline{u}(x)-3\ep<\cL_{t_{n_j}-a}u(y)+h_a(y,x)-\ep<\underline{u}(y) +h_a(y,x).\]
\end{proof}

\begin{proposition}\label{superincreasing}
  Suppose $\fui$ is dominated and $\psi\in\om_\cL(u)$. For any 
$y\in\cM$ there exists $\ga\in\cK(y)$ such that the function
$t\mapsto\psi(\ga(t))-\fui(\ga(t))$ is constant. 
\end{proposition}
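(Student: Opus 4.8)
The plan is to combine the monotonicity of Proposition \ref{nonincreasing} with a recurrence property of $\psi$ inside $\om_\cL(u)$, and to use Proposition \ref{superdiff} to exclude strict variation. First I would build the candidate curve. Since $y\in\cM$ there is a static curve $\eta\in\cK$ and times $s_k\to\infty$ with $\eta(s_k)\to y$. The translates $\eta(\cdot+s_k)$ are again static, and by the compactness of $\cK$ for uniform convergence on compact intervals they subconverge, on every $[-l,l]$, to some $\ga\in\cK$ with $\ga(0)=y$; thus $\ga\in\cK(y)$. The whole problem then reduces to showing that $t\mapsto\psi(\ga(t))-\fui(\ga(t))$ is constant, the constant necessarily being $\psi(y)-\fui(y)$.

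Next I would produce the recurrence that turns the nonincreasing function of Proposition \ref{nonincreasing} into a constant one. The orbit $\{\cL_tu:t\ge\de\}$ is uniformly bounded and equicontinuous (established above), so by Arzel\'a--Ascoli $\om_\cL(u)$ is a nonempty compact subset of $C(M)$, invariant under the semigroup; moreover each $\cL_t$ is nonexpansive in the sup norm, and therefore restricts to a flow of isometries on $\om_\cL(u)$. For such a flow every point is recurrent, which gives times (that, by relative density of the isometric return times together with the bounded speed of $\eta$ guaranteed by Proposition \ref{energy}, I would align with the curve recurrence) $s_k\to\infty$ with simultaneously $\eta(\cdot+s_k)\to\ga$ and $\cL_{s_k}\psi\to\psi$ uniformly. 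Applying Proposition \ref{nonincreasing} along $\eta$, the nonincreasing bounded function $N(t)=\cL_t\psi(\eta(t))-\fui(\eta(t))$ has a limit $L_\infty$; evaluating $N(s_k+\tau)=\cL_\tau(\cL_{s_k}\psi)(\eta(s_k+\tau))-\fui(\eta(s_k+\tau))$ and letting $k\to\infty$, continuity of $\cL_\tau$ on $C(M)$ yields
\[\cL_\tau\psi(\ga(\tau))-\fui(\ga(\tau))=\psi(y)-\fui(y)\qquad(\tau\ge0).\]
Thus the semigroup version is already constant along $\ga$; here one also reads off that the infimum defining $\cL_\tau\psi(\ga(\tau))$ is attained along $\ga$ itself, i.e. $\ga$ is backward calibrated for $\psi$.

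Finally I would upgrade this to the statement about $\psi$. Because the constant cannot strictly decrease, the hypothesis of Proposition \ref{superdiff} must fail at $\tau=0$, so $D^+((\psi-\fui)\circ\ga)(0)\subseteq\{0\}$; propagating this to every base point $\ga(\sigma)$ by re-running the recurrence argument based at $\sigma$, and invoking the bilateral recurrence $\cL_{\pm s_k}\psi\to\psi$ that the group of isometries provides, one forces $(\psi-\fui)\circ\ga$ to have super-differential contained in $\{0\}$ everywhere, hence to be constant.

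The hard part will be exactly this last upgrade, and it is caused by the non-injectivity of the Lax--Oleinik semigroup: there is no direct comparison between $\psi$ and $\cL_\tau\psi$ at $\ga(\tau)$, and the function-recurrence ``drifts'' by $\cL_\tau$ when one shifts the base point, so one cannot simply reverse time (the Lagrangian is not even, so $t\mapsto\ga(-t)$ need not be static). Controlling this drift --- that is, showing the bare difference $\psi-\fui$, and not merely its semigroup image, is constant --- via the isometry/minimality structure of $\om_\cL(u)$ together with Proposition \ref{superdiff} is the technical heart of the proof; the passage to the limit in Proposition \ref{nonincreasing} and the construction of $\ga$ are comparatively routine.
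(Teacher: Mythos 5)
Your overall architecture matches the paper's: build $\ga\in\cK(y)$ as a limit of translates $\eta(\cdot+s_k)$, use Proposition \ref{nonincreasing} to extract a limit of the nonincreasing quantity, identify that limit along the recurrence times as $(\cL_t\psi)(\ga(\tau+t))-\fui(\ga(\tau+t))$, and then invoke Proposition \ref{superdiff} to force $D^+((\psi-\fui)\circ\ga)(\tau)\subseteq\{0\}$ for every $\tau$, whence constancy. The last step, which you flag as the ``technical heart,'' is in fact not where the difficulty lies: the paper gets the statement at every base point $\tau$ for free, simply by applying Proposition \ref{nonincreasing} to the shifted static curve $\eta(\tau+\cdot)$ and passing to the limit along the same $s_k$; no backward iterates $\cL_{-s}$ (which are undefined) and no ``bilateral recurrence'' are needed.

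The genuine gap is in your recurrence step. You need a single sequence $s_k\to\infty$ along which \emph{simultaneously} $\eta(\cdot+s_k)\to\ga$ and $\cL_{s_k}\psi\to\psi$. Even granting that $\{\cL_t\}$ acts by isometries on the compact set $\om_\cL(u)$, so that the return times of $\psi$ are relatively dense, your proposed alignment fails: adjusting a curve-return time $s$ by a bounded amount $\le T_\ep$ to land on a function-return time displaces $\eta(s)$ by an amount of order $KT_\ep$ (with $K$ the speed bound from Proposition \ref{energy}), which does not shrink as $\ep\to0$. If you instead choose $s_k$ from the curve recurrence alone, $\cL_{s_k}\psi$ only subconverges to \emph{some} $\psi'\in\om_\cL(u)$, and your argument then proves that $\psi'-\fui$, not $\psi-\fui$, is constant on $\ga$. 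The paper's fix is the auxiliary function $\psi_1$: with $\psi=\lim_k\cL_{t_k}u$, pass to subsequences so that $t_k-s_k\to\infty$ and $\cL_{t_k-s_k}u\to\psi_1\in\om_\cL(u)$; non-expansiveness gives $\|\cL_{t_k}u-\cL_{s_k}\psi_1\|_\infty\le\|\cL_{t_k-s_k}u-\psi_1\|_\infty\to0$, so $\cL_{s_k}\psi_1\to\psi$ exactly, along the curve's own return times. Running your argument with $\psi_1$ in place of $\psi$ then yields $l(\tau)=(\cL_t\psi)(\ga(\tau+t))-\fui(\ga(\tau+t))$ for all $t>0$, and the conclusion concerns $\psi$ as required. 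With that substitution your proof closes.
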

\begin{proof}
 Let $(s_k)_k$ and $(t_k)_k$ be diverging sequences, $\eta$ be a
 curve in $\cK$ such that $y=\lim\limits_k \eta(s_k)$, and $\psi$ is
 the  uniform limit of $\cL_{t_k}u$. As in Proposition \ref{h-wkam}, 
we can assume that the sequence of functions $t\mapsto\eta(s_k+t )$ converges 
uniformly on compact intervals to $\ga:\R\to M$, and so  $\ga\in\cK$. 
We may assume moreover that $t_k-s_k\to\infty$, as $k\to\infty$, and
that $\cL_{t_k-s_k}u$ converges uniformly to $\psi_1\in\om_\cL(u)$. 
By (2) and (5) in Proposition \ref{prop-lax}
\[\|\cL_{t_k}u-\cL_{s_k}\psi_1\|_\infty\le\|\cL_{t_k-s_k}u -\psi_1\|_\infty\]
which implies that $\cL_{s_k}\psi_1$ converges uniformly to $\psi$. 
From Proposition \ref{nonincreasing}, we have that for any $\tau\in\R$
$s\mapsto(\cL_s\psi_1)(\eta(\tau+s))-\fui(\eta(\tau+s))$ is a nonincreasing 
function in $\R^+$, and hence it has a limit $l(\tau)$ as $s\to\infty$, 
which is finite since $l(\tau)\ge-\|\overline u-\fui\|_\infty$. 
Given $t>0$, we have
\[l(\tau) = \lim_{k\to\infty} (\cL_{s_k+t}\psi_1) (\eta(s_k +\tau+t))-\fui(\eta(s_k+\tau+t)) = (\cL_t\psi) (\ga(\tau+t))-\fui(\ga(\tau+t))\]
The function $t\mapsto(\cL_t\psi)(\ga(\tau+t))-\fui(\ga(\tau+t))$ is therefore constant on $\R^+$. 
Applying Proposition \ref{superdiff} to
the curve $\ga(\tau +\cdot)\in\cK$, we have
$D^+((\psi-\fui)\circ\ga)(\tau)\entre\{0\} =\emptyset$ for any
$\tau\in\R$. This implies that $\psi-\fui$ is constant on $\ga$.
\end{proof}
\begin{proposition}\label{acercanse}
  Let $\eta\in\cK$, $\psi\in\om_\cL(u)$ and $v$ be defined by \eqref{eq:limite}.
For any $\ep>0$ there exists $\tau\in\R$ such that 
\[\psi(\eta(\tau))-v(\eta(\tau))<\ep.\]
\end{proposition}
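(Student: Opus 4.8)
The plan is to reduce the statement to showing that a single nonincreasing quantity tends to $0$. Normalizing $c(L)=0$, recall that $v$ is a backward weak KAM solution, so $\cL_tv=v$ for all $t\ge0$ (Proposition \ref{weak=fix}), and that $v\le\psi$ by Corollary \ref{v<us}. Since $\cL_t$ is monotone, $\cL_t\psi\ge\cL_tv=v$, so the function $g(t):=(\cL_t\psi)(\eta(t))-v(\eta(t))$ is nonnegative; by Proposition \ref{nonincreasing}, applied with the dominated function $\fui=v$ and the static curve $\eta$, it is also nonincreasing on $\R_+$, hence it has a limit $g_\infty\ge0$. I claim it suffices to prove $g_\infty=0$. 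Indeed, the map $\cL_s$ is nonexpansive for the sup-norm, $\|\cL_sw_1-\cL_sw_2\|_\infty\le\|w_1-w_2\|_\infty$; writing $\psi=\lim_n\cL_{t_n}u$ and using $\|\cL_{t_n-t_m}\psi-\cL_{t_n}u\|_\infty\le\|\psi-\cL_{t_m}u\|_\infty$, one produces $s_k\to\infty$ with $\cL_{s_k}\psi\to\psi$ uniformly. Then $g(s_k)-(\psi-v)(\eta(s_k))=(\cL_{s_k}\psi-\psi)(\eta(s_k))\to0$, so $g_\infty=0$ forces $(\psi-v)(\eta(s_k))\to0$, and as $\psi-v\ge0$ this gives $\inf_{\tau\in\R}(\psi-v)(\eta(\tau))=0$, which is exactly the assertion for any $\ep>0$.

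To prove $g_\infty=0$ I would pass to the semiflow $\Theta_t(\phi,\zeta)=(\cL_t\phi,\zeta(\cdot+t))$ on the compact space $\om_\cL(u)\times\cK$; here $\om_\cL(u)$ is compact because the orbit $\{\cL_tu:t\ge\de\}$ is uniformly bounded and equicontinuous, and $\cK$ is compact as shown above. Let $(\psi_*,\eta_*)$ be any limit point of $\Theta_{r_j}(\psi,\eta)$ along some $r_j\to\infty$. Using continuity of $\cL_t$ and $g(s)\to g_\infty$, one obtains $(\cL_t\psi_*)(\eta_*(t))-v(\eta_*(t))=\lim_j g(t+r_j)=g_\infty$ for every $t\ge0$; that is, the nonincreasing function attached to $(\psi_*,\eta_*)$ is identically equal to $g_\infty$.

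Now the dichotomy: if the Lipschitz function $(\psi_*-v)\circ\eta_*$ were nonconstant it would be differentiable with nonzero derivative at some $\tau_0$, so $D^+\big((\psi_*-v)\circ\eta_*\big)(\tau_0)\entre\{0\}\ne\emptyset$; Proposition \ref{superdiff}, applied to the static curve $\eta_*(\tau_0+\cdot)\in\cK$, would then force a strict drop of that function after $\tau_0$, contradicting that it is constant equal to $g_\infty$. Hence $(\psi_*-v)\circ\eta_*\equiv g_\infty$, and evaluating along $\eta_*(\sigma_i)\to q\in\om(\eta_*)\subseteq\cM\subseteq\cA$ yields a point $q\in\cA$ with $\psi_*(q)-v(q)=g_\infty$.

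It remains to show $\psi_*(q)\le v(q)$, which gives $g_\infty\le0$ and finishes the proof. Writing $\psi_*=\lim_m\cL_{\theta_m}u$ and choosing $z_0$ with $v(q)=u(z_0)+h(z_0,q)$ as in \eqref{eq:limite}, the semigroup property of $\bfh$ (Proposition \ref{prop-ht}) gives $\cL_{\theta_m}u(q)\le u(z_0)+h_\rho(z_0,q)+h_{\theta_m-\rho}(q,q)$ for any $\rho<\theta_m$; since $q\in\cA$ we have $h(q,q)=0$, and $h(z_0,q)=\liminf_t h_t(z_0,q)$, so choosing $\rho$ along the times realizing these two liminfs should yield $\cL_{\theta_m}u(q)\to u(z_0)+h(z_0,q)=v(q)$. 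I expect this last step to be the main obstacle: the times $\theta_m$ are prescribed by $\psi_*$, while the liminfs defining $h(z_0,q)$ and $h(q,q)$ are attained only along \emph{a priori} unrelated sequences, so matching the two durations requires exploiting the recurrence of the static curve at $q$ (arbitrarily long loops of essentially zero action near $q\in\cA$) together with a diagonal selection. Everything else is a routine combination of Propositions \ref{nonincreasing} and \ref{superdiff} with the compactness of $\om_\cL(u)\times\cK$.
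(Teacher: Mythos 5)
Your first three paragraphs are, in substance, a re-derivation of Proposition \ref{superincreasing}, and they contain one repairable slip: Proposition \ref{superdiff} applied at $\tau_0$ yields a strict drop of $t\mapsto(\cL_t\psi_*)(\eta_*(\tau_0+t))-v(\eta_*(\tau_0+t))$, whereas the function you have shown to be constant is only the $\tau_0=0$ one, $t\mapsto(\cL_t\psi_*)(\eta_*(t))-v(\eta_*(t))$; these differ because the semigroup time is not shifted together with the curve. To run the dichotomy at every $\tau_0$ you must first prove constancy of the offset functions for every $\tau_0$, which is exactly what the proof of Proposition \ref{superincreasing} does by applying Proposition \ref{nonincreasing} to the pair $(\psi,\eta(\tau+\cdot))$ \emph{before} passing to the limit, so that the limit of the nonincreasing quantity is independent of $t$ for each fixed $\tau$. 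The serious problem, however, is the final step, which you flag yourself and which does not go through as sketched. You need $\psi_*(q)\le v(q)$ at a point $q\in\cA$, i.e.\ $\limsup_m h_{\theta_m}(z_0,q)\le h(z_0,q)$ along the sequence $\theta_m$ prescribed by $\psi_*$, and your splitting $h_{\theta_m}(z_0,q)\le h_\rho(z_0,q)+h_{\theta_m-\rho}(q,q)$ requires $h_s(q,q)$ to be small at the specific leftover time $s=\theta_m-\rho$. But $h(q,q)=0$ is only a $\liminf$: the function $s\mapsto h_s(q,q)$ is nonnegative, bounded and subadditive, which gives smallness only on the sumset of the good times, and no diagonal or recurrence argument lets you hit the prescribed duration $\theta_m-\rho$. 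Nothing available at this stage of the paper rules out $h_s(q,q)$ staying bounded away from $0$ off a thin set of times, so the step is a genuine gap, not a routine verification.

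The paper's proof avoids the whole apparatus and, in particular, this obstacle. Since $\eta(0)\in\cA$, Corollary \ref{aubry}(6) gives $h(z_0,\eta(0))=\Phi(z_0,\eta(0))=\inf_{T>0}h_T(z_0,\eta(0))$ --- an infimum rather than a liminf --- so a \emph{single finite} $T$ satisfies $\cL_Tu(\eta(0))\le u(z_0)+h_T(z_0,\eta(0))<v(\eta(0))+\ep/2$. The surplus time $\tau=t_n-T$ is then spent travelling along $\eta$ itself, which is static and hence calibrated by the dominated function $v$ (Corollary \ref{aubry}(5)), so
$\cL_{t_n}u(\eta(\tau))=\cL_\tau\bigl(\cL_Tu\bigr)(\eta(\tau))\le\cL_Tu(\eta(0))+\int_0^\tau L(\dot\eta)<v(\eta(0))+\tfrac\ep2+\int_0^\tau L(\dot\eta)=v(\eta(\tau))+\tfrac\ep2$,
and $\|\cL_{t_n}u-\psi\|_\infty<\ep/2$ concludes. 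The moral is: do not wait at $q$ for the surplus time, where the action $h_{\theta_m-\rho}(q,q)$ is uncontrolled; move along the static curve, where the action is exactly $v(\eta(\tau))-v(\eta(0))$. If you substitute this for your last step you obtain a correct proof, but then the $\om$-limit and constancy machinery of your first three paragraphs is no longer needed.
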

\begin{proof}
  Since the curve $\eta $ is contained in $\cA$, we have
\[v(\eta(0)) = \min_{z\in M} u(z) + \Phi(z,\eta(0)),\] 
and hence $v(\eta(0)) = u(z_0) + \Phi(z_0, \eta(0))$, for some
$z_0\in M$. Take a curve $\ga:[0,T]\to M$ such that
\[v(\eta(0))+\frac\ep 2 = u(z_0)+\Phi(z_0,\eta(0))+\frac\ep 2
> u(z_0)+\int_0^TL(\ga,\dga)\ge\cL_Tu(\eta(0)).\]
Choosing a divergent sequence $(t_n)_n$ such that $\cL_{t_n}u$
converges uniformly to $\psi$ we have for $n$ sufficiently large
\[\|\cL_{t_n}u-\psi\|_\infty<\frac\ep 2, \quad t_n-T>0.\]
Take $\tau=t_n-T$ 
\begin{align*}
  \psi(\eta(\tau))-\frac\ep 2&<\cL_{t_n}u(\eta(\tau))=\cL_{\tau}\cL_Tu(\eta(\tau))
=\cL_Tu(\eta(0))+\int_0^\tau L(\eta,\dot\eta)\\
&<\frac\ep 2+v(\eta(0))+\int_0^\tau L(\eta,\dot\eta)=\frac\ep 2+v(\eta(\tau))
\end{align*}

\end{proof}
From Propositions \ref{superincreasing} and \ref{acercanse} we obtain
\begin{theorem}\label{Mcoincide}
 Let $\psi\in\om_\cL(u)$ and $v$ be defined by \eqref{eq:limite}. Then $\psi=v$ 
on $\cM$.
\end{theorem}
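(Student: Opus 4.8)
My plan is to obtain the identity $\psi=v$ on $\cM$ as a squeeze: one inequality is already available, and the two immediately preceding Propositions furnish the other. First I would record that by Corollary \ref{v<us} we have $v\le\overline u\le\underline u$, and since $\overline u(x)=\inf\{\phi(x):\phi\in\om_\cL(u)\}$ this gives $v\le\psi$ on all of $M$, in particular on $\cM$. Hence the whole problem reduces to proving the reverse inequality $\psi\le v$ at each point of $\cM$.

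Next I would fix $y\in\cM$ and feed $v$ itself into Proposition \ref{superincreasing} in the role of the dominated function $\fui$. This is legitimate because $v$, being the backward weak KAM solution produced from \eqref{eq:limite}, is in particular dominated (we are working under the normalization $c(L)=0$). Proposition \ref{superincreasing} then yields a static curve $\ga\in\cK(y)$, so $\ga(0)=y$, along which the map $t\mapsto\psi(\ga(t))-v(\ga(t))$ is constant. Evaluating at $t=0$ identifies this constant as the nonnegative number $\psi(y)-v(y)$ that I am trying to bound above by $0$.

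Finally I would apply Proposition \ref{acercanse} to $\eta=\ga\in\cK$: for every $\ep>0$ there is $\tau\in\R$ with $\psi(\ga(\tau))-v(\ga(\tau))<\ep$. Because $\psi-v$ is constant along $\ga$, the left-hand side equals $\psi(y)-v(y)$, so $\psi(y)-v(y)<\ep$ for all $\ep>0$, forcing $\psi(y)\le v(y)$. Combined with $v(y)\le\psi(y)$ this gives $\psi(y)=v(y)$, and since $y\in\cM$ was arbitrary the theorem follows. The argument itself is a short two-line squeeze; the genuine work has already been done in Propositions \ref{superincreasing} and \ref{acercanse}, so the only point needing care here is the verification that $v$ qualifies as the dominated function $\fui$ and that $\cM\subset\cA$, so that the curves $\ga\in\cK(y)$ invoked above actually exist.
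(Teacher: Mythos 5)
Your proof is correct and is exactly the argument the paper intends: the paper states Theorem \ref{Mcoincide} as an immediate consequence of Propositions \ref{superincreasing} and \ref{acercanse}, and you have supplied precisely the squeeze (with $v\le\psi$ coming from Corollary \ref{v<us} and the reverse inequality from taking $\fui=v$ in Proposition \ref{superincreasing} and then invoking Proposition \ref{acercanse} along the resulting static curve). No gaps; your closing remarks about $v$ being dominated and the existence of curves in $\cK(y)$ for $y\in\cM$ are the right points to check.
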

\begin{theorem}\label{conv-lax}
  Assume that there are $C_1, C_2>0$ such that
  \eqref{vlvbound} holds.
  Let $u\in C(M)$, then $\cL_tu$ converges uniformly as $t\to\infty$ to $v$ 
given by \eqref{eq:limite}.
\end{theorem}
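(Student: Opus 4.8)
The plan is to show that the $\om$-limit set $\om_\cL(u)$ reduces to the single function $v$. Since $u\in C(M)$ is bounded and $M$ is compact, the family $\{\cL_t u:t\ge\de\}$ is uniformly bounded and equicontinuous (as shown above), so by Arzel\'a--Ascoli it is relatively compact in $C(M)$; in particular $\om_\cL(u)\ne\emptyset$, and $\cL_t u$ will converge uniformly to $v$ as soon as $\om_\cL(u)=\{v\}$. By Corollary \ref{v<us} we already have $v\le\overline u\le\underline u$, so the whole problem reduces to proving the reverse inequality $\underline u\le v$: this forces $v=\overline u=\underline u$, hence every uniform limit of a sequence $\cL_{t_n}u$ equals $v$.

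First I would record that $\underline u$ and $v$ coincide on $\cM$. By Theorem \ref{Mcoincide} every $\psi\in\om_\cL(u)$ satisfies $\psi=v$ on $\cM$; taking the supremum over $\psi$ gives $\underline u=v$ on $\cM$. Both functions are dominated: $v$ is a backward weak KAM solution and hence dominated, while $\underline u$ is dominated as shown above. Proposition \ref{Acoincide} then upgrades their coincidence from $\cM$ to the whole Aubry set, so $\underline u=v$ on $\cA$.

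Next I would exploit the maximality built into Corollary \ref{kam}. Applying item \eqref{max-dom} with $C=\cA$ and $w_0=\underline u|_{\cA}$, the function $x\mapsto\inf_{z\in\cA}\underline u(z)+\Phi(z,x)$ is the maximal dominated function not exceeding $\underline u$ on $\cA$; since $\underline u$ is itself such a function, maximality yields $\underline u(x)\le\inf_{z\in\cA}\underline u(z)+\Phi(z,x)$ for all $x\in M$. Using $\underline u=v$ on $\cA$, the representation $v(x)=\min_{p\in\cA}v(p)+h(p,x)$ of Proposition \ref{gonza}, and $h=\Phi$ on $\cA$ from item (6) of Corollary \ref{aubry}, the right-hand side equals $v(x)$. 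Hence $\underline u\le v$, which closes the chain $v\le\overline u\le\underline u\le v$ and gives $\om_\cL(u)=\{v\}$.

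Finally I would deduce the uniform convergence: if $\cL_t u$ did not converge uniformly to $v$, there would be $\ep>0$ and $t_n\to\infty$ with $\|\cL_{t_n}u-v\|_\infty\ge\ep$; by relative compactness a subsequence would converge uniformly to some $\psi\in\om_\cL(u)$ with $\psi\ne v$, contradicting $\om_\cL(u)=\{v\}$. The main obstacle is not this assembly but the inequality $\underline u\le v$: the delicate point is that $\underline u$ is only known to be dominated, not a weak KAM solution, so one cannot apply Proposition \ref{gonza} to it directly and must instead route through its coincidence with $v$ on $\cA$ and then invoke the extremal characterization of dominated functions to control $\underline u$ globally.
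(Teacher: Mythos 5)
Your proposal is correct and follows essentially the same route as the paper: show $\underline{u}$ is dominated, use Theorem \ref{Mcoincide} to identify it with $v$ on $\cM$, upgrade to $\cA$ via Proposition \ref{Acoincide}, and then invoke the maximality statement in item \eqref{max-dom} of Corollary \ref{kam} together with $v\le\overline{u}\le\underline{u}$ from Corollary \ref{v<us}. The only difference is that you spell out the final compactness/Arzel\'a--Ascoli step and the identification of $\inf_{z\in\cA}\underline{u}(z)+\Phi(z,\cdot)$ with $v$, which the paper leaves implicit.
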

\begin{proof}
 The function $\underline{u}$
is dominated and coincides with $v$ on $\cM$ by Theorem 
\ref{Mcoincide}. Proposition \ref{Acoincide} implies that
$\underline{u}$ coincide with $v$ on $\cA$ and so does with $w$. 
By item \eqref{max-dom} of Corollary \ref{kam} we have  $\underline{u}\le v$.
\end{proof}

\section{Hamilton-Jacobi equation}
\label{sec:HJ}

\subsection{Viscosity solutions}

\begin{definition}
  Let $N$ be a manifold, $F:T^*N\to\R$ be continuous and consider the
 Hamilton-Jacobi equation
  \begin{equation}
  \label{HJ}
  F(u(z),du(z))=0
\end{equation}
We say that $u\in C(N)$ is a {\em viscosity subsolution} of \eqref{HJ}
if  $\forall\fui\in C^1(N)$ s.t. $z_0\in N$ is a point of local maximum
    of $u-\fui$ we have 
    \[F(u(x_0),d\fui(x_0)) \le 0.\]
We say that   $u\in C(N)$ is a {\em  viscosity supersolution} of  \eqref{HJ} if
$\forall\fui\in C^1(N)$ s.t. $z_0\in N$ is a point of local minimum of
$u-\fui$ we have
\[F(u(x_0),d\fui(z_0))\ge 0.\]
We say that $u$ is a {\em viscosity solution} if it is both, a
subsolution and a supersolution.
\end{definition}
Let $\pi^*:T^*M\to M$ be the natural projection and 
define the sub-Riemannian Hamiltonian $H: T^*M\to\R$ by
\[H(p)=\max\{p(v)-L(v):v\in\cD , \pi(v)=\pi^*(p)\}.\]
For $\om\in\Om^1(M)$, the Hamiltonian asociated to $L-\om$ is
$H(p+\om(\pi^*(p)))$
\begin{proposition}\label{solHJt}
 Let $u\in C(M)$, then
 $w:M\times[0,\infty[\to\R$ given by $w(x,t)=\cL_tu(x)$, is a viscosity solution of  
\[
  \begin{cases}
  w_t(x,t)+ H(d_xw(x,t))=0, & x\in M, t>0\\
 w(x,0)=u(x) &
\end{cases}
\]
\end{proposition}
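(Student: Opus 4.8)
The plan is to prove that $w(x,t)=\cL_t u(x)$ is a viscosity solution of the Hamilton-Jacobi equation by verifying the subsolution and supersolution conditions separately, exploiting the dynamic programming principle encoded in the semigroup property $\cL_{t+s}=\cL_t\circ\cL_s$ (item (2) of Proposition \ref{prop-lax}). The initial condition $w(x,0)=u(x)$ holds by definition of $\cL_0$. The key observation throughout is the relation between $H$ and $L$: by the definition of the sub-Riemannian Hamiltonian as a fiberwise Legendre-type transform over $\cD$, and by the duality inequality $p(v)-L(v)\le H(p)$ for all $v\in\cD$ with equality attained along optimal curves, I can convert statements about the action $A_L$ into statements about $H$ evaluated at differentials of test functions.

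First I would establish that $w$ is a viscosity subsolution. Suppose $\fui\in C^1(M\times\R^+)$ and $(x_0,t_0)$ with $t_0>0$ is a local maximum of $w-\fui$. For a fixed horizontal direction $v\in\cD$ with $\pi(v)=x_0$, I would take a short horizontal curve $\ga$ emanating backward from $x_0$ with $\dga(t_0)=v$, and use the dynamic programming inequality $w(x_0,t_0)\le w(\ga(t_0-s),t_0-s)+\int_{t_0-s}^{t_0}L(\dga)$ coming from the infimum defining $\cL_{t_0}$. Feeding this into the local maximum condition $w-\fui\le (w-\fui)(x_0,t_0)$, dividing by $s$ and letting $s\to 0^+$, I would obtain $\fui_t(x_0,t_0)+d_x\fui(x_0,t_0)(v)-L(v)\le 0$. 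Taking the supremum over $v\in\cD$ over the fiber then yields $\fui_t(x_0,t_0)+H(d_x\fui(x_0,t_0))\le 0$, which is the subsolution inequality.

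Next I would prove the supersolution property. Suppose $(x_0,t_0)$ with $t_0>0$ is a local minimum of $w-\fui$. Here I would use the \emph{existence of minimizers}: by item (3) (or (4)) of Proposition \ref{prop-lax}, applied to $\cL_s(w(\cdot,t_0-s))$, there is an optimal horizontal curve $\ga$ with $\ga(t_0)=x_0$ realizing the infimum, so that $w(x_0,t_0)=w(\ga(t_0-s),t_0-s)+\int_{t_0-s}^{t_0}L(\dga)$ exactly. Using the local minimum condition $w-\fui\ge(w-\fui)(x_0,t_0)$ along this optimal curve, dividing by $s$ and passing to the limit, I would get $\fui_t(x_0,t_0)+\frac{1}{s}\int_{t_0-s}^{t_0}\big(d_x\fui(\dga)-L(\dga)\big)\ge o(1)$; since $d_x\fui(\dga)-L(\dga)\le H(d_x\fui)$ pointwise, in the limit this produces $\fui_t(x_0,t_0)+H(d_x\fui(x_0,t_0))\ge 0$.

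\textbf{The hard part} will be the limiting arguments in the supersolution case, specifically controlling the behavior of the optimal curves $\ga=\ga_s$ as $s\to 0^+$. I must guarantee that the speeds $\|\dga_s\|$ stay bounded (or are controlled well enough that the Riemann-sum $\frac1s\int_{t_0-s}^{t_0}(\cdots)$ converges to the correct fiberwise Legendre value), which relies on the uniform superlinearity and boundedness hypotheses on $L$ together with the a priori bounds of the type derived in Proposition \ref{min-inf} and Corollary \ref{tonelli}. The subtlety particular to the sub-Riemannian setting is that $\dga$ is constrained to $\cD$ and the Legendre duality only holds fiberwise over $\cD$; care is needed so that the supremum/infimum defining $H$ is compatible with the admissible horizontal velocities, but since $H$ is itself defined by maximizing only over $v\in\cD$, both inequalities close up correctly. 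A minor technical point is the lack of a classical Lagrangian flow, so I cannot invoke characteristics; instead everything must be driven by the minimality and the semigroup identity, exactly as Tonelli's theorem was designed to enable.
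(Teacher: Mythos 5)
Your two-step argument (subsolution by testing along arbitrary horizontal directions through the dynamic-programming inequality, supersolution along optimal curves combined with the pointwise Fenchel inequality $d_x\fui(\dga)-L(\dga)\le H(d_x\fui)$) is correct and is exactly the method the paper uses for the stationary analogue, item (2) of Proposition \ref{value+time}; the paper in fact states Proposition \ref{solHJt} itself without a written proof. Note only that the bound on $\|\dga_s\|$ you single out as the hard part is not actually needed, because the one-sided Fenchel inequality holds pointwise for horizontal velocities of arbitrary size, and the exact minimizers you invoke from Proposition \ref{prop-lax}(3) (which requires Lipschitz data) can be replaced by $o(s)$-optimal curves without changing the limit.
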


\begin{proposition}\label{sub=domina}
    Let $\lam\ge 0$, then $u\in C(M)$ is a viscosity subsolution of
    \begin{equation}
  \label{eq:dHJ}
  \lam u(x)+H(du(x))-c=0,
\end{equation}
if and only if 
\begin{equation}
  \label{lam-dom}
    u(\ga(b))e^{\lam b}-u(\ga(a))e^{\lam a}\le A_{L+c,\lam}(\ga)
    \hbox{ for any }\ga\in\cWD([a,b])
  \end{equation}
\end{proposition}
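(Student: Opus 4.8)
The statement is an equivalence, so the plan is to prove the two implications separately; the passage from the domination inequality \eqref{lam-dom} to the subsolution property is elementary, while the converse is the substantial one. For \eqref{lam-dom}$\Rightarrow$ subsolution, I would fix $\fui\in C^1(M)$ with $x_0$ a local maximum of $u-\fui$, choose $v\in\cD$ with $\pi(v)=x_0$, and take a smooth horizontal curve $\ga:[-h_0,0]\to M$ with $\ga(0)=x_0$, $\dga(0)=v$ (it exists because $\cD$ is a smooth distribution: extend $v$ to a smooth section and take its integral curve). Applying \eqref{lam-dom} on $[-h,0]$ and using $u(\ga(-h))\le u(x_0)+\fui(\ga(-h))-\fui(x_0)$ at the local maximum, then multiplying by $e^{-\lam h}>0$, gives
\[
u(x_0)-\bigl(u(x_0)+\fui(\ga(-h))-\fui(x_0)\bigr)e^{-\lam h}\le\int_{-h}^0 e^{\lam t}\bigl(L(\dga(t))+c\bigr)\,dt .
\]
Dividing by $h$ and letting $h\to0^+$, the left side tends to $\lam u(x_0)+d\fui(x_0)(v)$ and the right side to $L(v)+c$, so $\lam u(x_0)+d\fui(x_0)(v)-L(v)\le c$; taking the supremum over such $v$ yields $\lam u(x_0)+H(d\fui(x_0))\le c$.

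For the converse I expect three steps. First I would show that a viscosity subsolution $u$ is Lipschitz for $d$: uniform superlinearity of $L$ makes $p\mapsto H(p)$ coercive on the horizontal part of the cotangent fibers, so the inequality $\lam u+H(du)\le c$ forces a uniform bound on the horizontal gradient of $u$, which is equivalent to $d$-Lipschitz continuity; consequently, for any $\ga\in\cWD([a,b])$ the maps $t\mapsto u(\ga(t))$ and $t\mapsto e^{\lam t}u(\ga(t))$ are absolutely continuous. Second, I would record the standard fact that at every point $x$ where $u$ is differentiable one has $\lam u(x)+H(du(x))\le c$ classically, and then let this pass, by continuity, to limits of differentials and, by convexity of $H$ along the fibers, to the whole Clarke generalized gradient: every $p\in\partial_C u(x)$ satisfies $\lam u(x)+H(p)\le c$.

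Third, I would integrate along $\ga$. For a.e.\ $t$ the derivative $(u\circ\ga)'(t)$ exists and, by the chain-rule inequality for the Clarke gradient, $(u\circ\ga)'(t)\le\langle p_t,\dga(t)\rangle$ for some $p_t\in\partial_C u(\ga(t))$; since $\dga(t)\in\cD$ a.e., the definition of $H$ gives $\langle p_t,\dga(t)\rangle\le L(\dga(t))+H(p_t)\le L(\dga(t))+c-\lam u(\ga(t))$. Hence, for a.e.\ $t$,
\[
\frac{d}{dt}\bigl(e^{\lam t}u(\ga(t))\bigr)=e^{\lam t}\bigl(\lam u(\ga(t))+(u\circ\ga)'(t)\bigr)\le e^{\lam t}\bigl(L(\dga(t))+c\bigr),
\]
and integrating from $a$ to $b$ gives exactly \eqref{lam-dom}.

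The hard part will be the third step: since $u$ is only $d$-Lipschitz and $\ga$ only horizontal and absolutely continuous, one cannot naively write $(u\circ\ga)'(t)=du(\ga(t))(\dga(t))$, because $\ga$ may spend positive time in the Lebesgue-null set where $u$ fails to be differentiable. Passing to the Clarke gradient, together with the convexity of $L$ (equivalently of $H$ in the fibers) used to control the whole gradient, is what circumvents this. An alternative I would keep in reserve is to trivialize $\cD$ on a chart as in the proof of Theorem \ref{Cle}, mollify $u$ there, use Jensen's inequality and the convexity of $L$ to obtain an approximate classical subsolution, integrate the smooth chain rule along $\ga$, and let the mollification parameter tend to zero. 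Establishing the $d$-Lipschitz regularity in the first step, given that $H$ is coercive only in the horizontal codirections, is the other point requiring genuine care.
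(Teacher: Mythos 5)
The paper states Proposition \ref{sub=domina} without proof; the only trace of an intended argument is a fragment of the forward implication (for $\lam=0$) in the discarded appendix material, so there is no complete argument of the author's to compare your converse against. Your proof of \eqref{lam-dom}$\Rightarrow$subsolution is correct and coincides with that fragment: test function, horizontal curve with prescribed initial velocity, divide by $h$, take the supremum over $v\in\cD_{x_0}$.

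The converse is where the substance lies, and as written it has a concrete gap. Your Steps 2--3 invoke a.e.\ differentiability of $u$ and the classical Clarke generalized gradient together with its chain rule along $\ga$. But a function that is Lipschitz for the Carnot--Carath\'eodory distance $d$ is, by the ball--box theorem, only H\"older continuous for any Riemannian distance on $M$; it is therefore not locally Lipschitz in the sense required for Rademacher's theorem or for Clarke's calculus, and $\partial_C u(x)\subset T^*_xM$ is simply not defined. What is available is the horizontal Rademacher-type statement in the paper's appendix: for a.e.\ $x$ the directional derivatives $du(x)v$ exist for $v\in\cD_x$ and are linear in $v$. So you must (i) show that at a.e.\ such point the pointwise inequality $\lam u+H(\nabla_\cD u)\le c$ holds --- which does not follow immediately from the viscosity definition, since horizontal differentiability does not by itself produce a $C^1$ function touching $u$ from above --- and (ii) prove a chain rule $(u\circ\ga)'(t)\le du(\ga(t))\dga(t)$ (or a one-sided horizontal-Clarke version) along an arbitrary $\ga\in\cWD([a,b])$, which may spend positive time in the exceptional null set. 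Your fallback via mollification is the more promising route (and is what the appendix fragment $H(\int\eta*du)\le\int\eta*H(du)$ gestures at), but it too needs care: the expression $\int\eta_\ep*H(du)$ presupposes that $du$ exists a.e., and Euclidean mollification does not commute with the $x$-dependent horizontal frame, so a commutator error must be estimated using only the bound on the horizontal gradient. Finally, your Step 1 (coercivity of $H$ in the horizontal codirections implies $d$-Lipschitz continuity of subsolutions) is true but is itself a nontrivial sub-Riemannian statement that needs an argument, not just an appeal to the Euclidean case; in the paper's own logic this Lipschitz bound is listed as a \emph{corollary} of the proposition, via \eqref{lam-dom} applied to minimizing geodesics, so you cannot quietly borrow it.
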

\begin{corollary}\quad
  \begin{enumerate}
  \item A viscosity subsolution of \eqref{eq:dHJ}
is Lipschitz for the metric $d$.
    
  \item Considering the case $\lam=0$
\begin{equation}
     \label {eq:HJ}
     H(du(x))-c=0,
   \end{equation}
\[c(L)=\inf\{c\in\R: \eqref{eq:HJ} \hbox{ has a viscosity subsolution } u\in C(M)\}\]
\end{enumerate}
\end{corollary}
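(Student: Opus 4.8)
The plan is to derive both statements from Proposition \ref{sub=domina}, which identifies the viscosity subsolutions of the (discounted) Hamilton--Jacobi equation with the continuous functions satisfying the integral domination inequality \eqref{lam-dom}. Once that translation is in place, item (1) becomes essentially a repetition of the estimate in the proof of Proposition \ref{lip-value}, and item (2) becomes a matter of recognizing two descriptions of the same set of constants $c$.

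For item (1), let $u\in C(M)$ be a viscosity subsolution of \eqref{eq:dHJ}; by Proposition \ref{sub=domina} it satisfies \eqref{lam-dom}. Given $x,y\in M$ with $d=d(x,y)$, I would take a minimizing geodesic $\ga\in\cWD([-d,0])$ with $\ga(-d)=y$, $\ga(0)=x$ and $\|\dga\|=1$ a.e., exactly as in Proposition \ref{lip-value}. By the uniform boundedness $L(\dga(s))\le A(1)$ a.e., inequality \eqref{lam-dom} gives
\[
u(x)-u(y)e^{-\lam d}\le\int_{-d}^0 e^{\lam s}\bigl(A(1)+c\bigr)\,ds=(A(1)+c)\,\frac{1-e^{-\lam d}}{\lam}.
\]
Writing $u(x)-u(y)=\bigl(u(x)-u(y)e^{-\lam d}\bigr)+u(y)\bigl(e^{-\lam d}-1\bigr)$ and using $0\le 1-e^{-\lam d}\le\lam d$ together with the boundedness of $u$ on the compact manifold $M$, I obtain $u(x)-u(y)\le (A(1)+c)\,d+\lam\|u\|_\infty\,d$. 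Exchanging $x$ and $y$ then yields $|u(x)-u(y)|\le\bigl(|A(1)+c|+\lam\|u\|_\infty\bigr)d(x,y)$, so $u$ is $d$-Lipschitz. (For $\lam=0$ the quotient is read as $d$ and the term $u(y)(e^{-\lam d}-1)$ vanishes, recovering directly that an $L+c$ dominated function is Lipschitz.)

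For item (2), specialize Proposition \ref{sub=domina} to $\lam=0$: a function $u\in C(M)$ is a viscosity subsolution of \eqref{eq:HJ} if and only if $u(\ga(b))-u(\ga(a))\le A_{L+c}(\ga)$ for every $\ga\in\cWD([a,b])$, that is, if and only if $u$ is $L+c$ dominated. Since every $L+c$ dominated function is $d$-Lipschitz, hence continuous, the dominated functions entering the definition of $c(L)$ are exactly the continuous subsolutions of \eqref{eq:HJ}. Therefore
\[
\{c\in\R:\eqref{eq:HJ}\text{ has a viscosity subsolution }u\in C(M)\}=\{c\in\R:\exists\,u\colon M\to\R\ L+c\text{ dominated}\},
\]
and taking the infimum of both sides gives $c(L)$ by its very definition.

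The computations are routine; the substantive input is entirely Proposition \ref{sub=domina}. The only points requiring a little care are, in (1), replacing the value-function bound $\lam u_\lam\ge\min L$ used in Proposition \ref{lip-value} by the boundedness of $u$ on the compact $M$ (which is what controls the term $u(y)(e^{-\lam d}-1)$ when $\lam>0$), and, in (2), the observation that domination automatically forces continuity, so that the class of functions used to define $c(L)$ coincides with the class of continuous subsolutions and the two infima are literally the same.
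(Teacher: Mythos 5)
Your proposal is correct and follows exactly the route the paper intends: Proposition \ref{sub=domina} translates subsolutions into the domination inequality \eqref{lam-dom}, item (1) then repeats the unit-speed geodesic estimate of Proposition \ref{lip-value} (using $L(\dga)\le A(1)$ and, for $\lam>0$, the boundedness of $u$ on the compact $M$ to absorb the term $u(y)(e^{-\lam d}-1)$), and item (2) reduces to the observation that the $\lam=0$ case of \eqref{lam-dom} is literally the definition of $L+c$ domination, so the two infima defining $c(L)$ coincide. The only point worth flagging is the one you already flag yourself: the global Lipschitz constant in (1) involves $\lam\|u\|_\infty$ and thus uses compactness of $M$, which is the standing assumption in the sections where $c(L)$ and this corollary are used.
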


 \begin{proposition}\label{value+time}\quad
   \begin{enumerate}
   \item For $\lam>0$,  let $u_\lam$  be the value function,
     then $u_\lam +c/\lam$ is a viscosity solution of \eqref{eq:dHJ}.
   \item    If $u=\cL_tu+ct$ for any $t>0$, then $u$ is a viscosity solution of
 \eqref{eq:HJ}.
   \end{enumerate}
   
 \end{proposition}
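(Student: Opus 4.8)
The plan is to treat the two statements separately, using the dominated-function characterization of subsolutions (Proposition \ref{sub=domina}) together with the calibrated curve of Proposition \ref{min-inf} for (1), and the time-dependent solution of Proposition \ref{solHJt} for (2).

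For (1), set $w=u_\lam+c/\lam$, so that $dw=du_\lam$ and $\lam w+H(dw)-c=\lam u_\lam+H(du_\lam)$; it therefore suffices to check that $w$ is both a sub- and a supersolution of \eqref{eq:dHJ}. For the subsolution property I would invoke Proposition \ref{sub=domina}: since
\[
w(\ga(b))e^{\lam b}-w(\ga(a))e^{\lam a}
=u_\lam(\ga(b))e^{\lam b}-u_\lam(\ga(a))e^{\lam a}+\tfrac{c}{\lam}\bigl(e^{\lam b}-e^{\lam a}\bigr)
\]
and $A_{L+c,\lam}(\ga)=A_{L,\lam}(\ga)+\tfrac{c}{\lam}(e^{\lam b}-e^{\lam a})$, condition \eqref{lam-dom} for $w$ reduces exactly to inequality \eqref{eq:lam-dom} of Proposition \ref{value-prop}, which holds for every $\ga\in\cWD([a,b])$ (note $w\in C(M)$ since $u_\lam$ is Lipschitz). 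Hence $w$ is a viscosity subsolution.

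For the supersolution property, fix $\fui\in C^1(M)$ and a local minimum $x_0$ of $u_\lam-\fui$ (equivalently of $w-\fui$), and let $\ga=\ga_{\lam,x_0}$ be the curve of Proposition \ref{min-inf}, with $\ga(0)=x_0$, $\|\dga\|_\infty$ bounded, and $u_\lam(x_0)=u_\lam(\ga(-t))e^{-\lam t}+\int_{-t}^0 e^{\lam s}L(\dga(s))\,ds$ for all $t\ge 0$. For $t>0$ small the local minimum gives $u_\lam(\ga(-t))\ge u_\lam(x_0)+\fui(\ga(-t))-\fui(x_0)$; inserting this and writing $\fui(\ga(-t))-\fui(x_0)=-\int_{-t}^0 d\fui(\ga(s))(\dga(s))\,ds$ (valid because $\fui\circ\ga$ is absolutely continuous) yields
\[
u_\lam(x_0)\bigl(1-e^{-\lam t}\bigr)\ge\int_{-t}^0\bigl[e^{\lam s}L(\dga(s))-e^{-\lam t}\,d\fui(\ga(s))(\dga(s))\bigr]\,ds.
\]
Since $\|\dga\|_\infty$ is bounded, both integrands are bounded and the weights $e^{\lam s},e^{-\lam t}$ may be replaced by $1$ at the cost of an $o(t)$ term; dividing by $t$, using the fiberwise inequality $d\fui(\ga(s))(\dga(s))-L(\dga(s))\le H(d\fui(\ga(s)))$ (from the definition of $H$, as $\dga(s)\in\cD$ a.e.) and the continuity of $s\mapsto H(d\fui(\ga(s)))$ at $s=0$, I would let $t\to 0^+$ to obtain $\lam u_\lam(x_0)+H(d\fui(x_0))\ge 0$, which is the supersolution inequality for $w$.

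For (2), the hypothesis $u=\cL_tu+ct$ means $\cL_tu=u-ct$ for every $t\ge 0$, so (since $u$ is continuous, being a weak KAM solution by Proposition \ref{weak=fix}) the function $w(x,t)=\cL_tu(x)=u(x)-ct$ is, by Proposition \ref{solHJt}, a viscosity solution of $w_t+H(d_xw)=0$ on $M\times]0,\infty[$. I would transfer this to \eqref{eq:HJ} by the standard test-function argument: given $\fui\in C^1(M)$ with a local maximum (resp. minimum) of $u-\fui$ at $x_0$, the function $\Psi(x,t)=\fui(x)-ct$ is $C^1$ and $w-\Psi=u-\fui$, so $(x_0,t_0)$ is a local maximum (resp. minimum) of $w-\Psi$ for every $t_0>0$; the sub- (resp. super-) solution property of $w$ then gives $-c+H(d\fui(x_0))\le 0$ (resp. $\ge 0$), which is exactly \eqref{eq:HJ} in the viscosity sense. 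The main obstacle is the limit step in the supersolution part of (1): the calibrating curve $\ga_{\lam,x_0}$ need not be differentiable at $0$, so one cannot argue pointwise. The remedy is precisely the uniform bound $\|\dga_{\lam,x_0}\|_\infty<\infty$ of Proposition \ref{min-inf}, which bounds the integrands, allows discarding the exponential weights up to $o(t)$, and — combined with $p(v)-L(v)\le H(p)$ and the continuity of $s\mapsto H(d\fui(\ga(s)))$ — lets the average $\tfrac1t\int_{-t}^0$ pass to the limit without any regularity of $\ga$ at the endpoint.
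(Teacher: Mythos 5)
Your proposal is correct. For item (2) it takes exactly the route the paper indicates in the remark following the proposition: $w(x,t)=\cL_tu(x)=u(x)-ct$ solves the evolution equation by Proposition \ref{solHJt}, and the test function $\Psi(x,t)=\fui(x)-ct$ transfers this to \eqref{eq:HJ}. For item (1) the paper gives no proof at all, and your argument is the natural one and is sound: the subsolution half reduces, via Proposition \ref{sub=domina}, to inequality \eqref{eq:lam-dom} of Proposition \ref{value-prop} (the constants $\tfrac{c}{\lam}(e^{\lam b}-e^{\lam a})$ cancel exactly as you compute), and the supersolution half follows from the calibrated curve of Proposition \ref{min-inf} together with the Fenchel inequality $d\fui(v)-L(v)\le H(d\fui)$ on $\cD$ --- the same mechanism the paper uses for the undiscounted fixed points of $\cL_t$. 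You correctly identify the one point that needs care, namely that the uniform bound on $\|\dga_{\lam,x_0}\|_\infty$ from Proposition \ref{min-inf} is what lets you discard the exponential weights up to $o(t)$ and pass to the limit in the time average without any differentiability of $\ga_{\lam,x_0}$ at $0$.
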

 Item (2) of Proposition \ref{value+time} can be considered a special case 
of Proposition \ref{solHJt} since $w(x,t)=\cL_tu(x)=u(x)-ct$ and then
$d_xw(x,t)=du(x)$, $w_t=-c$.

Under some mild assumption that can be fulfilled for a sub-Riemannian
Hamiltonian, the comparison principle for \eqref{eq:dHJ} holds for
$\lam>0$. In particular \eqref{eq:dHJ} has a unique viscosity solution.
An instance of such an assumption is
\begin{equation}
  \label{eq:bar}
 \Big|\frac{\partial H}{\partial x}(x,p)\Big|\le (1+|p|) \Phi(H(x,p)) 
\end{equation}
  for a continuous function $\Phi:\R\to\R^+$, see page 35 in
  \cite{Ba}.   This fact implies the following.

  \begin{corollary}
    Assume that \eqref{eq:bar} holds, 
    then there is only one constant $c$ such that \eqref{eq:HJ} has
    viscosity solutions.
  \end{corollary}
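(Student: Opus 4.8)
The plan is to show that \emph{any} constant $c$ for which \eqref{eq:HJ} admits a viscosity solution is forced to equal $-\lim_{\lam\to 0^+}\lam u_\lam$, a quantity intrinsic to the value function $u_\lam$ of \eqref{eq:value} and hence independent of $c$. Uniqueness of a limit then yields uniqueness of $c$. The bridge from the stationary equation \eqref{eq:HJ} to the discounted family \eqref{eq:dHJ} is the comparison principle, which is available for $\lam>0$ precisely because of \eqref{eq:bar}.

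First I would record two consequences of Proposition \ref{value+time}(1): for every $\lam>0$ and every real $b$, the function $u_\lam+b/\lam$ is the viscosity solution of $\lam w+H(dw)-b=0$, and since the $w$-dependence of this equation is the strictly increasing term $\lam w$, the comparison principle puts every subsolution below $u_\lam+b/\lam$ and every supersolution above it. Next, let $u$ be a viscosity solution of \eqref{eq:HJ} for a constant $c$, and use compactness of $M$ to make $u$ bounded. Because $u$ is a subsolution of \eqref{eq:HJ}, at a local maximum $x_0$ of $u-\fui$ we have $H(d\fui(x_0))\le c$, whence $\lam u(x_0)+H(d\fui(x_0))\le \lam\max u+c$; thus $u$ is a subsolution of $\lam w+H(dw)-(\lam\max u+c)=0$, and comparison gives $u\le u_\lam+\max u+c/\lam$. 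Symmetrically, the supersolution property of $u$ yields $u\ge u_\lam+\min u+c/\lam$.

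Combining the two inequalities, subtracting $u_\lam$ and multiplying by $\lam>0$, I obtain
\[
|\,\lam u_\lam(x)+c\,|\le \lam\,(\max u-\min u)\qquad\text{for all }x\in M,\ \lam>0,
\]
so that $\lam u_\lam\to -c$ uniformly as $\lam\to 0^+$. Consequently, if \eqref{eq:HJ} had viscosity solutions for two constants $c_1$ and $c_2$, the same estimate would force $\lam u_\lam\to -c_1$ and $\lam u_\lam\to -c_2$; since a limit is unique, $c_1=c_2$. That at least one such $c$ exists (making the corollary non-vacuous) follows from the weak KAM theorem together with Propositions \ref{weak=fix} and \ref{value+time}(2), and the common value is then $c(L)=-\lim_{\lam\to 0^+}\lam u_\lam$.

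The hard part is the middle step: one must read off correctly, from the one-sided viscosity inequalities for the stationary equation, genuine sub- and supersolutions of the \emph{discounted} equation with the shifted constants $\lam\max u+c$ and $\lam\min u+c$, and then invoke comparison in the right direction against $u_\lam+b/\lam$. Once that sandwich is in place the remaining estimation and the passage $\lam\to 0^+$ are elementary, and the boundedness of $u$ supplied by the compactness of $M$ is the only place compactness is used.
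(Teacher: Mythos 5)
Your proof is correct, but it follows a different route from the paper's. The paper argues by contradiction: given solutions $u_1,u_2$ of \eqref{eq:HJ} for constants $c_1<c_2$, it adds a constant to $u_1$ so that $u_1>u_2$, picks an intermediate $c$, observes that for $\lam$ small $u_1$ is a subsolution and $u_2$ a supersolution of \eqref{eq:dHJ} with that $c$, and applies comparison between $u_1$ and $u_2$ directly to get $u_1\le u_2$, a contradiction. You instead compare a single stationary solution against the discounted value function: since $u_\lam+b/\lam$ solves $\lam w+H(dw)-b=0$ and your $u$ is a sub-, resp.\ supersolution of that equation for $b=\lam\max u+c$, resp.\ $b=\lam\min u+c$, the sandwich $u_\lam+\min u+c/\lam\le u\le u_\lam+\max u+c/\lam$ yields $|\lam u_\lam+c|\le\lam(\max u-\min u)$ and hence $\lam u_\lam\to -c$ uniformly. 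Both arguments rest on exactly the same enabling fact (comparison for \eqref{eq:dHJ} under \eqref{eq:bar}), but the paper's is shorter, while yours buys more: it identifies the unique constant intrinsically as $c=-\lim_{\lam\to 0^+}\lam u_\lam$, i.e.\ it proves the convergence of $\lam u_\lam$ whenever \eqref{eq:HJ} has a solution, which is the starting point of Section \ref{sec:conv-disc-value}. The one-sided viscosity manipulations you flag as the delicate step are carried out correctly, and boundedness of $u$ is indeed automatic from continuity on compact $M$.
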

  \begin{proof}
    Suppose that there are $c_1<c_2$ such that \eqref{eq:HJ} admits
    viscosity solutions $u_1,u_2$ respectively. Adding a constant to $u_1$
    we may suppose $u_1> u_2$. Let $c_1<c<c_2$.
    For $\lam>0$ small enough $u_1$ and
    $u_2$ are a subsolution and a supersolution of
    \eqref{eq:dHJ}. By  comparison, $u_1\le u_2$ which is a contradiction.
  \end{proof}

  \section{Aubry-Mather theory}
\label{sec:aubry-mather}
In this section we extend Aubry-Mather  theory to our setting.
We take Ma\~n\'e's approach  using closed measures.
We assume again that $M$ is compact.

Let $g$ be a Riemannian metric on $M$ that tames the sub-Riemannian metric.
The values of $g$ outside $\cD$ will not play any role.
Let $C^0_\ell$ be the set of continuous functions 
 $f: TM\to\R$ having linear growth, i.e.
 $$
 \lV f\rV_\ell:=\sup_{v\in \cD}\frac{|f(v)|}{1+\lV v\rV_g}<+\infty.
 $$
 Let $\cP_\ell$  be the set of Borel probability measures on $TM$ such
 that
 $$
 \int_{TM}\lV v\rV_g\ d\mu<+\infty,
 $$
 endowed with the topology such that $\lim_n\mu_n=\mu$ if and only if
 \[
 \lim_n\int_{TM} f\;d\mu_n=\int_{TM} f\;d\mu
 \]
 for all $f\in C^0_\ell$.
 
 Let $(C^0_\ell)'$ be the dual of $C^0_\ell$. Then  $\cP_\ell$ is
 naturally embedded in $(C^0_\ell)'$ and its topology coincides 
 with that  induced by the weak* topology on $(C^0_\ell)'$. 
 This topology is metrizable, indeed,
 let $\{f_n\}$ be a sequence of functions with compact support 
 on  $C^0_\ell$ which is dense on $C^0_\ell$ in the topology 
 of uniform convergence on compact sets of $TM$. 
 The metric $d_\ell$ on $\cP_\ell$ defined by
 \[   d_\ell(\mu_1,\mu_2)=
   \Big|\int_{TM} \|v\|_g \;d\mu_1-\int_{TM}\|v\|_g\;d\mu_2\Big|
   +\sum_n\frac{1}{2^n\|f_n\|_\infty}
   \Big|\int_{TM} f_n\; d\mu_1-\int_{TM} f_n\; d\mu_2\Big|\]
 gives the topology of $\cP_\ell$.

 Let $\cP_\cD$ be the set of measures in $\cP_\ell$ that are supported
 in $\cD$.
 \begin{proposition}\label{Ma}
   The function $A_L:\cP_\cD\to\R\cup\{+\infty\}$, defined as
   $A_L(\mu)= \int_{\cD }L\,d\mu$
 is lower semicontinuous and for $c\in\R$ the set 
\[A(c):=\{\mu\in \cP_\ell:\ A_L(\mu)\le c\}\] 
is compact in $\cP_\cD$.
\end{proposition}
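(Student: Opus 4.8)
The plan is to prove the two assertions separately, establishing lower semicontinuity first and then using it together with a compactness argument in the topology of $\cP_\ell$. For lower semicontinuity I would exploit that $L$ is bounded below: uniform superlinearity with $K=0$ gives $L\ge C(0)$ on $\cD$. I extend $L$ to a continuous function $\tilde L:TM\to\R$ with $\tilde L\ge C(0)$, for instance by setting $\tilde L=L\circ P$ where $P:TM\to\cD$ is the $g$-orthogonal projection onto $\cD$, so that $\tilde L|_\cD=L$. For $n\in\N$ put $g_n=\min(\tilde L-C(0),n)$. Each $g_n$ is continuous and bounded, hence lies in $C^0_\ell$, so $\mu\mapsto\int_{TM}g_n\,d\mu$ is continuous on $\cP_\ell$. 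For $\mu\in\cP_\cD$ the $g_n$ increase pointwise $\mu$-a.e.\ to $\tilde L-C(0)$, and monotone convergence gives $\int g_n\,d\mu\nearrow A_L(\mu)-C(0)$. Thus $A_L-C(0)=\sup_n\int g_n\,d\mu$ is a supremum of continuous functionals, hence lower semicontinuous, and so is $A_L$.

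For compactness I first record an a priori bound. On the compact base $M$ the norms $\|\cdot\|$ and $\|\cdot\|_g$ are uniformly equivalent on $\cD$, and since $g$ tames the metric we have $\|v\|_g\le\|v\|$ for $v\in\cD$. Applying uniform superlinearity with $K=1$, every $\mu\in A(c)$ satisfies $\int\|v\|_g\,d\mu\le\int\|v\|\,d\mu\le\int_\cD(L-C(1))\,d\mu=A_L(\mu)-C(1)\le c-C(1)$, a bound uniform over $A(c)$. More is true: the nonnegative function $\phi=L-C(0)$ is superlinear, i.e.\ $\phi(v)/\|v\|\to\infty$ as $\|v\|\to\infty$, while $\int\phi\,d\mu\le c-C(0)$ for all $\mu\in A(c)$. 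By the de la Vallée Poussin criterion this makes the family $\{\|v\|_g:\mu\in A(c)\}$ uniformly integrable.

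I would then conclude compactness using metrizability of $\cP_\ell$, so that sequential compactness suffices. Given $\mu_k\in A(c)$, the first-moment bound and Chebyshev's inequality give tightness of $(\mu_k)$ in $TM$, the sets $\{\|v\|_g\le R\}$ being compact because $M$ is compact; by Prokhorov a subsequence converges weakly to a probability measure $\mu$. Since $\cD$ is closed and $\mu_k(\cD)=1$, the Portmanteau theorem yields $\mu(\cD)=1$, i.e.\ $\mu\in\cP_\cD$. The uniform integrability of $\|v\|_g$ upgrades this weak convergence to convergence in $\cP_\ell$: it forces $\int\|v\|_g\,d\mu_k\to\int\|v\|_g\,d\mu$ and the convergence of $\int f\,d\mu_k$ for every $f\in C^0_\ell$. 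Finally, the lower semicontinuity proved above gives $A_L(\mu)\le\liminf_k A_L(\mu_k)\le c$, so $\mu\in A(c)$; hence $A(c)$ is closed and therefore compact.

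The main obstacle is the passage from weak (Prokhorov) convergence of probability measures to convergence in the stronger topology of $\cP_\ell$: one must rule out any loss of the first moment in the limit, which is precisely where the uniform integrability coming from superlinearity is indispensable, for without it the limiting measure could carry strictly smaller action. Establishing that uniform integrability, together with the tightness that keeps the limit a probability measure supported on $\cD$, is the technical heart of the argument.
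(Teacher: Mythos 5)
The paper states Proposition \ref{Ma} without proof, so there is no argument of the author's to compare against; judged on its own, your proof is correct and is the standard one --- monotone truncation of a continuous extension of $L$ exhibits $A_L$ as a supremum of $\cP_\ell$-continuous functionals (hence lower semicontinuous), and uniform superlinearity gives the uniform integrability of $\|v\|_g$ over $A(c)$ that simultaneously yields tightness and upgrades Prokhorov convergence of a subsequence to convergence in $\cP_\ell$, after which lower semicontinuity keeps the limit in $A(c)$. Two minor remarks: the uniform integrability you attribute to de la Vall\'ee Poussin is exactly the computation the author already performs in Lemma \ref{UI} (take $K$ large in $L\ge K\|v\|+C(K)$), so it can be cited rather than re-derived; and your closing phrase ``closed and therefore compact'' should be replaced by ``sequentially compact, hence compact since $(\cP_\ell,d_\ell)$ is metrizable'', since closedness alone is not what the tightness and uniform-integrability argument establishes, nor would it suffice.
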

\begin{definition}\label{vakonomic}
  A measure $\mu$ in $\cP_\cD$ is called {\em closed} or  {\em vakonomic}
(see \cite{G})  if
 \begin{equation}\label{eq:closed}
      \int_{TM}  D\varphi(\pi(v)) v\ d\mu(v)=0
\end{equation}
for all $\varphi\in C^1(M).$ We denote by  $\cVD$ the convex closed
set of vakonomic measures.
\end{definition}
For $\mu\in\cVD$ we define its rotation vector $\rho(\mu)\in H_1(M,\R)$ by
  \begin{equation}\label{eq:rotation}
    \lip [\om], \rho(\mu)\rip=\int_{TM} \om\ d\mu,\quad \om\in\Om^1(M), d\om=0
  \end{equation}
  For $\ga\in\cWD([a,b])$ we define the measure $\mu_\ga$  by 
\begin{equation}    \label{eq:example}
\int_{TM} f\ d\mu_\ga = \frac 1{b-a}\int_a^b f(\dga),\quad f\in C^0_\ell,
  \end{equation}
  when $\ga(a)=\ga(b)$, $\mu_\ga\in\cVD$  and it is called holonomic.
  We denote by $\cH_\cD$ the set of holonomic measures.

  More generally, if $\ga_n\in\cWD([a_n,b_n])$, $b_n-a_n\to\infty$ and
  $\mu=\lim\limits_n \mu_{\ga_n}$ exists then $\mu\in\cVD$. Indeed,
we observe again that if $\af\in\cWD$ is a minimizing geodesic with $\|\dot\af\|=1$
a.e. we have that $-C(0)\ell(\af)\le A_L(\af)\le A(1) \ell(\af)$. Fix $x_0\in M$,
take unit speed geodesics $\af_n$ from $x_0$ to $\ga_n(a_n)$ and $\be_n$
from $\ga_n(b_n)$ to $x_0$, and the concatenation $\de_n$ of 
$\af_n, \ga_n$ and $\be_n$. Then $\de_n$ is a closed curve and
for any $f\in C^0_\ell$ we have
\[\lim_n \int_{TM} f \ d\mu_{\de_n}=\lim_n \int_{TM} f\ d\mu_{\ga_n}=
\int_{TM} f \ d\mu.\]
  Thus $\mu\in\cVD$ and $\rho(\mu)=\lim\limits_n \rho(\mu_{\de_n})$.
  
 Set $\G:=\{h\in H_1(M,\Z): kh\ne 0\ \forall k\in\N\}$ and
consider the abelian cover $\Pi:\bM\to M$
whose group of deck transformations is $\G$.
The distribution $\cD$ lifts to a  bracket generating
distribution $\bar\cD\subset T\bM$.

Fix a basis $c_1,\ldots,c_k$ of $H^1(M,\R)$ and fix
closed 1-forms $\om_1,\ldots,\om_k$ in $M$ such that $\om_i$ has
cohomology class $c_i$ . 
Let $ G : \bM\to H_1(M,\R)$ be given by
\[\lip \sum_i a_ic_i, G(x)\rip = \int_{\bar x_0}^x\sum_i a_i\bar\om_i , \]
where $\bar x_0$ is a base point in $\bM$ and $\bar\om_i$ is the lift of
$\om_i$ to $\bM$. Since $\bar\om_i$ is exact, the integral does not
depend on the choice of the path from $\bar x_0$ to $x$. Notice that the
function $G$ depends on the choice of $\bar x_0$, on the choice of the
basis $\{c_i\}$ and of representatives $\om_i$ in $c_i$ . 
In general, if $x, y$ belong to the same fiber, then $G(x)- G(y)$ 
is the transformation in $\G$ carrying $y$ to
$x$, condition which uniquely identifies it,
conversely, any deck transformation admits a representation of this
type.

Let $\bga\in W^{1,1}_{\bar\cD}([a,b])$ and $\ga=\Pi\circ\bga$.
If $\om\in\Om^1(M)$, $d\om=0$, there
is $f\in C^1(M)$ such that
\[\int_{TM}\om\ d\mu_\ga=\frac 1{b-a}\lip[\om],G(\bga(b))-G(\bga(a))\rip
  +f((\ga(b))-f(\ga(a)).\]
If $T\in\G$, $a<b$, take 
$\bga\in\cW_{\bar\cD}([a,b])$ such that  $\bga(b))=T\bga(a)$, then 
$T=G(\bga(b))-G(\bga(a))$. If  $\ga=\Pi\circ\bga $ then  $\ga(a)=\ga(b)$
and $\rho(\mu_\ga)=T/(b-a)$.

If $\bga_n\in\cW_\cD([a_n,b_n])$, $\ga_n=\Pi\circ\bga_n$, $b_n-a_n\to\infty$ and
  $\mu=\lim\limits_n \mu_{\ga_n}$ exists then
\[h=\lim\limits_{n\to\infty}  \frac{G(\bga(b_n))-G(\bga(a_n))}{b_n-a_n}\]
exists and $\rho(\mu)=h$.

\begin{lemma}
    The map $\rho:\cVD\to H_1(M,\R)$ is surjective
      \end{lemma}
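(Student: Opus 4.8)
The plan is to show that the image $C:=\rho(\cVD)$ is a convex subset of $H_1(M,\R)\cong\R^k$ which contains the whole real line through the image of every non-torsion integral homology class; since such classes span $H_1(M,\R)$, an elementary convexity argument then forces $C=H_1(M,\R)$.

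First I would record that $\rho$ is affine: from the defining relation $\langle[\om],\rho(\mu)\rangle=\int_{TM}\om\,d\mu$, a convex combination $\mu=t\mu_1+(1-t)\mu_2$ satisfies $\rho(\mu)=t\rho(\mu_1)+(1-t)\rho(\mu_2)$. As $\cVD$ is convex, $C$ is convex. I would also note $0\in C$: the Dirac mass $\delta_{0_x}$ at a zero tangent vector lies in $\cVD$, since it is supported on $\cD$, has finite first moment, and trivially satisfies the closedness condition (the evaluated vector is zero), and $\rho(\delta_{0_x})=0$.

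Next comes the core geometric step. Fix $T\in\G$. Because $\bar\cD$ is bracket generating, the Chow--Rashevskii theorem provides a horizontal curve $\bga$ in $\bM$ joining a chosen point $\bar x$ to $T\bar x$; then $\ga=\Pi\circ\bga$ is a closed horizontal loop in $M$ and, as recorded just before the statement, $\rho(\mu_\ga)=[T]/\tau$, where $\tau$ is the length of the parameter interval and $[T]$ is the image of $T$ in $H_1(M,\R)$. Reparametrizing the same geometric loop so that it is traversed over an interval of arbitrary length $\tau>0$ keeps $\mu_\ga\in\cVD$ (only the first moment scales, remaining finite) and yields rotation vector $[T]/\tau$. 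Letting $\tau$ range over $(0,\infty)$ realizes every positive multiple of $[T]$; reversing the orientation of the loop (which replaces the deck transformation $T$ by $T^{-1}$, hence $[T]$ by $-[T]$) realizes every negative multiple; and $0\in C$ covers the origin. Hence $\R\,[T]\subseteq C$ for every $T\in\G$.

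Finally, a basis $e_1,\dots,e_k$ of the free part of $H_1(M,\Z)$ consists of elements of $\G$ whose images $[e_1],\dots,[e_k]$ form a basis of $H_1(M,\R)$. Given an arbitrary $w\in H_1(M,\R)$, write $w=\sum_{i=1}^k c_i[e_i]$; then $w=\frac1k\sum_{i=1}^k\bigl(k c_i[e_i]\bigr)$ exhibits $w$ as a convex combination of the points $kc_i[e_i]\in\R\,[e_i]\subseteq C$, so $w\in C$ by convexity, and $\rho$ is surjective. The only genuinely nontrivial ingredient is the realization of the rays $\R\,[T]$, which rests on the bracket-generating hypothesis (to produce horizontal loops in the prescribed deck classes) together with the scaling behaviour of the rotation vector under reparametrization; everything else is the affineness of $\rho$ and the linear-algebraic fact that the convex hull of a spanning family of lines through the origin is the whole space.
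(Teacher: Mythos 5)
Your proof is correct and follows essentially the same route as the paper: convexity of $\rho(\cVD)$ from the affineness of $\rho$ and convexity of $\cVD$, together with the realization of (multiples of) each non-torsion class $T\in\G$ as $\rho(\mu_\ga)$ for the projection $\ga$ of a horizontal curve $\bga$ joining $\bar x$ to $T\bar x$ in the abelian cover. You merely make explicit what the paper leaves implicit, namely the appeal to Chow--Rashevskii for the existence of such a horizontal curve, the scaling of the rotation vector under reparametrization and orientation reversal, and the final convex-combination step.
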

      \begin{proof}
Since $\cVD$ is convex and the map $\rho$ is affine, we have that
$\rho(\cVD)$ is convex, and we have proved that $\rho(\cVD)$
contains $\G$.
\end{proof}
\begin{lemma}\label{Mane}
  If $M$ is connected the set $\overline{\cH_\cD}$ is convex
\end{lemma}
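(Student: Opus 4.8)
The plan is to exploit the affine dependence of the map $\ga\mapsto\mu_\ga$ on concatenations of closed curves, together with the Chow--Rashevskii theorem, in order to realize convex combinations of holonomic measures as limits of holonomic measures. First I would reduce the statement to a density claim: since $\overline{\cH_\cD}$ is closed and $d_\ell$ metrizes $\cP_\ell$, and since the map $(\mu,\nu)\mapsto\lam\mu+(1-\lam)\nu$ is (sequentially) continuous on $\cP_\cD\times\cP_\cD$ for each fixed $\lam$, it suffices to prove that $\lam\mu_1+(1-\lam)\mu_2\in\overline{\cH_\cD}$ whenever $\mu_1,\mu_2\in\cH_\cD$ and $\lam\in[0,1]$. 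Indeed, given $\mu,\nu\in\overline{\cH_\cD}$ and $\lam\in[0,1]$, choose $\mu_n,\nu_n\in\cH_\cD$ with $\mu_n\to\mu$, $\nu_n\to\nu$; then $\lam\mu_n+(1-\lam)\nu_n\in\overline{\cH_\cD}$ converges to $\lam\mu+(1-\lam)\nu$, which lies in $\overline{\cH_\cD}$ because the latter is closed.

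So fix closed horizontal curves $\ga_1\in\cWD([0,T_1])$, $\ga_2\in\cWD([0,T_2])$ with $\ga_i(0)=\ga_i(T_i)=x_i$, giving $\mu_i=\mu_{\ga_i}\in\cH_\cD$. Because $\cD$ is bracket generating and $M$ is connected, the Chow--Rashevskii theorem provides horizontal curves $\si$ from $x_1$ to $x_2$ and $\tau$ from $x_2$ to $x_1$. For positive integers $p,q,N$ I would form the closed horizontal loop $\de_{p,q,N}$ based at $x_1$ that traverses $\ga_1$ exactly $Np$ times, then $\si$, then $\ga_2$ exactly $Nq$ times, then $\tau$; after reparametrization on a single interval this is an element of $\cWD$ with $\mu_{\de_{p,q,N}}\in\cH_\cD$.

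The key computation is to let $N\to\infty$ with $p,q$ fixed. For any $f\in C^0_\ell$, the contributions of the bridges $\si,\tau$ to the integral $\int_{TM}f\,d\mu_{\de_{p,q,N}}$ are bounded independently of $N$, since they are integrals of $f$ over the fixed curves $\si,\tau$ and $|f(v)|\le\|f\|_\ell(1+\|v\|_g)$, whereas the total time of $\de_{p,q,N}$ grows like $N(pT_1+qT_2)$. Using $\int_0^{T_i}f(\dot\ga_i)=T_i\int f\,d\mu_i$, the bridge terms therefore vanish in the limit and
\[\lim_{N\to\infty}\int_{TM}f\,d\mu_{\de_{p,q,N}}=\lam_{p,q}\int_{TM} f\,d\mu_1+(1-\lam_{p,q})\int_{TM} f\,d\mu_2,\qquad \lam_{p,q}=\frac{pT_1}{pT_1+qT_2}.\]
As this holds for every $f\in C^0_\ell$, the convex combination $\lam_{p,q}\mu_1+(1-\lam_{p,q})\mu_2$ belongs to $\overline{\cH_\cD}$.

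Finally, as $(p,q)$ range over $\N\times\N$ the ratios $pT_1/(qT_2)$ are dense in $(0,\infty)$, so $\{\lam_{p,q}\}$ is dense in $[0,1]$; choosing $\lam_{p_k,q_k}\to\lam$ and again using continuity of the convex combination with closedness of $\overline{\cH_\cD}$ yields $\lam\mu_1+(1-\lam)\mu_2\in\overline{\cH_\cD}$ for every $\lam\in[0,1]$, the endpoints $\lam=0,1$ being $\mu_2,\mu_1\in\cH_\cD$ themselves. I expect the main obstacle to be the topological bookkeeping in the last two steps: one must carefully justify the vanishing of the bridge contributions in the linear-growth topology $\cP_\ell$ (this is exactly where the estimate $|f(v)|\le\|f\|_\ell(1+\|v\|_g)$ and the linear growth of the total time are used), and confirm that convex combinations depend continuously on the measures in this topology, so that the density of $\{\lam_{p,q}\}$ can be upgraded to all of $[0,1]$.
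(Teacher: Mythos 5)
Your proof is correct and follows essentially the same route as the paper: both realize the convex combination by concatenating many repetitions of the two loops joined by short horizontal bridges, so that the bridge contributions vanish in the linear-growth topology as the total time grows. The only organizational difference is that the paper works directly with the approximating sequences of $\eta_1,\eta_2\in\overline{\cH_\cD}$ and tunes the repetition counts $m^i_n$ so that the time ratio $T^1_n/T^2_n$ converges to $\lambda_1/\lambda_2$, hitting the desired weight exactly in the limit, whereas you first reduce to $\mu_1,\mu_2\in\cH_\cD$, obtain a dense set of weights $\lambda_{p,q}$, and then close up using continuity of convex combination and closedness of $\overline{\cH_\cD}$.
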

\begin{proof}
  Let $\eta_1, \eta_2\in \overline{\cH_\cD}$, and let $\lambda_1$ and $\lambda_2$
in $[0,1]$ be such that $\lambda_1+\lambda_2=1$.
There are sequence $\ga^i_n\in\cC_{S^i_n}(x^i_n,x^i_n)$, $i=1,2$ such that
$\mu_{\ga^i_n}$ converges to $\eta_i$.
There are $m^1_n, m^2_n\in\N$ such that  $T^i_n=m^i_nS^i_n\to +\infty$
and $\dfrac{T^1_n}{T^2_n}$
converges to $\dfrac{\lam_1}{\lam_2}$, then  $\dfrac{T^i_n}{T^1_n+T^2_n}$
converges to $\lam_i$.

Taking a minimizing geodesic $\af_n\in\cC_{1}(x^1_n,x^2_n)$, 
define the curve $\ga_n\in\cC_{T^1_n+T^2_n+2}(x^1_n,x^1_n)$ by
\[\ga_n(t)=\begin{cases}
  \ga^1_n(t)& t\in[0,T^1_n]\\ \af_n(t-T^1_n) & t\in[T^1_n,T^1_n+1]\\
\ga^2_n(t-T^1_n-1) & t\in[T^1_n+1,T^1_n+T^2_n+1]\\
\af_n(T^1_n+T^2_n+2-t)& t\in[T^1_n+T^2_n+1,T^2_n+T^1_n+2].
\end{cases}\]
Then $\mu_{\ga_n}$ converges to $\lambda_1\eta_1+\lambda_2\eta_2$.
\end{proof}
\begin{theorem}\label{vako=holo}
        $\cVD=\overline{\cH_\cD}$
      \end{theorem}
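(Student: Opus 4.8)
The plan is to prove the two inclusions separately, the inclusion $\overline{\cH_\cD}\subseteq\cVD$ being the routine one. Every holonomic measure $\mu_\ga$ with $\ga\in\cC_{T}(x,x)$ satisfies, for every $\varphi\in C^1(M)$,
\[
\int_{TM}D\varphi(\pi(v))\,v\,d\mu_\ga=\frac1T\int_0^TD\varphi(\ga)\,\dga\,dt=\frac1T\bigl(\varphi(\ga(T))-\varphi(\ga(0))\bigr)=0,
\]
so $\mu_\ga\in\cVD$; since $\cVD$ is closed, this already gives $\overline{\cH_\cD}\subseteq\cVD$. (Equivalently, this is the content of the remark preceding the statement: a limit of $\mu_{\ga_n}$ with $b_n-a_n\to\infty$ lies in $\cVD$, and the concatenation with geodesics shows that such limits coincide with limits of holonomic measures.)

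For the reverse inclusion I would argue by Hahn--Banach separation. Both $\cVD$ and $\overline{\cH_\cD}$ are convex (Lemma \ref{Mane}) and closed subsets of $\cP_\ell\subset(C^0_\ell)'$ in the weak$^*$ topology, whose continuous linear functionals are exactly the evaluations against elements of $C^0_\ell$. If some $\mu\in\cVD$ failed to lie in $\overline{\cH_\cD}$, strict separation of the compact set $\{\mu\}$ from the closed convex set $\overline{\cH_\cD}$ would furnish $f\in C^0_\ell$ and $\alpha\in\R$ with $\int f\,d\mu>\alpha\ge\int f\,d\nu$ for every $\nu\in\overline{\cH_\cD}$ (both integrals being finite since $\mu,\nu\in\cP_\ell$ and $f$ has linear growth). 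Testing against the holonomic measures of closed curves, this gives
\begin{equation}\label{plan-loops}
\frac1T\int_0^Tf(\dga(t))\,dt\le\alpha\qquad\text{for every }\ga\in\cC_T(x,x),\ x\in M,\ T>0 .
\end{equation}
Hence it suffices to establish the closing estimate
\[
\int_{TM}f\,d\mu\ \le\ \sup\Big\{\tfrac1T\int_0^Tf(\dga):\ga\in\cC_T(x,x),\ x\in M,\ T>0\Big\},
\]
which contradicts $\int f\,d\mu>\alpha$.

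The heart of the argument, and the step I expect to be the main obstacle, is this closing estimate: realizing a vakonomic measure, up to arbitrarily small weak$^*$ error, as the measure of a single long closed curve. I would proceed as follows. Given $\ep>0$, use $\int\|v\|\,d\mu<\infty$ to discard the part of $\mu$ carried by $\{\|v\|>R\}$, and partition the remaining compact piece of $\cD$ into small Borel cells $B_1,\dots,B_N$; choosing $v_i\in B_i$, $x_i=\pi(v_i)$, $p_i=\mu(B_i)$, the atomic measure $\sum_i p_i\delta_{v_i}$ is weak$^*$ close to $\mu$ (continuity of $f$ handles the bulk, the linear growth bound the tail). The closedness condition $\int D\varphi(\pi(v))v\,d\mu=0$ translates into an approximate conservation relation $\sum_i p_iD\varphi(x_i)v_i\approx0$ for all $\varphi$. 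Approximating the $p_i$ by rationals, I would then build, by an Eulerian-circuit / flow argument, a single closed curve that spends a time-fraction $\approx p_i$ moving with velocity $\approx v_i$ near $x_i$, the consecutive short horizontal segments being joined by short horizontal geodesics supplied by Chow's theorem; here the bracket-generating hypothesis on $\cD$ is essential. The approximate conservation relation is precisely what allows these connectors to be chosen of total length $o(T)$, so their contribution to $\tfrac1T\int_0^Tf(\dga)$ vanishes as $T\to\infty$, and the limit is $\sum_i p_if(v_i)\approx\int f\,d\mu$; letting $\ep\to0$ yields the estimate. The delicate points, where the real work lies, are the quantitative balancing of the flow so that the connecting geodesics remain cheap, and keeping the full velocity distribution of the curve faithful to $\mu$ rather than merely its first moment.
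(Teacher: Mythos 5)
Your overall architecture is the same as the paper's: the easy inclusion is the computation you give, and your Hahn--Banach separation of a putative $\mu\in\cVD\setminus\overline{\cH_\cD}$ is exactly what Proposition \ref{cc+} packages (applied to the closed convex set $\overline{\cH_\cD}$, convexity being Lemma \ref{Mane}). In both cases everything reduces to the dual statement $\overline{\cH_\cD}^+\subset\cVD^+$, i.e.\ to your ``closing estimate'': a function $f\in C^0_\ell$ bounded above (or below) on all loop measures obeys the same bound on all vakonomic measures. One minor caution: the separation must be carried out inside $\cP_\ell$ with its weak$^*$ topology, as Proposition \ref{cc+} does, since $\overline{\cH_\cD}$ is closed in $\cP_\ell$ but not obviously in $(C^0_\ell)'$; this is a technicality, not an obstruction.

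The genuine gap is in your proof of the closing estimate --- which is precisely the step the paper itself does not prove but delegates to \cite{B}. Discretizing $\mu$ into atoms $(x_i,v_i,p_i)$ and recording the approximate balance $\sum_i p_i D\varphi(x_i)v_i\approx 0$ does not by itself produce a cyclic ordering of the pieces in which consecutive pieces are close and the jumps are rare. That balance only says that the projected vector-valued measure $\pi_\#(v\mu)$ is approximately divergence-free, and decomposing a divergence-free measure into loops is the content of the superposition principle (a Smirnov-type decomposition of normal $1$-currents); this is the substantive theorem hiding behind your ``Eulerian-circuit / flow argument''. Even for a measure carried by two disjoint invariant circles one must quantify how rarely one may jump between them; in general the Eulerian-circuit picture applies to exactly balanced flows on finite graphs, and converting the $\approx 0$ into an exact graph flow, correcting the imbalance, and bounding the cost of that correction is where the argument actually lives. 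In the sub-Riemannian setting there is the further point that your connectors must be horizontal and short for the Carnot--Carath\'eodory distance $d$, which by the ball-box estimates is only H\"older with respect to a Riemannian distance, so ``the cells are small in $M$'' does not immediately yield ``the connectors are short in $d$'' at the rate you need. The strategy is not wrong --- it is essentially how such approximation results are proved --- but as written the key quantitative claims (connectors of total $d$-length $o(T)$, velocity distribution of the closed curve faithful to $\mu$) are asserted rather than established, and they constitute exactly the content of the result of \cite{B} that the paper invokes.
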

      \begin{proof}
        As in \cite{B}, by Lemma \ref{Mane} and 
      \begin{proposition}\label{cc+}
        Let $C$ be a closed convex subset of $\cP_\ell$ and let
        \[C^+=\{f\in C^0_\ell:\int_{TM} f d\mu\ge 0\;\;\forall \mu\in C\}.\]
        Then  \[C=\{\mu\in\cP_\ell:\int_{TM} f d\mu\ge 0\;\; \forall f\in C^+\},\]
      \end{proposition}
    \noindent it is enough to prove that  $\overline{\cH_\cD}^+\subset\cVD^+$.
    The proof of this fact is the same as the proof of a similar fact in \cite{B}.
      \end{proof}
\begin{proposition}\label{aprox-mu}
  If $\mu\in\cVD$ then there is a sequence
  $\mu_{\eta_n}\in\cH_\cD$ converging to $\mu$ such that que $A_L(\mu_{\eta_n})$
converges to $A_L(\mu)$
\end{proposition}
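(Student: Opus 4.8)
The plan is to split according to whether $A_L(\mu)$ is finite. If $A_L(\mu)=+\infty$, then since $\mu\in\cVD=\overline{\cH_\cD}$ by Theorem \ref{vako=holo} we may choose any holonomic measures $\mu_{\eta_n}\to\mu$; lower semicontinuity of $A_L$ (Proposition \ref{Ma}) forces $A_L(\mu_{\eta_n})\to+\infty=A_L(\mu)$, and we are done. So assume $A_L(\mu)<+\infty$. For \emph{any} approximating sequence $\mu_{\eta_n}\to\mu$ of holonomic measures, Proposition \ref{Ma} already gives $\liminf_n A_L(\mu_{\eta_n})\ge A_L(\mu)$, so the whole difficulty is to produce one sequence with the matching upper bound $\limsup_n A_L(\mu_{\eta_n})\le A_L(\mu)$; that is, a \emph{recovery sequence}.

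First I would reduce the construction from single closed curves to finite convex combinations of holonomic measures, using the concatenation idea of Lemma \ref{Mane}. Given $\sigma=\sum_{i=1}^k\lambda_i\mu_{\eta_i}\in\mathrm{co}(\cH_\cD)$, one builds closed curves $\ga_T$ that spend a time-fraction close to $\lambda_i$ running along $\eta_i$ (suitably time-rescaled) and are joined by unit-speed geodesics of length $\le\diam M$. As $T\to\infty$ the connectors occupy a vanishing fraction of time and contribute bounded action, hence $O(1/T)$ to the average; since $A_L$ is affine on measures, $\mu_{\ga_T}\to\sigma$ in $\cP_\ell$ and $A_L(\mu_{\ga_T})\to\sum_i\lambda_i A_L(\mu_{\eta_i})=A_L(\sigma)$. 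Thus, if I can approximate $\mu$ by convex combinations $\sigma_m\to\mu$ with $A_L(\sigma_m)\to A_L(\mu)$, a diagonal argument over the $\sigma_m$ and the $\ga_T$ produces the desired holonomic recovery sequence.

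The hard part is exactly this last step: realizing the closed, finite-action measure $\mu$ as a limit of convex-combination measures whose action does not exceed $A_L(\mu)$ in the limit. A soft separation argument in $(C^0_\ell)'$ does \emph{not} suffice, because $L$ is only superlinear rather than of linear growth, so under weak limits one retains only the lower semicontinuity inequality, which is precisely the wrong direction for an upper bound. Instead I would argue constructively, following the superposition method of \cite{B}. Using $A_L(\mu)<+\infty$ together with the uniform superlinearity, I would first truncate to $\{\|v\|\le R\}$: the complementary mass and its action contribution are small for $R$ large. Then I would discretize $\mu$ via a fine partition of $M$ into finitely many base-point/velocity data; the closedness relation $\int_{TM}D\varphi(\pi(v))\,v\,d\mu=0$ is a flux-balance condition forcing the discretized velocity field to have no net flux across the partition, so it can be realized, Eulerian-circuit fashion, by finitely many closed curves. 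Estimating that these realizing curves carry action within $\ep$ of $A_L(\mu)$ and induce measures converging weakly to $\mu$ yields the convex combinations $\sigma_m$, and hence the proposition.

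I expect the flux-balancing and curve-realization, i.e. the quantitative superposition principle, to be the main technical obstacle, with the velocity truncation and the concatenation bookkeeping being routine given the superlinearity bounds and Lemma \ref{Mane}.
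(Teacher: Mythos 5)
Your overall strategy is viable in principle, but it is both heavier than necessary and, at its crucial point, not actually carried out. The paper's own proof is deferred to Proposition 2-3.3 of \cite{CI}, whose mechanism is quite different from yours and much shorter, because it exploits two things you set aside: Theorem \ref{vako=holo} already supplies closed curves $\ga_n$ with $\mu_{\ga_n}\to\mu$ in $\cP_\ell$, and the topology of $\cP_\ell$ tests against $\|v\|_g$ itself, so the speeds of any such approximating sequence are automatically uniformly integrable. The only defect of the $\ga_n$ is that their action may concentrate on the set of large velocities; this is repaired not by rebuilding the approximation from scratch but by a cutoff reparametrization (the construction appears in the paper's appendix): set $\si'(t)=\max\bigl(1,\|\dga_n(t)\|/A\bigr)$, $\tau=\si^{-1}$ and $\eta_n(s)=\ga_n(\tau(s))$, so that $\|\dot\eta_n\|\le A$. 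Uniform integrability makes $d_\ell(\mu_{\eta_n},\mu_{\ga_n})$ small uniformly in $n$ once $A$ is large, while convexity of $L$ along rays gives $L(Av/\|v\|)\,\|v\|/A\le L(v)+O(\|v\|/A)$ for $\|v\|\ge A$, so the truncation increases the normalized action by at most $O\bigl(\tfrac1A\int_{\|v\|>A}\|v\|\,d\mu_{\ga_n}\bigr)$ plus a normalization factor tending to $1$. For fixed $A$ all these measures live in $\{\|v\|\le A\}$, where $L$ is bounded and continuous, so their actions pass to the limit; a diagonal argument in $A$ and $n$ together with the lower semicontinuity of Proposition \ref{Ma} finishes the proof. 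Note that no upper bound on $A_L(\mu_{\ga_n})$ is ever needed.

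By contrast, your plan re-proves the density of $\cH_\cD$ with quantitative action control via truncation, discretization, flux balancing and Eulerian circuits. Your treatment of the case $A_L(\mu)=+\infty$, the reduction to a recovery sequence, and the concatenation step turning convex combinations into single closed curves are all correct. But the step you yourself flag as the main obstacle --- realizing the discretized, flux-balanced velocity field by closed horizontal curves whose action is within $\ep$ of $A_L(\mu)$ --- is exactly where the content of the proposition lies, and it is not supplied. In the sub-Riemannian setting it is genuinely delicate: the connecting and correcting arcs must be horizontal; their Carnot--Carath\'eodory length inside a small cell of the partition is not controlled by the Euclidean size of the cell in any naive way; the discretization balances the flux only approximately, so correction cycles with controlled total duration and action are needed; and one must verify that a datum $v\in\cD$ is realized by a short horizontal arc with action close to $L(v)$ times its duration. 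As written, the proposal therefore has a genuine gap at its central step; I recommend replacing it by the reparametrization argument above, which uses only tools already established in the paper.
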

\begin{proof}
   The proof is similar to the one of Proposition 2-3.3 in \cite {CI}.
 \end{proof}
\begin{proposition}\label{c=min-med}
  \[\min\{A_L(\mu):\mu\in\cVD\}=-c(L).\]
\end{proposition}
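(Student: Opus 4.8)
The plan is to prove the two inequalities separately and to exhibit a measure realizing the minimum.

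\emph{Lower bound.} First I would show that $A_L(\mu)\ge -c(L)$ for every $\mu\in\cVD$. Since $c=c(L)$ there is a function $u:M\to\R$ that is $L+c(L)$ dominated. If $\eta\in\cC_S(x,x)$ is a closed curve, then domination gives
\[
0=u(\eta(S))-u(\eta(0))\le A_{L+c(L)}(\eta)=A_L(\eta)+c(L)S,
\]
whence $A_L(\mu_\eta)=\tfrac1S A_L(\eta)\ge -c(L)$ for every holonomic measure $\mu_\eta\in\cH_\cD$. By Proposition \ref{aprox-mu} an arbitrary $\mu\in\cVD$ is the limit of holonomic measures $\mu_{\eta_n}$ with $A_L(\mu_{\eta_n})\to A_L(\mu)$, so passing to the limit yields $A_L(\mu)\ge -c(L)$. (Alternatively one may invoke the lower semicontinuity of $A_L$ of Proposition \ref{Ma} together with $\cVD=\overline{\cH_\cD}$.)

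\emph{Upper bound and attainment.} Next I would build a measure of action $\le -c(L)$ out of a static curve. A calibrated curve exists because $h(x,\cdot)$ is a backward weak KAM solution (Proposition \ref{h-wkam}), so Proposition \ref{alfa} produces a static curve $\eta\in\cWD(\R)$. By item (3) of Corollary \ref{aubry},
\[
\int_0^T L(\dot\eta)+cT=-\Phi(\eta(0),\eta(T))\qquad\text{for all }T>0,
\]
and since $\Phi$ is bounded on the compact manifold $M$, dividing by $T$ gives $A_L(\mu_{\eta|[0,T]})=\tfrac1T\int_0^T L(\dot\eta)\to -c(L)$ as $T\to\infty$. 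By Proposition \ref{energy} the static curve has constant energy $E(\dot\eta)=c$, and the coercivity $E(v)\ge -A(0)+a\|v\|^2$ then forces $\|\dot\eta\|$ to be bounded, say by $K$; hence all the measures $\mu_{\eta|[0,T]}$ are supported in the compact set $\{v\in\cD:\|v\|\le K\}$ and satisfy a uniform bound on $A_L$, so they lie in a fixed compact sublevel set of $A_L$ (Proposition \ref{Ma}). I would extract $T_n\to\infty$ with $\mu_{\eta|[0,T_n]}\to\mu$; by the remark following \eqref{eq:example} the limit $\mu$ belongs to $\cVD$, and the lower semicontinuity of $A_L$ gives $A_L(\mu)\le\liminf_n A_L(\mu_{\eta|[0,T_n]})=-c(L)$.

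Combining the two steps yields $A_L(\mu)=-c(L)$, so the infimum equals $-c(L)$ and is attained at $\mu$. The domination estimate and the approximation by holonomic measures are routine; the real content — and the step I expect to be the main obstacle — is the construction of the minimizing closed measure, namely guaranteeing tightness (no escape of mass along the noncompact static curve) through the energy identity of Proposition \ref{energy} and checking that the time averages along $\eta$ genuinely converge to a vakonomic measure.
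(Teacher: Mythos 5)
Your proof is correct, but the attainment half follows a genuinely different route from the paper's. For the lower bound you argue exactly as the paper does in substance: the paper writes $A_L(\ga_n)+ct_n\ge\Phi(x_n,x_n)=0$ where you write $0\le A_{L+c}(\eta)$ via an $L+c(L)$ dominated function; these are the same inequality, and both arguments then pass to a general $\mu\in\cVD$ through Proposition \ref{aprox-mu}. One caveat: your parenthetical alternative via lower semicontinuity does not work for this direction, since semicontinuity gives $A_L(\mu)\le\liminf_n A_L(\mu_{\eta_n})$, which bounds $A_L(\mu)$ from \emph{above} by the limit of the actions, not from below; you genuinely need the convergence of actions supplied by Proposition \ref{aprox-mu}. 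For the attainment, the paper is more economical: it takes any $x\in M$ and a sequence $\ga_n\in\cC_{t_n}(x,x)$ with $h(x,x)=\lim_n A_L(\ga_n)+ct_n$ (item (6) of Proposition \ref{prop-h}), divides by $t_n$ so that finiteness of $h(x,x)$ forces $A_L(\mu_{\ga_n})\to-c$, and concludes by lower semicontinuity applied to a limit point $\mu_0$. You instead manufacture a bi-infinite static curve (Propositions \ref{h-wkam} and \ref{alfa}), use the identity $\int_0^T L(\dot\eta)+cT=-\Phi(\eta(0),\eta(T))$ together with boundedness of $\Phi$, and obtain compactness of $\{\mu_{\eta|[0,T]}\}$ through the energy identity of Proposition \ref{energy}. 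Your route costs more machinery (existence of weak KAM solutions, of static curves, and the energy level of semi-static curves) but buys more: the minimizing measure you produce is carried by a single static curve with uniformly bounded speed, hence its projection is supported in the Aubry set by Proposition \ref{aubry=static} and closedness of $\cA$, whereas the paper's $\mu_0$ comes with no such localization. Both arguments rely, without the paper saying so explicitly, on Proposition \ref{Ma} to extract a convergent subsequence of holonomic measures; your version makes the needed a priori bound ($\|\dot\eta\|\le K$, hence $A_L(\mu_{\eta|[0,T]})\le A(K)$) explicit, which is a point in its favor.
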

\begin{proof}
  Set $c=c(L)$ and let $x\in M$, then there is a
  sequence  $\ga_n\in\cC_{t_n}(x,x)$ with  $t_n\to\infty$ such that 
 \[h(x,x)=\lim_{n\to\infty}A_L(\ga_n)+ct_n.\]
Let $\mu_0\in\cVD$ be the limit of a subsequence of $\mu_{\ga_n}$, then
 \[0=\lim_{n\to\infty}\frac1{t_n}A_L(\ga_n)+c=\lim_{n\to\infty}A_L(\mu_{\ga_n})+c\ge
   A_L(\mu_0)+c\ge\inf\{A_L(\mu):\mu\in\cVD\}+c.\]
 Let $\mu\in\cVD$, by Proposition \ref{aprox-mu}, there is a sequence
 $\ga_n\in\cC_{t_n}(x_n,x_n)$ 
such that $\dfrac1{t_n}A_L(\ga_n)=A_L(\mu_{\ga_n})$ converges to $-c(L)$.
Since $A_L(\ga_n)+ct_n\ge\Phi(x_n,x_n)=0$,
\[A_L(\mu)+c=\lim_{n\to\infty}\frac1{t_n}A_L(\ga_n)+c\ge 0.\]
Thus $\inf\{A_L(\mu):\mu\in\cVD\}+c\ge 0$ and then 
 \[A_L(\mu_0)=\inf\{A_L(\mu):\mu\in\cVD\}=-c\]
\end{proof}
We define the set of Mather measures  as $\fM(L)=\{\mu\in\cVD:A_L(\mu)=-c(L)\}$.
and the set of projected Mather measures $\fM_0(L)=\{\pi_\#\mu:\mu\in\fM(L)\}$.
\begin{example}
  Let $M$, $U:M\to\R$ and $L$ be as in example 1.
Since $\min L=-\max U=-c(L)$, the support of any $\mu\in\fM(L)$ is
contained in $\{0\}\times U^{-1}(\max U)=\{0\}\times\cA$.

 If $\mu$ is a probability measure in $TM$ supported in
 $\{0\}\times\cA$, then $\mu\in\cVD$ and
$\int L d\mu=-\max U$ and thus $\mu\in\fM(L)$. Therefore $\fM_0(L)$
is the set of probabilty measures in $M$ supported in $U^{-1}(\max U)=\cA$.
\end{example}
\begin{definition}
  We define the effective Lagrangian $\mL:H_1(M,\R)\to\R$, or
 Mather $\be$ function, by
  \begin{equation}\label{eq:effL}
    \mL(h)=\inf\{A_L(\mu):\rho(\mu)=h\}
  \end{equation}
  It is clear that $\mL$ is convex and its Legendre transform,
  called effective Hamiltonian, $\mH:H^1(M,\R)\to\R$ is given by
  \begin{equation}\label{eq:effH}
    \mH([\om])=-\inf\{A_{L-\om}(\mu):\mu\in\cVD\}=c(L-\om)
  \end{equation}
\end{definition}
\section{Convergence of the discounted value function}
\label{sec:conv-disc-value}
Suppose $M$ is compact and let $c=c(L)$.
For $\lam>0$ let $u_\lam$ be the discounted value function.
\begin{proposition}
  For every $\mu\in\fM_0(L)$ we have $\int_M u_\lam\ d\mu\le - \frac c\lam$.
\end{proposition}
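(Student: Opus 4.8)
The plan is to avoid any attempt to use the closedness condition \eqref{eq:closed} with the non-smooth test function $u_\lam$, and instead pass through the holonomic approximation of Mather measures. First I would fix $\tilde\mu\in\fM(L)$ with $\pi_\#\tilde\mu=\mu$; by Proposition \ref{c=min-med} this gives $A_L(\tilde\mu)=-c$. Since $u_\lam$ is bounded (Proposition \ref{value-prop}(1)) and continuous, $u_\lam\circ\pi\in C^0_\ell$. By Proposition \ref{aprox-mu} there are closed curves $\eta_n\in\cWD([0,S_n])$ with $\eta_n(0)=\eta_n(S_n)$ whose holonomic measures $\mu_{\eta_n}$ converge to $\tilde\mu$ in $\cP_\ell$ and satisfy $A_L(\mu_{\eta_n})\to A_L(\tilde\mu)=-c$. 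Testing the $\cP_\ell$-convergence against $u_\lam\circ\pi$ then yields
\[
\int_M u_\lam\ d\mu=\int_{TM}u_\lam\circ\pi\ d\tilde\mu
=\lim_{n\to\infty}\frac 1{S_n}\int_0^{S_n}u_\lam(\eta_n(s))\ ds,
\]
so everything reduces to bounding the time average of $u_\lam$ along each loop by $A_L(\mu_{\eta_n})/\lam$.

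The key pointwise estimate uses the periodic backward extension of each loop as a competitor in the definition \eqref{eq:value}. I would extend $\eta_n$ to an $S_n$-periodic horizontal curve on $\R$ (still denoted $\eta_n$); for a fixed phase $s\in[0,S_n]$ the curve $t\mapsto\eta_n(s+t)$, $t\in\,]-\infty,0]$, lies in $\cWD(]-\infty,0])$, ends at $\eta_n(s)$, and has finite discounted action since $g:=L(\dot\eta_n)$ is $S_n$-periodic, bounded below, and integrable over one period. Writing $r=-t$, the defining infimum gives
\[
u_\lam(\eta_n(s))\le\int_{-\infty}^0e^{\lam t}L(\dot\eta_n(s+t))\ dt
=\int_0^\infty e^{-\lam r}g(s-r)\ dr .
\]

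Then I would average over the phase: integrating the last inequality in $s$ over $[0,S_n]$, dividing by $S_n$, and applying Fubini,
\[
\frac 1{S_n}\int_0^{S_n}u_\lam(\eta_n(s))\ ds
\le\int_0^\infty e^{-\lam r}\Big(\frac 1{S_n}\int_0^{S_n}g(s-r)\ ds\Big)\ dr .
\]
By $S_n$-periodicity the inner average equals $\frac 1{S_n}\int_0^{S_n}g=A_L(\mu_{\eta_n})$ for every $r$, so the right-hand side is $A_L(\mu_{\eta_n})\int_0^\infty e^{-\lam r}\,dr=A_L(\mu_{\eta_n})/\lam$. Passing to the limit,
\[
\int_M u_\lam\ d\mu\le\lim_{n\to\infty}\frac{A_L(\mu_{\eta_n})}{\lam}=\frac{A_L(\tilde\mu)}{\lam}=-\frac c\lam .
\]

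The only delicate point is the passage from a measure to concrete curves. The clean ``smooth'' argument would integrate the subsolution inequality $\lam u_\lam(\pi(v))+du_\lam(\pi(v))\,v\le L(v)$ against $\tilde\mu$ and annihilate the middle term by \eqref{eq:closed} with $\varphi=u_\lam$, leaving $\lam\int u_\lam\,d\mu\le A_L(\tilde\mu)=-c$; but $u_\lam$ is merely Lipschitz, so \eqref{eq:closed} cannot be applied to it directly. The holonomic approximation of Proposition \ref{aprox-mu} is exactly what replaces this step, and the periodicity of each approximating loop is what converts the one-sided discounted competitor into the undiscounted average $A_L(\mu_{\eta_n})$. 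I expect the main bookkeeping to be checking the admissibility and finite discounted action of the periodic extensions, which is routine given that $g=L(\dot\eta_n)$ is integrable over a period and bounded below.
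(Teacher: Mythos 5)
Your proof is correct, and it shares the paper's overall skeleton: both arguments reduce the statement, via Proposition \ref{aprox-mu}, to bounding the time average of $u_\lam$ over a closed horizontal loop $\eta_n\in\cC_{S_n}(x_n,x_n)$ by $A_L(\mu_{\eta_n})/\lam$, and then pass to the limit against the Mather measure. Where you diverge is in how that loop estimate is obtained. The paper applies the domination inequality \eqref{eq:lam-dom} twice — once to get $u_\lam(\ga_k(t))\le e^{-\lam t}\bigl(u_\lam(x_k)+\int_0^t e^{\lam s}L(\dga_k)\,ds\bigr)$, and once, using that the loop is closed, to get $u_\lam(x_k)(e^{\lam t_k}-1)\le A_{L,\lam}(\ga_k)$ — then integrates in $t$ over one period and integrates by parts so that the boundary terms cancel. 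You instead feed the infinite periodic backward extension of the loop directly into the defining infimum \eqref{eq:value}, average over the phase, and use Fubini together with periodicity to collapse the inner average to $A_L(\mu_{\eta_n})$ for every delay $r$. Your device is somewhat cleaner: it uses only the definition of $u_\lam$ as an infimum (no integration by parts, no sign-checking of boundary terms), at the modest cost of verifying that the periodic extension is an admissible competitor with finite discounted action — which you correctly note follows from $L\ge C(0)$ and the integrability of $L(\dot\eta_n)$ over one period. You are also right that the closedness condition \eqref{eq:closed} cannot be applied with $\varphi=u_\lam$ since $u_\lam$ is only Lipschitz; the paper avoids that pitfall in exactly the same way you do, by working with holonomic approximations rather than with the vakonomic measure directly.
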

\begin{proof}
  Let  $\ga_k\in\cC_{t_k}(x_k,x_k)$. By item (2) of Proposition \ref{value-prop}
   \[u_\lam(\ga_k(t))\le e^{-\lam t}(u_\lam(x_k)+\int_0^t e^{\lam s}L(\dga_k(s))\ ds).\]
Integration by parts gives
  \begin{align*}
    \int_0^{t_k}u_\lam(\ga_k(t))\ dt &\le \frac{1-e^{-\lam t_k}}{\lam}u_\lam(x_k)+
 \int_0^{t_k}e^{-\lam t} \int_0^t e^{\lam s}L(\dga_k(s))\ ds\  dt\\
&=\frac{1-e^{-\lam t_k}}{\lam}u_\lam(x_k)-\frac{e^{-\lam t_k}}{\lam}\int_0^{t_k} e^{\lam s}L(\dga_k(s))\ ds
+ \frac 1\lam \int_0^{t_k} L(\dga_k)\\ &\le \frac {A_L(\ga_k)}\lam.
  \end{align*}
  If the sequence $\mu_{\ga_k}\in\cH_\cD$ converges to $\nu\in\cV_\cD$ we have 
 \[\int_M u_\lam\ d\pi_\#\nu\le \frac{A_L(\nu)}\lam.\]
In particular, for $\nu\in\fM(L)$ choose a sequenece $\mu_{\ga_k}\in\cH_\cD$ converging to $\nu$,
and then \[\int_M u_\lam\ d\pi_\#\nu\le -\frac c\lam.\]
\end{proof}
\begin{proposition}
 $u_\lam+\dfrac c\lam$ is uniformly bounded.
\end{proposition}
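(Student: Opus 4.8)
The plan is to sandwich $u_\lam+c/\lam$ between two constants that do not depend on $\lam$, using two complementary tools: a projected Mather measure for the upper bound, and an $L+c$ dominated function for the lower bound. Throughout write $c=c(L)$; by Proposition \ref{lip-value} all the functions $u_\lam$ are Lipschitz for $d$ with a common constant $K=A(1)-\min L$, and since $M$ is compact I set $D=\diam M<\infty$.

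For the upper bound I would fix any $\mu\in\fM_0(L)$; such a $\mu$ exists because the minimum in Proposition \ref{c=min-med} is attained, and $\mu$ is a Borel probability measure on $M$. The preceding proposition gives $\int_M u_\lam\,d\mu\le-c/\lam$, so using that each $u_\lam$ is $K$-Lipschitz and that $\mu$ is a probability measure,
\[
u_\lam(x)=\int_M\bigl(u_\lam(x)-u_\lam(y)\bigr)\,d\mu(y)+\int_M u_\lam\,d\mu\le KD-\frac c\lam
\]
for every $x\in M$, whence $u_\lam(x)+c/\lam\le KD$ uniformly in $x$ and $\lam$.

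For the lower bound I would invoke item (1) of the proposition following Theorem \ref{wkam-teo} to fix an $L+c$ dominated function $u_0:M\to\R$; it is Lipschitz, hence $\|u_0\|_\infty<\infty$ on the compact $M$. Fix $x\in M$ and let $\ga\in\cWD(]-\infty,0])$ with $\ga(0)=x$ have finite discounted action, which are the only curves relevant for the infimum defining $u_\lam(x)$. By the domination property $u_0\circ\ga$ is absolutely continuous with $(u_0\circ\ga)'(t)\le L(\dga(t))+c$ a.e., so
\[
\int_{-\infty}^0 e^{\lam t}L(\dga(t))\,dt\ge\int_{-\infty}^0 e^{\lam t}(u_0\circ\ga)'(t)\,dt-\frac c\lam .
\]
Integrating the first term on the right by parts on $[-T,0]$ and letting $T\to\infty$ (the boundary term $e^{-\lam T}u_0(\ga(-T))$ vanishes because $u_0$ is bounded) gives
\[
\int_{-\infty}^0 e^{\lam t}(u_0\circ\ga)'(t)\,dt=u_0(x)-\lam\int_{-\infty}^0 e^{\lam t}u_0(\ga(t))\,dt\ge u_0(x)-\|u_0\|_\infty,
\]
where I used $\lam\int_{-\infty}^0 e^{\lam t}\,dt=1$ to bound the weighted average of $u_0$ by $\|u_0\|_\infty$. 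Combining the two displays and taking the infimum over $\ga$ yields $u_\lam(x)+c/\lam\ge u_0(x)-\|u_0\|_\infty\ge-2\|u_0\|_\infty$.

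The two estimates give $-2\|u_0\|_\infty\le u_\lam+c/\lam\le KD$ for all $\lam>0$, which is the assertion. I expect the only delicate point to be the integration by parts on the half-line in the lower bound: one must stay with finite-action curves so that $e^{\lam t}(u_0\circ\ga)'$ is integrable (which follows from $|(u_0\circ\ga)'|\le K\|\dga\|$ together with the superlinear control of $\int e^{\lam t}\|\dga\|$ by $\int e^{\lam t}L(\dga)$), and then verify that the contribution at $-\infty$ drops out; both are routine given the uniform bounds already in place.
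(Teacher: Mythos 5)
Your proof is correct, and the two halves compare differently with the paper's argument. For the lower bound you do essentially what the paper does: integrate the domination inequality $(u_0\circ\ga)'\le L(\dga)+c$ against $e^{\lam t}$ on $]-\infty,0]$ and integrate by parts, the boundary term at $-\infty$ vanishing because $u_0$ is bounded. The only cosmetic difference is that the paper works with the specific dominated function $v=h(z,\cdot)$, $z\in\cA$, and evaluates along the optimal curve $\ga_{\lam,x}$ furnished by Proposition \ref{min-inf}, whereas you run the estimate over all finite-action competitors and take the infimum; your variant is slightly leaner since it does not need the existence of minimizers on the half-line, only that curves of infinite discounted action are irrelevant (and your justification that $e^{\lam t}\|\dga\|$ is integrable via superlinearity is the right one). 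For the upper bound you take a genuinely different route: you combine the immediately preceding proposition, $\int_M u_\lam\,d\mu\le -c/\lam$ for $\mu\in\fM_0(L)$ (a nonempty set, since the minimum in Proposition \ref{c=min-med} is attained), with the uniform Lipschitz bound of Proposition \ref{lip-value} and the finiteness of $\diam M$, to get $u_\lam(x)\le KD-c/\lam$ in one line. The paper instead takes closed curves $\ga_n\in\cC_{t_n}(x,x)$ almost realizing $h(x,x)$, applies the discounted domination inequality \eqref{lam-dom}, and performs a second integration by parts with $a_n(t)=A_L(\ga_n|_{[0,t]})$ before dividing by $e^{\lam t_n}-1$ and letting $n\to\infty$. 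Your argument is shorter and exploits the Mather-measure estimate that the paper proves but then does not reuse here; the paper's argument has the advantage of being self-contained within the weak KAM/Peierls-barrier material (it yields the more precise pointwise bound $u_\lam(x)+c/\lam\le h(x,x)+v(x)-\min v$ without invoking the existence of minimizing measures). There is no circularity in your use of the preceding proposition, since its proof is independent of the present statement.
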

\begin{proof}
  Fix $z\in\cA$ and let $v(x)=h(z,x)$. Let $x\in M$.

By Proposition \ref{min-inf} there is $\ga_{\lam,x}\in\cWD(]-\infty,0])$
such that $\ga_{\lam,x}(0)=x$ and 
\begin{align*}
  u_\lam(x)&=\int_{-\infty}^0e^{\lam t}L(\dga_{\lam,x}(t))\ dt
 \ge \int_{-\infty}^0e^{\lam t}((v\circ\ga_{\lam,x})'-c)\ dt\\
&\ge v(x)-\int_{-\infty}^0e^{\lam t}(\lam v\circ\ga_{\lam,x}-c)\ dt
\ge v(x)-\max v-\frac c\lam.
\end{align*}
There is a sequence  $\ga_n\in\cC_{t_n}(x,x)$ with
$t_n\to\infty$ such that  
\[h(x,x)=\lim_{n\to\infty}A_L(\ga_n)+ct_n.\]
By \eqref{lam-dom}, $u_\lam(x)(e^{\lam t_n}-1)\le A_{L,\lam}(\ga_n)$.
Considering the function $a_n(t)=A_L(\ga_n|_{[0,t]})$ and integrating
by parts
\begin{align*}
 u_\lam(x)(e^{\lam t_n}-1)&\le A_L(\ga_n)e^{\lam t_n}-\int_0^{t_n}\lam e^{\lam t} a_n(t)\ dt\\
&\le A_L(\ga_n)e^{\lam t_n}  +\int_0^{t_n}\lam  e^{\lam t}(v(x)-v(\ga_n(t))+ct)\ dt\\
     &\le A_L(\ga_n)e^{\lam t_n}  +(e^{\lam t_n}-1)(v(x)-\min v)
       +c(e^{\lam t_n}t_n-\frac{e^{\lam t_n}-1}\lam)\\
 &= (e^{\lam t_n}-1)(A_L(\ga_n)+ct_n +v(x)-\min v-\frac c\lam)+A_L(\ga_n)+ct_n 
\end{align*}
dividing by $e^{\lam t_n}-1$ and letting $n\to\infty$,
\[u_\lam(x)+\frac c\lam\le h(x,x)+v(x)-\min v\le h(x,z)+2v(x)-\min v.\]
\end{proof}
\begin{corollary}
  If $u_{\lam_k}+\dfrac c{\lam_k}$ converges uniformly to $u$ for $\lam_k\to 0$, then
  $u$ is a weak KAM solution and $\int\limits_Mu\ d\mu\le 0$ 
\end{corollary}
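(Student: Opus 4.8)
The plan is to show that $u$ is a backward $c$-weak KAM solution (with $c=c(L)$, as fixed in this section) and then to deduce the integral bound from the discounted estimate already proved above. Write $w_\lam=u_\lam+c/\lam$, so the hypothesis is $w_{\lam_k}\to u$ uniformly; since $M$ is compact and $u$ is bounded, this also yields $\lam_ku_{\lam_k}=\lam_kw_{\lam_k}-c\to-c$ uniformly. The whole argument runs parallel to the proof of Theorem \ref{wkam-teo}, the only change being that the normalization $-\min u_\lam$ there is replaced here by $c/\lam$. For domination I would apply \eqref{eq:lam-dom} to an arbitrary $\ga\in\cWD([a,b])$ and rewrite it in terms of $w_\lam$:
\[
w_\lam(\ga(b))e^{\lam b}-w_\lam(\ga(a))e^{\lam a}\le A_{L,\lam}(\ga)+\frac c\lam\,(e^{\lam b}-e^{\lam a}).
\]
Taking $\lam=\lam_k\to0$ and using $w_{\lam_k}\to u$ uniformly, $e^{\lam_ka},e^{\lam_kb}\to1$, $A_{L,\lam_k}(\ga)\to A_L(\ga)$ (dominated convergence), and $\tfrac c{\lam_k}(e^{\lam_kb}-e^{\lam_ka})\to c(b-a)$, gives $u(\ga(b))-u(\ga(a))\le A_{L+c}(\ga)$, so $u$ is $L+c$ dominated.

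For calibration, fix $x\in M$ and take the curves $\ga_{\lam,x}\in\cWD(]-\infty,0])$ of Proposition \ref{min-inf}, which satisfy \eqref{eq:lam-calib} and have $\|\dga_{\lam,x}\|_\infty$ bounded uniformly in $\lam$. Rewriting \eqref{eq:lam-calib} for $w_\lam$ gives
\[
w_\lam(x)=w_\lam(\ga_{\lam,x}(-t))e^{-\lam t}+\frac c\lam\,(1-e^{-\lam t})+A_{L,\lam}(\ga_{\lam,x}|_{[-t,0]}).
\]
By the uniform speed bound and Arzel\'a--Ascoli, combined with a diagonal argument over a sequence $t\to\infty$ (exactly as in Proposition \ref{min-inf} and Theorem \ref{wkam-teo}), I can pass to a subsequence $\lam^n$ of $\lam_k$ and obtain $\ga\in\cWD(]-\infty,0])$ with $\ga(0)=x$ such that $\ga_{\lam^n,x}|_{[-t,0]}\to\ga|_{[-t,0]}$ $d$-uniformly for every $t>0$. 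Letting $\lam^n\to0$, the left side tends to $u(x)$ and the first two terms on the right to $u(\ga(-t))+ct$, so $A_{L,\lam^n}(\ga_{\lam^n,x}|_{[-t,0]})\to u(x)-u(\ga(-t))-ct$; Theorem \ref{Cle} then gives $A_L(\ga|_{[-t,0]})+ct\le u(x)-u(\ga(-t))$, while domination gives the reverse inequality. Hence $u(x)-u(\ga(-t))=A_{L+c}(\ga|_{[-t,0]})$ for all $t>0$, so $\ga$ is calibrated by $u$ and $u$ is a backward $c$-weak KAM solution.

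For the integral bound, recall that earlier in this section we proved $\int_M u_\lam\,d\mu\le-c/\lam$ for every $\mu\in\fM_0(L)$, that is $\int_M w_\lam\,d\mu\le0$. Taking $\lam=\lam_k\to0$ and using uniform convergence $w_{\lam_k}\to u$ on the compact space $M$ gives
\[
\int_M u\,d\mu=\lim_{k\to\infty}\int_M w_{\lam_k}\,d\mu\le0 .
\]

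The main obstacle is the calibration step: one must produce a single infinite-horizon limit curve out of the family $\ga_{\lam,x}$ as $\lam\to0$ (not merely as $t\to\infty$ for fixed $\lam$) and verify that it is calibrated. The delicate point is that the limit of the discounted actions $A_{L,\lam^n}(\ga_{\lam^n,x}|_{[-t,0]})$ is only controlled from below by the lower semicontinuity of Theorem \ref{Cle}; equality, and hence calibration, is obtained only by combining that lower bound with the domination inequality. The uniform speed bound supplied by Proposition \ref{min-inf} is precisely what makes the Arzel\'a--Ascoli and diagonal extraction legitimate uniformly in $\lam$, so the rest is bookkeeping mirroring arguments already carried out in the weak KAM theorem.
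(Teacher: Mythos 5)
Your proposal is correct and follows essentially the same route as the paper: the paper proves the first assertion by repeating the argument of Theorem \ref{wkam-teo} (which is exactly what you spell out, including the use of Proposition \ref{min-inf}, the diagonal extraction, and the combination of Theorem \ref{Cle} with domination to get calibration), and obtains the integral bound by passing to the limit in $\int_M u_{\lam_k}\,d\mu\le -c/\lam_k$, just as you do. Your write-up merely makes explicit the details the paper leaves to the reference.
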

\begin{proof}
The first assertion follows as in Theorem \ref{wkam-teo}.
We already proved that   $\lam_k \int\limits_M u_{\lam_k}d\mu\le -c$
for  $\mu\in\fM_0$. Thus
  $\int\limits_M (u_{\lam_k}+ \dfrac c{\lam_k})d\mu\le 0$ and
  letting $k\to\infty$ we get $\int\limits_Mu\ d\mu\le 0$.
\end{proof}
\begin{theorem}\label{conv-disc}
 Let $u_\lam$ be the discounted value function, then $u_\lam + \dfrac c\lam$
 converges uniformly as $\lam\to 0$  to the weak KAM solution 
$u_0$ given by either of the following ways
\begin{enumerate}[(i)]
\item it is the largest viscosity subsolution of \eqref{eq:HJ}  such
  that $\int\limits_M u\ d\mu\le 0$ for any $\mu\in\fM_0(L)$.
\item $u_0(x)=\inf\{\int\limits_Mh(y,x)\ d\mu(y): \mu\in\fM_0(L)\}$.
\end{enumerate}
\end{theorem}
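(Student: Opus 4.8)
The plan is to prove that every subsequential uniform limit of the normalized family $\{u_\lam+c/\lam\}_{\lam>0}$ is one and the same function $u_0$, so that the whole family converges, and then to match $u_0$ with each of the two descriptions. Write $w_\lam:=u_\lam+c/\lam$. By Proposition \ref{lip-value} the $u_\lam$ are equi-Lipschitz, and by the Proposition immediately preceding this theorem the $w_\lam$ are uniformly bounded; hence Arzel\'a--Ascoli gives, for any $\lam_k\to0$, a subsequence along which $w_\lam$ converges uniformly. By the Corollary immediately preceding this theorem each such limit $u$ is a weak KAM solution --- in particular an $L+c$ dominated function, i.e.\ a viscosity subsolution of \eqref{eq:HJ} by Proposition \ref{sub=domina} at $\lam=0$ --- satisfying $\int_M u\,d\mu\le0$ for every $\mu\in\fM_0(L)$. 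It therefore suffices to identify any such $u$ with the function of (i) and of (ii).

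The engine is a family of discounted occupation measures. For fixed $x,\lam$ let $\ga_{\lam,x}\in\cWD(]-\infty,0])$ be the calibrating curve of Proposition \ref{min-inf}, and set
\[
\sigma_{\lam,x}=\lam\int_{-\infty}^0 e^{\lam s}\,\delta_{\dga_{\lam,x}(s)}\,ds,
\]
a probability measure on $TM$. Since $\|\dga_{\lam,x}\|_\infty$ is bounded uniformly in $\lam,x$ (Proposition \ref{min-inf}), all $\sigma_{\lam,x}$ are carried by a fixed compact subset of $\cD$, so the family is precompact in $\cP_\cD$. First I would show any weak-$*$ limit $\sigma$ is a Mather measure: using $D\fui(\pi(v))v=\tfrac{d}{ds}\fui(\ga_{\lam,x}(s))$ and integration by parts, $\int D\fui\cdot v\,d\sigma_{\lam,x}=\lam\fui(x)-\lam\int_M\fui\,d\pi_\#\sigma_{\lam,x}\to0$, so $\sigma\in\cVD$; and by \eqref{eq:min-inf}, $A_L(\sigma_{\lam,x})=\lam u_\lam(x)=\lam w_\lam(x)-c\to-c$, so the lower semicontinuity of Proposition \ref{Ma} together with the bound $A_L\ge-c$ on $\cVD$ from Proposition \ref{c=min-med} forces $A_L(\sigma)=-c$, i.e.\ $\sigma\in\fM(L)$ and $\pi_\#\sigma\in\fM_0(L)$.

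Characterization (i) then follows cleanly. Let $w$ be any $L+c$ dominated function with $\int_M w\,d\mu\le0$ for all $\mu\in\fM_0(L)$. Since $w$ is dominated, $w\circ\ga_{\lam,x}$ is absolutely continuous with $(w\circ\ga_{\lam,x})'\le L(\dga_{\lam,x})+c$ a.e.; integrating by parts in $w_\lam(x)=\int_{-\infty}^0 e^{\lam s}(L(\dga_{\lam,x})+c)\,ds$ I get
\[
w_\lam(x)\ \ge\ w(x)-\lam\int_{-\infty}^0 e^{\lam s}w(\ga_{\lam,x}(s))\,ds\ =\ w(x)-\int_M w\,d\pi_\#\sigma_{\lam,x}.
\]
Passing to a subsequence with $w_\lam(x)\to u(x)$ and $\pi_\#\sigma_{\lam,x}\to\pi_\#\sigma\in\fM_0(L)$, the constraint yields $\int_M w\,d\pi_\#\sigma\le0$, hence $w(x)\le u(x)$. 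As $w$ is arbitrary and $u$ is itself admissible, $u$ is the largest admissible subsolution; this is (i), and since the limit does not depend on the subsequence, it gives the uniform convergence.

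It remains to match $u_0$ with (ii). The inequality $u\le\inf_{\mu\in\fM_0(L)}\int_M h(y,\cdot)\,d\mu$ is easy: Proposition \ref{gonza} gives $u(x)=\min_{p\in\cA}u(p)+h(p,x)\le\int_M\bigl(u(p)+h(p,x)\bigr)\,d\mu(p)\le\int_M h(p,x)\,d\mu(p)$ for each $\mu\in\fM_0(L)$, using $\int u\,d\mu\le0$. For the reverse inequality I would rewrite the dynamic-programming identity \eqref{eq:lam-calib} as $w_\lam(x)=e^{-\lam T}w_\lam(\ga_{\lam,x}(-T))+\int_{-T}^0 e^{\lam s}(L(\dga_{\lam,x})+c)\,ds$ and let $\lam\to0$ along a subsequence for which $\ga_{\lam,x}$ converges to a $u$-calibrated curve $\ga_*$ (whose $\alpha$-limit lies in $\cA$ by Proposition \ref{alfa}) and $\pi_\#\sigma_{\lam,x}\to\mu_*\in\fM_0(L)$; the lower semicontinuity of the action (Corollary \ref{low-semi}, Theorem \ref{Cle}) and the recurrence of $\ga_*$ onto $\cA$ should show the tail of the action equidistributes according to $\mu_*$ and yield $u(x)\ge\int_M h(y,x)\,d\mu_*(y)\ge u_0(x)$, as in \cite{DFIZ}. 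The main obstacle is exactly this last equidistribution step: converting the asymptotics of the discounted minimizers into the averaged Peierls barrier of a Mather measure. Everything upstream is forced by results already at hand (Propositions \ref{min-inf}, \ref{Ma}, \ref{c=min-med}, \ref{gonza} and the maximal-subsolution argument above); the delicate point is that, with no Lagrangian flow available, the invariance one would normally use must be replaced throughout by the compactness of Tonelli minimizers (Corollary \ref{tonelli}) and by the lower semicontinuity of the discounted action (Theorem \ref{Cle}).
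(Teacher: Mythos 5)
Your argument for the uniform convergence and for characterization (i) is correct and is, in substance, exactly the route the paper intends: the paper's own proof of this theorem consists of the two propositions and the corollary placed just before it together with the sentence ``the proof goes as in \cite{DFIZ}'', and your discounted occupation measures $\sigma_{\lam,x}$, the verification that their limits lie in $\fM(L)$ (closedness by integration by parts, $A_L(\sigma_{\lam,x})=\lam u_\lam(x)\to -c$ plus Propositions \ref{Ma} and \ref{c=min-med}), and the inequality $w_\lam(x)\ge w(x)-\int_M w\,d\pi_\#\sigma_{\lam,x}$ for every admissible subsolution $w$ are precisely the DFIZ mechanism being invoked. Since every subsequential limit is itself admissible (by the corollary preceding the theorem) and dominates every admissible $w$, all limits coincide and (i) holds. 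One small simplification in the easy half of (ii): you do not need Proposition \ref{gonza} there; domination already gives $u(x)\le u(p)+\Phi(p,x)\le u(p)+h(p,x)$ for \emph{all} $p\in M$ by item (1) of Proposition \ref{prop-h}, which is preferable because it avoids any assumption on where $\sop\mu$ lies.

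The gap you flag in the reverse inequality of (ii) is genuine, and the tail-equidistribution argument you sketch is not how \cite{DFIZ} close it. Their route is: set $\hat u(x)=\inf_{\mu\in\fM_0(L)}\int_M h(y,x)\,d\mu(y)$; each $x\mapsto\int_M h(y,x)\,d\mu(y)$ is dominated, hence so is $\hat u$; taking $\mu$ as the test measure gives $\int_M\hat u\,d\mu\le\int\int h(y,x)\,d\mu(y)\,d\mu(x)$, so everything reduces to the single inequality $\int\int h\,d(\mu\otimes\mu)\le 0$ for every projected Mather measure, after which $\hat u$ is admissible and the maximality already established in (i) yields $\hat u\le u_0$. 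That key inequality is proved in \cite{DFIZ} using the invariance of Mather measures under the Euler--Lagrange flow (Birkhoff averages and recurrence along orbits, equivalently the inclusion of the Mather set in the Aubry set). In the present sub-Riemannian setting there is no Lagrangian flow, and the paper explicitly lists ``is $\sop\mu\subset\cA$ for $\mu\in\fM_0(L)$?'' as an open question; the natural substitute, approximating $\mu$ by holonomic measures $\mu_{\eta_n}$ with $\eta_n\in\cC_{T_n}(p_n,p_n)$, only gives $A_L(\eta_n)+cT_n=o(T_n)\ge 0$, which is not enough to bound $h$ along $\eta_n$ from above. So the missing step is real and nontrivial here --- but it is equally missing from the paper, which offers no more than the citation.
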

Now the proof of Theorem \ref{conv-disc} goes as in \cite{DFIZ}

\section{Homogenization}
\label{sec:homog}
As in section \ref{sec:aubry-mather}, 
let $M$ be a compact manifold. We follow the approach of \cite{CIS}
and refer to that paper for the definition of the convergence of a family
of metric spaces $(\cM_n,d_n)$ to a metric space $(\cM,d)$ trough
continuous maps $F_n:\cM_n\to\cM$.
For the Carnot Caratheodory metric $d$ on $\bM$ for the
distribution $\bar\cD$ we have as in \cite{CIS}

\begin{proposition}
\[\lim_{\ep\to 0} (\bM , \ep d , \ep G ) = H_1 ( M , \R ) . \] 
\end{proposition}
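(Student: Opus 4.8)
The plan is to unwind the notion of convergence of metric spaces through the maps $\ep G$ of \cite{CIS} and reduce the statement to two geometric facts about the pair $(\bM,G)$. First I recall the structural features already available: the deck group $\G\cong\Z^k$ (with $k=\dim H_1(M,\R)$) acts on $(\bM,d)$ by isometries, because $d$ is the lift of the Carnot--Carath\'eodory metric of the compact quotient $M=\bM/\G$, and the action is cocompact; moreover $G$ is $\G$-equivariant, $G(Tx)=G(x)+T$ for $T\in\G$, by the defining property that $G(x)-G(y)$ is the deck transformation carrying $y$ to $x$ when $x,y$ lie in the same fiber. The candidate limit metric on $H_1(M,\R)$ is the stable norm $\bar\sigma$ of $d$, defined on $\G$ by $\bar\sigma(T)=\lim_{k\to\infty}d(\bar x_0,T^k\bar x_0)/k$ and extended by homogeneity and continuity. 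Granting that $\bar\sigma$ is a norm, the definition in \cite{CIS} reduces to two conditions: (i) an almost-isometry estimate, namely $|\ep\,d(x,y)-\bar\sigma(\ep G(x)-\ep G(y))|\to 0$ uniformly on $\ep$-bounded sets, and (ii) approximate surjectivity of $\ep G$ onto $\bar\sigma$-bounded subsets of $H_1(M,\R)$.

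Next I would check that $\bar\sigma$ is a genuine norm. Subadditivity $\bar\sigma(S+T)\le\bar\sigma(S)+\bar\sigma(T)$ follows from the triangle inequality together with the fact that the $T^j$ are isometries, so the sequence $d(\bar x_0,T^k\bar x_0)$ is subadditive and the defining limit exists by Fekete's lemma, with $\bar\sigma(T)\le d(\bar x_0,T\bar x_0)<\infty$. For positivity I would use the Lipschitz bound on $G$ coming from the formula $\langle[\om],G(\bga(b))-G(\bga(a))\rangle=\int_{\bga}\bar\om$, which gives $|G(x)-G(y)|\le C\,d(x,y)$ for a constant $C$ depending only on the chosen representatives $\om_i$; applied to $x=\bar x_0$, $y=T^k\bar x_0$ this yields $d(\bar x_0,T^k\bar x_0)\ge k|T|/C$, hence $\bar\sigma(T)\ge|T|/C>0$ for $T\ne 0$. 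Homogeneity on $\G$ is immediate, and the extension to $H_1(M,\R)$ is standard.

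The heart of the proof, and the step I expect to be the main obstacle, is the bounded-distance estimate
\[\sup_{x,y\in\bM}\bigl|d(x,y)-\bar\sigma(G(x)-G(y))\bigr|<\infty.\]
One inequality is soft: subadditivity and equivariance give $\bar\sigma(G(x)-G(y))\le d(x,y)$ up to a bounded error coming from moving $x,y$ into the orbit of $\bar x_0$, controlled by the $d$-diameter of a fundamental domain. The reverse inequality $d(x,y)\le\bar\sigma(G(x)-G(y))+C$ is the sub-Riemannian analogue of Burago's theorem that a cocompact periodic length metric lies at bounded distance from its stable norm. I would either invoke that theorem directly---$(\bM,d)$ is a geodesic space carrying a cocompact isometric $\Z^k$-action, so its hypotheses hold verbatim---or reproduce the weak KAM argument of \cite{CIS}, using that for each homology class the calibrated curves of the length functional furnish orbit points realizing $\bar\sigma$ up to an additive constant. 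This is where the genuinely metric work lies, as opposed to the soft limit arguments above.

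Finally I would verify (i) and (ii). For (i), the displayed bounded-distance estimate and homogeneity of $\bar\sigma$ give $|\ep\,d(x,y)-\bar\sigma(\ep G(x)-\ep G(y))|\le C\ep$, which tends to $0$ uniformly in $x,y$, in particular on $\ep$-bounded sets. For (ii), $G$ restricted to the fiber $\Pi^{-1}(\Pi(\bar x_0))$ is a bijection onto $\G$, so $G(\bM)$ contains the full-rank lattice $\G\subset H_1(M,\R)$; since $\ep\G$ has mesh tending to $0$ as $\ep\to 0$ and $\bar\sigma$ is equivalent to any fixed Euclidean norm, every $\bar\sigma$-bounded point of $H_1(M,\R)$ is within $\eta$ of $\ep G(\bM)$ for $\ep$ small. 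Combining (i) and (ii) with the definition of convergence in \cite{CIS} yields $\lim_{\ep\to 0}(\bM,\ep d,\ep G)=H_1(M,\R)$ equipped with the stable norm $\bar\sigma$.
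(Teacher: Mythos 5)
Your outline is correct and coincides with the intended argument: the paper gives no proof of this proposition, asserting only that it holds ``as in \cite{CIS}'', and the proof in that reference is exactly the route you describe --- the stable norm of $d$ as the limit metric on $H_1(M,\R)$, the $\G$-equivariance and Lipschitz continuity of $G$, a Burago-type bounded-distance estimate $\sup_{x,y}|d(x,y)-\bar\sigma(G(x)-G(y))|<\infty$ for the cocompact isometric $\G$-action, and the verification of the two conditions in the definition of convergence through the maps $\ep G$. The only point specific to the present setting, which you use implicitly and which is worth a line, is that $(\bM,d)$ is a complete geodesic space with finite distance because the lifted distribution $\bar\cD$ is bracket generating (Chow--Rashevskii) and the quotient is compact, so Burago's theorem applies verbatim.
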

Let $\bL:\bar\cD\to\R$ be the lift of $L$ and
 \[\bH(p)=\max\{p(v)-\bL(v):\bar\pi(v)=\bar\pi^*(p)\}.\]
We are interested in the limit as $\ep\to 0$ of the Cauchy problem
\begin{equation}\label{eq:cauchy}
  \begin{cases}
    w_t + \bH(dw/\ep) = 0,&  x\in\bM, t > 0; \\
    w(x,0)= f_\ep(x). &
  \end{cases}
  \end{equation}
  for $f_\ep : \bM \to\R$ continuous, uniformly $\ep G$–converging to
  some $f: H_1(M,\R)\to \R$.

  Since
  \[\bH( p/\ep)=\max\{p(v)-\bL(\ep v) :\bar\pi(v)=\bar\pi^*(p)\},\]
letting $\cL^\ep_t$ be the Lax-Oleinik semi-group for the Lagragian $\bL(\ep v)$,
by Proposition \ref{solHJt} we have that $u_\ep$ given by
$u_\ep(x,t)=\cL^\ep_tf_\ep(x)$  is a viscosity solution to \eqref{eq:cauchy}.
Under assumption \eqref{eq:bar} the viscosity solution is unique. We have
\begin{align}\nonumber
  u_\ep(x,t)&=\inf\{f_\ep(\bga(0))+\int_0^t\bL(\ep\dot{\bga}):\ga\in\cW_{\bar\cD}([0,t])\}\\\nonumber
&=\inf\{f_\ep(\af(0))+\ep A_{\bL}(\bar\af):\bar\af\in\cW_{\bar\cD}([0,t/\ep])\}\\\label{hbar}
&=\inf\{f_\ep(y)+\ep\bar h_{t/\ep}(y,x)\}
\end{align}
where $\bar h_\tau:\bM\times\bM\to\R$ is the minimal action for the
Lagrangian $\bL$.
\begin{theorem}\label{homo}
 Let $f_\ep : \bM \to\R$, $f: H_1(M,\R)\to \R$ be continuous, with $f$
 having at most linear growth, such that
$f_\ep$ uniformly $\ep G$–converges to $f$.
Then the family of functions $u_\ep : \bM\times [0,
+\infty[\to\R$ locally uniformly $\ep G$-converges in
$M\times]0, +\infty[$ to the viscosity solution $u : H_1(M, \R)\times [0, +\infty[\to R$ of the problem
\begin{equation}\label{eq:l-cauchy}
  \begin{cases}
u_t + \mH(du) = 0, & h \in H_1(M,\R), t > 0; \\
u(h,0)= f(h)&
\end{cases}
\end{equation}
\end{theorem}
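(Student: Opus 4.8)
The plan is to pass to the limit in the representation \eqref{hbar} and identify the limit with the Hopf--Lax formula for the effective Lagrangian. Since $\mH$ is by \eqref{eq:effH} the Legendre transform of $\mL$, the function
\[u(h,t)=\inf_{h'\in H_1(M,\R)}\Bigl\{f(h')+t\,\mL\Bigl(\tfrac{h-h'}{t}\Bigr)\Bigr\}\]
is the (unique, under \eqref{eq:bar}) viscosity solution of \eqref{eq:l-cauchy}; hence it suffices to prove that $u_\ep$ locally uniformly $\ep G$--converges to this $u$. By \eqref{hbar} we have $u_\ep(x,t)=\inf_{y\in\bM}\{f_\ep(y)+\ep\,\bar h_{t/\ep}(y,x)\}$, so the whole theorem reduces to the asymptotics of the rescaled minimal action $\ep\,\bar h_{t/\ep}$ together with the stability of the infimum.

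The core step is a homogenization lemma for the action: whenever $\ep G(x_\ep)\to\xi$ and $\ep G(y_\ep)\to\eta$, one should have
\[\lim_{\ep\to0}\ep\,\bar h_{t/\ep}(y_\ep,x_\ep)=t\,\mL\Bigl(\tfrac{\xi-\eta}{t}\Bigr),\]
locally uniformly in $(\xi,\eta,t)$. Writing $w=(\xi-\eta)/t$, the lower bound would follow by projecting a nearly optimal $\bga\in\cW_{\bar\cD}([0,t/\ep])$ from $y_\ep$ to $x_\ep$ down to $\ga=\Pi\circ\bga$, closing it with a connecting geodesic of $d$--length $O(1)$, and passing to the induced holonomic measures: their rotation vectors converge to $w$ since $(G(x_\ep)-G(y_\ep))/(t/\ep)\to w$, and by the lower semicontinuity in Proposition~\ref{Ma} any weak limit $\mu_0$ satisfies $\rho(\mu_0)=w$ and $A_L(\mu_0)\ge\mL(w)$, giving $\liminf \ep\,\bar h_{t/\ep}\ge t\,\mL(w)$. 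For the upper bound I would take a minimizing vakonomic measure for $\mL(w)$, approximate it by a holonomic measure $\mu_\eta$ coming from a closed curve $\eta\in\cC_S(p,p)$ with $A_L(\eta)/S\approx\mL(w)$ and $\rho(\mu_\eta)\approx w$ via Proposition~\ref{aprox-mu}, lift $\eta$ to $\bM$ (its lift displaces $G$ by $\approx Sw$), concatenate $N\approx t/(\ep S)$ copies joined by bounded geodesics, and finally correct the endpoints to $y_\ep,x_\ep$ with one more geodesic of bounded length. This yields $\bar h_{t/\ep}(y_\ep,x_\ep)\le (t/\ep)\,\mL(w)+O(1)$, and multiplying by $\ep$ gives the matching upper bound.

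With the action lemma in hand I would pass to the limit in the infimum. The superlinearity of $L$ makes $\mL$ coercive, so together with the at most linear growth of $f$ the effective infimum defining $u(h,t)$ is attained and may be restricted to a compact set of $h'$; this compactness is what lets the $\liminf$ and $\limsup$ of $\inf_y\{f_\ep(y)+\ep\,\bar h_{t/\ep}(y,x)\}$ be computed from the pointwise action limit, using the uniform $\ep G$--convergence $f_\ep\to f$. To upgrade pointwise convergence to local uniform convergence I would use equi--Lipschitz bounds for $x\mapsto\ep\,\bar h_{t/\ep}(y,x)$ and for $u_\ep$ in $(x,t)$, obtained exactly as in Proposition~\ref{htcont}, which give equicontinuity of the family $\{u_\ep\}$; Arzel\'a--Ascoli then forces convergence of the whole family to $u$. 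Finally, stability of viscosity solutions under local uniform convergence shows the limit solves \eqref{eq:l-cauchy}, and uniqueness under \eqref{eq:bar} identifies it with the Hopf--Lax formula above.

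The hard part will be the homogenization lemma, and within it the upper--bound construction: one must control, uniformly in $\ep$, the number and total length of the connecting geodesics used to close up and to adjust endpoints, so that their contribution vanishes after multiplication by $\ep$. This hinges on the fact established around \eqref{eq:rotation} that a closed curve realizing a deck transformation $T=G(\bga(b))-G(\bga(a))$ has rotation vector $T/(b-a)$, together with the bounded geometry of the abelian cover (deck--translated fundamental domains have bounded diameter), so that endpoints may always be matched to lattice points at uniformly bounded $d$--cost.
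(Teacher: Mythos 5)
Your proposal follows essentially the same route as the paper: the limit is identified with the Hopf--Lax formula for $\mL$, the lower bound comes from weak limits of the holonomic measures of projected minimizers together with Lemma \ref{ybound}, the identification of rotation vectors and the lower semicontinuity of $A_L$, and the upper bound from approximating a minimizing vakonomic measure by a closed-curve holonomic measure (Proposition \ref{aprox-mu}), lifting and concatenating many copies, and correcting endpoints by a bounded geodesic. One small point: consecutive copies of the lifted closed curve concatenate exactly (no connecting geodesics are needed between copies, only a single endpoint correction), which is what makes the total error of order $\ep T_k$ rather than $t/T_k$; otherwise the per-join cost you allow for would not vanish when the periods $T_k$ stay bounded.
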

We need the following Lemma
\begin{lemma}[\cite{CIS}]\label{ybound}
  Given a compact subset $K$ in $H^1(M, \R)$, and a compact interval
  $I\subset [0, +\infty]$,
there is a constant $C>0$ such that
$\ep d(x, y)\le C$
for $\ep > 0$ suitably small $\ep G(x)\in K$, $t\in I$ and $y\in\bM$
realizing the minimum in the \eqref{hbar}.
\end{lemma}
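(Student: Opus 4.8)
\subsection*{Proof proposal for Lemma \ref{ybound}}

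The plan is to sandwich $u_\ep(x,t)$ between a cheap upper bound coming from a trivial competitor and a lower bound coming from uniform superlinearity, and to read off a bound on $\ep d(x,y)$ from the resulting inequality. For the upper bound I would use $y=x$ as a competitor in \eqref{hbar} together with the constant curve at $x$: since $\bL(x,0)\le A(0)$ by uniform boundedness, $\ep\bar h_{t/\ep}(x,x)\le \ep\cdot(t/\ep)A(0)=tA(0)$, so $u_\ep(x,t)\le f_\ep(x)+tA(0)$. For the lower bound I would exploit that superlinearity holds with an arbitrary parameter $K$: lifting $L(v)\ge K\|v\|+C(K)$ to $\bL$ and estimating, for any $\bga\in\cW_{\bar\cD}([0,t/\ep])$ from $y$ to $x$,
\[
A_{\bL}(\bga)=\int_0^{t/\ep}\bL(\dot{\bga})\ge K\int_0^{t/\ep}\|\dot{\bga}\|+C(K)\tfrac t\ep\ge K\,d(x,y)+C(K)\tfrac t\ep,
\]
whence $\ep\bar h_{t/\ep}(y,x)\ge K\ep d(x,y)+C(K)t$. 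Since $y$ realizes the minimum, $u_\ep(x,t)=f_\ep(y)+\ep\bar h_{t/\ep}(y,x)$, and combining the two bounds gives the master inequality
\[
K\,\ep d(x,y)\le f_\ep(x)-f_\ep(y)+t\,(A(0)-C(K)).
\]

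Next I would control $f_\ep(x)-f_\ep(y)$. The term $f_\ep(x)$ is bounded above by a constant $M_1$ uniformly for $\ep$ small and $\ep G(x)\in K$: this is exactly the uniform $\ep G$-convergence of $f_\ep$ to the continuous $f$ on the compact set $K$. For $-f_\ep(y)$ I would use that the hypotheses (uniform $\ep G$-convergence together with the at most linear growth of $f$) furnish constants $a',b'$ and $\ep_0>0$ with $f_\ep(z)\ge -a'|\ep G(z)|-b'$ for all $z\in\bM$ and $\ep<\ep_0$. The key geometric input is that $G$ is Lipschitz for the Carnot--Carath\'eodory distance: since each $\bar\om_i$ is the lift of a smooth form on the compact $M$, its restriction to $\bar\cD$ is uniformly bounded, so integrating along a horizontal path yields $|G(x)-G(y)|\le C'd(x,y)$ with $C'=C'(\max_i\|\om_i\|_{\bar\cD})$. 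Writing $M_K=\sup_{h\in K}|h|$, this gives $|\ep G(y)|\le M_K+C'\ep d(x,y)$, hence
\[
f_\ep(x)-f_\ep(y)\le M_1+a'\bigl(M_K+C'\ep d(x,y)\bigr)+b'.
\]

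Finally I would absorb the offending $\ep d(x,y)$ term. Substituting into the master inequality and using $t\le t_{\max}:=\max I$,
\[
(K-a'C')\,\ep d(x,y)\le M_1+a'M_K+b'+t_{\max}\bigl(A(0)-C(K)\bigr).
\]
Now I \emph{choose} $K$ large enough that $K-a'C'\ge 1$; note $K$ is fixed once $a',C'$ are fixed, so $C(K)$ is a definite constant, and the right-hand side is a finite constant $C$ depending only on $K$, $I$ and $f$. This yields $\ep d(x,y)\le C$, as claimed. The main obstacle I anticipate is not this absorption argument but the two inputs feeding it from \cite{CIS}: precisely extracting the uniform lower linear-growth bound $f_\ep(z)\ge -a'|\ep G(z)|-b'$ and the uniform upper bound on the compact locus $\{\ep G(x)\in K\}$ from the definition of uniform $\ep G$-convergence, and checking the Lipschitz estimate for $G$; the superlinearity-with-large-$K$ trick is the decisive but routine step once these are in hand.
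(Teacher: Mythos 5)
Your proposal is correct and is essentially the argument behind the lemma: the paper itself gives no proof, quoting the statement from \cite{CIS}, and the standard proof there follows exactly your scheme --- compare the minimizing $y$ with the trivial competitor $y=x$ (constant curve, cost $tA(0)$), lower-bound $\ep\bar h_{t/\ep}(y,x)\ge K\ep d(x,y)+C(K)t$ by uniform superlinearity with adjustable slope $K$, control $f_\ep(x)-f_\ep(y)$ via uniform $\ep G$-convergence, the linear growth of $f$, and the Lipschitz estimate $|G(x)-G(y)|\le C'd(x,y)$ (valid since the lifted forms $\bar\om_i$ are bounded on the unit ball of $\bar\cD$ by compactness of $M$), and then absorb the $\ep d(x,y)$ term by fixing $K>a'C'$. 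Note also that the sign issue in $t\,(A(0)-C(K))$ takes care of itself, since $L(x,0)\ge C(K)$ forces $A(0)\ge C(K)$, so bounding $t\le\max I$ is legitimate.
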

\begin{proof}[Proof of Theorem \ref{homo} ]
 The solution for the limit problem \eqref{eq:l-cauchy} is
\[u(z,t)=\inf\{f(q)+t\mL\Bigl(\frac{z-q}t\Bigr):q\in H_1(M,\R)\}.\]

Let $h\in H_1(M,\R)$, $t>0$. Let $\{\ep_n\}$ be a sequence converging
to zero and let $\{x_n\}, \{t_n\}$ be
subsequences in $\bM, (0,+\infty)$ such that ${\ep_n}G(x_n)$ converges
to $z$ and $t_n$ converges to $t$. Our goal is to prove that $u_{\ep_n}(x_n,t_n)$ 
converges to $u(z, t)$.

Let $y_n\in\bM$, and $\bga_n\in\cC_{t_n/\ep_n}(y_n,x_n)$ be such that
\[u_{\ep_n}(x_n,t_n)=f_{\ep_n}(y_n) +\ep_n\bar h_{t_n/\ep_n}(y_n,x_n)=
f_{\ep_n}(y_n) +\ep_n A_{\bL} (\bga_n)\]
  
By Lemma \ref{ybound} the sequence $\ep_nG(y_n)$ is bounded, so it
converges converges up to subsequences, to $q\in H_1(M,\R)$ and then
\[\lim_n \frac{\ep_n(G(x_n)-G(y_n))}{t_n}=\frac{z-q}t.\]
Consider the measure $\mu_{\ga_n} $ for $\ga_n=\Pi\circ\bga_n$
\[\bar h_{t_n/\ep_n}(x_n,t_n)=A_L(\ga_n)=t_n A_L(\mu_{\ga_n})/{\ep_n}.\]
Passing to a further subsquence we can assume that $\mu_{\ga_n}$
converges to $\mu$, then we have
\[\rho(\mu)=\dfrac{z-q}t\ \hbox{ and }\ \liminf_n A_L(\mu_{\ga_n})\ge A_L(\mu).\]
Thus
\[\liminf_n\ep_n \bar h_{t_n/\ep_n}(x_n,x_n)=\liminf_n t_n  A_L(\mu_{\ga_n})
 \ge tA_L(\mu) \ge t\mL\Bigl(\frac{z-q}t\Bigr).\]
By the the uniform $\ep_n G$-convergence of $f_{\ep_n}$ to $f$, we obtain
\[\liminf_n u_{\ep_n}(x_n,t_n)=
  \liminf_n f_{\ep_n}(y_n) +\ep\bar h_{t_n/\ep_n}(y_n,x_n)
\ge f(q)+t\mL\Bigl(\frac{z-q}t\Bigr)\ge u(z,t)\]
Let $l=\limsup_n u_{\ep_n}(x_n,t_n)$ and take a subsequence of $\{\ep_n\}$,
still denoted $\{\ep_n\}$, such that $l=\lim\limits_{n\to\infty}
u_{\ep_n}(x_n,t_n)$.
Choose $q\in H_1(M,R)$ such that $u(z,t)=f(q)+t\mL\Bigl(\dfrac{z-q}t\Bigr)$.
Let $\mu\in\cV_\cD$ be such that $\rho(\mu)=\dfrac{z-q}t$ and
$A_L(\mu)=\mL\Bigl(\dfrac{z-q}t\Bigr)$.
There is a sequence $\mu_{\eta_k}\in\cH_\cD$ converging to $\mu$ such that 
$\{A_L(\mu_{\eta_k})\}$ converges to $A_L(\mu)$.

We have that $\eta_k\in\cC_{T_k}(p_k,p_k)$ and we can assume that $T_k\ge 1$.
Take a subsequence $\{\ep_{n_k}\}$ such that $\ep_{n_k}T_k$ converges
to $0$.

Choose a lift $y_k$ of $p_{n_k}$ such that $d^*_k=d(y_k, x_{n_k})\le\diam M$.
Let $m_k\in\N$ be such that
\[(1+m_k)T_k\le\frac{t_{n_k}}{\ep_{n_k}}<(2+m_k)T_k,\]
then $\ep_{n_k}m_kT_k$ converges to $t$.

Let $\be_k\in\cW_{\bar\cD}([0,m_kT_nk])$ be the lift of the curve
$\eta_k$ travelled $m_k$ times such that $\be_k(m_kT_k)=y_k$, then 
$\dfrac{G(y_k)-G(\be_k(0))}{m_kT_k}=\rho(\mu_{\eta_k})$.
Thus $\ep_{n_k}(G(y_k)-G(\be_k(0)))=\ep_{n_k}m_kT_k \rho(\mu_{\eta_k})$
converges to $z-q$ and since $\ep_{n_k}(G(x_{n_k})-G(y_k))$ converges to
$0$, we have that $\ep_{n_k}G(\be_k(0))$ converges to $q$.

Take a minimizing geodesic $\be_k\in\cC_{0, d^*_k}(y_k, x_{n_k})$ with
$\|\dot\be_k\|=1$ a.e. Letting
$\af_k(s)=\be_k(d^*_ks/({\frac{t_{n_k}}{\ep_{n_k}}-m_kT_k}))$ we have that
$|\dot\af_k|\le d^*_k$ and then $A_{\bL}(\af_k)\le 2 A(\diam M) T_k$.
\begin{align*}
l&=\lim_k u_{\ep_{n_k}}(x_{n_k},t_{n_k})\le \limsup_k f_{\ep_{n_k}}(\be_k(0)) +\ep_{n_k} (A_{\bL}(\be_k)+A_{\bL}(\af_k)))\\
&\le \lim_k f_{\ep_{n_k}}(\be_k(0))+\ep_{n_k}m_kT_kA_L(\mu_{\eta_k})+2\ep_{n_k}T_k A(\diam M)
=f(q)+t\mL\Bigl(\frac{z-q}t\Bigr).
\end{align*}
\end{proof}
\section{Open questions}
\begin{itemize}
\item Is any viscosity solution of \eqref{eq:HJ} a weak KAM solution?
\item Is the support of any $\mu\in\fM_0(L)$ contained in $\cA$?
\end{itemize}


\begin{thebibliography}{99}
  \bibitem[ABB]{ABB} A. Agrachev, D. Barilari, U. Baoscai. {\em A
    Comprehensive Introduction to Sub-Riemannian Geometry}
  Cambridge studies in advanced Mathematics  No. 181.
  Cambridge University Press.
  
\bibitem[BCP]{BCP} Z. M. Balogh, A. Calogero and  R. Pini
  The Hopf–Lax formula in Carnot groups: a control theoretic approach
{\em Calc. Var.} {\bf  49} 1379--1414 (2014).
  \bibitem[Ba]{Ba} G. Barles,  Solutions de viscosit\'e des
    \'equations de Hamilton Jacobi, Math\'ematiques et Applications
    {\bf 17}, Springer 1994.
  \bibitem[B]{B} P. Bernard. Young Measures, Superposition and
  Transport. {\em Indiana Univ. Math. J.} {\bf 57}, no. 1
    247--75 (2008)
  
\bibitem [BW]{BW} I. Birindelli and J Wigniolli. Homogenization of
  Hamilton-Jacobi equations in the Heisenberg group 
  {\em Comm. Pure App. Anal.} {\bf 2} No.4, 461--479 (2003).
 
  \bibitem[CI]{CI} G. Contreras, R. Iturriaga. Global minimizers for
    autonomous Lagrangians.  {\em 22º Col\'oquio Brasileiro de
      Matem\'atica}
 \bibitem[CIS]{CIS} G. Contreras, R. Iturriaga, 
   A. Siconolfi. Homogenization on arbitrary manifolds. {\em
     Calc. Var.} {\bf  52} 237--252 (2015).
  
\bibitem[DFIZ]{DFIZ} A. Davini, A. Fathi, R. Iturriaga \& M. Zavidovique.
Convergence of the solutions of the discounted Hamilton–Jacobi equation.
{\em Invent. math}. {\bf 206} (2016) 29–-55
\bibitem[F]{F} A. Fathi. Weak KAM Theorem in Lagrangian Dynamics.
  Preliminary Version Number 10.
\bibitem[G]{G} D. Gomes. Hamilton-Jacobi methods for Vakonomic Mechanics.
  {\em NoDEA} {\bf 14} (2007) 233--257.
  
\bibitem[MS]{MS} J.J. Manfredi and B. Stroffolini. 
  A version of the Hopf-Lax formula in the Heisenberg group.
  {\em Comm. Part. Diff. Eq.} {\bf 27} No. 5\&6 1139--1159 (2002).
\bibitem[M]{M} R. Montgomery. {\em  A Tour of Subriemannian Geometries,
  Their Geodesics and Applications}.  (No. 91). American Mathematical Soc..
\bibitem[S]{S}
 B. Stroffolini.
  Homogenization of Hamilton-Jacobi equations in Carnot groups.
{\em ESAIM: COCV.} {\bf 13}, No 1 107–-119  (2007).
\end{thebibliography}
\end{document}